\numberwithin{equation}{section}
\newtheorem{theorem}{Theorem}[section]
\newtheorem{prpstn}[theorem]{Proposition}
\newtheorem{definition}[theorem]{Definition}
\newtheorem{remark}[theorem]{Remark}
\newcommand{\qed}{\nobreak \ifvmode \relax \else
      \ifdim\lastskip<1.5em \hskip-\lastskip
      \hskip1.5em plus0em minus0.5em \fi \nobreak
      \vrule height0.75em width0.5em depth0.25em\fi}
\newenvironment{proof}[1][Proof]{\begin{trivlist}
\item[\hskip \labelsep {\bfseries #1}]}{\hfill\qed\end{trivlist}}
\def\no{\nonumber}
\newcommand{\R}{\mathbb{R}}
\newcommand{\ot}{\#}
\newcommand{\p}{\partial}
\newcommand{\mc}{\mathcal}
\renewcommand{\div}[0]{\text{div}}
\renewcommand{\/}[2]{\frac{#1}{#2}}
\def\eps{\epsilon}
\def\wt{\widetilde}
\def\wh{\widehat}
\def\wb{\overline}
\DeclareFontFamily{U}{mathx}{}
\DeclareFontShape{U}{mathx}{m}{n}{<-> mathx10}{}
\DeclareSymbolFont{mathx}{U}{mathx}{m}{n}
\DeclareMathAccent{\widecheck}{0}{mathx}{"71}
\newcommand{\vertiii}[1]{{\left\vert\kern-0.25ex\left\vert\kern-0.25ex\left\vert #1 
    \right\vert\kern-0.25ex\right\vert\kern-0.25ex\right\vert}}
\newcommand{\Lip}{\mathrm{Lip}}
\DeclareMathOperator*{\argmin}{\mathrm{arg}\,\mathrm{min}}
\title{Sections and Chapters}
\author[,1]{\normalsize Alain Bensoussan\footnote{E-mail: axb046100@utdallas.edu.}}
\author[,2]{\normalsize Tak Kwong Wong\footnote{E-mail: takkwong@maths.hku.hk.}}
\author[,3]{\normalsize Sheung Chi Phillip Yam\footnote{E-mail: scpyam@sta.cuhk.edu.hk.}}
\author[,3,4]{\normalsize Hongwei Yuan\footnote{E-mail: hwyuan@link.cuhk.edu.hk; hwyuan@um.edu.mo.}}
\affil[1]{\small\it Naveen Jindal School of Management, University of Texas at Dallas}
\affil[2]{\small\it Department of Mathematics, The University of Hong Kong}
\affil[3]{\small\it Department of Statistics, The Chinese University of Hong Kong}
\affil[4]{\small\it Department of Mathematics, University of Macau}
\title{Global Well-Posedness of First-Order Mean Field Games and Master Equations with Nonlinear Dynamics}
\begin{document}
\maketitle

\begin{abstract}
This article presents the variant of the approach introduced in the recent work of Bensoussan, Wong, Yam and Yuan \cite{bensoussan2023theory} to the generic first-order mean field game problem. A major contribution here is the provision of new crucial {\it a priori} estimates, whose establishment is fundamentally different from the mentioned work since the associated forward-backward ordinary differential equation (FBODE) system is notably different. In addition, we require monotonicity conditions intimately on the coefficient functions but not on the Hamiltonians to handle their non-separable nature and nonlinear dynamics; as tackling Hamiltonians directly, it potentially dissolves much useful information. Compared with the assumptions used in \cite{bensoussan2023theory}, we introduce an additional requirement that the first-order derivative of the drift function in the measure variable cannot be too large relative to the convexity of the running cost function; this requirement only arises when the Hamiltonian is non-separable, and this phenomenon can also be seen in the existing literature. On the other hand, we require less here for the second-order differentiability of the coefficient functions in comparison to that in \cite{bensoussan2023theory}. Our approach involves first demonstrating the local existence of a solution over small time interval, followed by the provision of new crucial {\it a priori} estimates for the sensitivity of the backward equation with respect to the initial condition of forward dynamics; and finally, smoothly gluing the local solutions together to form a global solution. In addition, we establish the local and global existence and uniqueness of classical solutions for the mean field game and its master equation. Finally, to illustrate the effectiveness of our proposed general theory, we base on our theory to provide the resolution of a non-trivial non-linear-quadratic example with non-separable Hamiltonian and nonlinear dynamics which cannot yet be explained in the contemporary literature.
	
\end{abstract}
\begin{center}
\begin{minipage}{5.5in}
{\bf Mathematics Subject Classifications (2020)}: 60H30,\,60H10;\\ 
{\bf Key words}: Mean Field Game; Wasserstein Metric Space; Forward-Backward Differential Systems; Decoupling Field; Jacobian Flows; Master Equations; Non-linear-quadratic Examples.
\end{minipage}
\end{center}

\tableofcontents

\section{Introduction}


Since the last two decades, mean field games and control problems have become one of the popular research in mathematics and engineering. The topics involved the study of systems with a large number of interactive agents; in particular, as the number of agents approaches to infinity, a mean field approximation becomes increasingly applicable. This methodology, first introduced by Huang-Malham\'e-Caines \cite{huang2006large} under the name ``large population stochastic dynamic games'' and Larsy-Lions \cite{LL07} under another name ``mean field games'' independently, facilitates the analysis and comprehension of these complex systems composed of large agent populations.
In recent years, the theory has been enriched with substantial advancements, fueled by both theoretical progress and practical applications. The pioneering monographs by Cardaliaguet, Delarue, Lasry and Lions~\cite{cardaliaguet2019master}, Carmona and Delarue~\cite{carmona2018probabilistic}, Gomes, Pimentel and Voskanyan~\cite{gomes2016regularity}, and Bensoussan, Frehse and Yam \cite{bensoussan2013mean} have made remarkable strides in elucidating the fundamental problem formulation and optimization procedure.
For a thorough understanding of the stochastic (Pontryagin) maximum principle, McKean-Vlasov dynamics, and forward-backward stochastic differential equations (FBSDEs), as well as their relationship to mean field games and control problems, interested readers may refer to the following key resources \cite{andersson2011maximum,bensoussan2020control,bensoussan2016linear,homan2023game,homan2023control,bensoussan2015well,
buckdahn2011general,buckdahn2009meanLim,buckdahn2017meanSDC,buckdahn2009mean,buckdahn2017mean,carmona2015forward,
chassagneux2022probabilistic,cosso2023optimal,cosso2019zero,djete2022mckean,huang2006large,pardoux2005backward,pham2016linear,PHW,pham2018bellman}.
To delve deeper into the dynamic programming principle, Bellman equations, master equations and their connections with mean field games and control problems, the following references can provide invaluable insights \cite{bensoussan2015master,bensoussan2019control,cardaliaguet2022splitting,cosso2021master,gangbo2020global,gangbo2022mean,
gangbo2015existence,lauriere2014dynamic,li2012stochastic}.

One representative model of the mean field games problem is the linear quadratic setting, which has been investigated by Bensoussan-Sung-Yam-Yung \cite{bensoussan2016linear}, Carmona-Delarue-Lachapelle, \cite{carmona2013control}, Pham \cite{pham2016linear}, Yong \cite{yong2013linear}. For the problem with a more complicated structure, there are three mainstream approaches in the literature. The first one is to solve the forward-backward Fokker-Planck (FP) and Hamilton-Jacobi-Bellman (HJB) system. Cardaliaguet \cite{C15} and Graber \cite{G14} studied the well-posedness of weak solution of the first-order (in the absence of Brownian noise) FP-HJB system, with various structures of the Hamiltonian. Subsequently, Cardaliaguet-Graber \cite{CG15} generalized their findings under more general conditions. 
For the second-order case (with only individual Brownian noise), when there is a nonlocal smoothing coupling, Lasry-Lions \cite{LL07} showed the classical well-posedness of the FP-HJB system under the Lasry-Lions monotonicity. Porretta \cite{P15} showed the well-posedness of weak solution for more general cost functionals. All these aforementioned articles are working on separable Hamiltonians and on a torus. Further, Ambrose \cite{A18} established the well-posedness of strong solution to the FP-HJB system on a torus with a non-separable Hamiltonian, provided that the initial and terminal data or the Hamiltonian is small.

The second approach gets a popularity recently, which is about to solve an infinite-dimensional PDE, namely the {\it master equation}. This idea was first proposed by P.-L. Lions in the lecture \cite{L14} (see also Cardaliaguet \cite{C13}, and the derivations in Bensoussan-Frehse-Yam \cite{bensoussan2017interpretation,bensoussan2015master},  Carmona-Delarue \cite{carmona2018probabilistic}), and then it gained significant attention from scholars. For the first-order master equation (without Brownian noise), a local (in time) classical solution on a torus was established by Gangbo-{\'S}wi{\k{e}}ch \cite{gangbo2015existence} for a quadratic Hamiltonian; Mayorga \cite{M20} extended the work to a generic separable Hamiltonian on $\mathbb{R}^{d}$. For the second-order master equation (with individual Brownian noise), Ambrose-{\'S}wi{\k{e}}ch \cite{AM23} constructed local solutions on a torus with a non-separable Hamiltonian locally depending on the measure argument. In Cardaliaguet-Cirant-Porretta \cite{cardaliaguet2022splitting}, local solutions were constructed on $\mathbb{R}^{d}$ with a non-separable Hamiltonian that also involves common noise via a novel splitting method. Buckdahn-Li-Peng-Rainer \cite{buckdahn2017mean} utilized probabilistic techniques to study the global well-posedness of a linear master equation. In our knowledge, up to the present understanding, in order to extend to the global well-posedness of a generic problem, certain monotonicity assumptions are required on the Hamiltonian. The first type of monotonicity, known as {\it Lasry-Lions monotonicity} (LLM) as mentioned above, was introduced in Lasry-Lions \cite{LL07}. Under this condition, Cardaliaguet-Delarue-Lasry-Lions \cite{cardaliaguet2019master} established the global well-posedness of the master equation on a torus with a generic separable Hamiltonian and a common noise using a PDE approach and H\"older estimates. Chassagneux-Crisan-Delarue \cite{chassagneux2022probabilistic} built upon the LLM, utilizing a probabilistic approach and the associated FBSDEs, to establish the global well-posedness of the master equation on $\mathbb{R}^{d}$ with a separable Hamiltonian. Additionally, the global well-posedness of different senses of weak solutions, with a separable Hamiltonian and the presence of a common noise, were established in Mou-Zhang \cite{mou2019wellposedness} (on $\mathbb{R}^{d}$), Bertucci \cite{B21} (on a torus) and Cardaliaguet-Souganidis \cite{CS21} (on $\mathbb{R}^{d}$, without individual noise, still, there is a common noise). Also, see the LLM for master equations in mean field type control problems in Bensoussan-Tai-Yam \cite{homan2023control}. Finally, two examples in Cecchin-Pra-Fischer-Pelino \cite{CPFP19} (two-state model) and Delarue-Tchuendom \cite{DT20} (linear quadratic model) illustrated the non-unique existence of Nash equilibrium in the absence of LLM. Another monotonicity assumption more recently received is called the {\it displacement monotonicity} (DM), see \cite{gangbo2022mean}. The concept of DM in the context of mean field games was initially introduced as ``weak monotonicity'' by Ahuja \cite{A16} which employed this concept to examine the well-posedness of mean field games with common noise using simple cost functions via the FBSDE approach. The DM was first used in Gangbo-M{\'e}sz{\'a}ros \cite{gangbo2020global} for the global well-posedness of first-order master equation with separable Hamiltonians on $\mathbb{R}^{d}$, then in Gangbo-M{\'e}sz{\'a}ros-Mou-Zhang \cite{gangbo2022mean} for second-order master equations with a presence of common noise and non-separable Hamiltonian with bounded derivatives up to fifth order. Also, see the role of DM for master equations in mean field type control problems in Bensoussan-Graber-Yam \cite{bensoussan2020control}, Bensoussan-Tai-Yam \cite{homan2023control} and also Bensoussan-Wong-Yam-Yuan \cite{bensoussan2023theory}. One can read Section 2.1 in \cite{homan2023game}, Remark 3.1 in \cite{homan2023control}, Section 5.3 in \cite{bensoussan2023theory} and Graber-M{\'e}sz{\'a}ros \cite{GM22mono} for more discussion about these two monotonicity assumptions. Also, Cecchin-Delarue \cite{CD22weak} established the well-posedness of the weak solution to the master equation on a torus for a separable Hamiltonian, without imposing any monotonicity on the coefficients.

The third approach of resolving mean field games is to employ probabilistic tools. Carmona-Lacker \cite{CL15} utilized this approach by first weakly solving the optimal control problem and then using the modified Kakutani's theorem to find the equilibrium measure. Carmona-Delarue-Lacker \cite{CDL16} found the weak equilibria of mean field games subject to common noise by applying the Kakutani-Fan-Glicksberg fixed-point theorem for set-valued functions, as well as a discretization procedure for the common noise.  The weak equilibria can be shown to be strong under additional assumptions. We can also use the stochastic maximum principle to characterize the mean field games problem to the mean field (or McKean-Vlasov) forward-backward stochastic differential equations (FBSDEs). A major advantage of this approach is that the FBSDE is symbolically a finite-dimensional problem, as opposed to solving the master equation which is in-born infinite dimensional. The mean field SDEs were studied in Buckdahn-Li-Peng-Rainer \cite{buckdahn2017mean}. The study of mean field BSDEs was first conducted by Buckdahn-Djehiche-Li-Peng \cite{buckdahn2009meanLim} and Buckdahn-Li-Peng \cite{buckdahn2009mean}. The well-posedness of the coupled mean field FBSDEs was first studied in Carmona-Delarue \cite{carmona2013mean}. Later, Carmona-Delarue \cite{carmona2015forward} relaxed the assumptions and established the well-posedness of FBSDEs under the framework of linear forward dynamics and convex coefficients of the backward equation. Bensoussan-Yam-Zhang \cite{bensoussan2015well} also addressed the well-posedness of FBSDEs with another set of generic assumptions on the coefficients. We also refer to the recent article \cite{homan2023game} which resolved the mean field games problem under the small mean field effect. In the article, the authors provided a brand new set of estimates for establishing the globally well-posedness of the FBSDEs and hereto a new set of estimates for showing the regularity of the value function. The classical well-posedness of master equation was also provided in the paper. We also refer readers to \cite{cosso2023optimal,cosso2021master,djete2022mckean,lauriere2014dynamic,li2012stochastic,PHW,pham2018bellman} for the mean field type control problems.

In this paper, we analyze the problem with a generic dynamics in which the nonlinear drift function $f(x,\mu,\alpha)$ depends not only on the control $\alpha$, but also on the state variable $x$ and mean-field measure $\mu$. Contrary to the simpler cases considered in existing literature, such as $f(x,\mu,\alpha)\equiv\alpha$, our model assumes that drift function can be at most linear-growth but quite arbitrarily non-linear. In addition, we consider a running cost function $g(x,\mu,\alpha)$ that can be non-separable and of quadratic-growth in the entire space of $\R^{d_x}\times \mc{P}_2(\R^{d_x})$ in $(x,\mu)$, and thus the Hamiltonian $\displaystyle h(x,\mu,z)= \min_{\alpha} \left(f(x,\mu,\alpha)\cdot z  + g(x,\mu,\alpha)\right)$ can also be non-separable and of quadratic-growth in the entire space $\R^{d_x}\times \mc{P}_2(\R^{d_x})\times \R^{d_x}$ in $(x,\mu,z)$. In contrast, results in the existing literature mostly consider compact torus space $x\in\mathbb{T}$ with quite arbitrary Hamiltonian (see \cite{cardaliaguet2019master} for example), or only allows Hamiltonians of linear-growth but not of quadratic-growth at the far-end of the space of $\R^{d_x}\times \mc{P}_2(\R^{d_x})$ (see \cite{gangbo2022mean} for example).

More precisely, we assume that $f(x,\mu,\alpha)$ is Lipschitz continuous and grows at most linearly, while the running cost function $g(x,\mu,\alpha)$ is convex in the control $\alpha$ and satisfies a monotonicity condition. This monotonicity condition can be motivated by the positive definiteness of the Schur complement of the Hessian matrix of the Lagrangian in the lifted version and was proposed in our previous paper \cite{bensoussan2023theory}. Even though the calculus in the lifted variables provides useful insights, it creates additional technical difficulties in the mathematical analysis, and is not that effective in analyzing the problem with the generic drift rate; see Section 4 of \cite{bensoussan2023theory} for the detailed discussions. The proposed monotonicity conditions \eqref{positive_g_mu}, \eqref{positive_k_mu} and \eqref{positive_H_mu} are closely related to the Lasry-Lions monotonicity condition and displacement monotonicity condition; these conditions were first introduced in the works of \cite{cardaliaguet2019master,carmona2018probabilistic} and \cite{gangbo2022mean,gangbo2020global}, respectively, which we find to be interesting and important. For precise assumptions on the coefficient functions, refer to Section \ref{assumptions}, while Section \ref{subsec:dis} and Remark \ref{remark_h2} provide more elaborative discussion on the relationship and comparison between our convexity assumptions and other monotonicity conditions.

Under these assumptions, our main results include the global-in-time solvability of the generic mean field games, as well as the global-in-time existence, uniqueness, and classical regularity of the corresponding forward-backward ordinary differential equation (FBODE) system. Theorems \ref{GlobalSol} and \ref{Thm6_4} respectively cover existence and uniqueness of the solution to the FBODE system; while Theorem \ref{thm_master} and \ref{Unique_B} address the respective issue for the master equation.

To demonstrate the usefulness of our newly proposed general theory, we provide a resolution of a non-trivial non-linear-quadratic example with a non-separable Hamiltonian in Section \ref{sec:nonLQ}. This example is unlikely to be covered in the contemporary literature and serves as an illustration of the relevance of our proposed theory.

To solve the generic mean field games (see \eqref{SDE1}-\eqref{valfun}), we follow the approach used in our previous paper \cite{bensoussan2023theory}. Firstly, unlike the simpler cases where the optimal control can be solved explicitly from the first-order condition, in our generic setting of the drift and running cost functions, the optimal control cannot be solved explicitly at all. To still understand better about its behavior, we introduce a cone condition (see \eqref{c_k_0}), which is a property (not an assumption) that our constructed solution of the FBODE system can be shown to fulfill (see \eqref{eq_7_44}). This condition is incorporated with the assumption on the asymptotic behavior of second-order derivatives of the drift function (see \eqref{bdd_d2_f}), which is generic enough and yet guarantees the solvability of the optimal control via the first-order condition (see \eqref{first_order_condition}). To this end, we use a generalized version of the implicit function theorem (see the Appendix of \cite{bensoussan2023theory}) that includes measure arguments.
Secondly, we show that solving the generic mean field games (see \eqref{SDE1}-\eqref{valfun}) can be reduced to solving a FBODE system (see \eqref{FBODE}). 
Thirdly, we use fixed point arguments to prove the local-in-time existence, uniqueness, and regularity of solutions to the general FBODE system \eqref{fbodesystem}. This system is essentially the same as the FBODE system \eqref{FBODE}, except that it imposes a more general terminal data $p(x,\mu)$. The minimal lifespan (that can be guaranteed by the fixed point argument) of local solution depends mainly on the bound of the derivatives of the terminal data $p(x,\mu)$, as well as some other uniform-in-time constants related to the given drift and running cost functions; more discussions are given in Theorems \ref{Thm6_1} and \ref{Thm6_2}.
To obtain the global-in-time classical solution of the FBODE system \eqref{FBODE}, we paste together the local-in-time solutions. The key point is to obtain {\it a priori} uniform-in-time estimates on the derivatives of the backward equation $Z^{t,m}_s(x)$, since $Z^{t,m}_{t_i}(x)$, $i=1,2,...,N+1$ are the terminal data $p$ on the respective sub-interval $[t_{i-1},t_i]$. We can obtain our new crucial {\it a priori} estimates (see Theorem \ref{Crucial_Estimate}) under the monotonicity conditions \eqref{positive_g_mu}, \eqref{positive_k_mu}, and \eqref{positive_H_mu}. 
Using both the local-in-time results and the new crucial {\it a priori} estimates on the derivatives of the backward equation $Z^{t,m}_s(x)$, we can prove our global-in-time well-posedness claim. Most importantly, the current approach we propose can be easily extended to handle generic second-order mean field games and control problems in the presence of Brownian driving noise.

The main contribution of this work is the provision of new crucial {\it a priori} estimates, whose proofs differ from those given in \cite{bensoussan2023theory} as the associated forward-backward ordinary differential equation (FBODE) system is fundamentally different. These estimates are the key to guaranteeing the cone conditions and our global-in-time well-posedness.

Our approach has several direct consequences and advantages. Firstly, by using the push-forward operator $\ot$, we can transform the study of $\mu_s$ in the Wasserstein metric space to $X_s$ in the tangent space of Hilbert space $L^2_m$ at the measure $m$. This allows us to avoid the difficulties caused by the lack of vector space structure in the Wasserstein metric space. Secondly, the optimal control $\wh{\alpha}(x,\mu,z)$ is solved from the first-order condition $\p_\alpha f(x,\mu,\wh{\alpha})\cdot z+\p_\alpha g(x,\mu,\wh{\alpha})=0$. However, in our generic setting, the drift function $f$ can be non-linear in control, and when the magnitude of variable $z$ is large, we are not able to apply the implicit function theorem to guarantee the unique solvability of the optimal control from the first-order condition, even with the convexity in $\alpha$ of $g(x,\mu,\alpha)$. Nonetheless, the proposed ``cone condition'', which our constructed solution of the FBODE system fulfills, suggests that, for any given $(x,\mu)$, only those $z$'s in a bounded subset governed by $(x,\mu)$ are related to the solutions of FBODE. This allows us to apply the implicit function theorem in the related region while solving the FBODE. 
Thirdly, while deriving our new crucial \emph{a priori} estimates, we have to carefully deal with several terms (refer to Equation \eqref{eq_7_26_1}) that could be cancelled out in our previous paper \cite{bensoussan2023theory} on mean field type control problems, but this is no longer possible in the current work. To manage these terms, we employ a monotonicity condition related to both the running cost function and drift function (see Equation \eqref{positive_H_mu}). This condition is analogous to the displacement monotonicity condition proposed in \cite{gangbo2022mean}, and we only require our monotonicity condition being valid within a cone subset.
The monotonicity condition \eqref{positive_H_mu} can be directly deduced from the hypothesis (see \eqref{h3_eq_1}) that the first-order derivative of the drift function in the measure argument cannot be excessively large relative to the convexity of the running cost function. This requirement emerges only when the Hamiltonian is non-separable; for more details, refer to Remark \ref{remark_h2}. 
Finally, the solution of the FBODE system fulfills the ``cone condition'' on the whole time interval $[0,T]$ and satisfies the crucial \emph{a priori} estimates at the same time. Using our new crucial \emph{a priori} estimates of the derivatives of the $Z^{t,m}_s(x)$, we can obtain a uniform lower bound on the minimal time mesh size of local solutions. This allows us to construct a global solution over the whole time interval $[0,T]$ for any fixed large time $T$ by gluing these local solutions together.

The rest of this article is organized as follows.
In Section \ref{sec:ProblemSetting}, we introduce the Wasserstein space of measures, the push-forward operator and the formulation of first-order mean field game. Section \ref{sec:Derivatives} recalls various derivatives of functionals defined on the Wasserstein space.
In Section \ref{sec:assumptions}, we introduce the assumptions used in our results, as well as some direct consequences and useful properties implied by these assumptions. Furthermore, we also compare these assumptions with other assumptions used in the literature.
Section \ref{sec:master} introduces the HJ-FP system, master equation and forward-backward ordinary differential equation (FBODE) system, and explains why in order to solve the original first-order mean field game, it suffices to solve the FBODE system. 
In Section \ref{sec:local}, we prove the existence and regularity of local-in-time solutions of the FBODE system, and then establish the global-in-time solution of the FBODE system \eqref{FBODE} and master equation \eqref{Master_eq} in Section \ref{sec:global} by using some new crucial {\it a priori} estimates. We also establish the uniqueness in Section \ref{sec:unique}.
Finally, in Section \ref{sec:nonLQ}, we provide a non-trivial example of non-linear-quadratic mean field game with non-separable Hamiltonian and nonlinear dynamics that satisfies our assumptions in Theorem \ref{GlobalSol}.

\section{Generic First-Order Mean Field Games}\label{sec:ProblemSetting}

Let $d$ and $p$ be positive integers, and $\mathcal{P}_{p}(\mathbb{R}^{d})$ be the space of all probability measures on $\mathbb{R}^{d}$, each of which has a finite $p$-th order moment, equipped with the Wasserstein metric $W_{p}(\mu,\nu)$ defined by: 
\begin{equation} \label{eq_2_2}
W_{p}(\mu,\nu):=\bigg(\inf_{\pi\in\Pi(\mu,\nu)}\int_{\mathbb{R}^{d}\times \mathbb{R}^{d}}|\xi-\eta|^{p}d\pi(\xi,\eta)\bigg)^{\frac{1}{p}} , 
\end{equation}
where $\Pi(\mu,\nu)$ denotes the set of all joint probability measures
on $\mathbb{R}^{d}\times \mathbb{R}^{d}$ such that the marginals are the probability measures $\mu$ and $\nu$, respectively. Also define $\|\mu\|_p=\left(\int_{\R^{d}}|x|^p\,d\mu(x)\right)^{1/p}$. Clearly, $\mathcal{P}_{p'}(\mathbb{R}^d)\subset \mathcal{P}_{p}(\mathbb{R}^d)$ for any $1\leq p< p'<\infty$.

For any fixed $m \in \mathcal{P}_p(\mathbb{R}^d)$, we denote $L^{p,d_1,d_2}_m:= L^p_m(\mathbb{R}^{d_1};\mathbb{R}^{d_2})$, the set of all measurable maps $\Phi:\R^{d_1}\rightarrow \R^{d_2}$ such that $\int_{\mathbb{R}^{d_1}}|\Phi(x)|^pd m(x)<\infty$. We equip $L^{p,d_1,d_2}_m$ with a norm $\| \cdot \|_{L^{p,d_1,d_2}_m}$ or simply $\| \cdot \|_{L^{p,d}_m}$ when $d_1=d_2=d$:
\begin{align}\label{eq_2_3}
    \|X\|_{L^{p,d_1,d_2}_m}:= \bigg(\int_{\mathbb{R}^{d_1}}|X(x)|^pd m(x)\bigg)^{\frac{1}{p}},\ \forall\ X\in L^{p,d_1,d_2}_m;
\end{align} 
clearly, $L^{p,d_1,d_2}_m$ is a Banach space and $L^{p',d_1,d_2}_m\subset L^{p,d_1,d_2}_m$ for any $1\leq p<p'<\infty$.
For $p=2$, we further equip $L^{2,d_1,d_2}_m$ with an inner product:
\begin{align}
    \langle X,Y\rangle_{L^{2,d_1,d_2}_m}:= \int_{\mathbb{R}^{d_1}} X(x)\cdot Y(x)d m(x),\ \forall\ X,Y\in L^{2,d_1,d_2}_m;
\end{align}
clearly, $L^{2,d_1,d_2}_m$ is a Hilbert space. In addition, we can regard the space $L^{2,d_1,d_2}_m$ as a tangent space attached to a point $m$ in the Wasserstein metric space $\mathcal{P}_2(\mathbb{R}^d)$, and we use subscript $m$ to indicate this nature.
\begin{definition}\label{defot}
For any given probability measure $m\in\mathcal{P}_2(\mathbb{R}^d)$ and measurable map $X\in L^{2,d}_m$, define another probability measure $ X \ot m  \in \mathcal{P}_2(\mathbb{R}^d)$ so that for every measurable function $\phi:\mathbb{R}^d\to\mathbb{R}$ such that ${\displaystyle \sup_{x\in\R^d}} \dfrac{|\phi(x)|}{1+|x|^2}<\infty$, 
\begin{align}
    \int_{\R^d} \phi(x)d( X \ot m )(x)=\int_{\R^d} \phi(X(x))dm(x).
\end{align}
\end{definition}
This new probability measure $ X \ot m$ is actually the push-forward measure of $m$ under the map $X(\cdot)$.

In this article, we denote the state by $x\in\mathbb{R}^{d_x}$ and the control by $\alpha\in\mathbb{R}^{d_\alpha}$,
also consider continuous coefficient and cost functions:
\begin{align*}
&f(x,\mu,\alpha)=(f_1,...,f_{d_x})(x,\mu,\alpha): \mathbb{R}^{d_x}\times\mathcal{P}_2(\mathbb{R}^{d_x})\times\mathbb{R}^{d_\alpha}\to\mathbb{R}^{d_x};\\
&g(x,\mu,\alpha): \mathbb{R}^{d_x}\times\mathcal{P}_2(\mathbb{R}^{d_x})\times\mathbb{R}^{d_\alpha} \to\mathbb{R};\\ &k(x,\mu) : \mathbb{R}^{d_x}\times\mathcal{P}_2(\mathbb{R}^{d_x})\to\mathbb{R};
\end{align*}
where these differentiable, in $x$ and $\alpha$, functions can be allowed to depend on time, but we here omit this dependence to avoid unnecessary technicalities; yet all the following discussions remain valid with time dependent coefficients and cost functions. Besides, $f$ plays the role as a drift function, $g$ is the running cost, and $k$ stands for the terminal cost functional. 
The argument $\mu$ is the place for the mean field term of the state. 

To describe the generic first-order mean field game, let us first introduce the notations for the dynamics and objective function as follows:  given an arbitrary process of probability measures $\wt m:=(\wt m_\tau)_{\tau\in[0,T]}$ with $\sup_{\tau\in[0,T]}\|\wt m_t\|_1<\infty$ and any control process $\alpha:=(\alpha_\tau)_{\tau\in[0,T]}$ with $\int_0^T |\alpha_\tau|^2 d\tau<\infty$, for any fixed $t\in[0,T]$ and $x\in\R^{d_x}$,
consider the following dynamics of controlled state $X^{t,\wt m,\alpha}_s(x)$:
\begin{align}\label{SDE1}
X^{t,\wt m,\alpha}_s(x) =&\ \ x+\int_t^s f\left(X^{t,\wt m,\alpha}_\tau(x),\wt m_\tau,\alpha_\tau\right)d\tau,\ s\in[t,T].
\end{align}
The existence and uniqueness of solutions to \eqref{SDE1} can be proven under standard Lipschitz and linear-growth conditions of $f$; moreover this solution satisfies $\sup_{s\in[t,T]}\left|X^{t,\wt m,\alpha}_s(x)\right|\leq C(1+|x|)<\infty$, where the constant $C$ depends on $T$. 
Now, we define the objective function 
\begin{align}\label{cost}
j(t,x,\wt m,\alpha) :=&\  \int^T_t  g(X^{t,\wt m,\alpha}_s(x),\wt m_s,\alpha_s)ds +k(X^{t,\wt m,\alpha}_T(x),\wt m_T),\text{ for }t\in[0,T],
\end{align}
and the optimal control process $\alpha^*:=(\alpha^*_s)_{s\in[t,T]}$ as 
\begin{align}\label{optimal_alpha}
 \alpha^*(t,x,\wt m):=\argmin_{\alpha\in\mathbb{A}} j(t,x,\wt m,\alpha),
\end{align}
where the set of all admissible controls is defined as $\mathbb{A}:=L^2([t,T];\R^{d_\alpha})$.

The main purpose of this work is to solve the generic first order mean field game, which can be stated as follows: for any given initial data $m_0\in\mathcal{P}_2(\R^{d_x})$ at the initial time $t=0$, we aim at finding the Nash equilibrium probability measure $\wh m:=(\wh m_s)_{s\in[0,T]}$ such that the corresponding optimal control process $\alpha^*(0,x,\wh m)$, obtained by minimizing the objective function $j$ under the dynamics \eqref{SDE1} with $\wt m_\tau:=\wh m_\tau$, will generate the same probability measure $\wh m$ via \eqref{SDE1} with $t=0$, namely
\begin{align}\label{optimal_probability}
\wh{m}_s=X^{0,\wh{m},*}_s\ot m_0,
\end{align}
where the solution map $X^{t,\wh{m},*}_s$ is defined by solving
\begin{align}\label{SDE_star}
X^{t,\wh m,*}_s(x) =&\ \ x+\int_t^s f\left(X^{t,\wh m,*}_\tau(x),\wh m_\tau,\alpha^*_\tau (t, x,\wh m)\right)d\tau,\ s\in[t,T].
\end{align}
Furthermore, we shall also solve for the corresponding value function
\begin{align}\label{valfun}
v(t,x):=j(t,x,\wh{m},\alpha^*(t,x,\wh m)),\text{ for any }(t,x)\in[0,T]\times \R^{d_x}.
\end{align} 
In our knowledge, our solution will provide a settlement of a standing problem in this one of the most popular research areas of mean field games; indeed, we shall study the well-posedness of the generic first-order mean field game \eqref{SDE1}-\eqref{valfun} under Assumptions $\bf{(a1)}$-$\bf{(a3)}$,
to be specified in Section \ref{assumptions}.

\section{Derivatives in Wasserstein Metric Space}\label{sec:Derivatives}
We first list a few useful definitions related to the derivatives of functions in measure arguments. 

\begin{definition}\label{first_LFD}
(Linear Functional Derivative \cite{carmona2018probabilistic}). A function $f: \mu\in \mathcal{P}_2(\mathbb{R}^{d})\mapsto f(\mu)\in \mathbb{R}$ is said to have a {\it linear functional derivative} if there exists a function
\begin{align*}
    \/{\delta f}{\delta \mu}:\mathcal{P}_2(\mathbb{R}^{d})\times \mathbb{R}^{d} \to  \mathbb{R}
\end{align*}
which is continuous with respect to the product topology of $\mathcal{P}_2(\R^d)$ and $\R^d$ so that the followings hold:
\begin{enumerate}
    \item [(i)]
there is another function $c: \mathcal{P}_2(\mathbb{R}^{d})\rightarrow [0,\infty)$ which is bounded on any bounded subsets of $\mathcal{P}_2(\R^d)$, such that
\begin{align}
\left| \/{\delta f}{\delta \mu}(\mu)(x)\right|\leq c(\mu)(1+|x|^2),\text{ for any }\mu\in\mathcal{P}_2(\R^d)\text{ and }x\in \R^d;
\end{align}
\item [(ii)] for all $\nu_1$, $\nu_2\in \mathcal{P}_2(\mathbb{R}^{d})$, it also holds that
\begin{align}\label{delta_mu_f}
    f(\nu_1)-f(\nu_2) = \int_0^1 \int_{\mathbb{R}^{d}}\/{\delta f}{\delta \mu}(\theta \nu_1+(1-\theta)\nu_2)(x)d(\nu_1-\nu_2)(x)d\theta.
\end{align}
\end{enumerate}
Note that $\/{\delta f}{\delta \mu}$ defined here is unique only up to an additive constant, and the following normalization condition is taken in the rest of this article:
\begin{align}\label{eq_3_5_n}
\int_{\mathbb{R}^{d}} \/{\delta f}{\delta \mu}(\mu)(x)d\mu(x)=0,
\end{align}
which in turn ensures the functional derivative of a constant function is always zero. 

\end{definition}

Let $\mathcal{H}$ and $\mathcal{H}'$ be two Hilbert spaces and $\mathcal{L}(\mathcal{H};\mathcal{H}')$ be the set of all bounded linear operators from $\mathcal{H}$ to $\mathcal{H}'$ equipped with the usual operator norm: 
\begin{align}
	\|F\|_{\mathcal{L}(\mathcal{H};\mathcal{H}')} := \sup_{X\in\mathcal{H},X\neq 0} \/{\|F(X)\|_\mathcal{H'}}{\|X\|_\mathcal{H}}.
\end{align}
\begin{definition}
(\textit{Fr\'echet differentiability}). A function $F: \mathcal{H}\to\mathcal{H'}$ is said to be \textit{Fr\'echet differentiable} at $X$ if there is a bounded linear operator $D_XF (X) \in \mathcal{L}(\mathcal{H};\mathcal{H}')$, such that for all $Y \in \mathcal{H}$,
\begin{align*}
	\lim_{\|Y\|_\mathcal{H}\to 0}\/{\|F (X+Y)-F (X)-D_XF (X)(Y)\|_\mathcal{H'}}{\|Y\|_\mathcal{H}} = 0.
\end{align*}
\end{definition}
Consider a probability space $\left(\Omega, \mathcal{F},\mathbb{P}\right)$ 
 and all its square-integrable random $\R^d$-vectors, namely
\begin{align*}
\mathcal{H}^d:=L^{2}(\Omega,\mathcal{F},\mathbb{P};\mathbb{R}^{d}),
\end{align*}
which is equipped with an inner product:
\begin{align}
    \langle X,Y\rangle_{\mathcal{H}^d}:= \int_{\Omega} X(\omega)\cdot Y(\omega)d\mathbb{P}(\omega),\ \text{for all}\ X,Y\in\mathcal{H}^d.
\end{align} 
\begin{definition}\label{def_L_diff}
({\it L-differentiability} at a $\mu_0\in \mathcal{P}_2(\mathbb{R}^{d}$) \cite{bensoussan2017interpretation,carmona2018probabilistic}). A function $f: \mu\in \mathcal{P}_2(\mathbb{R}^{d})\mapsto f(\mu)\in \mathbb{R}$ is said to be {\it $L$-differentiable} at a $\mu_0 \in \mathcal{P}_2(\mathbb{R}^{d})$ if there exists a $X_0\in\mathcal{H}^{d}$ with the law $\mu_0$, denoted by $\mathbb{L}_{X_0} = \mu_0$, such that its lifted version $F (X):= f(\mathbb{L}_X)$ is Fr\'echet differentiable at $X_0$; and we also call this Fr\'echet derivative $D_X F(X_0)(\cdot)\in\mathcal{L}(\mathcal{H}^d;\R)$ as the {\it $L$-derivative} of $f(\mu)$ at $\mu_0$, denote it by
$\partial_\mu f(\mu_0)$.
\end{definition}
Note that this {\it L-derivative} $\partial_\mu f(\mu_0)$ is uniquely defined, except $\mathbb{P}$-null set of points, in the sense that its definition is independent of the choice of $X_0$ corresponding to the same $\mu_0$; also see Proposition 5.24 in \cite{carmona2018probabilistic} and \cite{bensoussan2017interpretation} for details. 
Generally, the function $f$ is {\it $L$-differentiable} if it is {\it $L$-differentiable} at every $\mu\in \mathcal{P}_2(\mathbb{R}^{d})$, so that the {\it $L$-derivative} as a function denoted by $\partial_\mu f(\cdot)(\cdot):(\mu,x)\in \mathcal{P}_2(\mathbb{R}^{d})\times \R^d \mapsto \partial_\mu f(\mu)(x)\in\mathbb{R}^{d}$ is jointly measurable. Certainly, we also have
\begin{equation}
\partial_{\mu}f(\mathbb{L}_X)(X)=D_{X}F(X) \:\text{ a.s.}, \quad \forall X\,\in\mathcal{H}^{d}. \label{eq:4-110}
\end{equation}

Next, we have the following properties for linear functional derivatives and $L$-derivatives. 

\begin{prpstn}\label{prop_3_7}(Proposition 5.51 in \cite{carmona2018probabilistic}) 
Suppose that $f: \mu\in \mathcal{P}_2(\mathbb{R}^{d})\mapsto f(\mu)\in \mathbb{R}$ is {\it $L$-differentiable} on $\mathcal{P}_2(\R^d)$ and its Fr\'echet derivative $D_{X}F(\cdot)$ is uniformly Lipschitz in its argument. Also assume that its $L$-derivative $\partial_\mu f(\cdot)(\cdot):(\mu,x)\in \mathcal{P}_2(\R^d)\times \R^d\mapsto \partial_\mu f(\mu)(x)\in \R$ is jointly continuous with respect to the natural product topology. Then $f$ has a linear functional derivative $\/{\delta f}{\delta \mu}(\cdot)(\cdot)$ such that 
\begin{equation}
\partial_\mu f(\mu)(x)=\partial_x \/{\delta f}{\delta \mu}(\mu)(x),\text{ for any }   \mu\in \mathcal{P}_2(\mathbb{R}^{d}). \label{eq:4-111}
\end{equation}
\end{prpstn}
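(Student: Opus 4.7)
The strategy is to integrate the $L$-derivative along the linear interpolation $\mu_s:=(1-s)\mu+s\nu$ and to identify a potential that will serve as $\/{\delta f}{\delta\mu}$.

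First I set up a Bernoulli coupling: on a probability space supporting $X\sim\mu$, $Y\sim\nu$ independent and an independent $U\sim\mathrm{Unif}[0,1]$, define $Z_s:=X\mathbf{1}_{\{U>s\}}+Y\mathbf{1}_{\{U\le s\}}$, so that $\mathbb{L}_{Z_s}=\mu_s$. Writing $F(W):=f(\mathbb{L}_W)$ and using the Riesz identification $D_XF(W)(\,\cdot\,)=\langle\partial_\mu f(\mathbb{L}_W)(W),\cdot\rangle_{\mathcal{H}^d}$ together with the fundamental theorem of calculus, I get
\begin{equation*}
f(\mu_{s+h})-f(\mu_s)=\int_0^1 \mathbb{E}\bigl[\partial_\mu f(\mathbb{L}_{W_{\lambda,h}})(W_{\lambda,h})\cdot(Z_{s+h}-Z_s)\bigr]\,d\lambda,
\end{equation*}
with $W_{\lambda,h}:=(1-\lambda)Z_s+\lambda Z_{s+h}$. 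Since $Z_{s+h}-Z_s=(Y-X)\mathbf{1}_{\{s<U\le s+h\}}$ and on this event $W_{\lambda,h}=(1-\lambda)X+\lambda Y$, the independence of $U$ from $(X,Y)$ lets me factor out the probability $h$; passing $h\to 0^+$ using joint continuity of $\partial_\mu f$ (with $\mathbb{L}_{W_{\lambda,h}}\to\mu_s$ in $W_2$) and dominated convergence yields
\begin{equation*}
\tfrac{d}{ds}f(\mu_s)=\int_0^1 \mathbb{E}\bigl[\partial_\mu f(\mu_s)((1-\lambda)X+\lambda Y)\cdot(Y-X)\bigr]\,d\lambda.
\end{equation*}

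Next, and this is the main obstacle, I need to realise $y\mapsto\partial_\mu f(\mu)(y)$ as a gradient. The uniform Lipschitz bound on $D_XF$ transfers via the Riesz identification to a Lipschitz modulus for $\partial_\mu f(\mu)(\cdot)$, so by Rademacher's theorem its Jacobian exists almost everywhere on $\R^d$. The symmetry of this Jacobian---the precise property needed---is forced by the law-invariance of $F$: $F(T(X))=F(X)$ holds for every measure-preserving diffeomorphism $T$ of $\R^d$, and differentiating along a one-parameter family of such transformations shows that $\partial_\mu f(\mu)(\cdot)$ is $L^2_\mu$-orthogonal to every $\mu$-divergence-free vector field. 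The Helmholtz decomposition in $L^2_\mu$ then forces $\partial_\mu f(\mu)(\cdot)$ to be a gradient, and joint continuity combined with Lipschitz regularity upgrades this to the classical sense on $\R^d$. Consequently the line integral
\begin{equation*}
V(\mu)(x):=\int_0^1 \partial_\mu f(\mu)(tx)\cdot x\,dt
\end{equation*}
is path-independent and satisfies both $\partial_x V(\mu)(x)=\partial_\mu f(\mu)(x)$ and $V(\mu)(y)-V(\mu)(x)=\int_0^1 \partial_\mu f(\mu)((1-\lambda)x+\lambda y)\cdot(y-x)\,d\lambda$.

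Substituting this last identity into the first-variation formula gives
\begin{equation*}
\tfrac{d}{ds}f(\mu_s)=\mathbb{E}\bigl[V(\mu_s)(Y)-V(\mu_s)(X)\bigr]=\int_{\R^d} V(\mu_s)(y)\,d(\nu-\mu)(y),
\end{equation*}
and integrating over $s\in[0,1]$ produces exactly \eqref{delta_mu_f} with $V$ in the role of $\/{\delta f}{\delta\mu}$. Finally, I normalise by setting $\/{\delta f}{\delta\mu}(\mu)(x):=V(\mu)(x)-\int_{\R^d}V(\mu)(z)\,d\mu(z)$, which enforces \eqref{eq_3_5_n} without affecting the preceding identity since $\nu-\mu$ integrates any $\mu$-dependent constant to zero. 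The growth control in Definition \ref{first_LFD}(i) follows from the at-most-linear growth of $\partial_\mu f(\mu)(\cdot)$ implied by its Lipschitz regularity, and the gradient relation \eqref{eq:4-111} is immediate from $\partial_x V(\mu)(x)=\partial_\mu f(\mu)(x)$.
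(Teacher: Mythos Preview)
The paper does not supply its own proof; it simply cites Proposition~5.51 of \cite{carmona2018probabilistic} and points to \cite{bensoussan2017interpretation} for an alternative derivation. So there is no in-paper argument to compare against, and your proposal has to be judged on its own.

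Your Bernoulli-coupling computation of $\tfrac{d}{ds}f(\mu_s)$ is correct and elegant, and you have correctly isolated the real content: showing that $y\mapsto\partial_\mu f(\mu)(y)$ is a classical gradient on $\R^d$. The gap is in how you discharge that step. The chain ``law-invariance under $\mu$-preserving diffeomorphisms $\Rightarrow$ $L^2_\mu$-orthogonality to $\mu$-divergence-free fields $\Rightarrow$ Helmholtz $\Rightarrow$ gradient'' only goes through when $\mu$ has a smooth strictly positive Lebesgue density: then the flows of compactly supported $\mu$-divergence-free fields really are $\mu$-preserving, orthogonality in $L^2_\mu$ transfers to orthogonality in $L^2(dx)$ against all smooth divergence-free fields, the distributional curl of $\partial_\mu f(\mu)$ vanishes, and the Lipschitz bound then gives a $C^{1}$ potential on all of $\R^d$. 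For a general $\mu\in\mathcal{P}_2(\R^d)$---a Dirac mass, or anything not fully supported---the Helmholtz step yields at best membership in the $L^2_\mu$-closure of gradients, which carries no information off $\mathrm{supp}(\mu)$; your sentence ``joint continuity combined with Lipschitz regularity upgrades this to the classical sense on $\R^d$'' is an assertion, not an argument. The repair is to run your Helmholtz argument only for smooth fully supported $\mu$, then approximate an arbitrary $\mu$ by such $\mu_n\to\mu$ in $W_2$ and use the assumed joint continuity of $(\mu,x)\mapsto\partial_\mu f(\mu)(x)$ to pass to the limit in the line-integral definition of $V$ (path-independence survives uniform-on-compacts convergence). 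Without that density step, the path-independence you rely on for $V$ is unproved.
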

Also see \cite{bensoussan2017interpretation} for \eqref{eq:4-111} for an alternative derivation.
\begin{definition}
A function $f:\mu\in\mathcal{P}_2(\mathbb{R}^{d}) \mapsto f(\mu)\in \mathbb{R}$ is called regularly linear-functionally differentiable 
in $\mu\in\mathcal{P}_2(\mathbb{R}^{d})$ if\\
(i) $f$ has a linear functional derivative $\/{\delta f}{\delta \mu}(\mu)(x)$ which, for each $\mu \in \mathcal{P}_{2}(\mathbb{R}^{d})$, is differentiable in $x$;\\
(ii) $\partial_x\/{\delta f}{\delta \mu}(\cdot)(\cdot):\mathcal{P}_2(\R^d)\times \R^d\rightarrow \R^d$ is jointly continuous with respect to the natural product topology such that
\begin{equation}
\left|\partial_x\/{\delta f}{\delta \mu}(\mu)(x)\right|\leq c(\mu)(1+|x|),\label{eq:4-113}
\end{equation}
where $c(\mu)$ is another function in $\mu$ being bounded on any bounded subsets of $\mathcal{P}_{2}(\mathbb{R}^{d})$.
\end{definition}
Without the cause of ambiguity, we simply call $f$ to be regularly differentiable in $\mu\in\mathcal{P}_2(\mathbb{R}^{d})$. The derivative $\p_x\frac{\delta f}{\delta \mu}:\mathcal{P}_2(\R^d)\times \R^d\rightarrow \R^d$ is sometimes called the {\it intrinsic derivative}, see \cite{cardaliaguet2019master} for example.

\begin{prpstn}\label{Ldif} (Propositions 5.44 and 5.48 in \cite{carmona2018probabilistic})
Suppose that  $f:\mathcal{P}_2(\mathbb{R}^{d}) \to \mathbb{R}$ is regularly differentiable in $\mu\in\mathcal{P}_2(\mathbb{R}^{d})$. Then, $f$ is $L$-differentiable such that \eqref{eq:4-111} holds. Furthermore, for any $\mu\in\mathcal{P}_2(\R^d)$, and any sequence $\mu_n$ with $W_{2}(\mu_n,\mu)\rightarrow0$ as $n \rightarrow \infty$, we also have  
\begin{equation}\label{prop33}
\frac{f(\mu_n)-f(\mu)-\int_{\mathbb{R}^{d}}\/{\delta f}{\delta \mu}(\mu)(x) d(\mu_n-\mu)(x)}{W_{2}(\mu_n,\mu)}\rightarrow0.
\end{equation}
\end{prpstn}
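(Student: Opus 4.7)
The core identity to exploit is the linear functional derivative formula \eqref{delta_mu_f}: for any $\mu,\nu\in\mathcal{P}_2(\mathbb{R}^d)$,
\begin{equation*}
f(\nu)-f(\mu)=\int_0^1\int_{\mathbb{R}^d}\frac{\delta f}{\delta \mu}\bigl(\mu^\theta\bigr)(x)\,d(\nu-\mu)(x)\,d\theta,\qquad \mu^\theta:=\theta\nu+(1-\theta)\mu.
\end{equation*}
The plan is to push all the information about measure variations onto the spatial variable (where we do have a derivative by hypothesis), couple the measures optimally, and pass to the limit via dominated convergence.

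First I would prove the asymptotic expansion \eqref{prop33}. Fix $\mu$ and a sequence $\mu_n\to\mu$ in $W_2$, and let $\pi_n$ be an optimal coupling realising $W_2(\mu_n,\mu)^2=\int|\xi-\eta|^2\,d\pi_n(\xi,\eta)$, with marginals $\mu_n$ and $\mu$. Applying the identity above and then writing an integral against $\mu_n-\mu$ as an integral against $\pi_n$ of the difference $\Phi(\xi)-\Phi(\eta)$, I would use the fundamental theorem of calculus in $x$, which is legitimate since $\partial_x\tfrac{\delta f}{\delta\mu}(\mu^\theta_n)(\cdot)$ exists and is continuous, to obtain
\begin{equation*}
f(\mu_n)-f(\mu)=\int_0^1\!\int_0^1\!\int_{\mathbb{R}^{2d}}\partial_x\frac{\delta f}{\delta\mu}\bigl(\mu_n^\theta\bigr)\bigl(\eta+s(\xi-\eta)\bigr)\cdot(\xi-\eta)\,d\pi_n(\xi,\eta)\,ds\,d\theta,
\end{equation*}
where $\mu_n^\theta:=\theta\mu_n+(1-\theta)\mu$. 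Applying the same procedure to $\int\tfrac{\delta f}{\delta\mu}(\mu)(x)\,d(\mu_n-\mu)(x)$ and subtracting, the residual is
\begin{equation*}
R_n:=\int_0^1\!\int_0^1\!\int\Bigl[\partial_x\tfrac{\delta f}{\delta\mu}(\mu_n^\theta)-\partial_x\tfrac{\delta f}{\delta\mu}(\mu)\Bigr]\bigl(\eta+s(\xi-\eta)\bigr)\cdot(\xi-\eta)\,d\pi_n\,ds\,d\theta.
\end{equation*}
By Cauchy--Schwarz in $\pi_n$, $|R_n|\leq W_2(\mu_n,\mu)\cdot \bigl(\int_0^1\int_0^1\int|\partial_x\tfrac{\delta f}{\delta\mu}(\mu_n^\theta)(\cdot)-\partial_x\tfrac{\delta f}{\delta\mu}(\mu)(\cdot)|^2\,d\pi_n\,ds\,d\theta\bigr)^{1/2}$. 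Dividing by $W_2(\mu_n,\mu)$ reduces \eqref{prop33} to showing that this last $L^2$ quantity tends to $0$. Since $\mu_n^\theta\to\mu$ in $W_2$ uniformly in $\theta$, and since the interpolating point $\eta+s(\xi-\eta)$ remains in an $L^2(\pi_n)$-bounded set (because both $|\xi|^2$ and $|\eta|^2$ are uniformly $\pi_n$-integrable along a $W_2$-convergent sequence), the joint continuity of $\partial_x\tfrac{\delta f}{\delta\mu}$ together with the growth bound \eqref{eq:4-113} permits a standard dominated convergence argument.

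Next I would establish $L$-differentiability and formula \eqref{eq:4-111}. For fixed $\mu_0\in\mathcal{P}_2(\mathbb{R}^d)$, pick any $X_0\in\mathcal{H}^d$ with $\mathbb{L}_{X_0}=\mu_0$ and consider $F(X)=f(\mathbb{L}_X)$. For an arbitrary increment $Y\in\mathcal{H}^d$, the pair $(X_0,X_0+Y)$ is a coupling of $(\mu_0,\mathbb{L}_{X_0+Y})$, so the same chain of identities (now with this particular coupling instead of $\pi_n$, and $\mathbb{L}_{X_0+Y}$ in place of $\mu_n$) gives
\begin{equation*}
F(X_0+Y)-F(X_0)=\mathbb{E}\!\left[\int_0^1\!\int_0^1\partial_x\tfrac{\delta f}{\delta\mu}\bigl(\mu^\theta\bigr)(X_0+sY)\cdot Y\,d\theta\,ds\right],
\end{equation*}
with $\mu^\theta=\theta\,\mathbb{L}_{X_0+Y}+(1-\theta)\mu_0$. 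Cauchy--Schwarz in $(\Omega,\mathbb{P})$ and the same continuity/dominated-convergence argument as above (using $W_2(\mathbb{L}_{X_0+Y},\mu_0)\leq\|Y\|_{\mathcal{H}^d}\to 0$) show that the right-hand side equals $\mathbb{E}[\partial_x\tfrac{\delta f}{\delta\mu}(\mu_0)(X_0)\cdot Y]+o(\|Y\|_{\mathcal{H}^d})$. This identifies $D_XF(X_0)(Y)=\mathbb{E}[\partial_x\tfrac{\delta f}{\delta\mu}(\mu_0)(X_0)\cdot Y]$, which is a bounded linear functional of $Y$ by the growth bound \eqref{eq:4-113}, so $F$ is Fr\'echet differentiable at $X_0$ and by Definition \ref{def_L_diff} we read off \eqref{eq:4-111}.

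The main technical obstacle I anticipate is the uniform control of the interpolating point $\eta+s(\xi-\eta)$ (resp.\ $X_0+sY$) needed to invoke dominated convergence against the growth bound $c(\mu)(1+|x|)$; this is what forces the argument to run through the optimal coupling and $L^2$ estimates, rather than any pointwise control. Once this uniform $L^2$-domination is in place, the joint continuity of $\partial_x\tfrac{\delta f}{\delta\mu}$ closes both statements simultaneously.
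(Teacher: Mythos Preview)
Your proof is correct and follows essentially the standard argument from Carmona--Delarue \cite{carmona2018probabilistic}, which is precisely what the paper invokes here without giving its own proof. The only point I would flag as needing a bit more care in a full write-up is the dominated convergence step for $R_n$: since the coupling $\pi_n$ varies with $n$, one cannot apply dominated convergence on a fixed measure space directly, and the clean way to close this is via the uniform-integrability/tightness splitting you allude to (restrict to a large compact set where joint continuity gives uniform smallness, and use the uniform second-moment bound to control the tail), or equivalently via a Skorokhod-type representation.
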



Having laid down the basic principles, we can now shift our focus to the first-order mean field game described by equations \eqref{SDE1}-\eqref{valfun}. Within this context, the push-forward mapping $X(\cdot)$, that resides in $L^{2,d_x}_{m}$ and is governed by \eqref{FBODE}, is deterministic. This deterministic nature is due to the absence of any additional randomness, with the only source of variability being carried over from the initial data. The present approach is readily extended to the second-order cases with the presence of Brownian noise, which is similar to our previous work of \cite{bensoussan2023theory}.

%

\section{Assumptions and Preliminary Results}\label{sec:assumptions}
\subsection{Assumptions}\label{assumptions}

We aim at solving the first-order mean field game \eqref{SDE1}-\eqref{valfun} under the following assumptions.\\
$\bf{(a1)}$ The drift function $f:(x,\mu,\alpha)\in \R^{d_x}\times \mathcal{P}_{2}(\mathbb{R}^{d_x})\times \R^{d_\alpha}\mapsto \R^{d_x}$ is differentiable in $x\in \R^{d_x}$ and $\alpha\in \R^{d_\alpha}$; and $f(x,\mu,\alpha)$, $\p_x f(x,\mu,\alpha)$ and $\p_\alpha f(x,\mu,\alpha)$ are also regularly differentiable in $\mu\in\mathcal{P}_2(\R^{d_x})$, as well as all of the following derivatives exist and are jointly continuous in their corresponding arguments, and they also satisfy the following estimates, respectively:\\ 
$(i)$ For any $\xi=(\xi_1,...,\xi_{d_x})\in\R^{d_x}$,\\ 
\begin{align}\label{positive_f}
&\sum_{i=1}^{d_x}\sum_{j=1}^{d_x}\bigg(\xi_i \p_\alpha f_i(x,\mu,\alpha)\bigg)\cdot \bigg(\xi_j \p_\alpha f_j(x,\mu,\alpha)\bigg) \geq \lambda_f|\xi|^2,
\end{align}
where $f_i$ is the $i$-th component of $f$.\\
$(ii)$ Boundedness on first-order derivatives:  
\begin{align}\no
&\sup_{(x,\mu,\alpha)\in \R^{d_x}\times \mathcal{P}_{2}(\mathbb{R}^{d_x})\times \R^{d_\alpha}}
\|\p_\alpha f(x,\mu,\alpha)\|_{\mathcal{L}(\R^{d_\alpha};\R^{d_x})}\leq \Lambda_1;\\\no 
\ &\sup_{(x,\mu,\alpha,\wt{x})\in \R^{d_x}\times \mathcal{P}_{2}(\mathbb{R}^{d_x})\times \R^{d_\alpha}\times\R^{d_x}}\|\p_\mu f(x,\mu,\alpha)(\wt{x})\|_{\mathcal{L}(\R^{d_x};\R^{d_x})}\leq \Lambda_2;\\
\label{bdd_d1_f}&\sup_{(x,\mu,\alpha)\in \R^{d_x}\times \mathcal{P}_{2}(\mathbb{R}^{d_x})\times \R^{d_\alpha}}\|\p_x f(x,\mu,\alpha)\|_{\mathcal{L}(\R^{d_x};\R^{d_x})}\leq \Lambda_3;
\end{align}
also define $\Lambda_f:=\max\{\Lambda_1,\Lambda_2,\Lambda_3\}$.\\ 
$(iii)$ Boundedness on second-order derivatives:
\begin{align}\no
&\sup_{(x,\mu,\alpha,\wt{x})\in \R^{d_x}\times \mathcal{P}_{2}(\mathbb{R}^{d_x})\times \R^{d_\alpha}\times\R^{d_x}}
\Big\{\|\p_\alpha\p_\alpha f(x,\mu,\alpha)\|_{\mathcal{L}(\R^{d_\alpha}\times \R^{d_\alpha};\R^{d_x})}\\
\no&\ \ \ \ \ \ \ \ \ \ \ \ \ \ \ \ \ \ \ \ \ \ \ \ \ \ \ \ \ \ \ \ \ \ \ \ \ \ \ \ \vee\|\p_\alpha\p_x f(x,\mu,\alpha)\|_{\mathcal{L}(\R^{d_x}\times \R^{d_\alpha};\R^{d_x})}\\
\no&\ \ \ \ \ \ \ \ \ \ \ \ \ \ \ \ \ \ \ \ \ \ \ \ \ \ \ \ \ \ \ \ \ \ \ \ \ \ \ \ \vee\|\p_\mu\p_\alpha f(x,\mu,\alpha)(\wt{x})\|_{\mathcal{L}(\R^{d_\alpha}\times \R^{d_x};\R^{d_x})}\\
\no&\ \ \ \ \ \ \ \ \ \ \ \ \ \ \ \ \ \ \ \ \ \ \ \ \ \ \ \ \ \ \ \ \ \ \ \ \ \ \ \ \vee\|\p_x\p_x f(x,\mu,\alpha)\|_{\mathcal{L}(\R^{d_x}\times \R^{d_x};\R^{d_x})}
\\
\label{bdd_d2_f}&\ \ \ \ \ \ \ \ \ \ \ \ \ \ \ \ \ \ \ \ \ \ \ \ \ \ \ \ \ \ \ \ \ \ \ \ \ \ \ \ \vee\|\p_\mu \p_xf(x,\mu,\alpha)(\wt{x})\|_{\mathcal{L}(\R^{d_x}\times \R^{d_x};\R^{d_x})}
\Big\}\cdot (1+|x|+\|\mu\|_1)\leq \wb{l}_f.
\end{align}
$(iv)$ The above second-order derivatives are jointly Lipschitz continuous in their corresponding arguments with Lipschitz constant decaying at the proximity of infinity: for any $x,\,x',\,\wt{x},\,\wt x'\in\R^{d_x}$, $\alpha,\,\alpha'\in\R^{d_\alpha}$, $\mu,\,\mu'\in\mc{P}_2(\R^{d_x})$,
\begin{align}\no
a)\ \ &\big\|\p_\mu\p_x f(x,\mu,\alpha)(\wt{x})-\p_\mu\p_x f(x',\mu',\alpha')(\wt{x}')\big\|_{\mathcal{L}(\R^{d_x}\times \R^{d_x};\R^{d_x})}\\
\leq &\frac{\Lip_f}{1+\max\{|x|,|x'|\}+\max\{\|\mu\|_1,\|\mu'\|_1\}}\Big(|x-x'|+W_2(\mu,\mu')+|\alpha-\alpha'|+|\wt{x}-\wt{x}'|\Big),\\\no 
b)\ \ &\big\|\p_\alpha\p_\alpha f(x,\mu,\alpha)-\p_\alpha\p_\alpha f(x',\mu',\alpha')\big\|_{\mathcal{L}(\R^{d_\alpha}\times \R^{d_\alpha};\R^{d_x})}\\
\leq &\frac{\Lip_f}{1+\max\{|x|,|x'|\}+\max\{\|\mu\|_1,\|\mu'\|_1\}}\Big(|x-x'|+W_2(\mu,\mu')+|\alpha-\alpha'|\Big),
\end{align}
and so are for $\p_\mu\p_\alpha f(x,\mu,\alpha)(\wt{x})$, $\p_\alpha\p_x f(x,\mu,\alpha)$ and $\p_x\p_x f(x,\mu,\alpha)$.\\
Here $\lambda_f$, $\Lambda_1$, $\Lambda_2$, $\Lambda_3$, $\wb{l}_f$ and $\Lip_f$ are some positive finite constants, and $\|\cdot\|_{\mathcal{L}(\mathcal{A};\mathcal{B})}$ is the operator norm of a linear mapping from $\mathcal{A}$ to $\mathcal{B}$.\\
$\bf{(a2)}$ The running cost function $g:(x,\mu,\alpha)\in \R^{d_x}\times \mathcal{P}_{2}(\mathbb{R}^{d_x})\times \R^{d_\alpha}\mapsto \R$ is differentiable in $x\in \R^{d_x}$ and $\alpha\in \R^{d_\alpha}$; and $g(x,\mu,\alpha)$, $\p_x g(x,\mu,\alpha)$ and $\p_\alpha g(x,\mu,\alpha)$ are regularly differentiable in $\mu\in\mathcal{P}_2(\R^{d_x})$, as well as 
all the second-order derivatives below are jointly Lipschitz continuous in their respective arguments, and they also satisfy the following estimates: \\
$(i)$ For all $\eta\in\R^{d_\alpha}$, 
\begin{align}
\label{positive_g_alpha}
\eta^\top\p_\alpha\p_\alpha g(x,\mu,\alpha)\eta\geq \lambda_g|\eta|^2.
\end{align}
$(ii)$ For all $\xi\in\R^{d_x}$ and any $\mu,\,\mu'\in\mathcal{P}_2(\R^{d_x})$,
\begin{align}\label{positive_g_x}
& a)\ \xi^\top\p_x\p_x g(x,\mu,\alpha)\xi\geq \lambda_{g}|\xi|^2>0,\\\label{positive_g_mu}
& b)\ \int_{\R^{d_x}} \Big(g(x,\mu',\alpha)-g(x,\mu,\alpha)\Big)d\big(\mu'-\mu\big)(x)\geq -l_g \left(\int_{\R} yd(\mu-\mu')(y)\right)^2 \text{ with }l_g<\lambda_g.
\end{align}\normalsize
\\$(iii)$ We have the following bounds on the second-order derivatives of $g$:
\begin{align}\no
\ a)\ &\sup_{(x,\mu,\alpha,\wt{x})\in \R^{d_x}\times \mathcal{P}_{2}(\mathbb{R}^{d_x})\times \R^{d_\alpha}\times\R^{d_x}}
\Big\{\|\p_\alpha\p_\alpha g(x,\mu,\alpha)\|_{\mathcal{L}(\R^{d_\alpha}\times \R^{d_\alpha};\R)}\vee\|\p_\mu \p_xg(x,\mu,\alpha)(\wt{x})\|_{\mathcal{L}(\R^{d_x}\times \R^{d_x};\R)}\\
\label{bdd_d2_g_1}&\ \ \ \ \ \ \ \ \ \ \ \ \ \ \ \ \ \ \ \ \ \ \ \ \ \ \ \ \ \ \ \ \ \ \ \ \ \ \ \ \vee\|\p_x\p_x g(x,\mu,\alpha)\|_{\mathcal{L}(\R^{d_x}\times \R^{d_x};\R)}\Big\}\leq \Lambda_g,\\
\label{bdd_d2_g_2} b)\ &\sup_{(x,\mu,\alpha,\wt{x})\in \R^{d_x}\times \mathcal{P}_{2}(\mathbb{R}^{d_x})\times \R^{d_\alpha}\times\R^{d_x}}
\Big\{\|\p_x\p_\alpha g(x,\mu,\alpha)\|_{\mathcal{L}(\R^{d_\alpha}\times \R^{d_x};\R)}\vee\|\p_\mu \p_\alpha g(x,\mu,\alpha)(\wt{x})\|_{\mathcal{L}(\R^{d_\alpha}\times \R^{d_x};\R)}\Big\}\leq \wb{l}_g.
\end{align}
Here $\lambda_g$, $\Lambda_g$, $\wb{l}_g$, $\lambda_1$ and $\lambda_2$ are some positive finite constants, and $l_{g}$ is a constant which can be negative.\\
$\bf{(a3)}$ The terminal cost function $k:(x,\mu)\in \R^{d_x}\times \mathcal{P}_{2}(\mathbb{R}^{d_x})\mapsto \R$ is differentiable in $x\in \R^{d_x}$, and $k(x,\mu)$ and $\p_x k(x,\mu)$ are regularly differentiable in $\mu\in\mathcal{P}_2(\R^{d_x})$, as well as all 
the second-order derivatives below are jointly Lipschitz continuous in their respective arguments; also these derivatives satisfy the following estimates: 
\\$(i)$ For all $\xi\in\R^{d_x}$ and any $\mu,\,\mu'\in\mathcal{P}_2(\R^{d_x})$,
\begin{align}\label{positive_k}
& a)\ \xi^\top\p_x\p_x k(x,\mu)\xi\geq \lambda_{k}|\xi|^2>0,\\\label{positive_k_mu}
& b)\ \int_{\R^{d_x}} \Big(k(x,\mu')-k(x,\mu)\Big)d\big(\mu'-\mu\big)(x)\geq -l_k \left(\int_{\R} yd(\mu-\mu')(y)\right)^2 \text{ with }l_k<\lambda_k.
\end{align}
\\$(ii)$ We have the following bounds on the second-order derivatives of $k$:
\begin{align}
\label{bdd_d2_k_1}\ &\sup_{(x,\mu,\wt{x})\in \R^{d_x}\times \mathcal{P}_{2}(\mathbb{R}^{d_x})}
\Big\{\|\p_\mu \p_xk(x,\mu)(\wt{x})\|_{\mathcal{L}(\R^{d_x}\times \R^{d_x};\R)}\vee\|\p_x\p_x k(x,\mu)\|_{\mathcal{L}(\R^{d_x}\times \R^{d_x};\R)}\Big\}\leq \Lambda_k.
\end{align}
Here $\lambda_k$ and $\Lambda_k$ are some positive finite constants, and $l_k$ is a constant which can be negative.
\begin{remark}
Compared with the assumptions on coefficient functions $f$, $g$ and $k$ in our previous paper \cite{bensoussan2023theory}, we do not require the existence of $\p_\mu\p_\mu f(x,\mu,\alpha)$, $\p_\mu\p_\mu g(x,\mu,\alpha)$ and $\p_\mu\p_\mu k(x,\mu)$ and their bounds. This is due to the absence of the terms $\p_\mu f(x,\mu,\alpha)$ and $\p_\mu g(x,\mu,\alpha)$ on the right-hand side of the FBODE system \eqref{fbodesystem} in comparison with (7.1) of \cite{bensoussan2023theory}. In terms of problem setting, this is due to the fact that the mean-field term in the underlying mean-field game problem represents the probability measure of all other agents' states, which is an exogenous measure and remains unchanged when we perturb the control variable in the proof of Pontryagin's maximum principle.
\end{remark}

\subsection{Properties of Coefficient Functions and the Optimal Control}\label{properties}
Under Assumptions $\bf{(a1)}$-$\bf{(a3)}$, we have the following useful properties which play a vital role in developing the general theory. All the proofs in this section are almost the same as that in Appendix of our previous paper \cite{bensoussan2023theory}, since these properties are only related to the coefficient functions themselves and the first-order condition, and are independent of the master equation or FBODE system \eqref{FBODE}. Therefore, we omit most details here.

Before delving into the properties of coefficient functions and controls, we first introduce a useful property regarding the Lipschitz continuity of functions defined on $\mathcal{P}_2(\R^{d_x})$. This property is used in deriving the properties of coefficient functions, controls, and solutions of the FBODE system \eqref{FBODE}.
\begin{prpstn}\label{A0} 
For a function $h:\mu\in\mathcal{P}_2(\R^{d_x})\mapsto h(\mu)\in\R^l$ (the dimension $l$ can be any positive integer such as $1$ or $d_x$) having a linear functional derivative $\frac{\delta}{\delta \mu}h(\mu)(\wt{x})$ which is also continuously differentiable in $\wt{x}$ and satisfies the linear growth of $\displaystyle\sup_{\mu\in\mathcal{P}_2(\R^{d_x})}\Big|\p_{\wt{x}} \frac{\delta}{\delta \mu}h(\mu)(\wt{x})\Big|\leq l_1+l_2|\wt{x}|$ for some positive constants $l_1$ and $l_2$. Then we have \begin{align}\label{eq_9_2}
|h(\mu)-h(\mu')|\leq  l_1W_1(\mu,\mu')+l_2\left( \|\mu\|_2+\|\mu'\|_2 \right) W_2(\mu,\mu').
\end{align}
Particularly, when $l_2=0$, we obtain
\begin{align}\label{eq_9_3}
|h(\mu)-h(\mu')|\leq  l_1  W_1(\mu,\mu').
\end{align}
\end{prpstn}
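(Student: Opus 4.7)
The plan is to exploit the integral representation of the linear functional derivative from Definition \ref{first_LFD}(ii). Writing $\mu_\theta := \theta\mu + (1-\theta)\mu'$ and $G_\theta(x) := \frac{\delta h}{\delta \mu}(\mu_\theta)(x)$, the identity \eqref{delta_mu_f} yields
\begin{equation*}
h(\mu) - h(\mu') = \int_0^1 \int_{\R^{d_x}} G_\theta(x) \, d(\mu - \mu')(x) \, d\theta,
\end{equation*}
and for each $\theta$ and any coupling $\pi \in \Pi(\mu, \mu')$ the inner integral can be rewritten as $\int \bigl[G_\theta(\xi) - G_\theta(\eta)\bigr] \, d\pi(\xi, \eta)$ since the marginals cancel the constant part.

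To estimate the integrand, apply the fundamental theorem of calculus to the segment $s \mapsto G_\theta(\eta + s(\xi - \eta))$, combined with the hypothesis $|\p_{\wt x} G_\theta(\wt x)| \leq l_1 + l_2 |\wt x|$ and the convex-combination bound $|\eta + s(\xi - \eta)| \leq (1-s)|\eta| + s|\xi|$ integrated over $s \in [0,1]$. This yields the pointwise estimate
\begin{equation*}
|G_\theta(\xi) - G_\theta(\eta)| \leq l_1 |\xi - \eta| + \tfrac{l_2}{2} |\xi - \eta|(|\xi| + |\eta|).
\end{equation*}
For the quadratic cross-term, Cauchy--Schwarz followed by Minkowski's inequality applied to the $L^2(\pi)$-norm of $|\xi| + |\eta|$ gives
\begin{equation*}
\int |\xi - \eta|(|\xi| + |\eta|) \, d\pi \leq \Bigl(\int |\xi - \eta|^2 \, d\pi\Bigr)^{1/2} (\|\mu\|_2 + \|\mu'\|_2) \leq W_2(\mu, \mu')\,(\|\mu\|_2 + \|\mu'\|_2),
\end{equation*}
upon choosing $\pi$ as the $W_2$-optimal plan. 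The linear contribution $l_1 \int |\xi - \eta| \, d\pi$, viewed against the $l_1$-Lipschitz part of $G_\theta$, is bounded by $l_1 W_1(\mu, \mu')$ via the Kantorovich--Rubinstein duality; specialising to $l_2 = 0$ makes $G_\theta$ globally $l_1$-Lipschitz and recovers the corollary \eqref{eq_9_3} at once.

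The main subtlety is that the linear and quadratic pieces naturally call for different transport plans — the $W_1$-optimal plan for the former and the $W_2$-optimal plan for the latter — and pooling them against a single coupling only yields the weaker inequality $l_1 W_2 + l_2(\|\mu\|_2 + \|\mu'\|_2) W_2$. The cleanest way to secure the sharper mixed bound \eqref{eq_9_2} is to treat the two contributions as independent Kantorovich-type estimates on the same linear functional $G_\theta \mapsto \int G_\theta \, d(\mu - \mu')$: the $l_1$-Lipschitz piece is handled by Kantorovich--Rubinstein against the $W_1$-optimal plan, while the linearly growing piece is handled by Cauchy--Schwarz against the $W_2$-optimal plan. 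Summing the two estimates and integrating over $\theta \in [0, 1]$ (using that the bounds are uniform in $\theta$ thanks to the $\sup_\mu$ in the hypothesis) then completes the proof of \eqref{eq_9_2}.
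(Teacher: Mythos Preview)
The paper itself omits the proof of this proposition, referring instead to the appendix of the companion work~\cite{bensoussan2023theory}, so there is no in-paper argument to compare against. Your overall strategy --- the integral representation via the linear functional derivative, rewriting $\int G_\theta\,d(\mu-\mu')$ through a coupling, and the fundamental-theorem-of-calculus bound $|G_\theta(\xi)-G_\theta(\eta)|\le l_1|\xi-\eta|+\tfrac{l_2}{2}(|\xi|+|\eta|)|\xi-\eta|$ --- is exactly the standard route, and it proves the corollary~\eqref{eq_9_3} (the case $l_2=0$) cleanly via Kantorovich--Rubinstein.

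The gap is in your last paragraph. Once you have passed to a \emph{fixed} coupling $\pi$ and applied the triangle inequality, the two pieces $l_1\!\int|\xi-\eta|\,d\pi$ and $\tfrac{l_2}{2}\!\int|\xi-\eta|(|\xi|+|\eta|)\,d\pi$ are tied to that same $\pi$; you cannot retroactively optimise the first over $W_1$-plans and the second over $W_2$-plans. Your proposed fix --- ``treat the two contributions as independent Kantorovich-type estimates on the same linear functional'' --- would only be legitimate if you first decomposed the \emph{function} $G_\theta$ itself as $G_\theta=A_\theta+B_\theta$ with $A_\theta$ genuinely $l_1$-Lipschitz and $|\nabla B_\theta(\wt x)|\le l_2|\wt x|$, so that $\int G_\theta\,d(\mu-\mu')=\int A_\theta\,d(\mu-\mu')+\int B_\theta\,d(\mu-\mu')$ splits \emph{before} any coupling is chosen. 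You have not constructed such a decomposition, and for $d_x\ge 2$ it is not clear one exists in general (the naive pointwise truncation of $\nabla G_\theta$ need not be curl-free). What your argument does deliver rigorously, by taking the single $W_2$-optimal plan throughout, is the slightly weaker estimate $|h(\mu)-h(\mu')|\le l_1 W_2(\mu,\mu')+\tfrac{l_2}{2}(\|\mu\|_2+\|\mu'\|_2)W_2(\mu,\mu')$; since $W_1\le W_2$, this is enough for every use the present paper makes of the proposition (only~\eqref{eq_9_3} is invoked explicitly), but it does not recover the sharper mixed $W_1/W_2$ form of~\eqref{eq_9_2} as written.
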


\begin{prpstn}\label{A1} 
The drift function $f(x,\mu,\alpha)$ is of linear-growth and Lipschitz continuous, that is, there exists a positive constant $L_f$, which can be taken as $L_f:=\max\{\Lambda_f,|f(0,\delta_0,0)|\}$\footnote{$\delta_0$ is Dirac measure at $0$.}, such that 
\begin{align}\label{ligf}
&|f(x,\mu,\alpha)|\leq L_f \left(1+|x|+ \|\mu\|_1+|\alpha| \right);\\\label{Lipf}
&\left|f(x,\mu,\alpha) - f(x',\mu',\alpha')\right|\leq L_f \big(|x-x'|+W_1(\mu,\mu')+|\alpha-\alpha'|\big).
\end{align} 
\end{prpstn}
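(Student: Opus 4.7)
The plan is to derive the Lipschitz bound first and then obtain the linear-growth estimate by comparing $f(x,\mu,\alpha)$ to the constant $f(0,\delta_0,0)$. Both bounds follow directly from the first-order derivative bounds in Assumption $\bf{(a1)}(ii)$ (collected in $\Lambda_f$) together with Proposition \ref{A0}, applied separately in each of the three arguments.

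For the Lipschitz estimate, I would handle the three variables one at a time. In the variables $x$ and $\alpha$, which live in vector spaces, I use the fundamental theorem of calculus along the segments joining $(x,\alpha)$ and $(x',\alpha')$, giving
\begin{align*}
|f(x,\mu,\alpha)-f(x',\mu,\alpha)| &\leq \Lambda_3 |x-x'|, \\
|f(x',\mu,\alpha)-f(x',\mu,\alpha')| &\leq \Lambda_1 |\alpha-\alpha'|,
\end{align*}
using the bounds on $\p_x f$ and $\p_\alpha f$ in \eqref{bdd_d1_f}. For the measure variable, I apply Proposition \ref{A0} to the mapping $\mu\mapsto f(x',\mu,\alpha')$ (with $x'$, $\alpha'$ held fixed). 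Under Assumption $\bf{(a1)}$, $f$ is regularly differentiable in $\mu$, and by Proposition \ref{prop_3_7} we have $\p_\mu f(x',\mu,\alpha')(\wt{x})=\p_{\wt{x}}\frac{\delta f}{\delta\mu}(x',\mu,\alpha')(\wt{x})$. The bound in \eqref{bdd_d1_f} therefore yields $|\p_{\wt{x}}\frac{\delta f}{\delta \mu}|\leq \Lambda_2$ uniformly in $\wt{x}$, so Proposition \ref{A0} applies with $l_1=\Lambda_2$ and $l_2=0$, producing $|f(x',\mu,\alpha')-f(x',\mu',\alpha')|\leq \Lambda_2 W_1(\mu,\mu')$. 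Summing the three estimates by the triangle inequality and bounding $\Lambda_1,\Lambda_2,\Lambda_3$ by $\Lambda_f\leq L_f$ gives \eqref{Lipf}.

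For the linear-growth estimate \eqref{ligf}, I write
\[
|f(x,\mu,\alpha)|\leq |f(0,\delta_0,0)|+|f(x,\mu,\alpha)-f(0,\delta_0,0)|,
\]
apply the Lipschitz estimate just obtained, and use the elementary identity $W_1(\mu,\delta_0)=\int_{\R^{d_x}}|y|\,d\mu(y)=\|\mu\|_1$, which follows because $\delta_0$-coupling is the unique coupling when one marginal is a point mass. This yields $|f(x,\mu,\alpha)|\leq |f(0,\delta_0,0)|+\Lambda_f(|x|+\|\mu\|_1+|\alpha|)\leq L_f(1+|x|+\|\mu\|_1+|\alpha|)$, since $L_f=\max\{\Lambda_f,|f(0,\delta_0,0)|\}$.

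No step here is genuinely difficult: the whole statement is bookkeeping around Proposition \ref{A0} and the first-order bounds in \eqref{bdd_d1_f}. The only point worth care is ensuring that Proposition \ref{A0} is being applied with $l_2=0$ (i.e.\ the $\p_{\wt{x}}\frac{\delta f}{\delta\mu}$ is uniformly bounded, not just of linear growth in $\wt{x}$), so that the estimate in $\mu$ comes out in $W_1$ rather than $W_2$; this is exactly what \eqref{bdd_d1_f} guarantees via the $\mathcal{L}(\R^{d_x};\R^{d_x})$-norm bound $\Lambda_2$ taken uniformly in $\wt{x}$.
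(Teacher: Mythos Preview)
Your proposal is correct and follows exactly the approach the paper has in mind: the paper omits the proof here, stating that it is ``almost the same as that in Appendix of our previous paper \cite{bensoussan2023theory}'' and relies only on the first-order derivative bounds and Proposition~\ref{A0}. Your decomposition into separate $x$, $\alpha$, and $\mu$ increments, the application of Proposition~\ref{A0} with $l_2=0$ to obtain the $W_1$-Lipschitz bound in $\mu$, and the deduction of linear growth from the Lipschitz estimate plus $W_1(\mu,\delta_0)=\|\mu\|_1$ are precisely the intended steps.
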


\begin{prpstn}\label{A2} 
(Bounding first-order derivatives) We have the following estimates:\\
\begin{align}
\label{Lipg_x}
&\left|\p_x g(x,\mu,\alpha) - \p_x g(x',\mu',\alpha')\right|\leq \max\{\Lambda_g,\wb{l}_g\} \left(W_1(\mu,\mu')+|x-x'|+|\alpha-\alpha'|\right);\\\label{Lipg_alpha}
&\left|\p_\alpha g(x,\mu,\alpha) - \p_\alpha g(x',\mu',\alpha')\right|\leq \max\{\Lambda_g,\wb{l}_g\} \left(W_1(\mu,\mu')+|x-x'|+|\alpha-\alpha'|\right);
\end{align}
\begin{align}\no
&\sup_{x\in\R^{d_x},\,\mu\in\mathcal{P}_2(\R^{d_x}),\,\alpha\in\R^{d_\alpha}}\frac{\left|\p_x g(x,\mu,\alpha)\right|}{1+|x|+ \|\mu\|_1+|\alpha|}
\vee\frac{\left|\p_\alpha g(x,\mu,\alpha)\right|}{1+|x|+ \|\mu\|_1+|\alpha|}\\\label{LGgDerivatives}
\leq&\ L_g:=\max\Big\{|\p_x g(0,\delta_0,0)|,|\p_\alpha g(0,\delta_0,0)|,\Lambda_g,\wb{l}_g\Big\};
\end{align}
and 
\begin{align}\label{LGkDerivatives}
\sup_{x\in\R^{d_x},\,\mu\in\mathcal{P}_2(\R^{d_x})}\frac{\left|\p_x k(x,\mu)\right|}{1+|x|+ \|\mu\|_1}\leq L_k:=\max\Big\{|\p_x k(0,\delta_0)|,\Lambda_k\Big\}.
\end{align}  
\end{prpstn}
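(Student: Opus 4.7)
The plan is to establish each of the four inequalities by splitting the relevant differences into one-variable increments, controlling the Euclidean increments by the fundamental theorem of calculus applied to the appropriate second-order derivatives of $g$ and $k$, and controlling the measure increment by appealing to Proposition~\ref{A0} with the intrinsic derivative playing the role of the ``slope''. All the relevant second-order derivatives exist, are bounded, and are jointly Lipschitz under Assumptions $\bf{(a2)}$ and $\bf{(a3)}$, so the structural ingredients are all in place; the argument is essentially identical to the analogous statement in the appendix of \cite{bensoussan2023theory}, and I only sketch the main moves.

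For \eqref{Lipg_x} I would write
\[
\p_x g(x,\mu,\alpha)-\p_x g(x',\mu',\alpha')
=\bigl[\p_x g(x,\mu,\alpha)-\p_x g(x',\mu,\alpha)\bigr]
+\bigl[\p_x g(x',\mu,\alpha)-\p_x g(x',\mu',\alpha)\bigr]
+\bigl[\p_x g(x',\mu',\alpha)-\p_x g(x',\mu',\alpha')\bigr],
\]
then bound the first and third brackets by the fundamental theorem of calculus using $\|\p_x\p_x g\|\leq \Lambda_g$ (from \eqref{bdd_d2_g_1}) and $\|\p_x\p_\alpha g\|\leq \bar l_g$ (from \eqref{bdd_d2_g_2}) respectively, producing contributions $\Lambda_g|x-x'|$ and $\bar l_g|\alpha-\alpha'|$. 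For the middle bracket, since $\p_x g(x',\cdot,\alpha)$ is regularly differentiable in the measure variable with intrinsic derivative $\p_\mu\p_x g(x',\cdot,\alpha)(\widetilde x)$ bounded by $\Lambda_g$ thanks to \eqref{bdd_d2_g_1}, Proposition~\ref{A0} (applied with $l_1=\Lambda_g$ and $l_2=0$) yields the $\Lambda_g W_1(\mu,\mu')$ contribution. Taking the maximum of these three constants gives \eqref{Lipg_x}. The proof of \eqref{Lipg_alpha} is completely parallel, except the role of \eqref{bdd_d2_g_1} and \eqref{bdd_d2_g_2} is interchanged when treating the $\alpha$-slot and the $x$-slot, so the common constant is again $\max\{\Lambda_g,\bar l_g\}$.

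For the linear-growth estimate \eqref{LGgDerivatives}, I would apply the already-established Lipschitz bound \eqref{Lipg_x} (and its analogue \eqref{Lipg_alpha}) between the generic point $(x,\mu,\alpha)$ and the reference point $(0,\delta_0,0)$, noting that $W_1(\mu,\delta_0)=\|\mu\|_1$. Combined with the triangle inequality and the obvious bounds $|\p_x g(0,\delta_0,0)|$ and $|\p_\alpha g(0,\delta_0,0)|$ at the base point, this yields the linear growth with constant $L_g=\max\{|\p_x g(0,\delta_0,0)|,|\p_\alpha g(0,\delta_0,0)|,\Lambda_g,\bar l_g\}$. The bound \eqref{LGkDerivatives} for $\p_x k$ is obtained in exactly the same way: one first proves the auxiliary Lipschitz estimate for $\p_x k$ with constant $\Lambda_k$ by using $\|\p_x\p_x k\|\leq \Lambda_k$ and $\|\p_\mu\p_x k\|\leq \Lambda_k$ from \eqref{bdd_d2_k_1} together with Proposition~\ref{A0}, then compares to the base point $(0,\delta_0)$.

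The only mildly delicate step is the application of Proposition~\ref{A0} to the measure slot, since one must verify that the relevant intrinsic derivative is indeed $\p_\mu\p_x g$ (resp.\ $\p_\mu\p_\alpha g$, $\p_\mu\p_x k$) and is uniformly bounded; this is exactly the content of \eqref{bdd_d2_g_1}, \eqref{bdd_d2_g_2} and \eqref{bdd_d2_k_1}, combined with the assumption in $\bf{(a2)}$ that $\p_x g$ and $\p_\alpha g$ are regularly differentiable in $\mu$. Once this is in place, the rest is bookkeeping. I therefore expect no genuine obstacle, and this is why the authors state in the paragraph preceding the proposition that they omit the details.
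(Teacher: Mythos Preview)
Your proposal is correct and follows exactly the standard route the paper has in mind when it defers to the appendix of \cite{bensoussan2023theory}: split each difference into one-variable increments, control the Euclidean increments by the bounded second-order derivatives in \eqref{bdd_d2_g_1}, \eqref{bdd_d2_g_2}, \eqref{bdd_d2_k_1}, control the measure increment by Proposition~\ref{A0} with $l_2=0$, and then deduce linear growth by comparing to the base point $(0,\delta_0,0)$ via $W_1(\mu,\delta_0)=\|\mu\|_1$. There is nothing to add.
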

\begin{prpstn}\label{h3}
For each fixed $(x,\mu,z)\in \R^{d_x}\times \mathcal{P}_2(\R^{d_x})\times \R^{d_x}$, we have $\displaystyle\lim_{|\alpha|\to \infty}f(x,\mu,\alpha)\cdot z+g(x,\mu,\alpha)\to \infty$. 
\end{prpstn}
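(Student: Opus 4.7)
The plan is to exploit the strong convexity of $g$ in $\alpha$ from assumption \eqref{positive_g_alpha} against the at-most-linear growth of $f$ in $\alpha$ from Proposition \ref{A1}. Fix $(x,\mu,z)$. The $f(x,\mu,\alpha)\cdot z$ term is at worst linear in $|\alpha|$, while $g(x,\mu,\alpha)$ will be shown to be at least quadratic; hence the quadratic term wins and drives the sum to infinity.

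First, I would apply a second-order Taylor expansion of $\alpha\mapsto g(x,\mu,\alpha)$ about $\alpha=0$. By \eqref{positive_g_alpha}, $\p_\alpha\p_\alpha g\succeq \lambda_g I$ uniformly, so for every $\alpha\in\R^{d_\alpha}$,
\[
g(x,\mu,\alpha)\geq g(x,\mu,0)+\p_\alpha g(x,\mu,0)\cdot\alpha+\frac{\lambda_g}{2}|\alpha|^2.
\]
Using the linear-growth bound \eqref{LGgDerivatives} for $\p_\alpha g(x,\mu,0)$, there is a constant $C_1=C_1(x,\mu)$ (finite for fixed $(x,\mu)$) with $|\p_\alpha g(x,\mu,0)|\leq C_1$, and hence
\[
g(x,\mu,\alpha)\geq \frac{\lambda_g}{2}|\alpha|^2 - C_1|\alpha| + g(x,\mu,0).
\]

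Next, by \eqref{ligf} there is another $C_2=C_2(x,\mu)$ (again finite for fixed $(x,\mu)$) so that $|f(x,\mu,\alpha)|\leq C_2(1+|\alpha|)$, giving
\[
f(x,\mu,\alpha)\cdot z \geq -C_2(1+|\alpha|)|z|.
\]
Adding the two bounds yields
\[
f(x,\mu,\alpha)\cdot z+g(x,\mu,\alpha)\geq \frac{\lambda_g}{2}|\alpha|^2 -\bigl(C_1+C_2|z|\bigr)|\alpha| +\bigl(g(x,\mu,0)-C_2|z|\bigr).
\]
Since $\lambda_g>0$ and all other constants are finite for the fixed $(x,\mu,z)$, the right-hand side tends to $+\infty$ as $|\alpha|\to\infty$, which is the desired conclusion.

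There is really no main obstacle here beyond verifying that the constants coming from \eqref{ligf} and \eqref{LGgDerivatives} are indeed finite at the fixed $(x,\mu)$, which is immediate from Propositions \ref{A1} and \ref{A2}. The proof is short and is essentially a coercivity argument: strong convexity of $g$ in $\alpha$ combined with only linear growth of $f$ in $\alpha$ forces coercivity of the unminimized Hamiltonian in $\alpha$.
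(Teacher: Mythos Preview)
Your argument is correct and is precisely the standard coercivity argument one expects here: the uniform lower bound $\p_\alpha\p_\alpha g\succeq\lambda_g I$ from \eqref{positive_g_alpha} forces $g$ to grow at least quadratically in $|\alpha|$, while \eqref{ligf} gives at most linear growth of $f(x,\mu,\alpha)\cdot z$, so the quadratic term dominates. The paper itself omits the proof of this proposition, referring instead to the appendix of \cite{bensoussan2023theory}, but the argument there follows exactly the same idea you have written.
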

Define, a constant 
\begin{align}\label{k_0}
k_0:=4\max\left\{\Lambda_k,L^*_0\right\}>0,
\end{align}
where $L^*_0$ will be defined later in \eqref{L_star_0} and it is a positive constant depending only on $\lambda_f$, $\Lambda_1$, $\Lambda_2$, $\Lambda_3$, $\lambda_g$, $\Lambda_g$, $l_g$, $\lambda_k$, $\Lambda_k$ and $l_k$.
In fact, we can show later in Remark \ref{remark_lower_bdd_L_star_0} that $\Lambda_k<L^*_0$ and thus $k_0=4 L^*_0$.
 We further define a cone set:
\begin{align}\label{c_k_0}
c_{k_0}:=&\ \left\{(x,\mu,z)\in \R^{d_x}\times \mathcal{P}_2(\R^{d_x})\times \R^{d_x}:|z|\leq \frac{1}{2}k_0\left(1+|x|+\|\mu\|_1\right)\right\},
\end{align}
for the constant $k_0>0$ as defined in \eqref{k_0}; remind that this cone needs not to be convex at all. In fact, Theorem \ref{Thm6_1} addresses the local existence of solution over small time interval, while Theorem \ref{GlobalSol} resolves the global existence of solution over an arbitrarily long time horizon. Both theorems ensure that $\big(X^{t,m}_s(x),X^{t,m}_s\ot m,Z^{t,m}_s(x)\big)\in c_{k_0}$, where $\big(X^{t,m}_s(x),Z^{t,m}_s(x)\big)$ is the solution pair of the FBODE system \eqref{FBODE}. This property is crucial in ensuring the unique existence of the optimal control $\wh{\alpha}(x,\mu,z)$ stated in Proposition \ref{A5}. It is worth noting that this additional growth confinement, which will be stated in the proposition's hypothesis, is indeed justifiable.
\begin{prpstn}\label{A5}
Suppose further that 
\begin{align}\label{p_aa_f}
\sup_{(x,\mu,\alpha)\in \R^{d_x}\times \mathcal{P}_{2}(\mathbb{R}^{d_x})\times \R^{d_\alpha}}\|\p_\alpha\p_\alpha f(x,\mu,\alpha)\|_{\mathcal{L}(\R^{d_\alpha}\times \R^{d_\alpha};\R^{d_x})}\cdot (1+|x|+\|\mu\|_1) \leq \frac{1}{40 \max\{\Lambda_k,L^*_0\}}\lambda_g.
\end{align} 
Then, for each $(x,\mu,z)\in c_{k_0}$, the function $f(x,\mu,\alpha)\cdot z  + g(x,\mu,\alpha)$ has a unique minimizer $\wh{\alpha}(x,\mu,z)\in\R^{d_\alpha}$, which satisfies the first-order condition 
\begin{align}\label{first_order_condition}
\p_\alpha f(x,\mu,\wh{\alpha})\cdot z  + \p_\alpha g(x,\mu,\wh{\alpha})=0,
\end{align} and it is first-order differentiable in $x$, $z\in\R^{d_x}$ and $L$-differentiable in $\mu\in\mathcal{P}_2(\R^{d_x})$ with jointly Lipschitz continuous first-order derivatives such that
\small\begin{align}\label{eq_5_25_new}
\partial_x \wh{\alpha}(x,\mu,z)=&\ -\Big(\p_\alpha\p_\alpha f(x,\mu,\wh{\alpha})\cdot z+\p_\alpha\p_\alpha g(x,\mu,\wh{\alpha})\Big)^{-1} \left(\p_x\p_\alpha f(x,\mu,\wh{\alpha})\cdot z+\p_x\p_\alpha g(x,\mu,\wh{\alpha})\right),\\
\partial_z \wh{\alpha}(x,\mu,z)=&\ -\Big(\p_\alpha\p_\alpha f(x,\mu,\wh{\alpha})\cdot z+\p_\alpha\p_\alpha g(x,\mu,\wh{\alpha})\Big)^{-1} \p_\alpha f(x,\mu,\wh{\alpha}),\\\label{eq_5_27_new}
\p_\mu \wh{\alpha}(x,\mu,z)(\wt{x})=&\ -\Big(\p_\alpha\p_\alpha f(x,\mu,\wh{\alpha})\cdot z+\p_\alpha\p_\alpha g(x,\mu,\wh{\alpha})\Big)^{-1}\left(\p_\mu\p_\alpha f(x,\mu,\wh{\alpha})(\wt{x})\cdot z+\p_\mu\p_\alpha g(x,\mu,\wh{\alpha})(\wt{x})\right).
\end{align}\normalsize
In addition, we have the following estimates:\\ 
(i) for an arbitrary $\xi\in\R^{d_x}$,
\begin{align}\label{positive_h}
&\Big(\Lambda_g+\frac{1}{20}\lambda_g\Big)|\xi|^2\geq \xi^\top\Big(\p_\alpha\p_\alpha f(x,\mu,\wh{\alpha})\cdot z+\p_\alpha\p_\alpha g(x,\mu,\wh{\alpha})\Big)\xi\geq \frac{19}{20}\lambda_g |\xi|^2;
\end{align}
(ii) for all $(x,\mu,z)\in c_{k_0}$,
\begin{align}\label{p_xalpha}
&\Big\|\p_{x}\wh{\alpha}(x,\mu,z)\Big\|_{\mathcal{L}(\R^{d_x};\R^{d_\alpha})}\vee\Big\|\p_{\mu}\wh{\alpha}(x,\mu,z)(\wt{x})\Big\|_{\mathcal{L}(\R^{d_x};\R^{d_\alpha})}\leq\frac{20\big(\wb{l}_g+\frac{1}{2}k_0\cdot\wb{l}_f\big) }{19\lambda_g},\\\label{p_zalpha}
&\Big\|\p_{z}\wh{\alpha}(x,\mu,z)\Big\|_{\mathcal{L}(\R^{d_x};\R^{d_\alpha})}\leq \frac{20\Lambda_f}{19\lambda_g};
\end{align}\normalsize
(iii) for any $(x,\mu,z)\in c_{k_0}$, and any $(x',\mu',z')\in c_{k_0}$ such that $\theta (x,\mu,z)+(1-\theta)(x',\mu',z')\in c_{k_0}$ for all $\theta\in[0,1]$
\begin{align}\label{lipalpha_old}
&|\wh{\alpha}(x,\mu,z)-\wh{\alpha}(x',\mu',z')|\leq\  L_\alpha\big(|x-x'|+W_1(\mu,\mu')+|z-z'|\big),\\\label{linalpha}
&|\wh{\alpha}(x,\mu,z)|\leq\  L_\alpha\cdot(1+|x|+\|\mu\|_1+|z|),
\end{align}
where 
\begin{align}\label{L_alpha}
L_\alpha:=\max\Big\{\frac{20\Lambda_f}{19\lambda_g},\frac{20\big(\wb{l}_g+\frac{1}{2}k_0\cdot\wb{l}_f\big) }{19\lambda_g},\wh{\alpha}(0,\delta_0,0)\Big\};
\end{align}
(iv) for any function $\Gamma:(x,\mu)\in\R^{d_x}\times\mc{P}_2(\R^{d_x})\mapsto \Gamma(x,\mu)\in\R^{d_x}$ which is differentiable in $x$ and is $L$-differentiable in $\mu$ with jointly continuous derivatives such that $\big\|\p_{x}\Gamma(x,\mu)\big\|_{\mathcal{L}(\R^{d_x};\R^{d_x})}\vee\big\|\p_{\mu}\Gamma(x,\mu)(\wt{x})\big\|_{\mathcal{L}(\R^{d_x};\R^{d_x})}\leq L_\Gamma$ for some positive constant $L_\Gamma\leq \frac{1}{2}k_0$ (recall \eqref{c_k_0}), then, $|\Gamma(x,\mu)|\leq L_\Gamma(1+|x|+\|\mu\|_1)$ and
\begin{align}\label{lipalpha}
&|\wh \alpha(x,\mu,\Gamma(x,\mu))-\wh \alpha(x',\mu',\Gamma(x',\mu'))|\leq L_\alpha(1+L_\Gamma)(|x-x'|+W_1(\mu,\mu')).
\end{align}
\end{prpstn}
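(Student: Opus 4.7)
The plan is to first establish uniform strong convexity of the reduced Hamiltonian $H(x,\mu,z,\alpha) := f(x,\mu,\alpha)\cdot z + g(x,\mu,\alpha)$ in the $\alpha$-variable on the cone $c_{k_0}$, and then apply the generalized implicit function theorem in the Wasserstein space (Appendix of \cite{bensoussan2023theory}) to the first-order condition \eqref{first_order_condition}. The crucial step is the two-sided Hessian estimate \eqref{positive_h}: for every $(x,\mu,z)\in c_{k_0}$, every $\alpha\in\R^{d_\alpha}$, and every $\xi\in\R^{d_\alpha}$,
\begin{equation*}
\tfrac{19}{20}\lambda_g|\xi|^2 \,\leq\, \xi^\top \p_\alpha\p_\alpha H(x,\mu,z,\alpha)\,\xi \,\leq\, \Big(\Lambda_g + \tfrac{1}{20}\lambda_g\Big)|\xi|^2.
\end{equation*}
The lower bound follows by combining the convexity \eqref{positive_g_alpha} with the smallness estimate $\|\p_\alpha\p_\alpha f(x,\mu,\alpha)\cdot z\|\leq \tfrac{1}{20}\lambda_g$, which is obtained by multiplying the hypothesis \eqref{p_aa_f} by the cone bound $|z|\leq\tfrac{1}{2}k_0(1+|x|+\|\mu\|_1)$ from \eqref{c_k_0} and using $k_0=4\max\{\Lambda_k,L^*_0\}$; the upper bound is immediate from \eqref{bdd_d2_g_1} together with the same smallness estimate. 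Strong convexity combined with the coercivity of $H$ from Proposition \ref{h3} then yields, for each fixed $(x,\mu,z)\in c_{k_0}$, a unique minimizer $\wh\alpha(x,\mu,z)\in\R^{d_\alpha}$, which must satisfy the first-order condition \eqref{first_order_condition} by differentiability.

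I would next view \eqref{first_order_condition} as the equation $F(x,\mu,z,\alpha):=\p_\alpha f(x,\mu,\alpha)\cdot z + \p_\alpha g(x,\mu,\alpha)=0$ and apply the Wasserstein implicit function theorem. Its hypothesis on the partial Jacobian $\p_\alpha F = \p_\alpha\p_\alpha f\cdot z + \p_\alpha\p_\alpha g$ is exactly the invertibility supplied by \eqref{positive_h}, which also gives $\|(\p_\alpha F)^{-1}\|\leq \tfrac{20}{19\lambda_g}$; the joint continuity and regularity of $F$ in $(x,\mu,z)$ are supplied by Assumptions \textbf{(a1)}-\textbf{(a2)}. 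The theorem then delivers differentiability of $\wh\alpha$ in $x$ and $z$ and $L$-differentiability in $\mu$, and implicit differentiation of $F(x,\mu,z,\wh\alpha(x,\mu,z))\equiv 0$ with respect to each of the three arguments produces the explicit formulas \eqref{eq_5_25_new}-\eqref{eq_5_27_new}.

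The remaining estimates follow from these formulas by bookkeeping. For \eqref{p_xalpha} I would bound the Hessian-inverse operator by $\tfrac{20}{19\lambda_g}$ and use $\|\p_x\p_\alpha f\cdot z\|\leq \tfrac{1}{2}k_0\wb{l}_f$ (from \eqref{bdd_d2_f} times the cone bound on $|z|$) together with $\|\p_x\p_\alpha g\|\leq \wb{l}_g$ from \eqref{bdd_d2_g_2}; the $\p_\mu$ bound is identical after replacing $\p_x\p_\alpha$ by the corresponding $\p_\mu\p_\alpha$ versions. The estimate \eqref{p_zalpha} uses only $\|\p_\alpha f\|\leq \Lambda_1\leq\Lambda_f$. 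The Lipschitz estimate \eqref{lipalpha_old}, valid on any straight segment lying in $c_{k_0}$, then comes from integrating \eqref{p_xalpha}-\eqref{p_zalpha} along the interpolating path, where Proposition \ref{A0} converts the intrinsic-derivative bound in the $\mu$-slot into a $W_1$-Lipschitz bound; the linear growth \eqref{linalpha} is the special case $(x',\mu',z')=(0,\delta_0,0)\in c_{k_0}$ combined with the definition of $L_\alpha$ in \eqref{L_alpha}. Finally, \eqref{lipalpha} follows by observing that $L_\Gamma\leq \tfrac{1}{2}k_0$ forces $(x,\mu,\Gamma(x,\mu))\in c_{k_0}$ and that the straight segment between two such triples again stays in $c_{k_0}$, so \eqref{lipalpha_old} composes with the Lipschitz continuity of $\Gamma$ in $(x,\mu)$ to yield the stated bound.

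The main obstacle is the precise numerical balance in the first step: the prescribed constant $\tfrac{19}{20}\lambda_g$ in \eqref{positive_h} demands that the coefficient $\tfrac{1}{40\max\{\Lambda_k,L^*_0\}}$ in \eqref{p_aa_f} be tuned exactly against the cone-size parameter $k_0=4\max\{\Lambda_k,L^*_0\}$, so that the perturbation $\p_\alpha\p_\alpha f\cdot z$ consumes at most $\tfrac{1}{20}\lambda_g$ of the $\alpha$-convexity supplied by $g$. Once this balance is secured, the remainder of the proposition is structurally parallel to the corresponding derivations in the Appendix of \cite{bensoussan2023theory}, and no genuinely new analytical difficulty arises.
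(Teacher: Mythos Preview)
Your proposal is correct and follows the same route the paper indicates (it defers the argument to the Appendix of \cite{bensoussan2023theory}): obtain the uniform two-sided Hessian bound \eqref{positive_h} from \eqref{p_aa_f} and the cone constraint, deduce existence/uniqueness of the minimizer, apply the Wasserstein implicit function theorem to \eqref{first_order_condition}, and read off the derivative formulas and bounds.

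One step in part (iv) does not go through as written. The \emph{straight} segment in $(x,\mu,z)$-space between $(x,\mu,\Gamma(x,\mu))$ and $(x',\mu',\Gamma(x',\mu'))$ need not remain in $c_{k_0}$: along the segment one only has $|z_\theta|\leq L_\Gamma\big(1+\theta|x|+(1-\theta)|x'|+\|\mu_\theta\|_1\big)$, whereas the cone condition requires $|z_\theta|\leq \tfrac12 k_0\big(1+|x_\theta|+\|\mu_\theta\|_1\big)$, and $|x_\theta|=|\theta x+(1-\theta)x'|$ can be strictly smaller than $\theta|x|+(1-\theta)|x'|$. The fix is immediate: differentiate the composite map $\theta\mapsto\wh\alpha\big(x_\theta,\mu_\theta,\Gamma(x_\theta,\mu_\theta)\big)$ along the \emph{curved} path, which stays in $c_{k_0}$ at every $\theta$ because $|\Gamma(x_\theta,\mu_\theta)|\leq L_\Gamma(1+|x_\theta|+\|\mu_\theta\|_1)\leq\tfrac12 k_0(1+|x_\theta|+\|\mu_\theta\|_1)$. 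The chain rule together with \eqref{p_xalpha}--\eqref{p_zalpha} and the derivative bound $L_\Gamma$ on $\Gamma$ then yields \eqref{lipalpha} directly, without invoking \eqref{lipalpha_old}.
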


\begin{prpstn}\label{A6}
For the cone $c_{k_0}$ defined in \eqref{c_k_0} and any $\xi\in\R^{d_x}$, and suppose also that \eqref{p_aa_f} is valid, then we have the following inequality:
\begin{align}\label{c_a7}
&\sup_{(x,\mu,z)\in c_{k_0}}\xi^\top\Big(\sum_{i=1}^{d_\alpha}\p_z \wh{\alpha}_i(x,\mu,z)\otimes \p_{\alpha_i} f(x,\mu,\wh{\alpha})\Big)\xi\leq -\frac{\lambda_f^2}{\Lambda_g+\lambda_g/20}|\xi|^2,
\end{align}\normalsize
where $\wh{\alpha}(x,\mu,z)$ is the unique minimizer solving the first-order condition \eqref{first_order_condition} and we denote $a\otimes b:=(a_ib_j)_{n\times m}$, the tensor product of $a$ and $b$, the $n\times m$ matrix for arbitrary vectors $a\in\R^n$ and $b\in \R^m$.
\end{prpstn}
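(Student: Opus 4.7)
The strategy is to exploit the closed-form expression $\partial_z\wh\alpha = -A^{-1}\partial_\alpha f$ provided in Proposition \ref{A5}, with $A := \partial_\alpha\partial_\alpha f(x,\mu,\wh\alpha)\cdot z + \partial_\alpha\partial_\alpha g(x,\mu,\wh\alpha)$, to reduce the tensor sum on the left-hand side to a single matrix expression and then apply the coercivity of $\partial_\alpha f$ from \eqref{positive_f} together with the spectral bound \eqref{positive_h} on $A$. Under the standing hypothesis \eqref{p_aa_f} and the cone constraint $(x,\mu,z)\in c_{k_0}$, Proposition \ref{A5} already delivers the invertibility of $A$ together with the inequality $A\preceq(\Lambda_g+\lambda_g/20)I$, equivalently $A^{-1}\succeq(\Lambda_g+\lambda_g/20)^{-1}I$, so no further effort is needed in this direction.

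First I would substitute the explicit formula for $\partial_z\wh\alpha$ and unfold the tensor products coordinate-wise. Viewing $\partial_\alpha f(x,\mu,\wh\alpha)$ as the $d_\alpha\times d_x$ matrix whose $i$-th row is $\partial_{\alpha_i}f\in\R^{d_x}$ (so that $\partial_\alpha f\cdot z\in\R^{d_\alpha}$, consistently with the first-order condition \eqref{first_order_condition}), the defining relation $(\partial_z\wh\alpha_i\otimes\partial_{\alpha_i}f)_{jk} = (\partial_z\wh\alpha_i)_j(\partial_{\alpha_i}f)_k$ yields, after summation over $i$ and the use of the symmetry $(A^{-1})^\top = A^{-1}$, the compact identity
\begin{equation*}
M := \sum_{i=1}^{d_\alpha}\partial_z\wh\alpha_i\otimes\partial_{\alpha_i}f = -(\partial_\alpha f)^\top A^{-1}(\partial_\alpha f).
\end{equation*}
Setting $\eta := (\partial_\alpha f)\xi \in \R^{d_\alpha}$, the quadratic form of interest therefore simplifies to $\xi^\top M\xi = -\eta^\top A^{-1}\eta$.

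The last step is to identify the left-hand side of \eqref{positive_f} with $|\eta|^2$. A direct expansion gives
\begin{equation*}
\sum_{i,j=1}^{d_x}(\xi_i\partial_\alpha f_i)\cdot(\xi_j\partial_\alpha f_j) = |(\partial_\alpha f)\xi|^2 = |\eta|^2,
\end{equation*}
so \eqref{positive_f} supplies precisely the coercivity $|\eta|^2\geq \lambda_f^2|\xi|^2$, reading $\lambda_f$ as the lower bound on the smallest singular value of $\partial_\alpha f$. Chaining this with $A^{-1}\succeq(\Lambda_g+\lambda_g/20)^{-1}I$ yields
\begin{equation*}
\xi^\top M\xi = -\eta^\top A^{-1}\eta \leq -\frac{|\eta|^2}{\Lambda_g+\lambda_g/20} \leq -\frac{\lambda_f^2}{\Lambda_g+\lambda_g/20}|\xi|^2,
\end{equation*}
which is exactly the claimed bound, and since every estimate is uniform in $(x,\mu,z)\in c_{k_0}$, the supremum inherits the same bound. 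I do not anticipate any genuine analytical obstacle here: the work is almost entirely algebraic manipulation together with careful matrix-orientation bookkeeping, and the heavy spectral estimates have already been secured in Proposition \ref{A5}. The only subtle point is matching the coercivity constant in \eqref{positive_f} with the squared form in the claim, which is resolved by the singular-value reading above.
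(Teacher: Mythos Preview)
Your argument is correct and is precisely the intended one. The paper does not spell out a proof of this proposition; it states at the start of Section~\ref{properties} that all proofs there are ``almost the same as that in Appendix of our previous paper \cite{bensoussan2023theory}'' and omits the details. Your derivation---substituting $\partial_z\wh\alpha=-A^{-1}\partial_\alpha f$, collapsing the tensor sum to $-(\partial_\alpha f)^\top A^{-1}(\partial_\alpha f)$, and then combining the spectral upper bound on $A$ from \eqref{positive_h} with the coercivity \eqref{positive_f}---is exactly the computation the paper is deferring to, and is in fact how the bound is used later (see, e.g., the passage around \eqref{eq_8_3_1} where $-\xi^\top\partial_z\partial_z h\,\xi=-\xi^\top(\partial_\alpha f)(\partial_z\alpha)\xi$ is bounded below by $\frac{\lambda_f^2}{\Lambda_g+\lambda_g/20}|\xi|^2$).

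One remark on the constant: as written, \eqref{positive_f} literally gives $|(\partial_\alpha f)\xi|^2\geq \lambda_f|\xi|^2$, which would yield $-\lambda_f/(\Lambda_g+\lambda_g/20)$ rather than $-\lambda_f^2/(\Lambda_g+\lambda_g/20)$. You are right to flag this; the paper's own usage (e.g.\ \eqref{lambda_z}, Remark~\ref{remark_h2}, and the example in Section~\ref{sec:nonLQ}) is consistent with interpreting $\lambda_f$ as a singular-value lower bound, i.e.\ with \eqref{positive_f} reading $\geq \lambda_f^2|\xi|^2$. This is a notational inconsistency in the paper rather than a gap in your proof.
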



\subsection{Discussion on Our Assumptions}\label{subsec:dis}
In this subsection, we aim to provide motivation behind and explanation of Assumptions ${\bf (a1)}$-${\bf (a3)}$ and the property \eqref{c_a7} by comparing them with the coercivity and monotonicity conditions used in Cardaliaguet-Delarue-Lasry-Lions' monograph \cite{cardaliaguet2019master}. We shall also refer to Remark \ref{remark_h2} to compare our monotonicity condition with the displacement monotonicity for non-separable Hamiltonians introduced in Gangbo, M\'esz\'aros, Mou, and Zhang \cite{gangbo2022mean}.

To simplify calculus and computations, we consider a special case where the drift function is simply $f(x,\mu,\alpha)=\alpha$ and the running cost function is $g(x,\mu,\alpha)=\lambda\alpha^2+g_1(x,\mu)$, with a positive constant $\lambda$. Note that a more complicated example is presented in Section \ref{sec:nonLQ}, which is not in this special form. Under this assumption, we find that \eqref{positive_f}, \eqref{bdd_d1_f}, \eqref{bdd_d2_f}, and \eqref{bdd_d2_g_2} are automatically satisfied with $\lambda_f=1$, $\Lambda_f=1$, and $\wb{l}_f=\wb{l}_g=0$. Additionally, we can explicitly solve for the optimal control $\wh{\alpha}$ using the first-order condition \eqref{first_order_condition} as $\wh{\alpha}(x,\mu,z)=-\frac{1}{2\lambda}z$. Therefore, the left-hand side of \eqref{c_a7} equals to $-\frac{1}{2\lambda}|\xi|^2$. It is worth noting that the term $\Big(\sum_{i=1}^{d_\alpha}\p_z \wh{\alpha}_i(x,\mu,z)\otimes \p_{\alpha_i} f(x,\mu,\wh{\alpha})\Big)$ in \eqref{c_a7} corresponds to the term $D^2_{zz}H(x,z)$ in (2.4) in \cite{cardaliaguet2019master}. Furthermore, there is no mean field term $\mu$ in $H(x,z)$ because the separable structure of the Hamiltonian was assumed in \cite{cardaliaguet2019master}, however we can also treat the non-separable case in the present work. Thus, \eqref{c_a7} corresponds to the coercivity condition (2.4) in Cardaliaguet-Delarue-Lasry-Lions' monograph \cite{cardaliaguet2019master}.

It is also worth noting that the running cost function $g(x,\mu,\alpha)$ corresponds to the function $F(x,\mu)$ in (2.5) of \cite{cardaliaguet2019master}. However, since the separable structure of the running cost function $g(x,\mu,\alpha)=\lambda\alpha^2+g_1(x,\mu)$ was assumed in \cite{cardaliaguet2019master}, there is no control term $\alpha$ in $F(x,\mu)$. Therefore, the estimate \eqref{positive_g_mu} assumed in Assumption ${\bf (a2)}$ generalizes the Lasry-Lions monotonicity assumption (see (2.5) in \cite{cardaliaguet2019master}) in the sense that it allows $\int_{\R^{d_x}} \Big(F(x,\mu')-F(x,\mu)\Big)d\big(\mu'-\mu\big)(x)$ to be negative, rather than non-negative in \cite{cardaliaguet2019master}.
Moreover, for any $m\in\mathcal{P}_2(\R^{d_x})$ and any $X,\,\wt{X}\in L^{2,d_x}_{m}$, by taking $\mu'=(X+\theta\wt{X})\ot m$ with $\theta>0$, $\mu=X\ot m$ and letting $\theta\to 0^+$ after dividing \eqref{positive_g_mu} by $\theta^2$, the estimate \eqref{positive_g_mu} implies that
\begin{align}\label{positive_g_mu_1}
&\int_{\R^{d_x}}\int_{\R^{d_x}}(\wt{X}(\wt{x})\cdot \p_{x})(\wt{X}(\wh{x})\cdot \p_\mu)g\big(x,\mu,\alpha\big)\bigg|_{x=X(\wt{x}),\mu=X\ot m}\big(X(\wh{x})\big)dm(\wh{x})dm(\wt{x})
\geq -l_g\left\|\wt{X}\right\|_{L^{2,d_x}_m}^2;
\end{align}
indeed in the mathematical analysis in this work, the estimate \eqref{positive_g_mu} assumed in Assumption ${\bf (a2)}$ can be replaced by \eqref{positive_g_mu_1} since we only use \eqref{positive_g_mu_1} in our proof of \eqref{displacement} in Theorem \ref{Crucial_Estimate} but not \eqref{positive_g_mu}. 
Also note that the estimate \eqref{positive_g_x} assumed in Assumption ${\bf (a2)}$ means that $g(x,\mu,\alpha)$ is strictly convex in $x$. 
Moreover, \eqref{positive_g_x} and \eqref{positive_g_mu} assumed in Assumption ${\bf (a2)}$ can be also replaced by 
\small\begin{align}\no
&\int_{\R^{d_x}}\int_{\R^{d_x}}(\wt{X}(\wt{x})\cdot \p_{x})(\wt{X}(\wt{x})\cdot \p_x{})g\big(x,\mu,\alpha\big)\bigg|_{x=X(\wt{x}),\mu=X\ot m}dm(\wt{x})\\\label{dispalcement_lambda}
&+\int_{\R^{d_x}}\int_{\R^{d_x}}(\wt{X}(\wt{x})\cdot \p_{x})(\wt{X}(\wh{x})\cdot \p_\mu)g\big(x,\mu,\alpha\big)\bigg|_{x=X(\wt{x}),\mu=X\ot m}\big(X(\wh{x})\big)dm(\wh{x})dm(\wt{x})\geq \lambda\left\|\wt{X}\right\|_{L^{2,d_x}_m}^2,
\end{align}\normalsize
with $\lambda=\lambda_g>0$. The reasons of this replacement are as follows: i) \eqref{dispalcement_lambda} can imply \eqref{positive_g_x} by  
taking $\wt{X}(\wt{x})=\eps^{-d_x}\wh{X}(\eps^{-1} \wt{x})$ and then passing to the limit as $\eps\to 0^+$ after multiplying \eqref{dispalcement_lambda} by $\eps^{d_x}$ (also see Proposition B.6 in \cite{gangbo2020global}); 
ii) while showing \eqref{displacement} in the proof of Theorem \ref{Crucial_Estimate}, we can directly use \eqref{dispalcement_lambda} instead of \eqref{positive_g_mu_1}. On the other hand, \eqref{positive_g_x} and \eqref{positive_g_mu} can imply \eqref{dispalcement_lambda} with $\lambda=\lambda_g-l_g$. 

Similarly, the terminal cost function $k(x,\mu)$ corresponds to the function $G(x,\mu)$ in (2.5) of \cite{cardaliaguet2019master}. Therefore, the estimate \eqref{positive_k_mu} assumed in Assumption ${\bf (a3)}$ generalizes the Lasry-Lions monotonicity assumption (see (2.5) in \cite{cardaliaguet2019master}) in the sense that it allows $\int_{\R^{d_x}} \Big(G(x,\mu')-G(x,\mu)\Big)d\big(\mu'-\mu\big)(x)$ to be negative, rather than non-negative.
Again, for any $m\in\mathcal{P}_2(\R^{d_x})$ and any $X,\,\wt{X}\in L^{2,d_x}_{m}$, by taking $\mu'=(X+\theta\wt{X})\ot m$ with $\theta>0$, $\mu=X\ot m$ and letting $\theta\to 0^+$ after dividing \eqref{positive_k_mu} by $\theta^2$, the estimate \eqref{positive_k_mu} implies that
\begin{align}\label{positive_k_mu_1}
&\int_{\R^{d_x}}\int_{\R^{d_x}}(\wt{X}(\wt{x})\cdot \p_{x})(\wt{X}(\wh{x})\cdot \p_\mu)k\big(x,\mu\big)\bigg|_{x=X(\wt{x}),\mu=X\ot m}\big(X(\wh{x})\big)dm(\wh{x})dm(\wt{x})
\geq -l_k\left\|\wt{X}\right\|_{L^{2,d_x}_m}^2.
\end{align}
Again, the estimate \eqref{positive_k_mu} assumed in Assumption ${\bf (a3)}$ can be replaced by \eqref{positive_k_mu_1}
since we only use \eqref{positive_k_mu_1} in our proof of \eqref{displacement} in Theorem \ref{Crucial_Estimate} but not \eqref{positive_k_mu}. 
Note again that the estimate \eqref{positive_k} assumed in Assumption ${\bf (a3)}$ means that $k(x,\mu)$ is strictly convex in $x$. 
Again, \eqref{positive_k} and \eqref{positive_k_mu} assumed in Assumption ${\bf (a3)}$ can also be replaced by 
\small\begin{align}\no
&\int_{\R^{d_x}}\int_{\R^{d_x}}(\wt{X}(\wt{x})\cdot \p_{x})(\wt{X}(\wt{x})\cdot \p_x{})k\big(x,\mu\big)\bigg|_{x=X(\wt{x}),\mu=X\ot m}dm(\wt{x})\\\label{dispalcement_lambda_k}
&+\int_{\R^{d_x}}\int_{\R^{d_x}}(\wt{X}(\wt{x})\cdot \p_{x})(\wt{X}(\wh{x})\cdot \p_\mu)k\big(x,\mu\big)\bigg|_{x=X(\wt{x}),\mu=X\ot m}\big(X(\wh{x})\big)dm(\wh{x})dm(\wt{x})\geq \lambda\left\|\wt{X}\right\|_{L^{2,d_x}_m}^2
\end{align}\normalsize
with $\lambda=\lambda_k>0$.

In summary, \eqref{c_a7} has a close relationship with the coercivity condition (2.4) of the Hamiltonian in \cite{cardaliaguet2019master}, while \eqref{positive_g_mu} and \eqref{positive_k_mu} have a close relationship with the monotonicity condition (2.5) in the same monograph. In other words, we are generalizing the coercivity condition (2.4) and the monotonicity condition (2.5) in \cite{cardaliaguet2019master} to the case with a non-separable Hamiltonian and a general drift function. For the comparison with displacement monotonicity, one can refer to Remark \ref{remark_h2}.

\section{Hamilton-Jacobi and Fokker-Planck System, Master equation, Forward-backward System and Their Relationships}\label{sec:master}
It follows from the principle of dynamic programming that the value function $v(t,x)$ defined in \eqref{valfun} is expected to satisfy the Hamilton-Jacobi (HJ) equation:
\begin{align}\label{HJ_eq}
\begin{cases}
\p_t v(t,x) + h\big(x,\wh m_t,\p_x v(t,x)\big)= 0,\\
v(T,x)= k(x,\wh m_T),
\end{cases}
\end{align} 
where the Hamiltonian $h:\R^{d_x}\times \mathcal{P}_2(\R^{d_x})\times \R^{d_x} \to\R$ is defined as
\begin{equation}\label{hamilton1}
h(x,\mu,z):= \min_{\alpha\in \R^{d_\alpha}} \left(f(x,\mu,\alpha)\cdot z  + g(x,\mu,\alpha)\right),
\end{equation}
and, by \eqref{optimal_probability}, the density of the probability measure $\wh m_t$ (if exists), denoted also by $\wh m(t,x)$, satisfies the following Fokker-Planck (FP) equation:
\begin{align}\label{FP_eq}
\begin{cases}
\p_t \wh m(t,x) + \div\Big(\p_z h\big(x,\wh m(t,x),\p_x v(t,x)\big)\wh m(t,x)\Big)= 0,\\
\wh m(0,x)= m_0(x).
\end{cases}
\end{align}
Suppose that for any fixed $x\in\R^{d_x}$, $\mu\in \mathcal{P}_2(\R^{d_x})$ and $z\in\R^{d_x}$ belong to our interested domain\footnote{More precisely, we shall show that only those $(x,\mu,z)$'s belonging to the cone set $c_{k_0}$ defined in \eqref{k_0} are related to the underlying mean field games, so Proposition \ref{A5} provides a solid and rigorous foundation for the discussion below.}, the function $f(x,\mu,\alpha)\cdot z  + g(x,\mu,\alpha)$ has a unique minimizer $\wh{\alpha}(x,\mu,z)\in\R^{d_\alpha}$ and is differentiable in $\alpha\in\R^{d_\alpha}$. Then the unique minimizer $\wh{\alpha}(x,\mu,z)$ must satisfy the usual first-order condition:
\begin{align}\label{foc}
\partial_\alpha f(x,\mu,\widehat{\alpha})\cdot z  + \partial_\alpha g(x,\mu,\widehat{\alpha})=0.
\end{align}
Substituting the optimal control $\wh{\alpha}$ in \eqref{HJ_eq}-\eqref{FP_eq} yields the following HJ-FP system:
\small
\begin{align}\label{HJFP_eq_nomin}
\begin{cases}
\p_t v(t,x) + f\Big(x,\wh m(t,x),\wh{\alpha}\big(x,\wh m(t,x),\p_x v(t,x)\big)\Big)\cdot \p_x v(t,x)  + g\Big(x,\wh m(t,x),\wh{\alpha}\big(x,\wh m(t,x),\p_x v(t,x)\big)\Big)= 0,\\
v(T,x)= k(x,\wh m_T);\\
\p_t \wh m(t,x) + \div\bigg(f\Big(x,\wh m(t,x),\wh{\alpha}\big(x,\wh m(t,x),\p_x v(t,x)\big)\Big)\wh m(t,x)\bigg)= 0,\\
\wh m(0,x)= m_0(x).
\end{cases}
\end{align}\normalsize
Define, for any $(t,x,m)\in[0,T]\times \R^{d_x}\times \mathcal{P}_2(\R^{d_x})$, 
\begin{align}\label{Valfun}
V(t,x,m):=j(t,x,\wh m,\alpha^*(t,x,\wh m)),
\end{align}
where the optimal control $\alpha^*$ is defined in \eqref{optimal_alpha}, the Nash equilibrium $\wh{m}:=(\wh m_s)_{s\in[t,T]}$ is given by 
\begin{align}\label{optimal_probability_m}
\wh{m}_s=X^{t,\wh{m},*}_s\ot m,
\end{align}
and the solution map $X^{t,\wh m,*}_s$ is defined by solving \eqref{SDE_star}. Then, applying the principle of dynamic programming, one can show that $V(t,x,m)$ satisfies the following master equation, also see \cite{bensoussan2017interpretation} and \cite{bensoussan2015master}:
\begin{align}\label{Master_eq}
\begin{cases}
&\p_t V(t,x,m)+f\Big(x,m,\wh{\alpha}\big(x,m,\p_x V(t,x,m)\big)\Big)\cdot \p_x V(t,x,m)  + g\Big(x,m,\wh{\alpha}\big(x,m,\p_x V(t,x,m)\big)\Big)\\
&\ \ \ \ \ \ \ \ \ \ \ \ \ \ \ \ \ \ \ \ \ \ \ \ \ \ \ \ \ \ \ \ \ \ +\int_{\R^{d_x}}\p_m V(t,x,m)(y)\cdot f\Big(y,m,\wh{\alpha}\big(y,m,\p_x V(t,y,m)\big)\Big)dm(y)=0,\\
&V(T,x,m)=k(x,m),
\end{cases}
\end{align}
where $\wh{\alpha}(x,\mu,z)$ satisfies the first-order condition \eqref{foc}. Let $(v,\wh m):=(v(t,x),\wh m(t,x))$ be a smooth enough solution of the HJ-FP system \eqref{HJFP_eq_nomin} with the given initial condition $\wh m(t_0,x)=\wh m_{t_0}(x)$ at time $t=t_0$, then we can directly check that $V(t_0,x,\wh m_{t_0}):=v(t_0,x)$ is a solution of the master equation \eqref{Master_eq}. In other words, we can write the solution $V(t,x,m)$ of the master equation \eqref{Master_eq} in terms of the solution $(v,\wh m)$ of the HJ-FP system \eqref{HJFP_eq_nomin} by the characteristics method; also see section 1.2.4 in the monograph of
Cardaliaguet, Delarue, Lasry and Lions \cite{cardaliaguet2019master}. 

Differentiating master equation \eqref{Master_eq} with respect to $x$ and using the first-order condition \eqref{foc}, we obtain
\begin{align}\label{p_x_Master_eq}
\begin{cases}
&\p_t \p_x V(t,x,m)+f\Big(x,m,\wh{\alpha}\big(x,m,\p_x V(t,x,m)\big)\Big)\cdot \p_x \p_x V(t,x,m)\\
&+\p_x f\Big(x,m,\wh{\alpha}\big(x,m,\p_x V(t,x,m)\big)\Big)\cdot \p_x V(t,x,m)+ \p_x g\Big(x,m,\wh{\alpha}\big(x,m,\p_x V(t,x,m)\big)\Big)\\
&+\int_{\R^{d_x}}\p_m \p_x V(t,x,m)(y)\cdot f\Big(y,m,\wh{\alpha}\big(y,m,\p_x V(t,y,m)\big)\Big)dm(y)=0,\\
&\p_x V(T,x,m)=\p_x k(x,m).
\end{cases}
\end{align}\normalsize
We can also write the solution $\p_x V(t,x,m)$ of \eqref{p_x_Master_eq} in terms of the solution $(X,Z)$ of a system of forward-backward ordinary differential equations (FBODE) defined as follows by using the characteristics method and the push-forward operator $\ot$ defined in Definition \ref{defot}:\begin{equation}\label{FBODE}
\left\{\begin{aligned}
\dfrac{d}{ds}X^{t,m}_s(x) &=f\Big(X^{t,m}_s(x),X^{t,m}_s\ot m,\wh{\alpha}\big(X^{t,m}_s(x),X^{t,m}_s\ot m,Z^{t,m}_s(x)\big)\Big),\ s\in[t,T],\\
X^{t,m}_t(x)&=x,\\
\dfrac{d}{ds}Z^{t,m}_s(x)&=-\p_x f\Big(X^{t,m}_s(x),X^{t,m}_s\ot m,\wh{\alpha}\big(X^{t,m}_s(x),X^{t,m}_s\ot m,Z^{t,m}_s(x)\big)\Big)\cdot Z^{t,m}_s(x)\\
&\ \ - \p_x g\Big(X^{t,m}_s(x),X^{t,m}_s\ot m,\wh{\alpha}\big(X^{t,m}_s(x),X^{t,m}_s\ot m,Z^{t,m}_s(x)\big)\Big),\ s\in[t,T],\\
Z^{t,m}_T(x)&=\p_x k\big(X^{t,m}_T(x),X^{t,m}_T\ot m\big);
\end{aligned}\right.
\end{equation}
indeed, we can directly check that $\p_x V(t,x,m)=Z^{t,m}_t(x)$ and $\p_x V(s,X^{t,m}_s(x),X^{t,m}_s\ot m)=Z^{t,m}_s(x)$. Once we solve the FBODE \eqref{FBODE}, then we can obtain the solution of master equation \eqref{Master_eq} by a direct integration:
\begin{align}\label{Master_eq_sol}
V(t,x,m):=k\big(X^{t,m}_T(x),X^{t,m}_T\ot m\big)+\int_t^T g\Big(X^{t,m}_s(x),X^{t,m}_s\ot m,\wh{\alpha}\big(X^{t,m}_s(x),X^{t,m}_s\ot m,Z^{t,m}_s(x)\big)\Big)ds.
\end{align}
In addition, we can obtain the optimal control process $$\big(\alpha^*_t(0,x,X^{0,m_0}_\cdot\ot m_0)\big)_{t\in[0,T]}=\Big(\wh{\alpha}\big(X^{0,m_0}_t(x),X^{0,m_0}_t\ot m_0,\p_x V(t,X^{0,m_0}_t(x),X^{0,m_0}_t\ot m_0)\big)\Big)_{t\in[0,T]}.$$ To avoid too much cumbersome of notation, we use $\alpha$ in place of $\wh{\alpha}$ to denote the optimal feedback control, which satisfies the first-order condition \eqref{foc} in the rest of this article.

\section{Local Existence of Forward-backward System \eqref{fbodesystem}}\label{sec:local}
For any fixed $0\leq t\leq \wt{T}\leq T$ and $m\in \mathcal{P}_2(\R^{d_x})$, consider the following forward-backward ordinary differential equation (FBODE) system, now with possibly a generic terminal cost functional $p$:  for $s\in[t,\wt T]$,
\footnotesize\begin{align}\label{fbodesystem}
\begin{cases}       
\dfrac{d}{ds}X^{t,m}_s(x) = f\left(X^{t,m}_s(x),X^{t,m}_s\ot m,\alpha(X^{t,m}_s(x),X^{t,m}_s\ot m,Z^{t,m}_s(x))\right),\ 
X^{t,m}_t(x)= x,\\
\,\dfrac{d}{ds}Z^{t,m}_s(x) =-\p_x f(X^{t,m}_s(x),X^{t,m}_s\ot m,\alpha(X^{t,m}_s(x),X^{t,m}_s\ot m,Z^{t,m}_s(x)))\cdot Z^{t,m}_s(x)\\
\ \ \ \ \ \ \ \ \ \ \ \ \ \ \ \ \ \ -\p_x g(X^{t,m}_s(x),X^{t,m}_s\ot m,\alpha(X^{t,m}_s(x),X^{t,m}_s\ot m,Z^{t,m}_s(x))),
\ \ \ \ Z^{t,m}_{\wt T}(x)=p(X^{t,m}_{\wt T}(x),X^{t,m}_{\wt T}\ot m),
\end{cases}
\end{align}\normalsize
where the optimal control $\alpha(x,\mu,z)$ solves the first-order condition \eqref{foc},
and the terminal function $p$, which can be arbitrary now, satisfies the following assumptions:\\ 
${\bf(P)}$ The terminal function $p:(x,\mu)\in \R^{d_x}\times \mathcal{P}_2(\R^{d_x})\mapsto p(x,\mu)\in \R^{d_x}$ is first-order differentiable in $x\in\R^{d_x}$ and $L$-differentiable in $\mu\in\mathcal{P}_2(\R^{d_x})$ with their first-order derivatives being Lipschitz continuous in their corresponding arguments and satisfying, for some finite positive constants $L_p$ and $\wb{L}_p$,
\begin{align}\label{p1}
&\sup_{x\in\R^{d_x},\,\mu\in\mathcal{P}_2(\R^{d_x}),\,\wt{x}\in\R^{d_x}} \left\|\p_x p(x,\mu)\right\|_{\mathcal{L}(\R^{d_x};\R^{d_x})}\vee\left\|\p_\mu p(x,\mu)(\wt{x})\right\|_{\mathcal{L}(\R^{d_x};\R^{d_x})}
\leq L_p,\\\label{p2}
&|p(x,\mu)|\leq \wb{L}_p\cdot(1+|x|+\|\mu\|_1).
\end{align}
Assumption ${\bf(P)}$ echoes Assumption ${\bf(a3)}(ii)$ in Section \ref{assumptions} about the nature of the terminal data, and this structure will be preserved, upon enlarging the bound $L_p$ gradually, while solving the FBODE over consecutive sub-intervals in a backward manner. In addition, \eqref{p1} implies that, for any $x,\,x'\in\R^{d_x}$ and $\mu,\,\mu'\in\mathcal{P}_2(\R^{d_x})$,
\begin{align}\label{lipp}
\left|p(x,\mu)-p(x',\mu')\right|\leq L_p \left(|x-x'|+W_1(\mu,\mu')\right).
\end{align}
Then, clearly, we see that \eqref{p2} is valid 
for any finite constant $\wb{L}_p\geq \max\{|p(0,\delta_0)|,L_p\}$. For the sake of convenience, we also define 
\begin{align}\label{L_B}
L_B:=L_f(1+L_\alpha+2L_pL_\alpha)\text{ and }\wb{L}_B:=L_f(1+L_\alpha+2\wb{L}_pL_\alpha).
\end{align}
For local (in time) solutions of \eqref{FBODE}, for the last time interval, $p(x,\mu)$ is taken to be $\p_x k(x,\mu)$ and thus, by \eqref{bdd_d2_k_1}, one can set $L_p=\Lambda_k$. In the proof for global (in time) existence theorem~\ref{GlobalSol}, we shall glue (smoothly) consecutively together local (in time) solutions of \eqref{fbodesystem} so as to obtain a global (in time) solution of \eqref{FBODE} over $[0,T]$ of arbitrary length $T$; for a variety of terminal data $p$ in different small time intervals $[t_i,t_{i+1}]$, $i=1,2,...,N$, by using Theorem \ref{Crucial_Estimate} to be proven later in Section \ref{sec:global}, one can set $\wb{L}_p=\max\left\{\Lambda_k,L^*_0\right\}=L^*_0$ uniformly for various terminal data $p$, provided that $L^*_0$ is to be defined in \eqref{L_star_0}. In these cases, by \eqref{cone_ZX}, one has, for all $(s,x,m)\in[t,{\wt{T}}] \times \R^{d_x}\times \mathcal{P}_2(\R^{d_x})$, $\left(X^{t,m}_s(x),X^{t,m}_s\ot m,Z^{t,m}_s(x)\right)\in  c_{k_0} $ where the cone $c_{k_0}$ was defined in \eqref{c_k_0} and the constant $k_0$ was defined in \eqref{k_0}, namely $k_0=4\wb{L}_p$. Thus, when \eqref{p_aa_f} is valid, by Proposition \ref{A5}, the optimal control $\alpha\left(X^{t,m}_s(x),X^{t,m}_s\ot m,Z^{t,m}_s(x)\right)$ is well-defined and satisfies the first-order condition \eqref{first_order_condition}.

Define
\begin{align}\label{Gamma_1}
\text{(a norm) }\vertiii{\gamma }_{1,[t,T]}:=&\  \sup_{s\in[t,T],\,x\in \R^{d_x},\,\mu\in\mathcal{P}_2(\R^{d_x})}\frac{|\gamma(s,x,\mu)|}{1+|x|+\|\mu\|_1},\\\label{Gamma_2}
\text{(a semi-norm) }\vertiii{\gamma }_{2,[t,T]}:=&\  \sup_{s\in[t,T],x\in\R^{d_x},\,\mu\in\mathcal{P}_2(\R^{d_x}),\,\wt{x}\in\R^{d_x}} \left\|\p_x \gamma(s,x,\mu)\right\|_{\mathcal{L}(\R^{d_x};\R^{d_x})}\vee\left\|\p_\mu \gamma(s,x,\mu)(\wt{x})\right\|_{\mathcal{L}(\R^{d_x};\R^{d_x})}.
\end{align}
When there is no cause of ambiguity, we denote $\vertiii{\gamma }_{i,[t,T]}$ by $\vertiii{\gamma }_{i}$, for $i=1,\,2$, for the notational simplicity. Before stating our main results in this section, let us first recall the following concept.

\begin{definition}\label{decouple}(Decoupling field) 
We call a function $\gamma:[t,\wt{T}]\times \R^{d_x}\times \mc{P}_2(\R^{d_x})\to \R^{d_x}$ a decoupling field for the FBODE system \eqref{fbodesystem} on $[t,\wt{T}]$ if $\gamma$ solves the following system for all $m\in\mc{P}_2(\R^{d_x})$, $x\in\R^{d_x}$ and $s\in[t,\wt{T}]$:
\small\begin{align}\no
\gamma (s,x,m)
= &\ p(X^{s,m}_{\wt{T}}(x),X^{s,m}_{\wt{T}}\ot m)
+ \int_s^{\wt{T}} \Bigg(\p_x f(X^{s,m }_\tau(x),X^{s,m }_\tau\ot m,\alpha(X^{s,m }_\tau(x),X^{s,m }_\tau\ot m,Z^{s,m }_\tau(x)))\cdot Z^{s,m }_\tau(x)\\\label{gammaeq} 
&\ \ \ \ \ \ \ \ \ \ \ \ \ \ \ \ \ \ \ \ \ \ \ \ \ \ \ \ \ \ \ \ \ \ \ \ \ +\p_x g(X^{s,m }_\tau(x),X^{s,m }_\tau\ot m,\alpha(X^{s,m }_\tau(x),X^{s,m }_\tau\ot m,Z^{s,m }_\tau(x)))\Bigg)d\tau,
\end{align}\normalsize
where $Z^{s,m }_\tau(x):=\gamma\big(\tau,X^{s,m }_\tau(x),X^{s,m }_\tau\ot m\big)$ and
\small\begin{align}
\label{xeq}       
X^{s,m}_\tau(x) = x+\int_s^\tau f\Big(X^{s,m }_{\wt{\tau}}(x),X^{s,m }_{\wt{\tau}}\ot m,\alpha\big(X^{s,m }_{\wt{\tau}}(x),X^{s,m }_{\wt{\tau}}\ot m,\gamma(\tau,X^{s,m }_{\wt{\tau}}(x),X^{s,m }_{\wt{\tau}}\ot m)\big)\Big)d\wt{\tau},
\end{align}\normalsize
and $\alpha(x,\mu,z)$ is the optimal control solved via the first-order condition \eqref{first_order_condition}.
\end{definition}
\begin{remark}
In other words, the decoupling field $\gamma:=\gamma(s,x,m)$ has the following property: using the given $\gamma$, one can first solve for $X^{s,m}_\tau(x)$ in \eqref{xeq}, and then define $Z^{s,m }_\tau(x):=\gamma\big(\tau,X^{s,m }_\tau(x),X^{s,m }_\tau\ot m\big)$. Then, it can be checked directly that the couple $\big(X^{t,m}_s(x),Z^{t,m}_s(x)\big)_{x\in\R^{d_x},s\in[t,\wt{T}]}$ is a solution to the FBODE system \eqref{fbodesystem}.
\end{remark}


Our main results in this section are as follows.
\begin{theorem}\label{Thm6_1} (Local-in-time existence)
Assume that the drift function $f$ and the running cost $g$ satisfy Assumptions ${\bf(a1)}$ and ${\bf(a2)}$, respectively, the terminal function $p$ satisfies Assumption ${\bf(P)}$ with $L_p\leq \wb{L}_p:=\max\left\{\Lambda_k,L^*_0\right\}$ and the inequality relation\footnote{It echoes the condition \eqref{p_aa_f}.}
\begin{align}\label{p_aa_f_new}
\sup_{(x,\mu,\alpha)\in \R^{d_x}\times \mathcal{P}_{2}(\mathbb{R}^{d_x})\times \R^{d_\alpha}}\|\p_\alpha\p_\alpha f(x,\mu,\alpha)\|_{\mathcal{L}(\R^{d_\alpha}\times \R^{d_\alpha};\R^{d_x})}\cdot (1+|x|+\|\mu\|_1) \leq \frac{1}{40 \wb{L}_p}\lambda_g,
\end{align}  
among $f$, $g$ and $p$ is valid, then there exists a constant $\eps_1=\eps_1(\wb{L}_p; L_f,\Lambda_f,\wb{l}_f,L_g,\Lambda_g,\wb{l}_g,L_\alpha)>0$, such that, for any $0\leq t\leq \wt{T}\leq T$ with $\wt{T}-t\leq \eps_1$, there exists a unique continuous decoupling field $\gamma:(s,x,m)\in [t,\wt{T}]\times \R^{d_x}\times \mathcal{P}_2(\R^{d_x})\mapsto \gamma(s,x,m)\in \R^{d_x}$ for the FBODE system \eqref{fbodesystem} on $[t,\wt{T}]$, which also satisfies $\vertiii{\gamma}_{1,[t,\wt{T}]}\leq 2\wb{L}_p$. Morover, for any $m\in \mathcal{P}_2(\R^{d_x})$, define $X^{t,m}_s(x)$ by solving \eqref{xeq} and $Z^{t,m}_s(x):=\gamma(s,X^{t,m}_s(x),X^{t,m}_s\ot m)$, then the pair $(X_s^{t,m}(x),Z_s^{t,m}(x))_{x\in\R^{d_x},s\in[t,\wt{T}]}$ is a solution of FBODE system \eqref{fbodesystem}, each of them is continuous in $(t,m,x)\in[t,\wt{T}]\times\mc{P}_2(\R^{d_x})\times\R^{d_x}$ and is continuously differentiable in $s\in[t,\wt{T}]$; and it also satisfies the following estimate:
\begin{align}\label{cone_ZX}
\left|Z^{t,m}_s(x)\right|\leq 2\wb{L}_p\cdot\left(1+\left|X^{t,m}_s(x)\right|+\left\|X^{t,m}_s\right\|_{L^{1,d_x}_m}\right),
\end{align}
which means that $(X_s^{t,m}(x),Z_s^{t,m}(x))$ satisfies the cone condition $(X_s^{t,m}(x),X_s^{t,m}\ot m, Z_s^{t,m}(x))\in c_{k_0}$ with the cone set $c_{k_0}$ defined in \eqref{c_k_0}. In addition, $X^{t,m}_s(x)$ has the following flow property of $X^{t,m }_s(x)$, for any $0\leq t\leq s\leq \tau\leq T$,
\begin{align}\label{coro_6_4}
X^{t,m }_\tau(x)=X^{s,X^{t,m }_s\ot m }_\tau\big(X^{t,m }_s(x)\big).
\end{align}
\end{theorem}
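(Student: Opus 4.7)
The plan is to solve \eqref{fbodesystem} by a Banach fixed-point argument on the space of candidate decoupling fields. Define
\begin{equation*}
\mc{X} := \Bigl\{\gamma \in C\bigl([t,\wt{T}] \times \R^{d_x} \times \mc{P}_2(\R^{d_x}); \R^{d_x}\bigr) : \vertiii{\gamma}_{1,[t,\wt{T}]} \leq 2\wb{L}_p\Bigr\},
\end{equation*}
equipped with the complete weighted-supremum distance $d(\gamma,\gamma') := \vertiii{\gamma - \gamma'}_{1,[t,\wt{T}]}$. The choice of the growth bound $2\wb{L}_p$ is dictated by the cone condition: since $k_0 = 4\wb{L}_p$, any $\gamma \in \mc{X}$ ensures that the triple $(X, X\ot m, \gamma(\cdot, X, X\ot m))$, evaluated along any absolutely continuous curve $X$, lies in $c_{k_0}$ (see \eqref{c_k_0}); by Proposition \ref{A5} together with the hypothesis \eqref{p_aa_f_new}, the optimal feedback $\alpha$ is then well-defined, Lipschitz with constant $L_\alpha$, and of linear growth along such curves.

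Given $\gamma \in \mc{X}$, first solve the forward ODE \eqref{xeq} by the classical Cauchy--Lipschitz theorem to obtain a unique $C^1$-solution $X^{s,m}_\tau(x)$, using Propositions \ref{A1} and \ref{A5} for the required Lipschitz continuity of the integrand in $X$ (with $X\mapsto X\ot m$ being $W_1$-Lipschitz in $X$). A Gronwall estimate yields $|X^{s,m}_\tau(x)| + \|X^{s,m}_\tau \ot m\|_1 \leq e^{C_0(\tau-s)}(1+|x|+\|m\|_1)$ for some $C_0$ depending on $L_f, L_\alpha, \wb{L}_p$. Now define $\Phi(\gamma)(s,x,m)$ as the right-hand side of \eqref{gammaeq}, with $Z^{s,m}_\tau(x) := \gamma(\tau, X^{s,m}_\tau(x), X^{s,m}_\tau\ot m)$. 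Using Assumption ${\bf(P)}$ and the linear-growth bounds \eqref{LGgDerivatives}--\eqref{LGkDerivatives}, one obtains
\begin{equation*}
|\Phi(\gamma)(s,x,m)| \leq \bigl(L_p\, e^{C_0 \eps_1} + C_1 \eps_1\bigr)(1+|x|+\|m\|_1),
\end{equation*}
for a computable $C_1$ depending only on the constants listed in the theorem; since $L_p \leq \wb{L}_p$, choosing $\eps_1$ small enough forces $\Phi(\mc{X}) \subseteq \mc{X}$.

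For the contraction, given $\gamma, \gamma' \in \mc{X}$, Gronwall's inequality applied to the forward ODE, combined with the Lipschitz continuity of $f$ (Proposition \ref{A1}) and of $\alpha$ (cf. \eqref{lipalpha_old}), produces
\begin{equation*}
|X^{s,m}_\tau(x) - X'^{s,m}_\tau(x)| + W_1(X^{s,m}_\tau\ot m,\, X'^{s,m}_\tau\ot m) \leq C_2(\wt{T}-t)\, d(\gamma,\gamma')\, (1+|x|+\|m\|_1).
\end{equation*}
Substituting this into the expression for $\Phi(\gamma) - \Phi(\gamma')$ and invoking the Lipschitz bounds on $p$ (\eqref{lipp}), on $\p_x f$ (Assumption ${\bf(a1)}(iv)$), on $\p_x g$ (\eqref{Lipg_x}), and on $\alpha$ (\eqref{lipalpha_old}) then yields $d(\Phi(\gamma), \Phi(\gamma')) \leq C_3(\wt{T}-t)\, d(\gamma,\gamma')$. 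Choosing $\eps_1$ additionally so that $C_3 \eps_1 < 1$, the Banach fixed-point theorem delivers a unique $\gamma \in \mc{X}$ with $\Phi(\gamma) = \gamma$, which is the sought decoupling field.

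Given this fixed point, the pair $(X^{t,m}_s(x),\, Z^{t,m}_s(x)) := (X^{t,m}_s(x),\, \gamma(s, X^{t,m}_s(x), X^{t,m}_s\ot m))$ solves \eqref{fbodesystem} by direct differentiation of \eqref{gammaeq} in $s$; the cone estimate \eqref{cone_ZX} is immediate from $\vertiii{\gamma}_1 \leq 2\wb{L}_p$; continuity in $(t,m,x)$ and $C^1$-regularity in $s$ follow from the continuous dependence of ODE solutions on parameters together with the continuity of $\gamma$; and the flow property \eqref{coro_6_4} is a direct consequence of ODE uniqueness, since both sides of \eqref{coro_6_4} satisfy the same forward equation on $[s,\wt{T}]$ with the common initial value $X^{t,m}_s(x)$ at $\tau=s$. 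The principal technical obstacle will be to carry out the above Gronwall and contraction estimates in a manner that interacts cleanly with the push-forward operator $\ot$: Lipschitz estimates involving $\p_\mu$-derivatives must be transported to $W_1$-estimates via Proposition \ref{A0}, and one must verify rigorously that every constant entering $\eps_1$ depends only on $\wb{L}_p$ and on the structural constants $L_f, \Lambda_f, \wb{l}_f, L_g, \Lambda_g, \wb{l}_g, L_\alpha$ listed in the statement.
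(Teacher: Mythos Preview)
Your proposal is correct and follows essentially the same approach as the paper: the paper itself does not give a detailed proof here but explicitly refers to a fixed-point argument (``we use fixed point arguments to prove the local-in-time existence'') and states that the proof is similar to, but simpler than, Theorem~7.3 in \cite{bensoussan2023theory}. Your Banach fixed-point scheme on the space of candidate decoupling fields with the weighted norm $\vertiii{\cdot}_{1}$, together with the cone-condition verification via $k_0=4\wb{L}_p$ and Proposition~\ref{A5}, is precisely this strategy.
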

The proof of Theorem \ref{Thm6_1} is similar to, but simpler\footnote{The simplification is due to the absence of the terms $\p_\mu f(x,\mu,\alpha)$ and $\p_\mu g(x,\mu,\alpha)$ on the right-hand side of the FBODE system \eqref{fbodesystem}, in comparison with (7.1) of \cite{bensoussan2023theory}.} than, the proof of Theorem 7.3 presented in Section 7.2 of our previous paper \cite{bensoussan2023theory}.  Therefore, we leave the technical details to the interested readers.

\begin{theorem}\label{Thm6_2} (Regularity of the local solution)
Assume that the drift function $f$ and the running cost $g$ satisfy Assumptions ${\bf(a1)}$ and ${\bf(a2)}$ respectively, the terminal function $p$ satisfies Assumption ${\bf(P)}$ with $L_p\leq \wb{L}_p:=\max\left\{\Lambda_k,L^*_0\right\}$ and the inequality relation \eqref{p_aa_f_new} among $f$, $g$ and $p$ is valid, then there exists a constant $\eps_2=\eps_2(\wb{L}_p; L_f,\Lambda_f,\wb{l}_f,L_g,\Lambda_g,\wb{l}_g,L_\alpha)>0$, such that, for any $0\leq t\leq \wt{T}\leq T$ with $\wt{T}-t\leq \eps_2$, the continuous decoupling field $\gamma(s,x,\mu)$ for the FBODE system \eqref{fbodesystem} on $[t,\wt{T}]$ constructed in Theorem \ref{Thm6_1} has the following regularities:\\
(i) $\gamma(s,x,\mu)$ is differentiable in $s\in[\wt{T}-\eps_2,\wt{T}]$ and $x\in\R^{d_x}$, and $L$-differentiable in $\mu\in\mc{P}_2(\R^{d_x})$, and its derivatives $\p_s\gamma(s,x,\mu)$, $\p_x\gamma(s,x,\mu)$ and $\p_\mu\gamma(s,x,\mu)(\wt{x})$ are jointly continuous in their corresponding arguments $(s,x,\mu)\in [\wt{T}-\eps_2,\wt{T}]\times \R^{d_x} \times \mathcal{P}_2(\R^{d_x})$ and $(s,x,\mu, \wt{x})\in [\wt{T}-\eps_2,\wt{T}]\times \R^{d_x} \times \mathcal{P}_2(\R^{d_x})\times \R^{d_x}$ respectively; they also satisfy the following estimates:
\begin{align}\label{p_x_gamma_bdd}
\sup_{s\in[\wt{T}-\eps_2,\wt{T}],\,x\in\R^{d_x},\,\mu\in\mathcal{P}_2(\R^{d_x})} \left\|\p_x\gamma(s,x,\mu)\right\|_{\mathcal{L}(\R^{d_x};\R^{d_x})}\leq 2L_p,\\
\label{p_m_gamma_bdd}
\sup_{s\in[\wt{T}-\eps_2,\wt{T}],\,x\in\R^{d_x},\,\mu\in\mathcal{P}_2(\R^{d_x}),\,\wt{x}\in\R^{d_x}} \left\|\p_\mu \gamma(s,x,\mu)(\wt{x})\right\|_{\mathcal{L}(\R^{d_x};\R^{d_x})}\leq 2L_p.
\end{align}
Moreover, the solution pair $\big(X^{t,m}_s(x),Z^{t,m}_s(x)=\gamma(s,X^{t,m}_s(x),X^{t,m}_s\ot m)\big)_{x\in\R^{d_x},s\in[\wt{T}-\eps_2,\wt{T}]}$ of FBODE system \eqref{fbodesystem} constructed in Theorem \ref{Thm6_1} through $\gamma$ has the following regularities:\\
(ii) $X^{t,m}_s(x)$ is differentiable in $x\in\R^{d_x}$ and $L$-differentiable in $m\in\mc{P}_2(\R^{d_x})$, and its derivatives $\p_x\big(X^{t,m}_s(x)\big)$ and $\p_m\big(X^{t,m}_s(x)\big)(y)$ are jointly continuous in their corresponding arguments $(t,m,x)\in [\wt{T}-\eps_2,\wt{T}]\times \mathcal{P}_2(\R^{d_x})\times \R^{d_x}$ and $(t,m,x,y)\in [\wt{T}-\eps_2,\wt{T}]\times \mathcal{P}_2(\R^{d_x})\times \R^{d_x}\times \R^{d_x}$ respectively; $X^{t,m}_s(x)$ and these derivatives are also continuously differentiable in $s\in[t,\wt{T}]$ and satisfy the following estimates:
\begin{align}\label{p_x_X_bdd}
&\left\|\p_x\big(X^{t,m}_s(x)\big)\right\|_{\mc{L}(\R^{d_x};\R^{d_x})}\leq \exp\big(L_B(s-t)\big),\\\label{p_m_X_bdd}
&\left\|\p_m\big(X^{t,m}_s(x)\big)(y)\right\|_{\mc{L}(\R^{d_x};\R^{d_x})}\leq L_B(s-t)\Big(L_B(s-t)\exp\Big(2L_B(s-t)\Big)+1\Big)\exp\Big(2L_B(s-t)\Big);
\end{align}
(iii) $X^{t,m}_s(x)$ is differentiable in $t\in[\wt{T}-\eps_2,\wt{T}]$; its derivative $\p_t\big(X^{t,m}_s(x)\big)$ is jointly continuous in its arguments $(t,m,x)\in [\wt{T}-\eps_2,\wt{T}]\times \mathcal{P}_2(\R^{d_x})\times \R^{d_x}$; this derivative is also continuously differentiable in $s\in[t,\wt{T}]$ and satisfies the following estimates:
\begin{align}\label{p_t_X_bdd}
\left|\p_t\big(X^{t,m}_s(x)\big)\right|\leq \wb{L}_B\big(1+|x|+\|m\|_1\big)\Big(L_B(s-t)\exp\Big(2L_B(s-t)\Big)+1\Big)\exp\Big(L_B(s-t)\Big);
\end{align}
here $L_B$ and $\wb{L}_B$ are constants defined in \eqref{L_B}.\\
In particular, $Z^{t,m}_s(x)=\gamma(s,X^{t,m}_s(x),X^{t,m}_s\ot m)$ is differentiable in $t\in[\wt{T}-\eps_2,\wt{T}]$ and $x\in\R^{d_x}$, and $L$-differentiable in $m\in\mc{P}_2(\R^{d_x})$; and its derivatives $\p_t\big(Z^{t,m}_s(x)\big)$, $\p_x\big(Z^{t,m}_s(x)\big)$ and $\p_m\big(Z^{t,m}_s(x)\big)(y)$ are jointly continuous in their corresponding arguments $(t,m,x)\in [\wt{T}-\eps_2,\wt{T}]\times \mathcal{P}_2(\R^{d_x})\times \R^{d_x}$ and $(t,m,x,y)\in [\wt{T}-\eps_2,\wt{T}]\times \mathcal{P}_2(\R^{d_x})\times \R^{d_x}\times \R^{d_x}$ respectively; $Z^{t,m}_s(x)$ and these derivatives also continuously differentiable in $s\in[t,\wt{T}]$ and satisfy the following estimates:
\footnotesize\begin{align}\label{p_x_Z_bdd_n}
&\left\|\p_x\big(Z^{t,m}_s(x)\big)\right\|_{\mc{L}(\R^{d_x};\R^{d_x})}\leq 2L_p\exp\big(L_B(s-t)\big);\\\label{p_m_Z_bdd_n}
&\left\|\p_m\big(Z^{t,m}_s(x)\big)(y)\right\|_{\mc{L}(\R^{d_x};\R^{d_x})}\leq 4L_pL_B(s-t)\Big(L_B(s-t)\exp\Big(2L_B(s-t)\Big)+1\Big)\exp\Big(2L_B(s-t)\Big)+2L_p\exp\big(L_B(s-t)\big);\\\label{p_t_Z_bdd_n}
&\left|\p_t\big(Z^{t,m}_s(x)\big)\right|\leq 4L_p\wb{L}_B\big(1+|x|+\|m\|_1\big)\Big(L_B(s-t)\exp\Big(2L_B(s-t)\Big)+1\Big)\exp\Big(L_B(s-t)\Big).
\end{align}\normalsize
\end{theorem}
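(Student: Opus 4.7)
\textbf{Proof proposal for Theorem~\ref{Thm6_2}.}

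The overall strategy is to refine the fixed-point construction used for Theorem~\ref{Thm6_1} so that first-order derivatives in $x$, $\mu$, $t$, and $s$ are propagated through the iteration, and then to close linear ODEs for these derivatives by Gronwall-type arguments. Because Theorem~\ref{Thm6_1} already gives a Lipschitz decoupling field $\gamma$ with $\vertiii{\gamma}_{1}\le 2\wb{L}_p$, the whole trajectory stays in the cone $c_{k_0}$; hence, by Propositions~\ref{A5} and \ref{A6}, the optimal control $\alpha(x,\mu,z)$ and all the coefficient functions appearing on the right-hand sides of \eqref{gammaeq}--\eqref{xeq} have the Lipschitz and bounded-derivative estimates listed in Section~\ref{properties}. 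I would then build $\gamma$ as a fixed point in the refined Banach space
\[
\mathcal{B}:=\Big\{\gamma\in C([t,\wt T]\times\R^{d_x}\times\mathcal{P}_2(\R^{d_x});\R^{d_x}): \vertiii{\gamma}_1\le 2\wb{L}_p,\ \vertiii{\gamma}_2\le 2L_p\Big\},
\]
equipped with a norm that measures both the sup-norm and a Lipschitz/derivative seminorm. The key is that choosing $\eps_2=\eps_2(\wb{L}_p;\dots)$ sufficiently small makes the map defined by the right-hand side of \eqref{gammaeq} a contraction on $\mathcal{B}$, so the limit inherits the derivative bounds \eqref{p_x_gamma_bdd}--\eqref{p_m_gamma_bdd}.

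For part (i) the concrete mechanics are as follows. I would differentiate \eqref{xeq} and \eqref{gammaeq} formally with respect to $x$ and with respect to $\mu$ (the latter along tangent vectors in $L^{2,d_x}_m$), using the chain rule for $L$-derivatives together with the identities \eqref{eq_5_25_new}--\eqref{eq_5_27_new} for the derivatives of $\wh\alpha$. This yields linear integral equations of the schematic form
\begin{align*}
\p_x\gamma(s,x,m)&=\p_x p\cdot\p_xX^{s,m}_{\wt T}+\p_\mu p\cdot(\p_xX^{s,m}_{\wt T}\ot m)+\int_s^{\wt T}\!\!\bigl[\text{linear in }\p_x\gamma,\p_xX,\p_\mu X\bigr]\,d\tau,
\end{align*}
and analogously for $\p_\mu\gamma$, whose solvability and Gronwall-bound of $2L_p$ both require $\eps_2$ small. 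Joint continuity of $\p_x\gamma$ and $\p_\mu\gamma$ in all arguments comes from the dominated convergence applied to these integral identities. Differentiability of $\gamma$ in $s\in[\wt T-\eps_2,\wt T]$ with continuity of $\p_s\gamma$ follows directly from \eqref{gammaeq}, since the integrand is continuous in all its arguments.

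For parts (ii) and (iii), once $\gamma$ is known to be $C^1$, the forward ODE \eqref{xeq} has a $C^1$ (in the appropriate Wasserstein sense) vector field, so classical ODE theory in the lifted Hilbert space yields differentiability of $X^{t,m}_s(x)$ in $x$, in $m\in\mathcal{P}_2(\R^{d_x})$, and in $t$. The linearized equations take the schematic form
\begin{align*}
\tfrac{d}{ds}\p_xX^{t,m}_s&=B_s\,\p_xX^{t,m}_s,\quad \p_xX^{t,m}_t=\mathrm{Id},\\
\tfrac{d}{ds}\p_mX^{t,m}_s(y)&=B_s\,\p_mX^{t,m}_s(y)+C_s(y),\quad \p_mX^{t,m}_t(y)=0,\\
\tfrac{d}{ds}\p_tX^{t,m}_s&=B_s\,\p_tX^{t,m}_s,\quad \p_tX^{t,m}_s\big|_{s=t}=-f\bigl(x,m,\alpha(x,m,\gamma(t,x,m))\bigr),
\end{align*}
where $B_s$ and $C_s$ are operator-valued coefficients built from $\p_xf$, $\p_\mu f$, $\p_x\wh\alpha$, $\p_\mu\wh\alpha$, $\p_z\wh\alpha$, $\p_x\gamma$ and $\p_\mu\gamma$; all are bounded by explicit constants via Propositions~\ref{A1}--\ref{A5} and the just-established derivative bounds on $\gamma$. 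Applying Gronwall gives \eqref{p_x_X_bdd}--\eqref{p_t_X_bdd}, with the operator norm $\|B_s\|\le L_B$ and initial condition of the $t$-derivative of magnitude $\wb{L}_B(1+|x|+\|m\|_1)$ driving the prefactors. Finally, differentiability and estimates for $Z^{t,m}_s=\gamma(s,X^{t,m}_s,X^{t,m}_s\ot m)$ follow by the chain rule, combining \eqref{p_x_gamma_bdd}--\eqref{p_m_gamma_bdd} with \eqref{p_x_X_bdd}--\eqref{p_t_X_bdd}; this immediately yields \eqref{p_x_Z_bdd_n}--\eqref{p_t_Z_bdd_n}.

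The main obstacle I expect is the handling of the $L$-derivative in $m$ through the push-forward $X^{t,m}_s\ot m$. When differentiating \eqref{xeq} along a tangent $Y\in L^{2,d_x}_m$, the measure argument contributes a term involving both $\p_\mu f(\cdot)(\wt x)\big|_{\wt x=X^{t,m}_s(y)}\cdot Y(y)$ \emph{and} $\p_\mu f(\cdot)(\wt x)\big|_{\wt x=X^{t,m}_s(y)}\cdot\p_mX^{t,m}_s(y)(\cdot)$, which mix the unknown with its own $L$-derivative under an $m$-integral; establishing that the resulting system is contractive in $L^{2,d_x}_m\otimes L^{2,d_x}_m$ and preserving joint continuity through the product topology is the technical heart of the argument. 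Provided $\eps_2$ is taken small enough relative to $L_B$, $\wb{L}_B$, and $\wb{L}_p$, the contraction/Gronwall closure goes through exactly as in the proof of Theorem~7.4 of \cite{bensoussan2023theory}, with noticeable simplifications stemming from the absence of $\p_\mu f$ and $\p_\mu g$ terms on the right-hand side of the backward equation of \eqref{fbodesystem}.
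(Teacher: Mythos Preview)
Your proposal is correct and follows essentially the same approach as the paper, which explicitly defers to the proof of Theorem~7.4 in \cite{bensoussan2023theory} and notes the identical simplification you highlight (the absence of $\p_\mu f$ and $\p_\mu g$ on the right-hand side of the backward equation in \eqref{fbodesystem}). Your sketch of the refined fixed-point space for $\gamma$, the linearized ODEs for $\p_xX$, $\p_mX$, $\p_tX$, and the chain-rule derivation of the $Z$-bounds matches the intended argument.
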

The proof of Theorem \ref{Thm6_2} is similar to, but simpler than, the proof of Theorem 7.4 presented in Section 7.3 of our previous paper \cite{bensoussan2023theory}. 
Therefore, we also leave the technical details of this proof to the interested readers.
\begin{remark}
The assumption on the validity of \eqref{p_aa_f_new} condition is used only for ensuring the unique solvability of the optimal control $\alpha(x,\mu,z)$, which serves a sufficient condition but not necessary one. However, this assumption simplifies the calculations involved, as shown in Proposition \ref{A5}. If the optimal control can be found explicitly and is regular enough, the assumption \eqref{p_aa_f_new} in Theorems \ref{Thm6_1} and \ref{Thm6_2} can even be removed.
\end{remark}

\section{Crucial Estimate and Global-in-time Existence}\label{sec:global}
\subsection{Hypotheses and Properties}\label{sec:hypotheses}
To obtain a global-in-time solution of \eqref{FBODE}, we only need to establish the global existence of a decoupling field $\gamma$ for the FBODE system \eqref{FBODE} on the interval $[0,T]$, which also satisfies \eqref{gammaeq}. This is because the pair $\big(X^{t,m}_s(x),Z^{t,m}_s(x)\big)$ defined by solving \eqref{xeq} and $Z^{t,m}_s(x):=\gamma(s,X^{t,m}_s(x),X^{t,m}_s\ot m)$ is a solution of the FBODE system \eqref{fbodesystem}. Moreover, the FBODE system \eqref{fbodesystem} reduces to the system \eqref{FBODE} when $\wt{T}=T$ and $p(x,\mu)=\p_x k(x,\mu)$. Therefore, establishing the global existence of a decoupling field $\gamma$ yields a global-in-time solution to the FBODE system \eqref{FBODE}.

To extend the local-in-time solution of \eqref{gammaeq} to a global-in-time solution over the interval $[0,T]$, we need to derive several new {\it a priori} estimates, which play a crucial role of extending the existence lifespan and can be established under Assumptions ${\bf(a1)}$-${\bf(a3)}$ and the following additional \textbf{Hypotheses}:\\ 
$\bf{(h1)}$ $f(0,\delta_0,0)=0$, $\p_x g(0,\delta_0,0)=\p_\alpha g(0,\delta_0,0)=0$, $\p_\mu g(0,\delta_0,0)(0)=0$, $\p_x k(0,\delta_0)=0$ and $\p_\mu k(0,\delta_0)(0)=0$, where $\delta_0$ is the Dirac measure with a point mass at $0$.\\
$\bf{(h2)}$ We set the requirement: there are positive constants $\lambda_1$ and $\lambda_2$ 
 such that, 
for any $m\in \mathcal{P}_2(\R^{d_x})$ and $X,\ \wt{X},\ Z,\ \wt{Z}\in L^{2,d_x}_m$ satisfying $(X(x),X\ot m,Z(x))\in c_{k_0}$,
\small\begin{align}\no
&\ (\lambda_g-l_g)\int_{\R^{d_x}}|\wt{X}(x)|^2dm(x)-\int_{\R^{d_x}}\int_{\R^{d_x}}\wt{Z}(x)^\top\p_\mu\p_z h(X(x),X\ot m,Z(x))\big(X(\wt{x})\big)\wt{X}(\wt{x})dm(\wt{x})dm(x)\\\label{positive_H_mu}
&\ -\int_{\R^{d_x}}\wt{Z}(x)^\top\p_z\p_z h(X(x),X\ot m,Z(x)) \wt{Z}(x)dm(x)
\geq \lambda_1\int_{\R^{d_x}}|\wt{X}(x)|^2dm(x)+\lambda_2\int_{\R^{d_x}}|\wt{Z}(x)|^2dm(x),
\end{align}\normalsize  
where $h(x,\mu,z)$ is the Hamiltonian defined in \eqref{hamilton1} and the cone set $c_{k_0}$ was defined in \eqref{c_k_0}.\\ 
$\bf{(h3)}$ Recall the constants defined in ${\bf(a1)}-{\bf(a3)}$, we here specify $\wb{l}_f\leq \frac{1}{40 \max\{\Lambda_k,L^*_0\}} \min\{\lambda_1,\lambda_g\}$ and $8\wb{l}_g\leq 
\min\{\lambda_1,\lambda_g\}$,
where the positive constant $L^*_0$ defined in \eqref{L_star_0} depends only on $\lambda_f$, $\Lambda_1$, $\Lambda_2$, $\Lambda_3$, $\lambda_1$, $\lambda_2$, $\lambda_g$, $\Lambda_g$, $l_g$, $\lambda_k$, $\Lambda_k$ and $l_k$ but not on $\wb{l}_g$ and $\wb{l}_f$.


\begin{remark}
(i) Hypotheses $\bf{(h1)}$-$\bf{(h3)}$ are satisfied in many interesting cases including Linear-Quadratic setting (see \cite{bensoussan2016linear}), also see an immediate non-linear quadratic examples in Section \ref{sec:nonLQ}. $\bf{(h1)}$ only serves for reducing the technicalities involved, it is not crucial and our methodology can even apply to the case without assuming it; we include here just for the sake of simplified illustration; indeed for the case with linear drift function $f$, we can even drop Hypothesis $\bf{(h1)}$ and set $\wb{l}_f=0$, so that $\bf{(h3)}$ can be automatically partially satisfied. \\
(ii)  $\bf{(h3)}$ tells us that exaggerated cross interactions among control, state and its measure should be avoided in order to get an equilibrium strategy for the game in long run. Many practical examples in the literature also justifies the fact that self-reinforced interactions among them will jeopardize the long term stability of the game; this is a common understanding in the literature, and some authors of the present work also have obtained such a consensus in a private communication with Peter Caines \cite{Peter2015}.
The constants chosen in  $\bf{(h3)}$ are just for convenience and we do not intend to optimize them.\\
(iii) By \eqref{positive_H_mu}, we can derive that $\lambda_1\leq\lambda_g-l_g$ by setting $\wt{Z}(x)=0$, and thus $\lambda_1\leq\lambda_g$ when $l_g$ is non-negative. In addition, under Assumptions ${\bf(a1)}$-${\bf(a3)}$ and Hypothesis $\bf{(h3)}$, we have the results stated in Propositions \ref{A1}-\ref{A6} since $\wb{l}_f\leq \frac{1}{40 \max\{\Lambda_k,L^*_0\}}\lambda_g$ implies \eqref{p_aa_f}.
\\ (iv) The condition $\wb{l}_f\leq \frac{1}{40 \max\{\Lambda_k,L^*_0\}}\min\{\lambda_1,\lambda_g\}$ in Hypothesis $\bf{(h3)}$ means that the rate of the second-order derivatives of drift function $f$ is relatively small, it means that the drift rate cannot have a large quadratic-growth effect; even for the classical control problems, the drift function with quadratic-growth may sometimes lead to ill-posedness, and this limitation on modeling can also be observed in a lot of practical applications; see the monograph \cite{bensoussan2018estimation} for instance.\\
(v) We can also set the requirement in $\bf{(h2)}$ as follows: there are positive constants $\wt{\lambda}_1$ and $\lambda_2$ 
 such that, 
for any $m\in \mathcal{P}_2(\R^{d_x})$ and $X,\ \wt{X},\ Z,\ \wt{Z}\in L^{2,d_x}_m$ satisfying $(X(x),X\ot m,Z(x))\in c_{k_0}$,
\small\begin{align}\no
&\ \left(\frac12\lambda_g-l_g\right)\int_{\R^{d_x}}|\wt{X}(x)|^2dm(x)-\int_{\R^{d_x}}\int_{\R^{d_x}}\wt{Z}(x)^\top\p_\mu\p_z h(X(x),X\ot m,Z(x))\big(X(\wt{x})\big)\wt{X}(\wt{x})dm(\wt{x})dm(x)\\\label{positive_H_mu_alter}
&\ -\int_{\R^{d_x}}\wt{Z}(x)^\top\p_z\p_z h(X(x),X\ot m,Z(x)) \wt{Z}(x)dm(x)
\geq \wt{\lambda}_1\int_{\R^{d_x}}|\wt{X}(x)|^2dm(x)+\lambda_2\int_{\R^{d_x}}|\wt{Z}(x)|^2dm(x),
\end{align}\normalsize  
which can imply \eqref{positive_H_mu} with $\lambda_1=\wt{\lambda}_1+\frac{1}{2}\lambda_g$. Thus, a sufficient condition for Hypothesis $\bf{(h3)}$ is $\wb{l}_f\leq \frac{1}{40 \max\{\Lambda_k,L^*_0\}} \cdot\frac12 \lambda_g$ and $8\wb{l}_g\leq \frac{1}{2}\lambda_g$; in particular, $\wt{\lambda}_1$ can be chosen arbitrary close to $0$.
\\(vi) The second condition of Hypothesis $\bf{(h3)}$ in the form $\wb{l}_g\leq C\lambda_g$ for some constant $C>0$ is also implicitly involved in the displacement monotonicity condition (see Assumption 3.5 (ii) in \cite{gangbo2022mean}), even in the following simplest case: consider the simplified case that, $d_x=d_\alpha=1$, $f(x,\mu,\alpha)=\alpha$, and $g(x,\mu,\alpha)=\frac{1}{2}\lambda_g x^2+\frac{1}{2}\lambda_g \alpha^2+\wb{l}_g x\alpha$ where the constants $\lambda_g>0$ and $\wb{l}_g\geq 0$. Then $\wh{\alpha}(x,\mu,z)=-\frac{1}{\lambda_g}(z+\wb{l}_g x)$ and $h(x,\mu,z)=-\frac{1}{2\lambda_g}(z+\wb{l}_g x)^2+\frac{1}{2}\lambda_g x^2$. In this case, since the Hamiltonian $h$ is independent of $\mu$, the displacement monotonicity condition is reduced to $\p_x\p_x h=-\frac{\wb{l}_g^2}{\lambda_g}+\lambda_g\geq 0$, that is, $\lambda_g\geq\wb{l}_g$.
\\(vii) 
The first condition of Hypothesis $\bf{(h3)}$ in the form $\bar{l}_f \leq C \min\{ \lambda_1, \lambda_g\}$ for some constant $C>0$ prevents the drift $f$ in forward dynamics from exhibiting excessively strong interactions among $x$, $\mu$, and $\alpha$. This requirement is a natural sufficient condition for the global-in-time solvability of the forward dynamics, as it avoids $f$ having some sort of quadratic growth that could potentially lead to a finite time blowup of the state. In practical applications, further structural information about $f$ is usually available, and one may further relax this condition accordingly. However, for the sake of convenience, we impose this condition in this work to simplify our mathematical analysis for the generic case.

\end{remark}

\begin{remark}\label{remark_h2}
(i) A sufficient condition (on the coefficient functions $f$ and $g$) for hypothesis $\bf{(h2)}$ is to assume that
\begin{align}\label{h3_eq_1}
\Big(\Lambda_2+\frac{1}{4}\Lambda_1\Big)^2-4(\lambda_g-l_g)\cdot\frac{\lambda_f^2}{\Lambda_g+\lambda_g/20}<0.
\end{align}
Under the condition \eqref{h3_eq_1}, the corresponding $\lambda_1$ and $\lambda_2$ in \eqref{positive_H_mu} can be computed as follows. First, by Hypothesis ${\bf(h2)}$, \eqref{bdd_d1_f}, \eqref{p_xalpha} and Proposition \ref{A6}, we have, for any $(x,\mu,z)\in c_{k_0}$, where $k_0$ was defined in \eqref{k_0}, and for any $y,\,\xi\in\R^{d_x}$, 
\begin{align}\no
&\|\p_\mu\p_z h(x,\mu,z)(y)\|_{\mc{L}(\R^{d_x}\times \R^{d_x};\R)}=\|\p_\mu f(x,\mu,\alpha(x,\mu,z))(y)+\p_\alpha f(x,\mu,\alpha(x,\mu,z))\p_\mu \alpha(x,\mu,z)(y)\|_{\mc{L}(\R^{d_x}; \R^{d_x})}\\\label{eq_8_3_1}
&\leq \Lambda_2 +\Lambda_1 \frac{20\big(\wb{l}_g+\frac{1}{2}k_0\cdot\wb{l}_f\big) }{19\lambda_g}<\Lambda_2+\frac{1}{4}\Lambda_1<\frac54 \Lambda_f,\\\no
& \text{ and }-\xi^\top\p_z\p_z h(x,\mu,z)\xi=-\xi^\top\p_\alpha f(x,\mu,\alpha(x,\mu,z))\p_z \alpha(x,\mu,z)\xi\geq \frac{\lambda_f^2}{\Lambda_g+\lambda_g/20}|\xi|^2,
\end{align}
which imply that, for any $m\in \mathcal{P}_2(\R^{d_x})$, and for any $X,\ \wt{X},\ Z,\ \wt{Z}\in L^{2,d_x}_m$ so that  $(X(x),X\ot m,Z(x))\in c_{k_0}$,
\small\begin{align}\no
&\ (\lambda_g-l_g)\int_{\R^{d_x}}|\wt{X}(x)|^2dm(x)-\int_{\R^{d_x}}\int_{\R^{d_x}}\wt{Z}(x)^\top\p_\mu\p_z h(X(x),X\ot m,Z(x))\big(X(\wt{x})\big)\wt{X}(\wt{x})dm(\wt{x})dm(x)\\\no
&\ -\int_{\R^{d_x}}\wt{Z}(x)^\top\p_z\p_z h(X(x),X\ot m,Z(x)) \wt{Z}(x)dm(x)\\\no
\geq &\ (\lambda_g-l_g)\int_{\R^{d_x}}|\wt{X}(\wt{x})|^2dm(\wt{x})+\frac{\lambda_f^2}{\Lambda_g+\lambda_g/20}\int_{\R^{d_x}}|\wt{Z}(x)|^2dm(x)\\\no
&\ -\Big(\Lambda_2+\frac{1}{4}\Lambda_1\Big) \int_{\R^{d_x}}|\wt{X}(\wt{x})|dm(\wt{x}) \int_{\R^{d_x}}|\wt{Z}(x)|dm(x)\\\no
\geq&\  \lambda_1\int_{\R^{d_x}}|\wt{X}(\wt{x})|^2dm(\wt{x})+\lambda_2\int_{\R^{d_x}}|\wt{Z}(x)|^2dm(x),
\end{align}\normalsize  
for some positive constants $\lambda_1$ and $\lambda_2$ depending only on $\lambda_f$, $\Lambda_1$, $\Lambda_2$, $\lambda_g$,  $\Lambda_g$ and $l_g$ since we have \eqref{h3_eq_1}; for instant, we can set $\lambda_1:=(1-\theta)(\lambda_g-l_g)$ and $\lambda_2:=(1-\theta)\frac{\lambda_f^2}{\Lambda_g+\lambda_g/20}$ where $\theta=\bigg(\Big(\Lambda_2+\frac{1}{4}\Lambda_1\Big)^2/\Big(4(\lambda_g-l_g)\cdot\frac{\lambda_f^2}{\Lambda_g+\lambda_g/20}\Big)\bigg)^{1/2}\in[0,1)$.  

In fact, by \eqref{eq_8_3_1}, the hypothesis \eqref{positive_H_mu} means that $\p_\mu \alpha(x,\mu,z)(y)$ (and thus $\p_\mu\p_\alpha f(x,\mu,\alpha)(y)$ and $\p_\mu\p_\alpha g(x,\mu,\alpha)(y)$) should be suitably small, and the derivative $\p_\mu f(x,\mu,\alpha(x,\mu,z))(y)$ should not be too large compared with the absolute value of $\p_\alpha f(x,\mu,\wh{\alpha}(x,\mu,z))(y)$ and the convexity of the running cost function $g(x,\mu,\alpha)$ in $(x,\mu)$. 
Also note that, because of \eqref{eq_8_3_1}, a sufficient condition for the hypothesis \eqref{positive_H_mu} is $\frac{25}{16}\Lambda_f^2-4(\lambda_g-l_g)\cdot\frac{\lambda_f^2}{\Lambda_g+\lambda_g/20}<0$, and readers can also find that the proposed non-linear quadratic example in Section \ref{sec:nonLQ} actually fulfills this sufficient condition.

(ii) Also note that if $\p_\mu f(x,\mu,\alpha(x,\mu,z))(y)=\p_\mu\p_\alpha f(x,\mu,\alpha)(y)=\p_\mu\p_\alpha g(x,\mu,\alpha)(y)=0$, then $\p_\mu \p_z h(x,\mu,z)(y)=0$, which means that the Hamiltonian $h(x,\mu,z)$ is separable; in this case, \eqref{positive_H_mu} is automatically satisfied with the choices of $\lambda_1=\lambda_g-l_g>0$ and $\lambda_2=\frac{\lambda_f^2}{\Lambda_g+\lambda_g/20}$.
In addition, the displacement monotonicity condition for Hamiltonian, which is similar to \eqref{positive_H_mu} but not the same, is also assumed in Assumption 3.5 (ii) (that is (3.2)) in \cite{gangbo2022mean} to deal with non-separable Hamiltonian. We here need to cope with nonlinear drift functions, both expressions $\p_x\p_x h(x,\mu,z)(y)=\p_x\p_x f(x,\mu,\alpha(x,\mu,z))\cdot z+\p_x\p_x g(x,\mu,\alpha(x,\mu,z))$ and $\p_\mu\p_x h(x,\mu,z)(y)=\p_\mu\p_x f(x,\mu,\alpha(x,\mu,z))(y)\cdot z+\p_\mu\p_x g(x,\mu,\alpha(x,\mu,z))(y)$ allow $z$ to take values without a bound, and to obtain more tractable conditions, one should impose assumptions to the coefficient functions $f$ and $g$ directly than to the Hamiltonian $h$ only.
\end{remark}

By using Hypothesis ${\bf(h1)}$, we have the following proposition which guides us with better linear-growth estimates for various coefficient functions and the equilibrium control.
\begin{prpstn}
Under Assumptions ${\bf(a1)}$-${\bf(a3)}$ and Hypotheses ${\bf(h1)}$-${\bf(h3)}$, we have $\wh{\alpha}(0,\delta_0,0)=0$ and the following linear-growth estimates hold:
\begin{align}\label{ligfb}
&i)\ |f(x,\mu,\alpha)|\leq L_f \left(|x|+|\alpha| + \|\mu\|_1 \right);\\\label{LGgDerivativesb}
&ii)\ \sup_{x\in\R^{d_x},\,\mu\in\mathcal{P}_2(\R^{d_x}),\,\alpha\in\R^{d_\alpha},\,\wt{x}\in\R^{d_x}}\frac{\left|\p_x g(x,\mu,\alpha)\right|}{|x|+ \|\mu\|_1+|\alpha|}
\vee\frac{\left|\p_\alpha g(x,\mu,\alpha)\right|}{|x|+ \|\mu\|_1+|\alpha|}\leq L_g;\\\label{LGkDerivativesb}
&iii)\ \sup_{x\in\R^{d_x},\,\mu\in\mathcal{P}_2(\R^{d_x}),\,\wt{x}\in\R^{d_x}}\frac{\left|\p_x k(x,\mu)\right|}{|x|+ \|\mu\|_1}\leq L_k;\\\label{linalphab}
&iv)\ |\wh{\alpha}(x,\mu,z)|\leq L_\alpha (|x|+\|\mu\|_1+|z|),
\end{align}
where $L_f$, $L_g$, $L_k$ and $L_\alpha$ were defined in Proposition \ref{A1}, \ref{A2} and \ref{A5}, respectively.
\end{prpstn}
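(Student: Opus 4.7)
The plan is to derive every statement from the vanishing identities at the base point $(0,\delta_{0},0)$ provided by Hypothesis ${\bf(h1)}$, combined with the uniform Lipschitz estimates that have already been collected in Propositions \ref{A1}, \ref{A2} and \ref{A5}. The one step needing a brief independent argument is $\wh{\alpha}(0,\delta_{0},0)=0$.

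First I would establish $\wh{\alpha}(0,\delta_{0},0)=0$. For $z=0$ the map $\alpha\mapsto f(0,\delta_{0},\alpha)\cdot 0+g(0,\delta_{0},\alpha)=g(0,\delta_{0},\alpha)$ is strictly convex in $\alpha$ by \eqref{positive_g_alpha}, so it admits a unique minimizer characterized by the first-order condition $\p_{\alpha}g(0,\delta_{0},\wh{\alpha})=0$. Since ${\bf(h1)}$ gives $\p_{\alpha}g(0,\delta_{0},0)=0$, uniqueness forces $\wh{\alpha}(0,\delta_{0},0)=0$. Note that this also collapses the definition \eqref{L_alpha} of $L_{\alpha}$ to the max of the two first arguments.

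Next I would derive (i)--(iii) by subtracting the base-point values. For (i), using \eqref{Lipf} and ${\bf(h1)}$,
\[
|f(x,\mu,\alpha)|=|f(x,\mu,\alpha)-f(0,\delta_{0},0)|\leq L_{f}\bigl(|x|+W_{1}(\mu,\delta_{0})+|\alpha|\bigr)=L_{f}\bigl(|x|+\|\mu\|_{1}+|\alpha|\bigr),
\]
since $W_{1}(\mu,\delta_{0})=\int|y|\,d\mu(y)=\|\mu\|_{1}$. For (ii), the Lipschitz bounds \eqref{Lipg_x}--\eqref{Lipg_alpha} together with $\p_{x}g(0,\delta_{0},0)=\p_{\alpha}g(0,\delta_{0},0)=0$ from ${\bf(h1)}$ yield the same type of bound, absorbing the constant $\max\{\Lambda_{g},\wb{l}_{g}\}$ into $L_{g}$ as in the definition \eqref{LGgDerivatives}. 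For (iii), the same argument with Assumption ${\bf(a3)}$ and $\p_{x}k(0,\delta_{0})=0$ from ${\bf(h1)}$ produces $|\p_{x}k(x,\mu)|\leq L_{k}(|x|+\|\mu\|_{1})$.

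For (iv), I would apply the Lipschitz estimate \eqref{lipalpha_old} of Proposition \ref{A5} between the points $(0,\delta_{0},0)$ and $(x,\mu,z)$. Both lie in the cone $c_{k_{0}}$ (the origin trivially), so the only thing to verify is that the interpolating path stays inside $c_{k_{0}}$; this is the only mildly delicate point in the argument. Taking the linear interpolation of measures $\mu_{\theta}:=\theta\mu+(1-\theta)\delta_{0}$ one has $\|\mu_{\theta}\|_{1}=\theta\|\mu\|_{1}$, so the cone inequality at $(\theta x,\mu_{\theta},\theta z)$ reads $\theta|z|\leq\tfrac{1}{2}k_{0}(1+\theta|x|+\theta\|\mu\|_{1})$, which is implied by $|z|\leq\tfrac{1}{2}k_{0}(1+|x|+\|\mu\|_{1})$ since $\theta\in[0,1]$. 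Hence \eqref{lipalpha_old} applies and, using $\wh{\alpha}(0,\delta_{0},0)=0$,
\[
|\wh{\alpha}(x,\mu,z)|\leq L_{\alpha}\bigl(|x|+W_{1}(\mu,\delta_{0})+|z|\bigr)=L_{\alpha}\bigl(|x|+\|\mu\|_{1}+|z|\bigr),
\]
which is (iv). The main (and essentially only) obstacle is the verification of the cone property along the interpolation in the last step; everything else is a direct subtraction from the Lipschitz estimates already recorded in Section~\ref{properties}.
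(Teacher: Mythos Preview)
Your proof is correct and follows essentially the same approach as the paper: the paper also obtains $\wh{\alpha}(0,\delta_{0},0)=0$ by checking that $\alpha=0$ satisfies the first-order condition \eqref{first_order_condition} at $(0,\delta_{0},0)$ via ${\bf(h1)}$ and invokes the uniqueness from Proposition~\ref{A5}, then derives \eqref{ligfb}--\eqref{linalphab} by rerunning the arguments for \eqref{ligf}, \eqref{LGgDerivatives}, \eqref{LGkDerivatives}, \eqref{linalpha} with the base-point values now equal to zero. Your explicit verification that the interpolation path $(\theta x,\theta\mu+(1-\theta)\delta_{0},\theta z)$ remains in $c_{k_{0}}$ for part (iv) is a useful detail that the paper leaves implicit.
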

\begin{proof}
By using Hypothesis ${\bf(h1)}$ and the first-order condition \eqref{first_order_condition}, $\alpha=0$ is a root to the first-order condition \eqref{first_order_condition} at the point $(x,\mu,z)=(0,\delta_0,0)$, therefore in light of the uniqueness stated in Proposition \ref{A5}, we know that the unique minimizer satisfies $\alpha(0,\delta_0,0)=0$. By using Hypothesis ${\bf(h1)}$ together with the arguments leading to \eqref{ligf} \eqref{LGgDerivatives}, \eqref{LGkDerivatives} and \eqref{linalpha}, we can similarly deduce \eqref{ligfb}-\eqref{linalphab}; in a certain sense, the proofs for \eqref{ligfb}-\eqref{linalphab} are slightly simpler that those for \eqref{ligf} \eqref{LGgDerivatives}, \eqref{LGkDerivatives} and \eqref{linalpha}.
\end{proof}

\subsection{Notations}\label{sec:notations}
For simplicity, we make use of the following notations by omitting some of their own arguments while only keeping the essential parameters and variables such as $t,\,m,\,x$ and $s$:
\begin{align}\label{notation_1}
&\begin{cases}
\alpha^{t,m}_s(x):= \alpha(X^{t,m}_s(x), X^{t,m}_s\ot m,Z^{t,m}_s(x)),\ g^{t,m}_s(x):=g(X^{t,m}_s(x),X^{t,m}_s\ot m,\alpha^{t,m}_s(x)),\\
f^{t,m}_s(x):=f(X^{t,m}_s(x),X^{t,m}_s\ot m,\alpha^{t,m}_s(x)),\ k^{t,m}_s(x):=k(X^{t,m}_s(x),X^{t,m}_s\ot m).
\end{cases}\ \ \ \  
\end{align} 
Likewise, we adopt the same style of notations for the derivatives of $\alpha$ and $f$, as shown in the following Table \ref{notation_2}: 
\begin{center}
\centering\scriptsize
\renewcommand\arraystretch{2}
\begin{tabular}{|l|l|l|}
\hline
$\big(\p_x\alpha\big)^{t,m}_s(y)=\p_x \alpha(x,\mu,z)$&
$\big(\p_\mu\alpha\big)^{t,m}_s(y,\wt{y})=\p_\mu \alpha(x,\mu,z)(\wt{x})$&
$\big(\p_z\alpha\big)^{t,m}_s(y)=\p_z \alpha(x,\mu,z)$\\ \hline
$\big(\p_x f\big)^{t,m}_s(y)=\p_x f(x,\mu,\alpha)$&
$\big(\p_\mu f\big)^{t,m}_s(y,\wt{y})=\p_\mu f(x,\mu,\alpha)(\wt{x})$&
$\big(\p_\alpha f\big)^{t,m}_s(y)=\p_\alpha f(x,\mu,\alpha)$\\\hline
$\big(\p_x\p_x f\big)^{t,m}_s(y)=\p_x\p_x f(x,\mu,\alpha)$&
$\big(\p_x\p_\mu f\big)^{t,m}_s(y,\wt{y})=\p_x\p_\mu f(x,\mu,\alpha)(\wt{x})$&
$\big(\p_{\wt{x}}\p_\mu f\big)^{t,m}_s(y,\wt{y})=\p_{\wt{x}}\p_\mu f(x,\mu,\alpha)(\wt{x})$\\\hline
$\big(\p_\alpha\p_\alpha f\big)^{t,m}_s(y)=\p_\alpha\p_\alpha f(x,\mu,\alpha)$&
$\big(\p_\alpha\p_\mu f\big)^{t,m}_s(y,\wt{y})=\p_\alpha\p_\mu f(x,\mu,\alpha)(\wt{x})$&
$\big(\p_x\p_\alpha f\big)^{t,m}_s(y)=\p_x\p_\alpha f(x,\mu,\alpha)$\\\hline
\multicolumn{3}{|c|}{$\big(\p_\mu\p_\mu f\big)^{t,m}_s(y,\wt{y},\wh{y})=\p_\mu\p_\mu f(x,\mu,\alpha)(\wt{x},\wh{x})$} \\\hline
\end{tabular}
\captionof{table}{\scriptsize Abbreviations for the derivatives of $\alpha$ and $f$, where $y,\,\wt{y},\,\wh{y}\in\R^{d_x}$, $t\in[0,T]$, $m\in\mathcal{P}_2(\R^{d_x})$, $s\in[t,T]$ and we also evaluate all of them at $(x,\mu,z,\alpha,\wt{x},\wh{x})=(X^{t,m}_s(y), X^{t,m}_s\ot m,Z^{t,m}_s(y),\alpha^{t,m}_s(y),X^{t,m}_s(\wt{y}),X^{t,m}_s(\wh{y}))\in \R^{d_x}\times \mathcal{P}_2(\R^{d_x})\times \R^{d_x}\times\R^{d_\alpha}\times\R^{d_x}\times\R^{d_x}$.}
\label{notation_2}
\end{center}
Using these notations and the chain rule, one can write: 
\footnotesize\begin{equation}
\begin{aligned}
\p_t\big(\alpha^{t,m}_s(y)\big)=&\ \big(\p_x  \alpha\big)^{t,m}_s(y)\cdot\p_t\big(X^{t,m}_s(y)\big)+\displaystyle\int_{\R^{d_x}}\big(\p_\mu \alpha\big)^{t,m}_s(y,\wt{y})\cdot \p_t\big(X^{t,m}_s(\wt{y})\big)dm(\wt{y})+\big(\p_z \alpha\big)^{t,m}_s(y)\cdot\p_t\big(Z^{t,m}_s(y)\big);\\
\p_t\big(f^{t,m}_s(y)\big)=&\ \big(\p_x f\big)^{t,m}_s(y)\cdot\p_t\big(X^{t,m}_s(y)\big)+\displaystyle\int_{\R^{d_x}}\big(\p_\mu f\big)^{t,m}_s(y,\wt{y})\cdot\p_t\big(X^{t,m}_s(\wt{y})\big)dm(\wt{y})+ \big(\p_\alpha f\big)^{t,m}_s(y)\cdot\p_t\big(\alpha^{t,m}_s(y)\big);\\
\p_t\big(g^{t,m}_s(y)\big)=&\ \big(\p_x g\big)^{t,m}_s(y)\cdot\p_t\big(X^{t,m}_s(y)\big)+\displaystyle\int_{\R^{d_x}}\big(\p_\mu g\big)^{t,m}_s(y,\wt{y})\cdot\p_t\big(X^{t,m}_s(\wt{y})\big)dm(\wt{y})+\big(\p_\alpha g\big)^{t,m}_s(y)\cdot\p_t\big(\alpha^{t,m}_s(y)\big);\\
\p_t\big(k^{t,m}_s(y)\big)=&\ \big(\p_x k\big)^{t,m}_s(y)\cdot\p_t\big(X^{t,m}_s(y)\big)+\displaystyle\int_{\R^{d_x}}\big(\p_\mu k\big)^{t,m}_s(y,\wt{y})\cdot\p_t\big(X^{t,m}_s(\wt{y})\big)dm(\wt{y});
\end{aligned}\label{calculus1}
\end{equation}\normalsize
similarly, we also have a set of similar equations for $\p_m \big(\alpha^{t,m}_s(y)\big)(\wt{y})$, $\p_m \big(f^{t,m}_s(y)\big)(\wt{y})$, $\p_m \big(g^{t,m}_s(y)\big)(\wt{y})$ and $\p_m \big(k^{t,m}_s(y)\big)(\wt{y})$, and $\p_x \big(\alpha^{t,m}_s(y)\big)$, $\p_x \big(f^{t,m}_s(y)\big)$, $\p_x \big(g^{t,m}_s(y)\big)$, $\p_x \big(k^{t,m}_s(y)\big)$. For instance, $\p_m \big(\alpha^{t,m}_s(y)\big)(\wt{y})$, $\p_m \big(f^{t,m}_s(y)\big)(\wt{y})$, $\p_y \big(\alpha^{t,m}_s(y)\big)$, $\p_y \big(f^{t,m}_s(y)\big)$ can be written as, for any $y,\,\wt{y}\in\R^{d_x}$, $t\in[0,T]$, $m\in\mathcal{P}_2(\R^{d_x})$ and $s\in[t,T]$,
\footnotesize\begin{equation}
\begin{aligned}
\p_m\big(\alpha^{t,m}_s(y)\big)(\wt{y})=&\ \big(\p_x  \alpha\big)^{t,m}_s(y)\cdot\p_m\big(X^{t,m}_s(y)\big)(\wt{y})+\big(\p_\mu \alpha\big)^{t,m}_s(y,\wt{y})\cdot \p_{\wt{y}}\big(X^{t,m}_s(\wt{y})\big)\\
&+\displaystyle\int_{\R^{d_x}}\big(\p_\mu \alpha\big)^{t,m}_s(y,\wh{y})\cdot \p_m\big(X^{t,m}_s(\wh{y})\big)(\wt{y})dm(\wh{y})+\big(\p_z \alpha\big)^{t,m}_s(y)\cdot\p_m\big(Z^{t,m}_s(y)\big)(\wt{y});\\
\p_m\big(f^{t,m}_s(y)\big)(\wt{y})=&\ \big(\p_x f\big)^{t,m}_s(y)\cdot\p_m\big(X^{t,m}_s(y)\big)(\wt{y})+\big(\p_\mu f\big)^{t,m}_s(y,\wt{y})\cdot \p_{\wt{y}}\big(X^{t,m}_s(\wt{y})\big)\\
&+\displaystyle\int_{\R^{d_x}} \big(\p_\mu f\big)^{t,m}_s(y,\wh{y})\cdot\p_m\big(X^{t,m}_s(\wh{y})\big)(\wt{y}) dm(\wh{y})+ \big(\p_\alpha f\big)^{t,m}_s(y)\cdot \p_m\big(\alpha^{t,m}_s(y)\big)(\wt{y});\\
\p_y\big(\alpha^{t,m}_s(y)\big)=&\ \big(\p_x  \alpha\big)^{t,m}_s(y)\cdot\p_y\big(X^{t,m}_s(y)\big)+\big(\p_z \alpha\big)^{t,m}_s(y)\cdot\p_y\big(Z^{t,m}_s(y)\big);\\
\p_y \big(f^{t,m}_s(y)\big)=&\ \big(\p_x f\big)^{t,m}_s(y)\cdot\p_y \big(X^{t,m}_s(y)\big)+ \big(\p_\alpha f\big)^{t,m}_s(y)\cdot \p_y \big(\alpha^{t,m}_s(y)\big).
\end{aligned}\label{calculus2}
\end{equation}\normalsize

\subsection{Crucial {\it a Priori} Estimate}
Now, we are ready to establish some {\it a priori} estimates.
For any fixed $t\in[0 ,T]$ and $m\in \mathcal{P}_2(\R^{d_x})$, it follows from Theorem \ref{Thm6_1} and \ref{Thm6_2} that there is a (local-in-time) unique solution pair $\big(X^{t,m}_s(x),Z^{t,m}_s(x):=\gamma(s,X^{t,m}_s(x),X^{t,m}_s\ot m)\big)$ to \eqref{FBODE}, each component function of which is continuously differentiable in $x\in\R^{d_x}$ and continuously $L$-differentiable in $m\in\mc{P}_2(\R^{d_x})$ with the corresponding derivatives being continuously differentiable in $s\in [t,T]$. Then the couples $\big(\p_m\big(X^{t,m}_s(x)\big)(y),\p_m\big(Z^{t,m}_s(x)\big)(y)\big)$ and $\big(\p_x\big(X^{t,m}_s(x)\big)$, $\p_x\big(Z^{t,m}_s(x)\big)\big)$ satisfy the linear FBODE systems, respectively:
\footnotesize\begin{align}\label{eq_8_5_new}
\left\{ \begin{aligned}
	\frac{d}{ds}\p_m\big(X^{t,m}_s(x)\big)(y) =& \p_m \big(f^{t,m}_s(x)\big)(y),\\
	\p_m \big(X^{t,m}_t(x)\big)(y) =&0;\\
-\frac{d}{ds}\p_m\big(Z^{t,m}_s(x)\big)(y)=& \big(\p_x f\big)^{t,m}_s(x)\cdot \p_m\big(Z^{t,m}_s(x)\big)(y)\\
&+\p_m\big(X^{t,m}_s(x)\big)(y)^\top\bigg(\big(\p_x\p_x f\big)^{t,m}_s(x)\cdot Z^{t,m}_s(x)+\big(\p_x\p_x g\big)^{t,m}_s(x)\bigg)\\
&+\int_{\R^{d_x}}\p_m\big(X^{t,m}_s(\wt{x})\big)(y)^\top\bigg(\big(\p_\mu\p_x f\big)^{t,m}_s(x,\wt{x})\cdot Z^{t,m}_s(x)+\big(\p_\mu\p_x g\big)^{t,m}_s(x,\wt{x})\bigg)dm(\wt{x})\\
&+\p_y\big(X^{t,m}_s(y)\big)^\top\bigg(\big(\p_\mu\p_x f\big)^{t,m}_s(x,y)\cdot Z^{t,m}_s(x)+\big(\p_\mu\p_x g\big)^{t,m}_s(x,y)\bigg)\\
&+\p_m\big(\alpha^{t,m}_s(x)\big)(y)^\top\bigg(\big(\p_\alpha\p_x f\big)^{t,m}_s(x)\cdot Z^{t,m}_s(x)+\big(\p_\alpha\p_x g\big)^{t,m}_s(x)\bigg),\\
\p_m\big(Z^{t,m}_T(x)\big)(y)=& \big(\p_x\p_x k\big)^{t,m}_T(x)\cdot \p_m\big(X^{t,m}_T(x)\big)(y)\\
&+\displaystyle\int_{\R^{d_x}}\big(\p_\mu\p_x k\big)^{t,m}_T(x,\wt{x})\cdot \p_m\big(X^{t,m}_T(\wt{x})\big)(y)dm(\wt{x})+\big(\p_\mu\p_x k\big)^{t,m}_T(x,y)\cdot \p_y\big(X^{t,m}_T(y)\big);
\end{aligned} \right.
\end{align}\normalsize
and 
\small\begin{align}\label{eq_8_6_new}
\left\{ \begin{aligned}
	\frac{d}{ds}\p_x\big(X^{t,m}_s(x)\big) =& \p_x \big(f^{t,m}_s(x)\big),\\
	\p_x \big(X^{t,m}_t(x)\big) =&\mathcal{I}_{d_x\times d_x};\\
	-\frac{d}{ds}\p_x \big(Z^{t,m}_s(x)\big)  =&\big(\p_x f\big)^{t,m}_s(x)\cdot \p_x\big(Z^{t,m}_s(x)\big)+\p_x\big(X^{t,m}_s(x)\big)^\top\bigg(\big(\p_x\p_x f\big)^{t,m}_s(x)\cdot Z^{t,m}_s(x)+\big(\p_x\p_x g\big)^{t,m}_s(x)\bigg)\\
&+\p_x\big(\alpha^{t,m}_s(x)\big)^\top\bigg(\big(\p_\alpha\p_x f\big)^{t,m}_s(x)\cdot Z^{t,m}_s(x)+\big(\p_\alpha\p_x g\big)^{t,m}_s(x)\bigg),\\
\p_x\big(Z^{t,m}_T(x)\big)=& \big(\p_x\p_x k\big)^{t,m}_T(x)\cdot \p_x\big(X^{t,m}_T(x)\big);
\end{aligned} \right.
\end{align}\normalsize
where $\mathcal{I}_{d_x\times d_x}$ is the $d_x\times d_x$ identity matrix.

\begin{remark}
Here, let us recall that the flow map $X^{t,m}_s(\cdot)$ actually maps the initial point $x$ at the initial time $t$ to another location at the later times $s$ by using the forward dynamics, so as in \eqref{FBODE}, the initial condition of this flow map is $X^{t,m}_s(x)=x$, which is the identity map from $\R^{d_x}$ to $\R^{d_x}$. Thus, the $x$ and $m$-derivatives of this flow map at the initial time $t$ are as  in \eqref{eq_8_5_new} and \eqref{eq_8_6_new}.
\end{remark}

For the sake of convenience, we denote
\begin{align}\label{eq_7_6}
L^*:=\vertiii{\gamma }_{2,[t,T]}<\infty,
\end{align}
where $\vertiii{\cdot}_2$ was defined in \eqref{Gamma_2}.
Here, at least close to the terminal time $T$, the finiteness of $L^*$ is guaranteed by the construction of the local-in-time solution, which was ensured at least for $t\in[0,T]$ with $T-t\leq \eps_2$, where $\eps_2$ was given in Theorem \ref{Thm6_2}. Furthermore, the decoupling field $\gamma$ in \eqref{eq_7_6} was constructed in Theorem \ref{Thm6_1}. Then, by using \eqref{eq_9_3}, 
\begin{align*}
&\big|Z^{t,m}_t(x)-Z^{t,m_0}_t(x_0)\big|=\big|\gamma(t,x,m)-\gamma(t,x_0,m_0)\big|
\leq \ L^*\Big(|x-x_0|+W_1(m,m_0)\Big).
\end{align*}
By using Hypothesis ${\bf (h1)}$, one can easily check that, for any fixed $t\in[0,T]$, the pair $\Big(X^{t,\delta_0}_s(0)\equiv 0,Z^{t,\delta_0}_s(0)\equiv 0\Big)$, as a function couple in timne $s$, solves the FBODE \eqref{FBODE} with fixed $x=0$ and $m=\delta_0$. Therefore, we have $\big|Z^{t,m}_t(x)\big|=\big|\gamma(t,x,m)\big|\leq L^*\Big(|x|+\|m\|_1\Big)$, which further implies that $\big\|Z^{t,m}_t\big\|_{L^{1,d_x}_{m}}=\big\|\gamma(t,\cdot,m)\big\|_{L^{1,d_x}_{m}}\leq 2L^*\|m\|_1$ by taking an integration. Furthermore, we also have 
\begin{align}\label{eq_8_8_cone}
&\big|Z^{t,m}_s(x)\big|=\big|\gamma(s,X^{t,m}_s(x),X^{t,m}_s\ot m)\big|\leq L^*\Big(|X^{t,m}_s(x)|+\|X^{t,m}_s\|_{L^{1,d_x}_{m}}\Big),\\
&\big\|Z^{t,m}_s\big\|_{L^{1,d_x}_{m}}\leq 2 L^*\|X^{t,m}_s\|_{L^{1,d_x}_{m}}.
\end{align}

Now, we have the following crucial {\it a priori} estimates.
\begin{theorem}\label{Crucial_Estimate}
Under Assumptions ${\bf(a1)}$-${\bf(a3)}$ and Hypotheses ${\bf(h1)}$-${\bf(h3)}$, let $\gamma(s,x,m)$ be the decoupling field for the FBODE system \eqref{fbodesystem} on $[t,T]$ with terminal data $p(x,\mu)=\p_x k(x,\mu)$, which is differentiable in 
$x\in\R^{d_x}$ and $L$-differentiable in $\mu\in\mc{P}_2(\R^{d_x})$, and also suppose that $L^*\leq 2\max\{L^*_0,\Lambda_k\}$, we then have the following estimate:
\begin{align}\label{JFE}
L^*\leq L^*_0,
\end{align}
where $L^*$ is defined in \eqref{eq_7_6} and $L^*_0$ to be defined in \eqref{L_star_0}.
\end{theorem}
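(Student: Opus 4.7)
Fix $(t,m)$. The strategy is to control $L^*$ via a lifted energy estimate in $L^{2,d_x}_m$. For an arbitrary perturbation direction $\tilde X_0\in L^{2,d_x}_m$, introduce the G\^ateaux-linearized pair $\tilde X_s(x):=\p_x X^{t,m}_s(x)\tilde X_0(x)+\int\p_m X^{t,m}_s(x)(y)\tilde X_0(y)dm(y)$ and its analogue $\tilde Z_s$. Using the envelope identities $\p_z h=f$ and $\p_x h=\p_x f\cdot z+\p_x g$, the linearized FBODE obtained from \eqref{eq_8_5_new}--\eqref{eq_8_6_new} admits the Hamiltonian form
\begin{align*}
\frac{d\tilde X}{ds} &= \p_x\p_z h\,\tilde X+\int\p_\mu\p_z h(\tilde x)\tilde X(\tilde x)dm(\tilde x)+\p_z\p_z h\,\tilde Z,\\
-\frac{d\tilde Z}{ds} &= \p_x\p_x h\,\tilde X+\int\p_\mu\p_x h(\tilde x)\tilde X(\tilde x)dm(\tilde x)+\p_z\p_x h\,\tilde Z,
\end{align*}
with $\tilde X_t=\tilde X_0$, $\tilde Z_T$ arising from linearizing $\p_x k$, and the pivotal identity $\tilde Z_t(x)=\p_x\gamma(t,x,m)\tilde X_0(x)+\int\p_\mu\gamma(t,x,m)(y)\tilde X_0(y)dm(y)$ at $s=t$.

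Introduce $E(s):=\int\tilde Z_s\cdot\tilde X_s\,dm$. Differentiating and invoking the Hessian symmetry $\p_x\p_z h=(\p_z\p_x h)^\top$ to cancel the cross inner-products produces
\begin{align*}
\frac{dE}{ds} &= -\int\tilde X^\top\p_x\p_x h\,\tilde X\,dm-\iint\tilde X(x)^\top\p_\mu\p_x h(\tilde x)\tilde X(\tilde x)dm(\tilde x)dm(x)\\
&\quad+\iint\tilde Z(x)^\top\p_\mu\p_z h(\tilde x)\tilde X(\tilde x)dm(\tilde x)dm(x)+\int\tilde Z^\top\p_z\p_z h\,\tilde Z\,dm.
\end{align*}
The standing hypothesis $L^*\leq 2\max\{L^*_0,\Lambda_k\}$ together with \eqref{eq_8_8_cone} places $(X^{t,m}_s,X^{t,m}_s\ot m,Z^{t,m}_s)$ inside the cone $c_{k_0}$, so Hypothesis $\bf{(h2)}$ bounds the last two terms above by $(\lambda_g-l_g)\|\tilde X\|_{L^2_m}^2-\lambda_1\|\tilde X\|_{L^2_m}^2-\lambda_2\|\tilde Z\|_{L^2_m}^2$. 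For the first two terms, expand $\p_x\p_x h$ and $\p_\mu\p_x h(\tilde x)$ via the envelope chain rule into the second $(x,\mu)$-partials of $f\cdot z+g$ augmented by $(\p_\alpha\p_x f\cdot z+\p_\alpha\p_x g)$ contracted respectively with $\p_x\alpha$ and $\p_\mu\alpha$; the displacement convexity of $g$ in $(x,\mu)$ given by \eqref{dispalcement_lambda} supplies the lower bound $\lambda_g\|\tilde X\|^2$ on the pure-$g$ piece, the cone bound $|Z|\leq(k_0/2)(1+|x|+\|m\|_1)$ combined with the decay prefactor in \eqref{bdd_d2_f} and the smallness $\bar l_f\leq\frac{\min\{\lambda_1,\lambda_g\}}{40\max\{\Lambda_k,L^*_0\}}$ from Hypothesis $\bf{(h3)}$ renders every $f$-contribution negligible, and the residual $\bar l_g$-cross terms (arising through $\p_x\alpha,\p_\mu\alpha$ in Proposition \ref{A5}) are absorbed using $8\bar l_g\leq\min\{\lambda_1,\lambda_g\}$ with Young's inequality against the $\lambda_2\|\tilde Z\|^2$ buffer. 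Altogether, $dE/ds\leq -c_1\|\tilde X_s\|_{L^2_m}^2-c_2\|\tilde Z_s\|_{L^2_m}^2$ for explicit $c_1,c_2>0$.

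Integrating over $[t,T]$ with the terminal lower bound $E(T)\geq(\lambda_k-l_k)\|\tilde X_T\|_{L^2_m}^2$ (via \eqref{dispalcement_lambda_k}), and combining with the forward estimate $\|\tilde X_s\|_{L^2_m}^2\leq C(\|\tilde X_0\|_{L^2_m}^2+\int_t^s\|\tilde Z_\tau\|_{L^2_m}^2 d\tau)$ and the backward estimate $\|\tilde Z_t\|_{L^2_m}^2\leq C(\|\tilde X_T\|_{L^2_m}^2+\int_t^T(\|\tilde X\|^2+\|\tilde Z\|^2)ds)$ derived from the linearized dynamics (noting $\|\tilde Z_T\|_{L^2_m}\leq 2\Lambda_k\|\tilde X_T\|_{L^2_m}$ by \eqref{bdd_d2_k_1}), the Cauchy--Schwarz bound $E(t)=\langle\tilde Z_t,\tilde X_0\rangle_{L^2_m}\leq\|\tilde Z_t\|_{L^2_m}\|\tilde X_0\|_{L^2_m}$ closes as a quadratic inequality in $\|\tilde Z_t\|_{L^2_m}$, yielding $\|\tilde Z_t\|_{L^2_m}\leq L^*_0\|\tilde X_0\|_{L^2_m}$ for an explicit $L^*_0$ depending only on $\lambda_f,\Lambda_1,\Lambda_2,\Lambda_3,\lambda_1,\lambda_2,\lambda_g,\Lambda_g,l_g,\lambda_k,\Lambda_k,l_k$. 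Pointwise bounds on $\p_x\gamma(t,x,m)$ and $\p_\mu\gamma(t,x,m)(y)$ follow by specializing this operator estimate to two-point discrete probability measures $m=(1-\epsilon)\delta_x+\epsilon\delta_y$ with tailored directions $\tilde X_0$ and examining the limits $\epsilon\to 0^+$ or $\epsilon=1/2$, then extending to arbitrary $m\in\mathcal{P}_2(\R^{d_x})$ via the continuity of $\p_x\gamma,\p_\mu\gamma$ in the measure argument established in Theorem \ref{Thm6_2}. The principal difficulty is the lower bound on the $(x,x)$-$(x,\mu)$ quadratic form in $dE/ds$ under the non-separable Hamiltonian, which demands the careful balancing of the displacement convexity of $g$ against the nonlinear $f$-contributions enabled by the smallness assumptions in Hypothesis $\bf{(h3)}$.
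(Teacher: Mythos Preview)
Your lifted energy identity and the resulting $L^{2,d_x}_m$-operator bound $\|\tilde Z_t\|_{L^2_m}\le L^*_0\|\tilde X_0\|_{L^2_m}$ are correct and parallel the paper's Steps~3--4. The genuine gap is the extraction of \emph{pointwise} bounds on $\p_\mu\gamma(t,x,m)(y)$ from this operator bound. Specializing to two-point measures $(1-\epsilon)\delta_x+\epsilon\delta_y$ and then appealing to continuity in $m$ cannot work: two-point measures are not dense in $\mathcal{P}_2(\R^{d_x})$, and even under $n$-point approximation the $L^2_m\to L^2_m$ bound only controls $\p_\mu\gamma$ as an integral \emph{kernel}, which does not yield a pointwise kernel bound. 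Concretely, if you localize $\tilde X_0$ at a single atom of weight $w_j$, the $\p_\mu\gamma$-contribution to $\tilde Z_t$ enters with an extra factor $w_j$, and the resulting inequality degenerates to an $O(w_j^{-1})$ bound on $|\p_\mu\gamma|$ as the mesh refines. Thus your scheme recovers $\|\p_x\gamma\|$ pointwise (the diagonal part of the operator) but not $\|\p_\mu\gamma(t,x,m)(y)\|$ for general $m$.

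The paper avoids this by \emph{separating} the two derivatives rather than bundling them into a single lifted perturbation. The Jacobian system \eqref{eq_8_6_new} for $(\p_x X,\p_x Z)$ is pointwise in $x$ (no coupling across $x$-values), so the energy $\p_{x_i}Z\cdot\p_{x_i}X$ is computed and estimated at each fixed $x$, yielding a pointwise bound on $\p_x\gamma$ directly (Step~3). The system \eqref{eq_8_5_new} for $(\p_m X,\p_m Z)$ carries $y$ as a \emph{parameter} throughout; the paper first integrates its energy identity in $x$ against $m$---keeping $y$ fixed---so that Hypothesis ${\bf(h2)}$ together with \eqref{positive_g_mu_1} and \eqref{positive_k_mu_1} apply and produce the $L^2_m$-in-$x$, pointwise-in-$y$ bound \eqref{cru_est_2}, with the residual $\p_y X$ terms on the right controlled by Step~3. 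It then returns to the \emph{unintegrated} identity \eqref{eq_8_40_new} at fixed $(x,y)$ and feeds in these $L^2_m$ bounds to close the pointwise estimate on $\p_\mu\gamma(t,x,m)(y)$ (Step~5). This two-tier bootstrap---pointwise $\p_x$, then $L^2_m$-in-$x$ for $\p_m$, then pointwise $\p_m$---is precisely what your lifted approach is missing.
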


\begin{remark}
In particular, we shall show that the constant $L^*_0$ defined in \eqref{L_star_0} is always larger than $\Lambda_k$; also see Remark \ref{remark_lower_bdd_L_star_0} for more detailed justification. In the statement of Theorem \ref{Crucial_Estimate}, the use of $\max\{L^*_0,\Lambda_k\}$ (instead of $L^*_0$) is to strongly emphasize that, when the time $t$ is close to the terminal time $T$, the quantity $L^*$ must be less than or equal to $2\Lambda_k$. This fact is extremely important since the estimates of $\gamma$ must strongly depend on the terminal cost function $k$.
\end{remark}

\begin{proof}
Define the pair $\big(X^{t,m}_s(x),Z^{t,m}_s(x)\big)$ by the equation \eqref{xeq} and $Z^{t,m}_s(x):=\gamma(s,X^{t,m}_s(x),X^{t,m}_s\ot m)$, then it is a solution pair of FBODE system \eqref{fbodesystem} by \eqref{gammaeq}. The differentiabilities of $X^{t,m}_s(x)$ and $Z^{t,m}_s(x)$ that were stated in Theorem \ref{Thm6_2} follow by the properties of $\gamma(s,x,\mu)$.

The idea of proof follows from that of Theorem 8.3 in our previous paper \cite{bensoussan2023theory} and will also be progressed in five steps. In Step 1, we shall first provide some calculus identities and elementary estimates that are useful in deriving estimates for the solutions to the Jacobian flows. Step 2 provides the preliminary estimates on different derivatives of $\gamma$ via the backward ODE. To control the upper bounds of these preliminary estimates, we shall derive our major estimates (e.g., \eqref{L_star_1_1} and \eqref{cru_est_2}) by integrating the inner products of forward and backward flows in Steps 3 and 4. Finally, in Step 5, we shall combine all the aforementioned estimates to obtain a uniform (in the terminal time $T$) bound on the derivatives of $\gamma$.

It follows from \eqref{k_0} that $\frac{1}{2}k_0=2\max\{L^*_0,\Lambda_k\}\geq L^*$, so using \eqref{c_k_0} and \eqref{eq_8_8_cone}, we know that $\big(X^{t,m}_s(x),X^{t,m}_s\ot m, Z^{t,m}_s(x)\big)\in c_{k_0}$, and hence the unique minimizer $\alpha^{t,m}_s(x):=\alpha\big(X^{t,m}_s(x),X^{t,m}_s\ot m, Z^{t,m}_s(x)\big)$ always exists according to Proposition \ref{A5}. Furthermore, Hypothesis ${\bf (h3)}$ implies that $\wb{l}_f L^* \leq \frac{1}{2}k_0\wb{l}_f \leq \frac{1}{20}\min\{\lambda_g,\lambda_1\}$ and $\wb{l}_g \leq \frac{1}{8} \lambda_g$.
Denote $\Lambda_h:=\frac{1}{2}k_0\wb{l}_f+\Lambda_g\leq \frac{1}{20}\min\{\lambda_1,\lambda_g\}+\Lambda_g$ and $\wb{l}_h:=\frac{1}{2}k_0\wb{l}_f+\wb{l}_g\leq \frac{1}{20}\min\{\lambda_1,\lambda_g\}+\frac{1}{8}\min\{\lambda_1,\lambda_g\}<\frac{1}{5}\min\{\lambda_1,\lambda_g\}$. 

\textbf{Step 1} (Calculus identities and some elementary estimates) We shall first introduce a few calculus identities that will be useful in Steps 3-4, and derive some elementary estimates that will be used in Steps 2-5.
First of all, differentiating \eqref{first_order_condition} with respect to $x$, $\mu$ and $z$, respectively, it yields that
\begin{align}\no
&\p_x\p_\alpha f(x,\mu,\alpha(x,\mu,z))\cdot z  + \p_x\p_\alpha g(x,\mu,\alpha(x,\mu,z))\\\label{p_x_foc}
&\ \ \ \ \ \ \ \ \ \ \  \ +\Big(\p_\alpha\p_\alpha f(x,\mu,\alpha(x,\mu,z))\cdot z  + \p_\alpha\p_\alpha g(x,\mu,\alpha(x,\mu,z))\Big)\cdot \p_x\alpha(x,\mu,z)=0,\\\no
&\p_\mu\p_\alpha f(x,\mu,\alpha(x,\mu,z))(\wt{x})\cdot z  + \p_\mu\p_\alpha g(x,\mu,\alpha(x,\mu,z))(\wt{x})\\\label{p_mu_foc}
&\ \ \ \ \ \ \ \ \ \ \  \ +\Big(\p_\alpha\p_\alpha f(x,\mu,\alpha(x,\mu,z))\cdot z  + \p_\alpha\p_\alpha g(x,\mu,\alpha(x,\mu,z))\Big)\cdot \p_\mu\alpha(x,\mu,z)(\wt{x})=0,\\\label{p_z_foc}
&\p_\alpha f(x,\mu,\alpha(x,\mu,z))+\Big(\p_\alpha\p_\alpha f(x,\mu,\alpha(x,\mu,z))\cdot z  + \p_\alpha\p_\alpha g(x,\mu,\alpha(x,\mu,z))\Big)\cdot \p_z\alpha(x,\mu,z)=0,
\end{align}
and hence, by considering \eqref{p_z_foc} multiplied with $\p_x\alpha(x,\mu,z)$ minus \eqref{p_x_foc} multiplied with $\p_z\alpha(x,\mu,z)$, and \eqref{p_z_foc} multiplied with $\p_\mu\alpha(x,\mu,z)$ minus \eqref{p_x_foc} multiplied with $\p_z\alpha(x,\mu,z)$, we obtain
\begin{align}\no
&\p_\alpha f(x,\mu,\alpha(x,\mu,z))\cdot \p_x\alpha(x,\mu,z)\\\label{eq_7_17_1}
&\ \ \ \ \ \ \ \ \ \ \  \ -\Big(\p_x\p_\alpha f(x,\mu,\alpha(x,\mu,z))\cdot z  + \p_x\p_\alpha g(x,\mu,\alpha(x,\mu,z))\Big)\cdot \p_z\alpha(x,\mu,z)=0,\\\no
&\p_\alpha f(x,\mu,\alpha(x,\mu,z))\cdot \p_\mu\alpha(x,\mu,z)(\wt{x})\\\label{eq_7_18_1}
&\ \ \ \ \ \ \ \ \ \ \  \ -\Big(\p_\mu\p_\alpha f(x,\mu,\alpha(x,\mu,z))(\wt{x})\cdot z  + \p_\mu\p_\alpha g(x,\mu,\alpha(x,\mu,z))(\wt{x})\Big)\cdot \p_z\alpha(x,\mu,z)=0.
\end{align}
Next, we are going to derive some elementary estimates and understand how these estimates depend on $L^*$ as follows. It follows from \eqref{bdd_d2_f}, \eqref{positive_g_alpha} and \eqref{eq_8_8_cone} that for any $\xi\in\R^{d_\alpha}$, 
\small\begin{align}\label{eq_8_11_new}
\xi^\top\Big((\p_\alpha\p_\alpha f)^{t,m}_s(x)\cdot Z^{t,m}_s(x)+(\p_\alpha\p_\alpha g)^{t,m}_s(x)\Big)\xi \geq \Big(\lambda_g-\wb{l}_fL^*\Big)|\xi|^2\geq \frac{19}{20}\lambda_g|\xi|^2>0.
\end{align}\normalsize
By \eqref{bdd_d1_f}, \eqref{bdd_d2_f}, \eqref{bdd_d2_g_2}, \eqref{eq_8_8_cone}, \eqref{p_x_foc}-\eqref{p_z_foc} and the first inequality of \eqref{eq_8_11_new}, we obtain
\begin{align}\label{p_xalpha_new}
&\Big\|(\p_x \alpha)^{t,m}_s(x)\Big\|_{\mathcal{L}(\R^{d_x};\R^{d_\alpha})}\vee\Big\|(\p_{\mu} \alpha)^{t,m}_s(x,\wt{x})\Big\|_{\mathcal{L}(\R^{d_x};\R^{d_\alpha})}\leq\frac{20\wb{l}_h}{19\lambda_g},\\\label{p_zalpha_new}
\text{ and } & \Big\|(\p_{z} \alpha)^{t,m}_s(x)\Big\|_{\mathcal{L}(\R^{d_x};\R^{d_\alpha})}\leq \frac{20\Lambda_f}{19\lambda_g}.
\end{align}\normalsize
Furthermore, applying \eqref{bdd_d1_f}, \eqref{p_xalpha_new} and \eqref{p_zalpha_new} to \eqref{calculus2}, one yields that
\begin{align}\no
\Big|\p_m\big(\alpha^{t,m}_s(x)\big)(y)\Big|\leq&\ \frac{20\wb{l}_h}{19\lambda_g}\bigg(\Big|\p_m\big(X^{t,m}_s(x)\big)(y)\Big|+\Big|\p_{y}\big(X^{t,m}_s(y)\big)\Big|+\int_{\R^{d_x}}\Big|\p_m\big(X^{t,m}_s(\wt{x})\big)(y)\Big|dm(\wt{x})\bigg)\\\label{p_m_alpha_1}
&+\frac{20\Lambda_f}{19\lambda_g}\Big|\p_m\big(Z^{t,m}_s(x)\big)(y)\Big|;\\\label{p_x_alpha_1}
\Big|\p_x\big(\alpha^{t,m}_s(x)\big)\Big|\leq&\ \ \frac{20\wb{l}_h}{19\lambda_g}\Big|\p_x\big(X^{t,m}_s(x)\big)\Big|+\frac{20\Lambda_f}{19\lambda_g}\Big|\p_x\big(Z^{t,m}_s(x)\big)\Big|;\end{align}
\begin{align}\no
\Big|\p_m\big(f^{t,m}_s(x)\big)(y)\Big|\leq&\ \Lambda_f\bigg(\Big|\p_m\big(X^{t,m}_s(x)\big)(y)\Big|+\Big| \p_{y}\big(X^{t,m}_s(y)\big)\Big|+\int_{\R^{d_x}} \Big|\p_m\big(X^{t,m}_s(\wt{x})\big)(y)\Big| dm(\wt{x})\\\label{p_m_f_1}
&\ \ \ \ \ \ \ \ + \Big|\p_m\big(\alpha^{t,m}_s(x)\big)(y)\Big|\bigg);\\\label{p_x_f_1}
\Big|\p_x \big(f^{t,m}_s(x)\big)\Big|\leq&\ \Lambda_f\bigg(\Big|\p_x \big(X^{t,m}_s(x)\big)\Big|+ \Big|\p_x \big(\alpha^{t,m}_s(x)\big)\Big|\bigg).
\end{align}

\textbf{Step 2} (Preliminary estimates on derivatives of $\gamma$) 
For $i=1,2,3,...,d_x$, denote $\p_{m_i}\gamma(t,x,m)(y):=\p_{y_i}\frac{\delta}{\delta m}\gamma(t,x,m)(y)$. By using Assumptions ${\bf(a1)}$-${\bf(a3)}$ and Hypotheses ${\bf(h1)}$ and ${\bf(h3)}$, we can obtain, for $i=1,2,3,...,d_x$, 
\small\begin{align}\no
&\bigg|\p_{m_i}\gamma(t,x,m)(y)\bigg|^2=\bigg|\p_{m_i}\big(Z^{t,m}_t(x)\big)(y)\bigg|^2\\\no
=&\bigg|\p_{m_i}\big(Z^{t,m}_T(x)\big)(y)\bigg|^2-2\int_t^T \p_{m_i}\big(Z^{t,m}_s(x)\big)(y)\cdot \frac{d}{ds}\p_{m_i}\big(Z^{t,m}_s(x)\big)(y) ds\\\no
\leq &\Lambda_k^2\bigg(\int_{\R^{d_x}}\Big|\p_{m_i}\big(X^{t,m}_T(\wt{x})\big)(y)\Big|dm(\wt{x})+\Big|\p_{m_i}\big(X^{t,m}_T(x)\big)(y)\Big|+\Big|\p_{y_i}\big(X^{t,m}_T(y)\big)\Big|\bigg)^2\\\no
&+2\int_t^T\bigg(\Lambda_h \int_{\R^{d_x}}\Big|\p_{m_i}\big(X^{t,m}_s(\wt{x})\big)(y)\Big|dm(\wt{x})+\Lambda_h\Big|\p_{m_i}\big(X^{t,m}_s(x)\big)(y)\Big|\\\no
&\ \ \ \ \ \ \ \ \ \ \ \ +\Lambda_h\Big|\p_{y_i}\big(X^{t,m}_s(y)\big)\Big|+\wb{l}_h \Big|\p_{m_i}\big(\alpha^{t,m}_s(x)\big)(y)\Big|+\Lambda_f\Big|\p_{m_i}\big(Z^{t,m}_s(x)\big)(y)\Big|\bigg)\cdot\Big|\p_{m_i}\big(Z^{t,m}_s(x)\big)(y)\Big|ds\\\no
&\text{(by using \eqref{bdd_d1_f}, \eqref{bdd_d2_f}, \eqref{bdd_d2_g_1}, \eqref{bdd_d2_g_2} and \eqref{bdd_d2_k_1})}\\\no
\leq &\Lambda_k^2\bigg(\int_{\R^{d_x}}\Big|\p_{m_i}\big(X^{t,m}_T(\wt{x})\big)(y)\Big|dm(\wt{x})+\Big|\p_{m_i}\big(X^{t,m}_T(x)\big)(y)\Big|+\Big|\p_{y_i}\big(X^{t,m}_T(y)\big)\Big|\bigg)^2\\\no
&+2\int_t^T\bigg(\Big(\Lambda_h+\frac{20\wb{l}_h^2}{19\lambda_g}\Big) \int_{\R^{d_x}}\Big|\p_{m_i}\big(X^{t,m}_s(\wt{x})\big)(y)\Big|dm(\wt{x})+\Big(\Lambda_h+\frac{20\wb{l}_h^2}{19\lambda_g}\Big)\Big|\p_{m_i}\big(X^{t,m}_s(x)\big)(y)\Big|\\\no
&\ \ \ \ \ \ \ \ \ \ \ \ +\Big(\Lambda_h+\frac{20\wb{l}_h^2}{19\lambda_g}\Big)\Big|\p_{y_i}\big(X^{t,m}_s(y)\big)\Big|+\Big(\Lambda_f+\frac{20\Lambda_f}{19\lambda_g}\wb{l}_h\Big)\Big|\p_{m_i}\big(Z^{t,m}_s(x)\big)(y)\Big|\bigg)\cdot\Big|\p_{m_i}\big(Z^{t,m}_s(x)\big)(y)\Big|ds\\\no
&\text{(by using \eqref{p_m_alpha_1})}\\\no
\leq &3\Lambda_k^2 \int_{\R^{d_x}}\Big|\p_{m_i}\big(X^{t,m}_T(\wt{x})\big)(y)\Big|^2dm(\wt{x})+3\Lambda_k^2\Big|\p_{m_i}\big(X^{t,m}_T(x)\big)(y)\Big|^2+3\Lambda_k^2\Big|\p_{y_i}\big(X^{t,m}_T(y)\big)\Big|^2\\\no
&+\int_t^T\Bigg(\Big(\Lambda_h+\frac{20\wb{l}_h^2}{19\lambda_g}\Big)\int_{\R^{d_x}}\Big|\p_{m_i}\big(X^{t,m}_s(\wt{x})\big)(y)\Big|^2dm(\wt{x})+\Big(\Lambda_h+\frac{20\wb{l}_h^2}{19\lambda_g}\Big)\Big|\p_{m_i}\big(X^{t,m}_s(x)\big)(y)\Big|^2\\\label{p_m_gamma}
&\ \ \ \ \ \ \ \ \ \ \ \ +\Big(\Lambda_h+\frac{20\wb{l}_h^2}{19\lambda_g}\Big)\Big|\p_{y_i}\big(X^{t,m}_s(y)\big)\Big|^2+\bigg(\Lambda_f+\frac{20\Lambda_f}{19\lambda_g}\wb{l}_h+3\Lambda_h+\frac{60\wb{l}_h^2}{19\lambda_g}\bigg)\Big|\p_{m_i}\big(Z^{t,m}_s(x)\big)(y)\Big|^2\Bigg)ds
\end{align}\normalsize
and, for $i=1,2,3,...,d_x$, we also consider
\small\begin{align}\no
&\bigg|\p_{x_i}\gamma(t,x,m)\bigg|^2=\bigg|\p_{x_i}\big(Z^{t,m}_t(x)\big)\bigg|^2=\bigg|\p_{x_i}\big(Z^{t,m}_T(x)\big)\bigg|^2-2\int_t^T \p_{x_i}\big(Z^{t,m}_s(x)\big)\cdot \frac{d}{ds}\p_{x_i}\big(Z^{t,m}_s(x)\big) ds\\\no
\leq &\Lambda_k^2\Big|\p_{x_i}\big(X^{t,m}_T(x)\big)\Big|^2+2\int_t^T\bigg(\Lambda_h\Big|\p_{x_i}\big(X^{t,m}_s(x)\big)\Big|+\wb{l}_h \Big|\p_{x_i}\big(\alpha^{t,m}_s(x)\big)\Big|+\Lambda_f\Big|\p_{x_i}\big(Z^{t,m}_s(x)\big)\Big|\bigg)\cdot\Big|\p_{x_i}\big(Z^{t,m}_s(x)\big)\Big|ds\\\no
&\text{(by using \eqref{bdd_d1_f}, \eqref{bdd_d2_f}, \eqref{bdd_d2_g_1}, \eqref{bdd_d2_g_2} and \eqref{bdd_d2_k_1})}\\\no
\leq &\Lambda_k^2\Big|\p_{x_i}\big(X^{t,m}_T(x)\big)\Big|^2+2\int_t^T\Bigg(\bigg(\Lambda_h+\frac{20\wb{l}_h^2}{19\lambda_g}\bigg)\Big|\p_{x_i}\big(X^{t,m}_s(x)\big)\Big|+\bigg(\frac{20\Lambda_f}{19\lambda_g}\wb{l}_h+\Lambda_f\bigg)\Big|\p_{x_i}\big(Z^{t,m}_s(x)\big)\Big|\Bigg)\cdot\Big|\p_{x_i}\big(Z^{t,m}_s(x)\big)\Big|ds\\\no
&\text{(by using \eqref{p_x_alpha_1})}\\\label{p_x_gamma}
\leq &\Lambda_k^2\Big|\p_{x_i}\big(X^{t,m}_T(x)\big)\Big|^2+\bigg(\Lambda_h+\frac{20\wb{l}_h ^2}{19\lambda_g}\bigg)\int_t^T \Big|\p_{x_i}\big(X^{t,m}_s(x)\big)\Big|^2ds+\bigg(\frac{40\Lambda_f}{19\lambda_g}\wb{l}_h+2\Lambda_f+\Lambda_h+ \frac{20\wb{l}_h ^2}{19\lambda_g}\bigg)\int_t^T\Big|\p_{x_i}\big(Z^{t,m}_s(x)\big)\Big|^2ds
\end{align}\normalsize
Then, by integrating \eqref{p_m_gamma} with respect to $x$ and and using Jensen's inequality to bound $L^1$-norm by the corresponding $L^2$-norm, we have, for $i=1,2,3,...,d_x$, 
\small\begin{align}\no
&\bigg\|\p_{m_i}\gamma(t,\cdot,m)(y)\bigg\|_{L^{2,d_x}_m}^2=\bigg\|\p_{m_i}\big(Z^{t,m}_t(\cdot)\big)(y)\bigg\|_{L^{2,d_x}_m}^2\\\no
\leq &6\Lambda_k^2\Big\|\p_{m_i}\big(X^{t,m}_T(\cdot)\big)(y)\Big\|_{L^{2,d_x}_m}^2+3\Lambda_k^2\Big|\p_{y_i}\big(X^{t,m}_T(y)\big)\Big|^2\\\no
&+\int_t^T\Bigg(\Big(2\Lambda_h+\frac{40\wb{l}_h^2}{19\lambda_g}\Big)\Big\|\p_{m_i}\big(X^{t,m}_s(\cdot)\big)(y)\Big\|_{L^{2,d_x}_m}^2+\Big(\Lambda_h+\frac{20\wb{l}_h^2}{19\lambda_g}\Big)\Big|\p_{y_i}\big(X^{t,m}_s(y)\big)\Big|^2\\\label{p_m_gamma_int}
&\ \ \ \ \ \ \ \ \ \ \ \ +\bigg(\Lambda_f+\frac{20\Lambda_f}{19\lambda_g}\wb{l}_h+3\Lambda_h+\frac{60\wb{l}_h^2}{19\lambda_g}\bigg)\Big\|\p_{m_i}\big(Z^{t,m}_s(\cdot)\big)(y)\Big\|_{L^{2,d_x}_m}^2\Bigg)ds.
\end{align}\normalsize

To bound the right hand sides of \eqref{p_m_gamma}-\eqref{p_m_gamma_int}, we need the following major estimates.

\textbf{Step 3} (Major  estimate 1 of the following \eqref{L_star_1_1}) For $i=1,2,3,...,d_x$,
\footnotesize\begin{align*}
&\p_{x_i}\big(Z^{t,m}_T(x)\big)\cdot \p_{x_i}\big(X^{t,m}_T(x)\big)-\p_{x_i}\big(Z^{t,m}_t(x)\big)\cdot \p_{x_i}\big(X^{t,m}_t(x)\big) \\
=&\ \int_t^T \bigg(\p_{x_i}\big(Z^{t,m}_s(x)\big) \cdot \frac{d}{ds}\p_{x_i}\big(X^{t,m}_s(x)\big) +\p_{x_i}\big(X^{t,m}_s(x)\big) \cdot \frac{d}{ds}\p_{x_i}\big(Z^{t,m}_s(x)\big) \bigg)ds\\
=&\ \int_t^T \Bigg(\p_{x_i}\big(Z^{t,m}_s(x)\big) \cdot \bigg({\color{green}\big(\p_x f\big)^{t,m}_s(x)\cdot\p_{x_i} \big(X^{t,m}_s(x)\big)}+ \big(\p_\alpha f\big)^{t,m}_s(x)\cdot \Big({\color{green}\big(\p_x  \alpha\big)^{t,m}_s(x)\cdot\p_{x_i}\big(X^{t,m}_s(x)\big)}+\big(\p_z \alpha\big)^{t,m}_s(x)\cdot\p_{x_i}\big(Z^{t,m}_s(x)\big)\Big)\bigg)\\
&\ \ \ \ \ \ \ \ {\color{green}-\p_{x_i}\big(X^{t,m}_s(x)\big)^\top\big(\p_x f\big)^{t,m}_s(x)\cdot \p_{x_i}\big(Z^{t,m}_s(x)\big)}-\p_{x_i}\big(X^{t,m}_s(x)\big)^\top\bigg(\big(\p_x\p_x f\big)^{t,m}_s(x)\cdot Z^{t,m}_s(x)+\big(\p_x\p_x g\big)^{t,m}_s(x)\bigg)\p_{x_i}\big(X^{t,m}_s(x)\big)\\
&\ \ \ \ \ \ \ \ -\Big(\big(\p_x  \alpha\big)^{t,m}_s(x)\cdot\p_{x_i}\big(X^{t,m}_s(x)\big){\color{green}+\big(\p_z \alpha\big)^{t,m}_s(x)\cdot\p_{x_i}\big(Z^{t,m}_s(x)\big)}\Big)^\top\bigg(\big(\p_\alpha\p_x f\big)^{t,m}_s(x)\cdot Z^{t,m}_s(x)+\big(\p_\alpha\p_x g\big)^{t,m}_s(x)\bigg)\p_{x_i}\big(X^{t,m}_s(x)\big)\Bigg)ds\\
&\text{(by using \eqref{calculus2}, \eqref{eq_8_6_new} and we can cancel the terms in green by \eqref{eq_7_17_1})}\\
\leq&\ \int_t^T \Bigg(-\frac{\lambda_f^2}{\Lambda_h}\Big|\p_{x_i}\big(Z^{t,m}_s(x)\big) \Big| ^2-\left(\lambda_g-\frac12k_0\wb{l}_f-\frac{20\wb{l}_h^2}{19\lambda_g}\right)\Big|\p_{x_i}\big(X^{t,m}_s(x)\big)\Big|^2\Bigg)ds\\
&\text{(by using \eqref{bdd_d2_f}, \eqref{positive_g_x}, \eqref{bdd_d2_g_2}, \eqref{c_a7}, \eqref{eq_8_8_cone} and \eqref{p_xalpha_new}),}
\end{align*}\normalsize
which implies, in light of \eqref{eq_8_6_new} for the terminal condition on the left hand side, by using \eqref{positive_k}, we deduce
\footnotesize\begin{align}\no
\lambda_k\Big|\p_{x_i}\big(X^{t,m}_T(x)\big)\Big|^2-\p_{x_i}\big(Z^{t,m}_t(x)\big)_i
\leq&\ \int_t^T \Bigg(-\frac{\lambda_f^2}{\Lambda_h}\Big|\p_{x_i}\big(Z^{t,m}_s(x)\big) \Big| ^2-\bigg(\lambda_g-\frac12k_0\wb{l}_f-\frac{20\wb{l}_h^2}{19\lambda_g}\bigg)\Big|\p_{x_i}\big(X^{t,m}_s(x)\big)\Big|^2\Bigg)ds,\\\label{cru_est_1}
\leq&\ \int_t^T \Bigg(-\lambda_z\Big|\p_{x_i}\big(Z^{t,m}_s(x)\big) \Big| ^2-\lambda_x\Big|\p_{x_i}\big(X^{t,m}_s(x)\big)\Big|^2\Bigg)ds,
\end{align}\normalsize
where
\begin{align}\label{lambda_z}
\lambda_z:=&\frac{\lambda_f^2}{\Lambda_g+\lambda_g/20}\leq \frac{\lambda_f^2}{\Lambda_h};\\\label{lambda_x}
\lambda_x:=&\frac{4}{5}\lambda_g<\lambda_g-\frac{1}{20}\lambda_g-\frac{1}{20}\lambda_g\leq \lambda_g-\frac12k_0\wb{l}_f-\frac{20\wb{l}_h^2}{19\lambda_g}.
\end{align}
Thus, by using \eqref{p_x_gamma} and \eqref{cru_est_1}, we can obtain
\begin{align}\no
&\bigg|\p_{x_i}\gamma(t,x,m)\bigg|^2=\bigg|\p_{x_i}\big(Z^{t,m}_t(x)\big)\bigg|^2\\\no
\leq &\ \frac{\Lambda_k^2}{\lambda_k}\lambda_k\Big|\p_{x_i}\big(X^{t,m}_T(x)\big)\Big|^2+\frac{1}{\lambda_z}\bigg(\Lambda_h+\frac{20\wb{l}_h ^2}{19\lambda_g}\bigg)\lambda_z\int_t^T \Big|\p_{x_i}\big(X^{t,m}_s(x)\big)\Big|^2ds\\\no
&+\frac{1}{\lambda_x}\bigg(\frac{40\Lambda_f}{19\lambda_g}\wb{l}_h+2\Lambda_f+\Lambda_h+ \frac{20\wb{l}_h ^2}{19\lambda_g}\bigg)\lambda_x\int_t^T\Big|\p_{x_i}\big(Z^{t,m}_s(x)\big)\Big|^2ds\\\no
\leq &\ \max\bigg\{\frac{\Lambda_k^2}{\lambda_k},\frac{1}{\lambda_z}\bigg(\Lambda_h+\frac{20\wb{l}_h ^2}{19\lambda_g}\bigg),\frac{1}{\lambda_x}\bigg(\frac{40\Lambda_f}{19\lambda_g}\wb{l}_h+2\Lambda_f+\Lambda_h+ \frac{20\wb{l}_h ^2}{19\lambda_g}\bigg)\bigg\}\Big|\p_{x_i}\big(Z^{t,m}_t(x)\big)\Big|,
\end{align}\normalsize
which implies the following {\it a priori} estimate, for any $i=1,2,3,...,d_x$, by dividing both sides of above by $\Big|\p_{x_i}\big(Z^{t,m}_t(x)\big)\Big|$,
\begin{align}\no
&\bigg|\p_{x_i}\gamma(t,x,m)\bigg|=\bigg|\p_{x_i}\big(Z^{t,m}_t(x)\big)\bigg|\\\no
\leq &\ \max\bigg\{\frac{\Lambda_k^2}{\lambda_k},\frac{1}{\lambda_z}\bigg(\Lambda_h+\frac{20\wb{l}_h ^2}{19\lambda_g}\bigg),\frac{1}{\lambda_x}\bigg(\frac{40\Lambda_f}{19\lambda_g}\wb{l}_h+2\Lambda_f+\Lambda_h+ \frac{20\wb{l}_h ^2}{19\lambda_g}\bigg)\bigg\}\\\label{L_star_1}
\leq &\ \max\bigg\{\frac{\Lambda_k^2}{\lambda_k},\frac{1}{\lambda_z}\bigg(\Lambda_h+\frac{1}{20}\lambda_g\bigg),\frac{1}{\lambda_x}\bigg(\frac{5}{2}\Lambda_f+\Lambda_h+\frac{1}{20}\lambda_g\bigg)\bigg\}=:L^*_1,
\end{align}\normalsize
since $\wb{l}_fL^*\leq \frac{1}{20} \lambda_g$, $\wb{l}_h<\frac{1}{5}\lambda_g$ and $\frac{20\wb{l}_h}{19\lambda_g}<\frac{1}{4}$. 
Therefore, we finally have
\begin{align}\label{L_star_1_1}
&\lambda_k\Big|\p_{x_i}\big(X^{t,m}_T(x)\big)\Big|^2+\int_t^T \Bigg(\lambda_z\Big|\p_{x_i}\big(Z^{t,m}_s(x)\big) \Big| ^2+\lambda_x\Big|\p_{x_i}\big(X^{t,m}_s(x)\big)\Big|^2\Bigg)ds\leq L^*_1.
\end{align}\normalsize

\textbf{Step 4} (Major estimate 2 of the following \eqref {cru_est_2}) For any $i=1,2,3,...,d_x$ and any $y\in\R^{d_x}$, integrating the inner product $\p_{m_i}\big(Z^{t,m}_s(x)\big)(y)\cdot \p_{m_i}\big(X^{t,m}_s(x)\big)(y)$ with respect to $s$ over $[t,T]$ and {\color{blue}using \eqref{eq_8_5_new}} yields\footnote{We have to pay extra attention at the orange terms that could be canceled out in our previous paper \cite{bensoussan2023theory} on mean field type control problems, since it is not possible to cancel them out in the current setting of mean field games.}
\footnotesize\begin{align*}
&\p_{m_i}\big(Z^{t,m}_T(x)\big)(y)\cdot \p_{m_i}\big(X^{t,m}_T(x)\big)(y)-\p_{m_i}\big(Z^{t,m}_t(x)\big)(y)\cdot \p_{m_i}\big(X^{t,m}_t(x)\big)(y)\\
=&\ \int_t^T \bigg(\p_{m_i}\big(Z^{t,m}_s(x)\big)(y)\cdot \frac{d}{ds}\p_{m_i}\big(X^{t,m}_s(x)\big)(y)+\p_{m_i}\big(X^{t,m}_s(x)\big)(y)\cdot \frac{d}{ds}\p_{m_i}\big(Z^{t,m}_s(x)\big)(y)\bigg)ds\\
=&\ \int_t^T \Bigg(\p_{m_i}\big(Z^{t,m}_s(x)\big)(y)\cdot \bigg({\color{green}\big(\p_x f\big)^{t,m}_s(x) \p_{m_i}\big(X^{t,m}_s(x)\big)(y)}+\big(\p_\mu f\big)^{t,m}_s(x,y)  \p_{y_i}\big(X^{t,m}_s(y)\big)\\
&\ \ \ \ \ \ \  \ \ \ \ \ \ \  \ \ \ \ \ \ \ \ \ \ \ \ \ \ \ \ \ \ \ \ \ {\color{orange}+\int_{\R^{d_x}} \big(\p_\mu f\big)^{t,m}_s(x,\wh{x}) \p_{m_i}\big(X^{t,m}_s(\wh{x})\big)(y) dm(\wh{x})}\\
&\ \ \ \ \ \ \  \ \ \ \ \ \ \  \ \ \ \ \ \ \ \ \ \ \ \ \ \ \ \ \ \ \ \ \ + \big(\p_\alpha f\big)^{t,m}_s(x)\cdot \Big({\color{green}\big(\p_x \alpha\big)^{t,m}_s(x)\cdot\p_{m_i}\big(X^{t,m}_s(x)\big)(y)}+\big(\p_\mu \alpha\big)^{t,m}_s(x,y)\cdot \p_{y_i}\big(X^{t,m}_s(y)\big)\\
&\ \ \ \ \ \ \  \ \ \ \ \ \ \  \ \ \ \ \ \ \ \ \ \ \ \ \ \ \ \ \ \ \ \ \ \ \ \ \ \ \ \ \ \ \ \ \ \ \ \ \ \ \ \ \ \ \ {\color{orange}+\int_{\R^{d_x}}\big(\p_\mu \alpha\big)^{t,m}_s(x,\wh{x})\cdot \p_{m_i}\big(X^{t,m}_s(\wh{x})\big)(y)dm(\wh{x})+\big(\p_z \alpha\big)^{t,m}_s(x)\cdot\p_{m_i}\big(Z^{t,m}_s(x)\big)(y)}\Big)\bigg)\\
&\ \ \ \ \ \ \ \ {\color{green}-\p_{m_i}\big(X^{t,m}_s(x)\big)(y)^\top\big(\p_x f\big)^{t,m}_s(x)\cdot \p_{m_i}\big(Z^{t,m}_s(x)\big)(y)}\\
&\ \ \ \ \ \ \ \ {\color{orange}-\p_{m_i}\big(X^{t,m}_s(x)\big)(y)^\top\bigg(\big(\p_x\p_x f\big)^{t,m}_s(x)\cdot Z^{t,m}_s(x)+\big(\p_x\p_x g\big)^{t,m}_s(x)\bigg)\p_{m_i}\big(X^{t,m}_s(x)\big)(y)}\\
&\ \ \ \ \ \ \ \ {\color{orange}-\int_{\R^{d_x}}\p_{m_i}\big(X^{t,m}_s(\wt{x})\big)(y)^\top\bigg(\big(\p_\mu\p_x f\big)^{t,m}_s(x,\wt{x})\cdot Z^{t,m}_s(x)+\big(\p_\mu\p_x g\big)^{t,m}_s(x,\wt{x})\bigg)dm(\wt{x})\p_{m_i}\big(X^{t,m}_s(x)\big)(y)}\\
&\ \ \ \ \ \ \ \ -\p_{y_i}\big(X^{t,m}_s(y)\big)^\top\bigg(\big(\p_\mu\p_x f\big)^{t,m}_s(x,y)\cdot Z^{t,m}_s(x)+\big(\p_\mu\p_x g\big)^{t,m}_s(x,y)\bigg)\p_{m_i}\big(X^{t,m}_s(x)\big)(y)\\
&\ \ \ \ \ \ \ \ -\bigg(\big(\p_x  \alpha\big)^{t,m}_s(x)\cdot\p_{m_i}\big(X^{t,m}_s(x)\big)(y)+\big(\p_\mu \alpha\big)^{t,m}_s(x,y)\cdot \p_{y_i}\big(X^{t,m}_s(y)\big)\\
&\ \ \ \ \ \ \ \ \ \ \ \ \ \  \ \ \ \ \ \ \ +\int_{\R^{d_x}}\big(\p_\mu \alpha\big)^{t,m}_s(x,\wh{x})\cdot \p_{m_i}\big(X^{t,m}_s(\wh{x})\big)(y)dm(\wh{x}){\color{green}+\big(\p_z \alpha\big)^{t,m}_s(x)\cdot\p_{m_i}\big(Z^{t,m}_s(x)\big)(y)}\bigg)\\
&\ \ \ \ \ \ \ \ \ \ \ \ \cdot\bigg(\big(\p_\alpha\p_x f\big)^{t,m}_s(x)\cdot Z^{t,m}_s(x)+\big(\p_\alpha\p_x g\big)^{t,m}_s(x)\bigg)\p_{m_i}\big(X^{t,m}_s(x)\big)(y)\Bigg)ds\\
&\text{(by using \eqref{calculus2}, \eqref{eq_7_17_1} and we can cancel the terms in green)}
\\
\leq &\ \int_t^T \Bigg(\Lambda_f\Big(1+\frac{20\wb{l}_h}{19\lambda_g}\Big)\Big|\p_{m_i}\big(Z^{t,m}_s(x)\big)(y)\Big|\Big| \p_{y_i}\big(X^{t,m}_s(y)\big)\Big|{\color{orange}+\p_{m_i}\big(Z^{t,m}_s(x)\big)(y)^\top\int_{\R^{d_x}} \big(\p_\mu f\big)^{t,m}_s(x,\wh{x}) \p_{m_i}\big(X^{t,m}_s(\wh{x})\big)(y) dm(\wh{x})}\\
&\ \ \ \ \ \ \ \ \ {\color{orange}+\p_{m_i}\big(Z^{t,m}_s(x)\big)(y)^\top\big(\p_\alpha f\big)^{t,m}_s(x)\int_{\R^{d_x}}\big(\p_\mu \alpha\big)^{t,m}_s(x,\wh{x})\p_{m_i}\big(X^{t,m}_s(\wh{x})\big)(y)dm(\wh{x})}\\
&\ \ \ \ \ \ \ \ \ {\color{orange}+\p_{m_i}\big(Z^{t,m}_s(x)\big)(y)^\top\big(\p_\alpha f\big)^{t,m}_s(x)\big(\p_z \alpha\big)^{t,m}_s(x)\p_{m_i}\big(Z^{t,m}_s(x)\big)(y)}\\
&\ \ \ \ \ \ \ \ {\color{orange}-\p_{m_i}\big(X^{t,m}_s(x)\big)(y)^\top\bigg(\big(\p_x\p_x f\big)^{t,m}_s(x)\cdot Z^{t,m}_s(x)+\big(\p_x\p_x g\big)^{t,m}_s(x)\bigg)\p_{m_i}\big(X^{t,m}_s(x)\big)(y)}\\
&\ \ \ \ \ \ \ \ {\color{orange}-\int_{\R^{d_x}}\p_{m_i}\big(X^{t,m}_s(\wt{x})\big)(y)^\top\bigg(\big(\p_\mu\p_x f\big)^{t,m}_s(x,\wt{x})\cdot Z^{t,m}_s(x)+\big(\p_\mu\p_x g\big)^{t,m}_s(x,\wt{x})\bigg)dm(\wt{x})\p_{m_i}\big(X^{t,m}_s(x)\big)(y)}\\
&\ \ \ \ \ \ \ \ +\frac{20\wb{l}_h^2}{19\lambda_g}\bigg(\int_{\R^{d_x}}\Big|\p_{m_i}\big(X^{t,m}_s(\wt{x})\big)(y)\Big|dm(\wt{x})+\Big|\p_{m_i}\big(X^{t,m}_s(x)\big)(y)\Big|\bigg)\cdot\Big|\p_{m_i}\big(X^{t,m}_s(x)\big)(y)\Big|\\
&\ \ \ \ \ \ \ \ +\Big(\Lambda_h+\frac{20\wb{l}_h^2}{19\lambda_g}\Big)\Big|\p_{y_i}\big(X^{t,m}_s(y)\big)\Big|\cdot\Big|\p_{m_i}\big(X^{t,m}_s(x)\big)(y)\Big|\Bigg)ds\\
&\text{(by using \eqref{bdd_d1_f}, \eqref{bdd_d2_f}, \eqref{bdd_d2_g_1}, \eqref{bdd_d2_g_2}, \eqref{eq_8_8_cone} and \eqref{p_xalpha_new}),}
\end{align*}\normalsize
which implies, by using \eqref{positive_k} and \eqref{bdd_d2_k_1} on the left hand side,
\footnotesize\begin{align}\no
&\lambda_k\Big|\p_{m_i}\big(X^{t,m}_T(x)\big)(y)\Big|^2-\Lambda_k\Big|\p_{y_i}\big(X^{t,m}_T(y)\big)\Big|\Big|\p_{m_i}\big(X^{t,m}_T(x)\big)(y)\Big|+\int_{\R^{d_x}} \p_{m_i}\big(X^{t,m}_T(x)\big)(y)^\top\big(\p_\mu\p_x k\big)^{t,m}_T(x,\wt{x})\p_{m_i}\big(X^{t,m}_T(\wt{x})\big)(y)dm(\wt{x})\\\no
\leq &\ \int_t^T \Bigg(\Lambda_f\Big(1+\frac{20\wb{l}_h}{19\lambda_g}\Big)\Big|\p_{m_i}\big(Z^{t,m}_s(x)\big)(y)\Big|\Big| \p_{y_i}\big(X^{t,m}_s(y)\big)\Big|{\color{orange}+\p_{m_i}\big(Z^{t,m}_s(x)\big)(y)^\top\int_{\R^{d_x}} \big(\p_\mu f\big)^{t,m}_s(x,\wh{x}) \p_{m_i}\big(X^{t,m}_s(\wh{x})\big)(y) dm(\wh{x})}\\\no
&\ \ \ \ \ \ \ \ \ {\color{orange}+\p_{m_i}\big(Z^{t,m}_s(x)\big)(y)^\top\big(\p_\alpha f\big)^{t,m}_s(x)\int_{\R^{d_x}}\big(\p_\mu \alpha\big)^{t,m}_s(x,\wh{x})\p_{m_i}\big(X^{t,m}_s(\wh{x})\big)(y)dm(\wh{x})}\\\no
&\ \ \ \ \ \ \ \ \ {\color{orange}+\p_{m_i}\big(Z^{t,m}_s(x)\big)(y)^\top\big(\p_\alpha f\big)^{t,m}_s(x)\big(\p_z \alpha\big)^{t,m}_s(x)\p_{m_i}\big(Z^{t,m}_s(x)\big)(y)}\\\no
&\ \ \ \ \ \ \ \ {\color{orange}-\p_{m_i}\big(X^{t,m}_s(x)\big)(y)^\top\bigg(\big(\p_x\p_x f\big)^{t,m}_s(x)\cdot Z^{t,m}_s(x)+\big(\p_x\p_x g\big)^{t,m}_s(x)\bigg)\p_{m_i}\big(X^{t,m}_s(x)\big)(y)}\\\no
&\ \ \ \ \ \ \ \ {\color{orange}-\int_{\R^{d_x}}\p_{m_i}\big(X^{t,m}_s(\wt{x})\big)(y)^\top\bigg(\big(\p_\mu\p_x f\big)^{t,m}_s(x,\wt{x})\cdot Z^{t,m}_s(x)+\big(\p_\mu\p_x g\big)^{t,m}_s(x,\wt{x})\bigg)dm(\wt{x})\p_{m_i}\big(X^{t,m}_s(x)\big)(y)}\\\no
&\ \ \ \ \ \ \ \ +\frac{20\wb{l}_h^2}{19\lambda_g}\bigg(\int_{\R^{d_x}}\Big|\p_{m_i}\big(X^{t,m}_s(\wt{x})\big)(y)\Big|dm(\wt{x})+\Big|\p_{m_i}\big(X^{t,m}_s(x)\big)(y)\Big|\bigg)\cdot\Big|\p_{m_i}\big(X^{t,m}_s(x)\big)(y)\Big|\\\label{eq_7_26_1}
&\ \ \ \ \ \ \ \ +\Big(\Lambda_h+\frac{20\wb{l}_h^2}{19\lambda_g}\Big)\Big|\p_{y_i}\big(X^{t,m}_s(y)\big)\Big|\cdot\Big|\p_{m_i}\big(X^{t,m}_s(x)\big)(y)\Big|\Bigg)ds.
\end{align}\normalsize
Then, integrating \eqref{eq_7_26_1} with respect to $x$, we have
\footnotesize\begin{align*}
&\lambda_k\Big\|\p_{m_i}\big(X^{t,m}_T(\cdot)\big)(y)\Big\|_{L^{2,d_x}_m}^2-\Lambda_k\Big|\p_{y_i}\big(X^{t,m}_T(y)\big)\Big|\Big\|\p_{m_i}\big(X^{t,m}_T(\cdot)\big)(y)\Big\|_{L^{1,d_x}_m}\\\no
&+\int_{\R^{d_x}}\int_{\R^{d_x}} \p_{m_i}\big(X^{t,m}_T(x)\big)(y)^\top\big(\p_\mu\p_x k\big)^{t,m}_T(x,\wt{x}) \p_{m_i}\big(X^{t,m}_T(\wt{x})\big)(y)dm(\wt{x})dm(x)\\\no
\leq &\ \int_t^T \Bigg(\Lambda_f\Big(1+\frac{20\wb{l}_h}{19\lambda_g}\Big)\Big\|\p_{m_i}\big(Z^{t,m}_s(\cdot)\big)(y)\Big\|_{L^{1,d_x}_m}\Big| \p_{y_i}\big(X^{t,m}_s(y)\big)\Big|\\
&\ \ \ \ \ \ \ \ \ {\color{orange}+\int_{\R^{d_x}}\p_{m_i}\big(Z^{t,m}_s(x)\big)(y)^\top\int_{\R^{d_x}} \big(\p_\mu f\big)^{t,m}_s(x,\wh{x}) \p_{m_i}\big(X^{t,m}_s(\wh{x})\big)(y) dm(\wh{x})dm(x)}\\\no
&\ \ \ \ \ \ \ \ \ {\color{orange}+\int_{\R^{d_x}}\p_{m_i}\big(Z^{t,m}_s(x)\big)(y)^\top\big(\p_\alpha f\big)^{t,m}_s(x)\int_{\R^{d_x}}\big(\p_\mu \alpha\big)^{t,m}_s(x,\wh{x})\p_{m_i}\big(X^{t,m}_s(\wh{x})\big)(y)dm(\wh{x})dm(x)}\\\no
&\ \ \ \ \ \ \ \ \ {\color{orange}+\int_{\R^{d_x}}\p_{m_i}\big(Z^{t,m}_s(x)\big)(y)^\top\big(\p_\alpha f\big)^{t,m}_s(x)\big(\p_z \alpha\big)^{t,m}_s(x)\p_{m_i}\big(Z^{t,m}_s(x)\big)(y)dm(x)}\\\no
&\ \ \ \ \ \ \ \ {\color{orange}-\int_{\R^{d_x}}\p_{m_i}\big(X^{t,m}_s(x)\big)(y)^\top\bigg(\big(\p_x\p_x f\big)^{t,m}_s(x)\cdot Z^{t,m}_s(x)+\big(\p_x\p_x g\big)^{t,m}_s(x)\bigg)\p_{m_i}\big(X^{t,m}_s(x)\big)(y)dm(x)}\\\no
&\ \ \ \ \ \ \ \ {\color{orange}-\int_{\R^{d_x}}\int_{\R^{d_x}}\p_{m_i}\big(X^{t,m}_s(\wt{x})\big)(y)^\top\bigg(\big(\p_\mu\p_x f\big)^{t,m}_s(x,\wt{x})\cdot Z^{t,m}_s(x)+\big(\p_\mu\p_x g\big)^{t,m}_s(x,\wt{x})\bigg)dm(\wt{x})\p_{m_i}\big(X^{t,m}_s(x)\big)(y)dm(x)}\\\no
&\ \ \ \ \ \ \ \ +\frac{20\wb{l}_h^2}{19\lambda_g}\bigg(\Big\|\p_{m_i}\big(X^{t,m}_s(\cdot)\big)(y)\Big\|_{L^{1,d_x}_m}^2+\Big\|\p_{m_i}\big(X^{t,m}_s(\cdot)\big)(y)\Big\|_{L^{2,d_x}_m}^2\bigg)\\
&\ \ \ \ \ \ \ \ +\Big(\Lambda_h+\frac{20\wb{l}_h^2}{19\lambda_g}\Big)\Big|\p_{y_i}\big(X^{t,m}_s(y)\big)\Big|\cdot\Big\|\p_{m_i}\big(X^{t,m}_s(\cdot)\big)(y)\Big\|_{L^{1,d_x}_m}\Bigg)ds,
\end{align*}\normalsize
which implies, by using \eqref{positive_g_x},  \eqref{positive_g_mu_1}, \eqref{positive_k_mu_1}, \eqref{positive_H_mu} and using Jensen's inequality to replace $L^1$-norm by $L^2$-norm, 
\footnotesize\begin{align}\no
&\Big(\lambda_k-l_k-\varepsilon_1\Big)\Big\|\p_{m_i}\big(X^{t,m}_T(\cdot)\big)(y)\Big\|_{L^{2,d_x}_m}^2-\frac{1}{4\varepsilon_1}\Lambda_k^2\Big|\p_{y_i}\big(X^{t,m}_T(y)\big)\Big|^2\\\no
\leq &\ \int_t^T \Bigg(\frac{1}{4\varepsilon_1}\bigg(\Lambda_f^2\Big(1+\frac{20\wb{l}_h}{19\lambda_g}\Big)^2+\Big(\Lambda_h+\frac{20\wb{l}_h^2}{19\lambda_g}\Big)^2\bigg)\Big| \p_{y_i}\big(X^{t,m}_s(y)\big)\Big|^2-\Big(\lambda_2-\varepsilon_1\Big)\Big\|\p_{m_i}\big(Z^{t,m}_s(\cdot)\big)(y)\Big\|_{L^{2,d_x}_m}^2\\\label{displacement}
&\ \ \ \ \ \ \ \ -\Big(\lambda_1-\frac{40\wb{l}_h^2}{19\lambda_g}-k_0\wb{l}_f-\varepsilon_1\Big)\Big\|\p_{m_i}\big(X^{t,m}_s(\cdot)\big)(y)\Big\|_{L^{2,d_x}_m}^2\Bigg)ds,
\end{align}\normalsize
where, we set 
\begin{align}\label{vareps_1}
\varepsilon_1:=\min\Bigg\{\frac{1}{2}(\lambda_k-l_k),\frac{1}{2}\lambda_2,\frac{2}{5}\lambda_1\Bigg\},
\end{align}
and thus we can define the following positive constants, under Hypothesis ${\bf(h3)}$,
\begin{align}\no
&\wb{\lambda}_k:=\frac{1}{2}(\lambda_k-l_k)=\lambda_k-l_k-\varepsilon_1;\ \wb{\lambda}_z:=\frac{1}{2}\lambda_2\leq \lambda_2-\varepsilon_1;\\\label{def_wb_lambda}
&\wb{\lambda}_x:=\frac{2}{5}\lambda_1\leq \lambda_1-\frac{1}{10}\lambda_1-\frac{1}{10}\lambda_1-\varepsilon_1\leq \lambda_1-\frac{40\wb{l}_h^2}{19\lambda_g}-k_0\wb{l}_f-\varepsilon_1.
\end{align}
Therefore, we have 
\footnotesize\begin{align}\no
&\wb{\lambda}_k\Big\|\p_{m_i}\big(X^{t,m}_T(\cdot)\big)(y)\Big\|_{L^{2,d_x}_m}^2+\int_t^T\Bigg(\wb{\lambda}_z\Big\|\p_{m_i}\big(Z^{t,m}_s(\cdot)\big)(y)\Big\|_{L^{2,d_x}_m}^2+\wb{\lambda}_x\Big\|\p_{m_i}\big(X^{t,m}_s(\cdot)\big)(y)\Big\|_{L^{2,d_x}_m}^2\Bigg)ds\\\label{cru_est_2}
\leq &\ \frac{1}{4\varepsilon_1}\bigg(\Lambda_f^2\Big(1+\frac{20\wb{l}_h}{19\lambda_g}\Big)^2+\Big(\Lambda_h+\frac{20\wb{l}_h^2}{19\lambda_g}\Big)^2\bigg)\int_t^T \Big| \p_{y_i}\big(X^{t,m}_s(y)\big)\Big|^2ds+\frac{1}{4\varepsilon_1}\Lambda_k^2\Big|\p_{y_i}\big(X^{t,m}_T(y)\big)\Big|^2.
\end{align}\normalsize 

\textbf{Step 5} (Consequences of major estimates obtained in Steps 3 and 4)
Thus, by using \eqref{p_m_gamma_int}, \eqref{L_star_1_1} and \eqref{cru_est_2}, we can obtain the following {\it a priori} estimate, for any $i=1,2,3,...,d_x$, 
\small\begin{align}\no
&\bigg\|\p_{m_i}\gamma(t,\cdot,m)(y)\bigg\|_{L^{2,d_x}_m}^2=\bigg\|\p_{m_i}\big(Z^{t,m}_t(\cdot)\big)(y)\bigg\|_{L^{2,d_x}_m}^2\\\no
\leq &\ \frac{6\Lambda_k^2}{\wb{\lambda}_k}\wb{\lambda}_k\Big\|\p_{m_i}\big(X^{t,m}_T(\cdot)\big)(y)\Big\|_{L^{2,d_x}_m}^2+3\Lambda_k^2\Big|\p_{y_i}\big(X^{t,m}_T(y)\big)\Big|^2\\\no
&+\int_t^T\Bigg(\Big(2\Lambda_h+\frac{40\wb{l}_h^2}{19\lambda_g}\Big)\frac{1}{\wb{\lambda}_x}\wb{\lambda}_x\Big\|\p_{m_i}\big(X^{t,m}_s(\cdot)\big)(y)\Big\|_{L^{2,d_x}_m}^2+\Big(\Lambda_h+\frac{20\wb{l}_h^2}{19\lambda_g}\Big)\Big|\p_{y_i}\big(X^{t,m}_s(y)\big)\Big|^2\\\no
&\ \ \ \ \ \ \ \ \ \ \ \ +\bigg(\Lambda_f+\frac{20\Lambda_f}{19\lambda_g}\wb{l}_h+3\Lambda_h+\frac{60\wb{l}_h^2}{19\lambda_g}\bigg)\frac{1}{\wb{\lambda}_z}\wb{\lambda}_z\Big\|\p_{m_i}\big(Z^{t,m}_s(\cdot)\big)(y)\Big\|_{L^{2,d_x}_m}^2\Bigg)ds\\\no
\leq &\  \Big(3+\frac{L^*_2}{4\varepsilon_1}\Big)\Lambda_k^2\frac{1}{\lambda_k}\lambda_k\Big|\p_{y_i}\big(X^{t,m}_T(y)\big)\Big|^2\\\no
&+\int_t^T \bigg(\Lambda_h+\frac{20\wb{l}_h^2}{19\lambda_g}+\frac{L^*_2}{4\varepsilon_1}\Big(\Lambda_f^2\Big(1+\frac{20\wb{l}_h}{19\lambda_g}\Big)^2+\Big(\Lambda_h+\frac{20\wb{l}_h^2}{19\lambda_g}\Big)^2\Big)\bigg)\frac{1}{\lambda_x}\lambda_x\Big|\p_{y_i}\big(X^{t,m}_s(y)\big)\Big|^2 ds\\\label{L_star_3_1}
\leq &\ L^*_1L^*_3,
\end{align}\normalsize
where $L^*_1$ was defined in \eqref{L_star_1} and 
\footnotesize\begin{align}\label{L_star_2}
L^*_2:=\max\Bigg\{&\frac{6\Lambda_k^2}{\wb{\lambda}_k},\Big(2\Lambda_g+\frac{1}{10}\lambda_g\Big)\frac{1}{\wb{\lambda}_x},\bigg(\frac54\Lambda_f+3\Lambda_g+\frac{3}{10}\lambda_g\bigg)\frac{1}{\wb{\lambda}_z}\Bigg\}\\\no
\geq \max\Bigg\{&\frac{6\Lambda_k^2}{\wb{\lambda}_k},\Big(2\Lambda_h+\frac{40\wb{l}_h^2}{19\lambda_g}\Big)\frac{1}{\wb{\lambda}_x},\bigg(\Lambda_f+\frac{20\Lambda_f}{19\lambda_g}\wb{l}_h+3\Lambda_h+\frac{60\wb{l}_h^2}{19\lambda_g}\bigg)\frac{1}{\wb{\lambda}_z}\Bigg\},
\end{align}\normalsize
and\footnotesize\begin{align}\label{L_star_3}
L^*_3:=\max\Bigg\{&\Big(3+\frac{L^*_2}{4\varepsilon_1}\Big)\Lambda_k^2\frac{1}{\lambda_k},\bigg(\Lambda_g+\frac{1}{10}\lambda_g+\frac{L^*_2}{4\varepsilon_1}\Big(\frac{25}{16}\Lambda_f^2+\Big(\Lambda_g+\frac{1}{10}\lambda_g\Big)^2\Big)\bigg)\frac{1}{\lambda_x}\Bigg\}\\\no
\geq \max\Bigg\{&\Big(3+\frac{L^*_2}{4\varepsilon_1}\Big)\Lambda_k^2\frac{1}{\lambda_k},\bigg(\Lambda_h+\frac{20\wb{l}_h^2}{19\lambda_g}+\frac{L^*_2}{4\varepsilon_1}\Big(\Lambda_f^2\Big(1+\frac{20\wb{l}_h}{19\lambda_g}\Big)^2+\Big(\Lambda_h+\frac{20\wb{l}_h^2}{19\lambda_g}\Big)^2\Big)\bigg)\frac{1}{\lambda_x}\Bigg\}.
\end{align}\normalsize
In addition, using \eqref{L_star_1_1} and \eqref{cru_est_2}, we have
\footnotesize\begin{align}\no
&\wb{\lambda_k}\Big\|\p_{m_i}\big(X^{t,m}_T(\cdot)\big)(y)\Big\|_{L^{2,d_x}_m}^2+\wb{\lambda}_z\int_t^T\Big\|\p_{m_i}\big(Z^{t,m}_s(\cdot)\big)(y)\Big\|_{L^{2,d_x}_m}^2ds+\wb{\lambda}_x\int_t^T\Big\|\p_{m_i}\big(X^{t,m}_s(\cdot)\big)(y)\Big\|_{L^{2,d_x}_m}^2ds\\\no
\leq &\ \frac{1}{4\varepsilon_1}\Lambda_k^2\frac{1}{\lambda_k}\lambda_k\Big|\p_{y_i}\big(X^{t,m}_T(y)\big)\Big|^2+\frac{1}{4\varepsilon_1}\bigg(\Lambda_f^2\Big(1+\frac{20\wb{l}_h}{19\lambda_g}\Big)^2+\Big(\Lambda_h+\frac{20\wb{l}_h^2}{19\lambda_g}\Big)^2\bigg)\frac{1}{\lambda_x}\lambda_x\int_t^T \Big| \p_{y_i}\big(X^{t,m}_s(y)\big)\Big|^2ds\\\label{L_star_4_1}
\leq &\ L^*_1L^*_4,
\end{align}\normalsize
where 
\footnotesize\begin{align}\label{L_star_4}
L^*_4:=\max\Bigg\{&\frac{1}{4\varepsilon_1}\Lambda_k^2\frac{1}{\lambda_k},\frac{1}{4\varepsilon_1}\bigg(\frac{25}{16}\Lambda_f^2+\Big(\Lambda_g+\frac{1}{10}\lambda_g\Big)^2\bigg)\frac{1}{\lambda_x}\Bigg\}\\\no
\geq \max\Bigg\{&\frac{1}{4\varepsilon_1}\Lambda_k^2\frac{1}{\lambda_k},\frac{1}{4\varepsilon_1}\bigg(\Lambda_f^2\Big(1+\frac{20\wb{l}_h}{19\lambda_g}\Big)^2+\Big(\Lambda_h+\frac{20\wb{l}_h^2}{19\lambda_g}\Big)^2\bigg)\frac{1}{\lambda_x}\Bigg\}.
\end{align}\normalsize

Now, we go back to \eqref{eq_7_26_1}. Using \eqref{bdd_d1_f}, \eqref{bdd_d2_f}, \eqref{bdd_d2_g_1}, \eqref{bdd_d2_g_2}, \eqref{bdd_d2_k_1}, \eqref{p_xalpha_new} and \eqref{p_zalpha_new}, we obtain
\footnotesize\begin{align}\no
&\lambda_k\Big|\p_{m_i}\big(X^{t,m}_T(x)\big)(y)\Big|^2-\Lambda_k\int_{\R^{d_x}} \Big|\p_{m_i}\big(X^{t,m}_T(\wt{x})\big)(y)\Big|dm(\wt{x})\Big|\p_{m_i}\big(X^{t,m}_T(x)\big)(y)\Big|-\Lambda_k\Big|\p_{y_i}\big(X^{t,m}_T(y)\big)\Big|\Big|\p_{m_i}\big(X^{t,m}_T(x)\big)(y)\Big|\\\no
\leq &\ \int_t^T \Bigg(\Lambda_f\Big(1+\frac{20\wb{l}_h}{19\lambda_g}\Big)\cdot\bigg(\Big|\p_{m_i}\big(Z^{t,m}_s(x)\big)(y)\Big|\Big| \p_{y_i}\big(X^{t,m}_s(y)\big)\Big|+\Big|\p_{m_i}\big(Z^{t,m}_s(x)\big)(y)\Big|\int_{\R^{d_x}} \Big|\p_{m_i}\big(X^{t,m}_s(\wh{x})\big)(y)\Big| dm(\wh{x})\bigg)\\\no
&\ \ \ \ \ \ \ \ -\lambda_z\Big|\p_{m_i}\big(Z^{t,m}_s(x)\big)(y)\Big|^2-\lambda_x\Big|\p_{m_i}\big(X^{t,m}_s(x)\big)(y)\Big|^2\\\no
&\ \ \ \ \ \ \ \ +\frac{20\wb{l}_h^2}{19\lambda_g}\bigg(\int_{\R^{d_x}}\Big|\p_{m_i}\big(X^{t,m}_s(\wt{x})\big)(y)\Big|dm(\wt{x})+\Big|\p_{m_i}\big(X^{t,m}_s(x)\big)(y)\Big|\bigg)\cdot\Big|\p_{m_i}\big(X^{t,m}_s(x)\big)(y)\Big|\\\label{eq_8_40_new}
&\ \ \ \ \ \ \ \ +\Big(\Lambda_h+\frac{20\wb{l}_h^2}{19\lambda_g}\Big)\Big|\p_{y_i}\big(X^{t,m}_s(y)\big)\Big|\Big|\p_{m_i}\big(X^{t,m}_s(x)\big)(y)\Big|\Bigg)ds.
\end{align}\normalsize 
Applying Young's inequality to \eqref{eq_8_40_new} yields 
\footnotesize\begin{align}\no
&\big(\lambda_k-2\varepsilon_2\big)\Big|\p_{m_i}\big(X^{t,m}_T(x)\big)(y)\Big|^2-\frac{1}{4\varepsilon_2}\Lambda_k^2\Big\|\p_{m_i}\big(X^{t,m}_T(\cdot)\big)(y)\Big\|_{L^{1,d_x}_m}^2-\frac{1}{4\varepsilon_2}\Lambda_k^2\Big|\p_{y_i}\big(X^{t,m}_T(y)\big)\Big|^2\\\no
\leq &\ \int_t^T \Bigg(\frac{1}{4\varepsilon_2}\Lambda_f^2\Big(1+\frac{20\wb{l}_h}{19\lambda_g}\Big)^2\cdot\bigg(\Big| \p_{y_i}\big(X^{t,m}_s(y)\big)\Big|^2+ \Big\|\p_{m_i}\big(X^{t,m}_s(\cdot)\big)(y)\Big\|_{L^{1,d_x}_m}^2\bigg)\\\no
&\ \ \ \ \ \ \ \ -\big(\lambda_z-2\varepsilon_2\big)\Big|\p_{m_i}\big(Z^{t,m}_s(x)\big)(y)\Big|^2-\Big(\lambda_x-2\varepsilon_2-\frac{20\wb{l}_h^2}{19\lambda_g}\Big)\Big|\p_{m_i}\big(X^{t,m}_s(x)\big)(y)\Big|^2\\\no
&\ \ \ \ \ \ \ \ +\frac{1}{4\varepsilon_2} \bigg(\frac{20\wb{l}_h^2}{19\lambda_g}\bigg)^2 \Big\|\p_{m_i}\big(X^{t,m}_s(\cdot)\big)(y)\Big\|_{L^{1,d_x}_m}^2+\frac{1}{4\varepsilon_2}\Big(\Lambda_h+\frac{20\wb{l}_h^2}{19\lambda_g}\Big)^2\Big|\p_{y_i}\big(X^{t,m}_s(y)\big)\Big|^2\Bigg)ds,
\end{align}\normalsize
which implies, by using \eqref{L_star_1_1} and \eqref{L_star_4_1},
\footnotesize\begin{align}\no
&\big(\lambda_k-2\varepsilon_2\big)\Big|\p_{m_i}\big(X^{t,m}_T(x)\big)(y)\Big|^2+\big(\lambda_z-2\varepsilon_2\big)\int_t^T\Big|\p_{m_i}\big(Z^{t,m}_s(x)\big)(y)\Big|^2ds+\Big(\lambda_x-2\varepsilon_2-\frac{20\wb{l}_h^2}{19\lambda_g}\Big)\int_t^T\Big|\p_{m_i}\big(X^{t,m}_s(x)\big)(y)\Big|^2ds\\\no
\leq &\ \frac{1}{4\varepsilon_2}\Lambda_k^2\frac{1}{\wb{\lambda}_k}\wb{\lambda}_k\Big\|\p_{m_i}\big(X^{t,m}_T(\cdot)\big)(y)\Big\|_{L^{1,d_x}_m}^2+\frac{1}{4\varepsilon_2}\Lambda_k^2\frac{1}{\lambda_k}\lambda_k\Big|\p_{y_i}\big(X^{t,m}_T(y)\big)\Big|^2\\\no
&+\int_t^T \Bigg(\frac{1}{4\varepsilon_2}\Lambda_f^2\Big(1+\frac{20\wb{l}_h}{19\lambda_g}\Big)^2\cdot\bigg(\frac{1}{\lambda_x}\lambda_x\Big| \p_{y_i}\big(X^{t,m}_s(y)\big)\Big|^2+ \frac{1}{\wb{\lambda}_x}\wb{\lambda}_x\Big\|\p_{m_i}\big(X^{t,m}_s(\cdot)\big)(y)\Big\|_{L^{1,d_x}_m}^2\bigg)\\\no
&\ \ \ \ \ \ \ \ \ \ +\frac{1}{4\varepsilon_2}\bigg(\frac{20\wb{l}_h^2}{19\lambda_g}\bigg)^2 \frac{1}{\wb{\lambda}_x}\wb{\lambda}_x\Big\|\p_{m_i}\big(X^{t,m}_s(\cdot)\big)(y)\Big\|_{L^{1,d_x}_m}^2+\frac{1}{4\varepsilon_2}\Big(\Lambda_h+\frac{20\wb{l}_h^2}{19\lambda_g}\Big)^2\frac{1}{\lambda_x}\lambda_x\Big|\p_{y_i}\big(X^{t,m}_s(y)\big)\Big|^2\Bigg)ds\\\no
\leq&\  L^*_1L^*_4\big(1+L^*_5\big),
\end{align}\normalsize
where
\footnotesize\begin{align}\label{L_star_5}
L^*_5:=\max\Bigg\{&\frac{1}{4\varepsilon_2}\Lambda_k^2\frac{1}{\wb{\lambda}_k},\frac{1}{4\varepsilon_2\wb{\lambda}_x}\bigg(\frac{25}{16}\Lambda_f^2+\frac{1}{400}\lambda_g^2\bigg)\Bigg\}\\\no
\geq \max\Bigg\{&\frac{1}{4\varepsilon_2}\Lambda_k^2\frac{1}{\wb{\lambda}_k},\frac{1}{4\varepsilon_2\wb{\lambda}_x}\bigg(\Lambda_f^2\Big(1+\frac{20\wb{l}_h}{19\lambda_g}\Big)^2+\bigg(\frac{20\wb{l}_h^2}{19\lambda_g}\bigg)^2\bigg)\Bigg\}.
\end{align}\normalsize
Here, we choose
\begin{align}\label{vareps_2}
\varepsilon_2:=\min\Big\{\frac{1}{4}\lambda_k,\frac{1}{4}\lambda_z,\frac{1}{8}\lambda_g\Big\},
\end{align}
and thus, $\lambda_k-2\varepsilon_2= \frac{1}{2}\lambda_k >0$, $\lambda_z-2\varepsilon_2=\frac12 \lambda_z>0$ and $\lambda_x-2\varepsilon_2-\frac{20\wb{l}_h^2}{19\lambda_g}\geq \frac{1}{2}\lambda_g>0$ are all positive, and we have 
\footnotesize\begin{align}\label{L_star_5_1}
&\frac12\lambda_k\Big|\p_{m_i}\big(X^{t,m}_T(x)\big)(y)\Big|^2+\frac12 \lambda_z \int_t^T\Big|\p_{m_i}\big(Z^{t,m}_s(x)\big)(y)\Big|^2ds+\frac12 \lambda_g\int_t^T\Big|\p_{m_i}\big(X^{t,m}_s(x)\big)(y)\Big|^2ds\leq L^*_1L^*_4\big(1+L^*_5\big).
\end{align}\normalsize
Thus, by using \eqref{p_m_gamma}, \eqref{L_star_1_1}, \eqref{L_star_4_1} and \eqref{L_star_5_1}, we can obtain the following {\it a priori} estimate, for $i=1,2,3,...,d_x$, 
\footnotesize\begin{align}\no
&\bigg|\p_{m_i}\gamma(t,x,m)(y)\bigg|^2=\bigg|\p_{m_i}\big(Z^{t,m}_t(x)\big)(y)\bigg|^2\\\no
\leq &\ 3\Lambda_k^2\frac{1}{\wb{\lambda}_k}\wb{\lambda}_k \Big\|\p_{m_i}\big(X^{t,m}_T(\cdot)\big)(y)\Big\|_{L^{2,d_x}_m}^2 +6\Lambda_k^2\frac{1}{ \lambda_k}\cdot\frac12 \lambda_k\Big|\p_{m_i}\big(X^{t,m}_T(x)\big)(y)\Big|^2+3\Lambda_k^2\frac{1}{\lambda_k}\lambda_k\Big|\p_{y_i}\big(X^{t,m}_T(y)\big)\Big|^2\\\no
&+\int_t^T\Bigg(\Big(\Lambda_h+\frac{20\wb{l}_h^2}{19\lambda_g}\Big) \frac{1}{\wb{\lambda}_x}\wb{\lambda}_x\Big\|\p_{m_i}\big(X^{t,m}_s(\cdot)\big)(y)\Big\|_{L^{2,d_x}_m}^2+2\Big(\Lambda_h+\frac{20\wb{l}_h^2}{19\lambda_g}\Big)\frac{1}{ \lambda_g}\cdot\frac12 \lambda_g\Big|\p_{m_i}\big(X^{t,m}_s(x)\big)(y)\Big|^2\\\no
&\ \ \ \ \ \ \ \ \ \ \ \ +\Big(\Lambda_h+\frac{20\wb{l}_h^2}{19\lambda_g}\Big)\frac{1}{\lambda_x}\lambda_x\Big|\p_{y_i}\big(X^{t,m}_s(y)\big)\Big|^2+2\bigg(\Lambda_f+\frac{20\Lambda_f}{19\lambda_g}\wb{l}_h+3\Lambda_h+\frac{60\wb{l}_h^2}{19\lambda_g}\bigg)\frac{1}{\lambda_z}\cdot\frac12 \lambda_z\Big|\p_{m_i}\big(Z^{t,m}_s(x)\big)(y)\Big|^2\Bigg)ds\\\label{L_star_6_1}
\leq &\  \Big(L^*_4\big(2+L^*_5\big)+1\Big)L^*_1L^*_6,
\end{align}\normalsize
where
\footnotesize\begin{align}\no
L^*_6:=\max\Bigg\{&6\Lambda_k^2\frac{1}{ \lambda_k},2\bigg(\frac54 \Lambda_f+3\Lambda_g+\frac{3}{10}\lambda_g\bigg)\frac{1}{\lambda_z},2\Big(\Lambda_g+\frac{1}{10}\lambda_g\Big)\frac{1}{ \lambda_g},\\\label{L_star_6}
&3\Lambda_k^2\frac{1}{\wb{\lambda}_k},\Big(\Lambda_g+\frac{1}{10}\lambda_g\Big) \frac{1}{\wb{\lambda}_x},3\Lambda_k^2\frac{1}{\lambda_k},\Big(\Lambda_g+\frac{1}{10}\lambda_g\Big)\frac{1}{\lambda_x}\Bigg\}\\\no
\geq \max\Bigg\{&6\Lambda_k^2\frac{1}{ \lambda_k},2\bigg(\Lambda_f+\frac{20\Lambda_f}{19\lambda_g}\wb{l}_h+3\Lambda_h+\frac{60\wb{l}_h^2}{19\lambda_g}\bigg)\frac{1}{\lambda_z},2\Big(\Lambda_h+\frac{20\wb{l}_h^2}{19\lambda_g}\Big)\frac{1}{ \lambda_g},\\\no
&3\Lambda_k^2\frac{1}{\wb{\lambda}_k},\Big(\Lambda_h+\frac{20\wb{l}_h^2}{19\lambda_g}\Big) \frac{1}{\wb{\lambda}_x},3\Lambda_k^2\frac{1}{\lambda_k},\Big(\Lambda_h+\frac{20\wb{l}_h^2}{19\lambda_g}\Big)\frac{1}{\lambda_x}\Bigg\}.
\end{align}\normalsize

In conclusion, by \eqref{L_star_1} and \eqref{L_star_6_1}, we can obtain,
\begin{align}\no
&\sup_{x\in\R^{d_x},m\in\mc{P}_2(\R^{d_x}),y\in\R^{d_x}}\|\p_x\gamma(t,x,m)\|_{\mathcal{L}(\R^{d_x};\R^{d_x})}\vee \|\p_m\gamma(t,x,m)(y)\|_{\mathcal{L}(\R^{d_x};\R^{d_x})}\\\label{L_star_0}
\leq&\ d_x\cdot\max\Big\{L^*_1,\sqrt{\Big(L^*_4\big(2+L^*_5\big)+1\Big)L^*_1L^*_6}\Big\}=:L^*_0.
\end{align}
Since $\gamma(s,x,m)$ is also a decoupling field for the FBODE system \eqref{fbodesystem} on $[t,T]$, for an arbitrary $s\in[t,T]$, $(X^{s,m}_\tau(x),Z^{s,m}_\tau(x)=\gamma(\tau,X^{s,m}_\tau(x),X^{s,m}_\tau\ot m))_{x\in\R^{d_x},\tau\in[s,T]}$ is also a solution pair to \eqref{fbodesystem} over the time horizon $[s,T]$, so the above estimates, for instance, \eqref{L_star_1_1}, \eqref {cru_est_2}, \eqref{L_star_3_1}, \eqref{L_star_4_1}, \eqref{L_star_5_1}, \eqref{L_star_6_1} and \eqref{L_star_0} are also valid for $\gamma(s,x,m)=\gamma(s,X^{s,m}_s(x),X^{s,m}_s\ot m)=Z^{s,m}_s(x)$; in other words, we still have
\begin{align}\no
&\sup_{x\in\R^{d_x},m\in\mc{P}_2(\R^{d_x}),y\in\R^{d_x}}\|\p_x\gamma(s,x,m)\|_{\mathcal{L}(\R^{d_x};\R^{d_x})}\vee \|\p_m\gamma(s,x,m)(y)\|_{\mathcal{L}(\R^{d_x};\R^{d_x})}\leq L^*_0.
\end{align}
Therefore, $L^*=\vertiii{\gamma }_{2,[t,T]}\leq L^*_0$.
We emphasize here that the positive constants $L^*_i,\,i=0,1,\cdot\cdot\cdot,6$, which are defined in \eqref{L_star_1}, \eqref{L_star_2}, \eqref{L_star_3}, \eqref{L_star_4}, \eqref{L_star_5}, \eqref{L_star_6} and \eqref{L_star_0}, only depend on $\lambda_f$, $\Lambda_1$, $\Lambda_2$, $\Lambda_3$, $\lambda_1$, $\lambda_2$, $\lambda_g$, $\Lambda_g$, $l_g$, $\lambda_k$, $\Lambda_k$ and $l_k$, but not $L^*$, $\wb{l}_f$, $\wb{l}_g$ and $T$. 
\end{proof}

\begin{remark}\label{remark_lower_bdd_L_star_0}
(i) It is worth noting that for our choice of $L^*_0$, we always have $L^*_0>\Lambda_k$. More precisely, it follows from \eqref{positive_k}, \eqref{bdd_d2_k_1} and \eqref{def_wb_lambda} that $\lambda_k\leq 2\Lambda_k$ and $\wb{\lambda}_k\leq \frac{1}{2}\Lambda_k$. Thus, by \eqref{L_star_1} and \eqref{L_star_6}, we have 
\begin{align}\label{lower_bdd_L_star_1}
L^*_1\geq \frac{\Lambda_k^2}{\lambda_k}\geq \frac12\Lambda_k\text{ and } L^*_6\geq \frac{3\Lambda_k^2}{\wb{\lambda}_k}\geq 6\Lambda_k,
\end{align}
and hence, 
\begin{align}\label{lower_bdd_L_star_0}
L^*_0\geq d_x\cdot\sqrt{\Big(L^*_4\big(2+L^*_5\big)+1\Big)L^*_1L^*_6}\geq \sqrt{L^*_1L^*_6}\geq \sqrt{3}\Lambda_k>\Lambda_k.
\end{align}
(ii) By Part $(i)$ of Remark \ref{remark_h2}, if we directly assume \eqref{h3_eq_1}, then $L^*_0$ only depends on $\lambda_f$, $\Lambda_1$, $\Lambda_2$, $\Lambda_3$, $\lambda_g$, $\Lambda_g$, $l_g$, $\lambda_k$, $\Lambda_k$ and $l_k$, since $\lambda_1$ and $\lambda_2$ can be chosen such that they depend only on $\lambda_f$, $\Lambda_1$, $\Lambda_2$, $\lambda_g$,  $\Lambda_g$ and $l_g$.
\end{remark}

\subsection{Global Existence}
With the crucial {\it a priori} estimate \eqref{JFE} in hand, we are ready to establish the global solution existence in time of the FBODE system \eqref{FBODE}.
\begin{theorem}\label{GlobalSol}
(Global in time solvability of the FBODE system \eqref{FBODE}). Under Assumption ${\bf(a1)}$-${\bf(a3)}$ and Hypothesis ${\bf(h1)}$-${\bf(h3)}$, for any $T>0$, there exists a  unique global decoupling field $\gamma(s,x,\mu)$ for the FBODE system \eqref{FBODE} on $[0,T]$ with terminal data $p(x,\mu)=\p_x k(x,\mu)$, which is differentiable in $s\in[0,T]$, $x\in\R^{d_x}$ and $L$-differentiable in $\mu\in\mc{P}_2(\R^{d_x})$, and its derivatives  $\p_s\gamma(s,x,\mu)$, $\p_x\gamma(s,x,\mu)$ and $\p_\mu\gamma(s,x,\mu)(\wt{x})$ are jointly continuous in their corresponding arguments $(s,x,\mu)\in [t,T]\times \R^{d_x} \times \mathcal{P}_2(\R^{d_x})$ and $(s,x,\mu, \wt{x})\in [t,T]\times \R^{d_x} \times \mathcal{P}_2(\R^{d_x})\times \R^{d_x}$, respectively; it also satisfies $\vertiii{\gamma }_{2,[0,T]}\leq L^*_0$ with $L^*_0$ defined in \eqref{L_star_0}.
Moreover, for any $t\in[0,T]$ and $m\in\mc{P}_2(\R^{d_x})$, there exists a solution pair $\big(X^{t,m}_s(x),Z^{t,m}_s(x)\big)_{x\in\R^{d_x},s\in[t,T]}$ of the FBODE system \eqref{FBODE}, which is defined by solving for $X^{t,m}_s(x)$ in \eqref{xeq} and $Z^{t,m}_s(x):=\gamma(s,X^{t,m}_s(x),X^{t,m}_s\ot m)$, such that, both $X^{t,m}_s(x)$ and $Z^{t,m}_s(x)$ are continuously differentiable in $s\in[t,T]$ and, for each $s\in[t ,T]$ and $x\in\R^{d_x}$,
\begin{align}\label{eq_7_43}
&\big|Z^{t,m}_s(x)-Z^{t,m_0}_s(x_0)\big|
\leq \ L^*_0\Big(\big|X^{t,m}_s(x)-X^{t,m_0}_s(x_0)\big|+W_1(X^{t,m}_s\ot m,X^{t,m_0}_s\ot m_0)\Big),\\\label{eq_7_44}
&\left|Z^{t,m}_s(x)\right|\leq L^*_0 \left(\left|X^{t,m}_s(x)\right|+\left\|X^{t,m}_s\right\|_{L^{1,d_x}_m}\right).
\end{align} 
Furthermore, both $X^{t,m}_s(x)$ and $Z^{t,m}_s(x)$ are differentiable in $t\in[0 ,T]$, $x\in\R^{d_x}$ and are $L$-differentiable in $m\in\mc{P}_2(\R^{d_x})$ with their corresponding derivatives $\Big(\p_t\big(X^{t,m}_s(x)\big),\p_t\big(Z^{t,m}_s(x)\big)\Big)$, $\Big(\p_x\big(X^{t,m}_s(x)\big),\p_x\big(Z^{t,m}_s(x)\big)\Big)$ and $\Big(\p_m\big(X^{t,m}_s(x)\big)(y), \p_m\big(Z^{t,m}_s(x)\big)(y)\Big)$ being continuous in their corresponding arguments, and these derivatives are also continuously differentiable in $s\in[t,T]$. In addition, the following estimates hold:
\begin{align}\label{Bp_xX_G}
&\left\|\p_x\big(X^{t,m}_s(x)\big)\right\|_{\mc{L}(\R^{d_x};\R^{d_x})}\leq \exp\Big(L_B'(s-t)\Big);
\\\label{Bp_mX_G}
&\left\|\p_m\big(X^{t,m}_s(x)\big)(y)\right\|_{\mc{L}(\R^{d_x};\R^{d_x})}\leq L_M^{(s-t)};
\\\label{Bp_tX_G}
&\left|\p_t\big(X^{t,m}_s(x)\big)\right|\leq L_B'\big(1+|x|+\|m\|_1\big)\Big(L_B'(s-t)\exp\Big(2L_B'(s-t)\Big)+1\Big)\exp\Big(L_B'(s-t)\Big);\\\label{Bp_xZ_G}
&\left\|\p_x\big(Z^{t,m}_s(x)\big)\right\|_{\mc{L}(\R^{d_x};\R^{d_x})}\leq L^*_0\exp\Big(L_B'(s-t)\Big);
\\\label{Bp_mZ_G}
&\left\|\p_m\big(Z^{t,m}_s(x)\big)(y)\right\|_{\mc{L}(\R^{d_x};\R^{d_x})}\leq 2L^*_0L_M^{(s-t)}+L^*_0\exp\Big(L_B'(s-t)\Big);
\\\label{Bp_tZ_G}
&\left|\p_t\big(Z^{t,m}_s(x)\big)\right|\leq 2L^*_0L_B'\big(1+|x|+\|m\|_1\big)\Big(L_B'(s-t)\exp\Big(2L_B'(s-t)\Big)+1\Big)\exp\Big(L_B'(s-t)\Big),
\end{align}
where 
\begin{align}\label{L_Bp}
L_B':=&\ \Lambda_f(1+L_\alpha+L^*_0L_\alpha),\\\label{L_M^{(s-t)}}
L_M^{(\tau)}:=&\ L_B'\tau\Big(L_B'\tau\cdot\exp\Big(2L_B'\tau\Big)+1\Big)\exp\Big(2L_B'\tau\Big).
\end{align}
\end{theorem}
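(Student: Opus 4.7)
The plan is a backward-in-time stitching argument: construct the global decoupling field $\gamma$ by gluing together finitely many local pieces obtained from Theorems \ref{Thm6_1}--\ref{Thm6_2}, where the \emph{a priori} bound of Theorem \ref{Crucial_Estimate} prevents the Lipschitz constant $L_p$ governing the ``terminal'' data from ever exceeding $\wb{L}_p=L^*_0$, so that the local lifespan $\eps:=\min\{\eps_1,\eps_2\}>0$ stays uniform throughout the iteration.

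First I fix a mesh $0=t_N<t_{N-1}<\cdots<t_1<t_0=T$ with $t_i-t_{i+1}\leq\eps$ and $N=\lceil T/\eps\rceil$. On $[t_1,T]$ apply Theorems \ref{Thm6_1}--\ref{Thm6_2} with terminal $p_0:=\p_x k$ (which satisfies ${\bf(P)}$ with $L_{p_0}\leq\Lambda_k\leq L^*_0$) to produce a local decoupling field $\gamma_0$ with $\vertiii{\gamma_0}_{2,[t_1,T]}\leq 2L_{p_0}\leq 2L^*_0$. The hypothesis of Theorem \ref{Crucial_Estimate} is met, so we upgrade to $\vertiii{\gamma_0}_{2,[t_1,T]}\leq L^*_0$. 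Setting $p_1(x,\mu):=\gamma_0(t_1,x,\mu)$ yields a terminal datum for $[t_2,t_1]$ satisfying ${\bf(P)}$ with $L_{p_1}\leq L^*_0=\wb{L}_p$, so Theorems \ref{Thm6_1}--\ref{Thm6_2} produce a local field $\gamma_1$ on $[t_2,t_1]$ to be glued to $\gamma_0$. Because the glued field is a genuine decoupling field on $[t_2,T]$ with terminal $\p_x k$ (the two pieces match at $t_1$ by construction, and concatenating solutions via \eqref{coro_6_4} produces a global solution pair of the FBODE) and satisfies $\vertiii{\gamma}_{2,[t_2,T]}\leq 2L^*_0$, Theorem \ref{Crucial_Estimate} applies again and recovers $\vertiii{\gamma}_{2,[t_2,T]}\leq L^*_0$. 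Iterating $N$ times produces $\gamma$ on $[0,T]$ with $\vertiii{\gamma}_{2,[0,T]}\leq L^*_0$.

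Next I verify $C^1$ regularity across the junction times: on each open piece Theorem \ref{Thm6_2} already provides joint continuity of $\p_s\gamma,\p_x\gamma,\p_\mu\gamma$, and the backward equation \eqref{gammaeq} expresses $\p_s\gamma$ as a continuous function of $(\gamma,\p_x\gamma,\p_\mu\gamma,x,\mu)$, hence the one-sided $s$-derivatives match at each $t_i$. With $\gamma$ in hand, set $X^{t,m}_s(x)$ via \eqref{xeq} and $Z^{t,m}_s(x):=\gamma(s,X^{t,m}_s(x),X^{t,m}_s\ot m)$; \eqref{eq_7_43} is immediate from $\vertiii{\gamma}_{2,[0,T]}\leq L^*_0$, and \eqref{eq_7_44} follows by adding the observation $\gamma(s,0,\delta_0)=0$, which in turn comes from Hypothesis ${\bf(h1)}$ (so that $X\equiv 0,Z\equiv 0$ is a solution pair when $x=0,\ m=\delta_0$) together with local uniqueness. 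In particular, $(X^{t,m}_s(x),X^{t,m}_s\ot m,Z^{t,m}_s(x))\in c_{k_0}$ on all of $[0,T]$, so Proposition \ref{A5} makes $\wh\alpha$ well-defined throughout.

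For the derivative bounds \eqref{Bp_xX_G}--\eqref{Bp_tZ_G}, I would differentiate the forward ODE \eqref{xeq} in $x$, $m$ and $t$, yielding linear variational equations whose coefficients are controlled by $L_B'=\Lambda_f(1+L_\alpha+L^*_0L_\alpha)$ via the chain rule combined with \eqref{lipalpha} and the global bound $\vertiii{\gamma}_{2,[0,T]}\leq L^*_0$; Gronwall then gives \eqref{Bp_xX_G}--\eqref{Bp_tX_G}, and applying the chain rule to $Z^{t,m}_s(x)=\gamma(s,X^{t,m}_s(x),X^{t,m}_s\ot m)$ transports these to \eqref{Bp_xZ_G}--\eqref{Bp_tZ_G}. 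Uniqueness proceeds by backward induction: any two global decoupling fields agree on $[t_1,T]$ by Theorem \ref{Thm6_1}'s local uniqueness with terminal $\p_x k$; once they agree at $t_i$, they share terminal data on $[t_{i+1},t_i]$ and local uniqueness forces agreement there as well. The main obstacle -- and the real content of the argument -- is the bootstrap in the first paragraph: Theorem \ref{Crucial_Estimate} requires the preliminary bound $L^*\leq 2\max\{L^*_0,\Lambda_k\}=2L^*_0$, and this is supplied by Theorem \ref{Thm6_2} because the uniform choice $\wb{L}_p=L^*_0$ makes $2L_p\leq 2L^*_0$ at every step; were this bootstrap to fail, $L_p$ would grow between steps, the lifespan $\eps$ would shrink, and the iteration could not reach $t=0$.
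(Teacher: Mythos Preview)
Your proposal is correct and follows essentially the same strategy as the paper: backward-in-time gluing of local decoupling fields from Theorems~\ref{Thm6_1}--\ref{Thm6_2}, with the bootstrap ``local existence gives $\vertiii{\gamma}_2\leq 2L^*_0$, then Theorem~\ref{Crucial_Estimate} improves to $\leq L^*_0$'' keeping the lifespan uniform. The one cosmetic difference is that the paper partitions with \emph{half-step overlapping} intervals (length $\wb{\eps}_2$, shifted by $\wb{\eps}_2/2$) and uses the local uniqueness in Theorem~\ref{Thm6_1} on each overlap to show the new and old pieces coincide there before extending to $T$; you instead use non-overlapping intervals and verify directly (via the flow property) that the concatenated field is a genuine decoupling field on the enlarged interval with terminal $\p_x k$ --- both routes require the same verification that the glued $\gamma$ satisfies \eqref{gammaeq} up to $T$, so neither is materially simpler.
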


\begin{proof}
Define $\wb{\eps}_2=:\eps_2(L^*_0; L_f,\Lambda_f,\wb{l}_f,L_g,\Lambda_g,\wb{l}_g,L_\alpha)$ where $\eps_2$ is given in Theorem \ref{Thm6_2}. Recall that $L^*_0$ was defined in \eqref{L_star_0}. Partition the interval $[0 ,T]$ into sub-intervals with endpoints $0  < t_1 <...<t_{N-1}:=T-\wb{\eps}_2<t_N:=T-\frac{1}{2}\wb{\eps}_2<t_{N+1} =:T$ where $t_{N-i}:=T-\frac{i+1}{2}\wb{\eps}_2$, $i=0,1,...,N-1$ and $N$ is the smallest integer such that $N\geq 2T/\wb{\eps}_2-1$. Note that the sub-interval $[0 ,t_1=T-\frac{N}{2}\wb{\eps}_2]$ has a length not longer than those other intervals $[t_1,t_2]$,..., and $[t_{N-1},t_N]$, everyone of which has a common length of size $\frac{1}{2}\wb{\eps}_2$. Now, we are ready to paste the local solution $\gamma$ of \eqref{gammaeq} together to obtain a global one by using the idea depicted in Figure 1.

\begin{figure}[h!]\label{fig_1}
	\centering
	\begin{tikzpicture}[scale=1.4]
		\draw[-{stealth[scale=4]}, very thick, black] (-0.3, 0) -- (10.3, 0);
		\draw (0, -0.06) node[below, scale=0.5]{$t_0 = 0$} -- (0, 0.06);
		\draw (4, -0.06) node[below, scale=0.5]{$t_{N-2}$} -- (4, 0.06);
		\draw (6, -0.06) node[below, scale=0.5]{$t_{N-1}$} -- (6, 0.06);
		\draw (8, -0.06) node[below, scale=0.5]{$t_{N}$} -- (8, 0.06);
		\draw (10, -0.06) node[below, scale=0.5]{$t_{N+1}=T$} -- (10, 0.06);
		
		\draw [decorate, decoration={brace,mirror}]
		(8, -0.4) -- node[below=0.1cm, scale=0.5]{distance = $\wb{\eps}_2/2$} (10, -0.4);
		\draw [decorate, decoration={brace,mirror}]
		(6, -0.8) -- node[below=0.1cm, scale=0.5]{distance = $\wb{\eps}_2/2$} (8, -0.8);
		\draw [decorate, decoration={brace,mirror}]
		(4, -1.2) -- node[below=0.1cm, scale=0.5]{distance = $\wb{\eps}_2/2$} (6, -1.2);

\draw[color=red, dashed] (6, 0.1) -- (6, 1.6) node[above, xshift=1.5cm, align=center, scale=0.5, color=red]{Step 3\\paste together};
		\draw[color=red, dashed] (8, 0.1) -- (8, 1.6);
	
		\draw[->, color=black, thick] (4, 1.0) node[above, scale=0.5, xshift=1cm, color=black]{Step 2}--(8,1.0)--(8,0.5)--(4, 0.5);
           
		\draw[-, color=blue, thick] (6, 1.1) node[above, scale=0.5, xshift=7cm, color=blue]{Step 1}--(10,1.1);
\draw[-, color=blue, line width=0.8mm] (10,1.1) node[above, scale=0.5, xshift=7cm, color=blue]{}--(10,0.4);
\draw[->, color=blue, thick] (10,0.4) node[above, scale=0.5, xshift=7cm, color=blue]{}--(6, 0.4);
\draw[->, color=purple, thick] (4, 1.3) node[above, scale=0.5, xshift=1.8cm, color=purple]{Step 3} --(10,1.3)--(10,0.2)--(4, 0.2);
		
	\end{tikzpicture}
	\caption{Algorithm for pasting the local-in-time solutions.}
\end{figure}
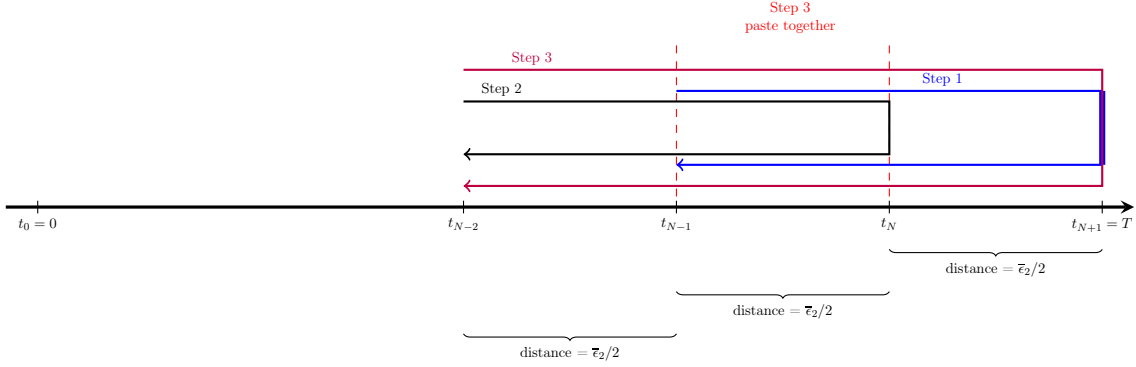

Step 1. Over the interval $ [t_{N-1}=T-\wb{\eps}_2,T]$, take $p(x,\mu)=\p_x k(x,\mu)$, and apply Theorem \ref{Thm6_1} and \ref{Thm6_2}, then, the equation \eqref{gammaeq} has a unique solution $\gamma^{(N)}(s,x,\mu)$ over the time interval $[t_{N-1},T]$ with the regularity $(i)$ stated in Theorem \ref{Thm6_2} and satisfying $\vertiii{\gamma^{(N)}}_{2,[t_{N-1},T]}\leq 2\max\{\Lambda_k,L^*_0\}=2L^*_0$. Moreover, applying Theorem \ref{Crucial_Estimate}, we actually have a better refined estimate $\vertiii{\gamma^{(N)}}_{2,[t_{N-1},T]}\leq L^*_0$.

Step 2. Over the interval $ [t_{N-2}=t_N-\wb{\eps}_2,t_N]$, take $p(x,\mu)=\gamma^{(N)}(t_N,x,\mu)$, and by referring to \eqref{p1} and \eqref{Gamma_2}, we have $L_p\leq L^*_0$, and then applying Theorem \ref{Thm6_1} and \ref{Thm6_2}, the equation \eqref{gammaeq} has a unique solution $\gamma^{(N-1)}(s,x,\mu)$ over the time interval $[t_{N-2},t_N]$ with the regularity $(i)$ stated in Theorem \ref{Thm6_2} and satisfying $\vertiii{\gamma^{(N-1)}}_{2,[t_{N-2},t_N]}\leq 2L^*_0$. 

Step 3. By the local uniqueness of $\gamma$ that was stated in Theorem \ref{Thm6_1}, for $s\in [t_{N-1},t_N]$, 
\small\begin{align}\no
\gamma^{(N-1)}(s,x,\mu)=\gamma^{(N)}(s,x,\mu),
\end{align}\normalsize
since both of them satisfy the following equation: 
for any $(s,x,m)\in [t_{N-1},t_N]\times \R^{d_x}\times\mathcal{P}_2(\R^{d_x})$,
\small\begin{align}\no
\gamma (s,x,m)
= &\ \gamma^{(N)}(t_N,X^{s,m}_{t_N}(x),X^{s,m}_{t_N}\ot m)\\\no
&+ \int_s^{t_N} \Bigg(\p_x f(X^{s,m }_\tau(x),X^{s,m }_\tau\ot m,\alpha(X^{s,m }_\tau(x),X^{s,m }_\tau\ot m,Z^{s,m }_\tau(x)))\cdot Z^{s,m }_\tau(x)\\
&\ \ \ \ \ \ \ \ \ \ \ \ \ \ +\p_x g(X^{s,m }_\tau(x),X^{s,m }_\tau\ot m,\alpha(X^{s,m }_\tau(x),X^{s,m }_\tau\ot m,Z^{s,m }_\tau(x)))\Bigg)d\tau,
\end{align}\normalsize
where $Z^{s,m }_\tau(x):=\gamma\big(\tau,X^{s,m }_\tau(x),X^{s,m }_\tau\ot m\big)$ and
\small\begin{align}     
X^{s,m}_\tau(x) = x+\int_s^\tau f\Big(X^{s,m }_{\wt{\tau}}(x),X^{s,m }_{\wt{\tau}}\ot m,\alpha\big(X^{s,m }_{\wt{\tau}}(x),X^{s,m }_{\wt{\tau}}\ot m,\gamma(\tau,X^{s,m }_{\wt{\tau}}(x),X^{s,m }_{\wt{\tau}}\ot m)\big)\Big)d\wt{\tau}.
\end{align}\normalsize
Therefore, for any $t\in [t_{N-1},t_N]$, we paste together the local solutions $\gamma^{(N)}(s,x,\mu)$ and $\gamma^{(N-1)}(s,x,\mu)$ consecutively in order by extending $\gamma^{(N-1)}(s,x,\mu)$ as follows: for any $s\in[t_N,T]$,  $\gamma^{(N-1)}(s,x,\mu)=\gamma^{(N)}(s,x,\mu)$.
We still denote the unique extended solution over the time horizon up to  the terminal time $[t_{N-2},T]$ by $\gamma^{(N-1)}(s,x,\mu)$ without causing much ambiguity, which thus solves the equation \eqref{gammaeq} over the time horizon up to  the terminal time $[t_{N-2},T]$. In addition, $\gamma^{(N-1)}(s,x,\mu)$ on the time interval $[t_{N-2},t_N]$ still has the regularity $(i)$ stated in Theorem \ref{Thm6_2} and satisfies $\vertiii{\gamma^{(N-1)}}_{2,[t_{N-2},T]}\leq 2L^*_0$. Moreover, applying Theorem \ref{Crucial_Estimate} again, we also have the refined estimate $\vertiii{\gamma^{(N-1)}}_{2,[t_{N-2},T]}\leq L^*_0$. 

Step 4. Over the interval $[t_{N-3},t_{N-1}]$, we repeat Step 2-Step 3. More precisely, due to the uniform (in time) estimate \eqref{JFE} in Theorem \ref{Crucial_Estimate}, the minimal lifespan $\wb{\eps}_2$ is uniform in all sub-intervals, so we can carry out the same pasting procedure for all the rest of the other intervals $[t_{N-4},t_{N-2}]$,...,$[0 ,t_2]$ in a backward manner. As a result, we can obtain a unique global solution $\gamma(s,x,\mu)$ of the equation \eqref{gammaeq} on the whole time horizon $[0,T]$, so that it has the regularity $(i)$ stated in Theorem \ref{Thm6_2}, that is the regularity of $\gamma$ stated in this theorem, and satisfies $\vertiii{\gamma}_{2,[0,T]}\leq L^*_0$ as well. 

Step 5. Finally, for any $t\in[0,T]$ and $m\in\mc{P}_2(\R^{d_x})$, define the pair $\big(X^{t,m}_s(x),Z^{t,m}_s(x)\big)$ by \eqref{xeq} and $Z^{t,m}_s(x):=\gamma(s,X^{t,m}_s(x),X^{t,m}_s\ot m)$ for all $s\in[t,T]$ and $x\in\R^{d_x}$, then it is a solution pair of the FBODE system \eqref{FBODE}, and \eqref{eq_7_43} and \eqref{eq_7_44} are valid. The regularity of $\big(X^{t,m}_s(x),Z^{t,m}_s(x)\big)$ stated in this theorem statement directly follows from that of $\gamma$ stated in Theorem \ref{GlobalSol}. In addition, by the same argument as that in the proof of \eqref{p_x_X_bdd}-\eqref{p_t_Z_bdd_n}, we have \eqref{Bp_xX_G}-\eqref{Bp_tX_G} and then \eqref{Bp_xZ_G}-\eqref{Bp_tZ_G} by using $\vertiii{\gamma}_{2,[0,T]}\leq L^*_0$.
\end{proof}

\begin{theorem}\label{thm_master}
(Global in time solvability of the master equation \eqref{Master_eq}). Under Assumption ${\bf(a1)}$-${\bf(a3)}$ and Hypothesis ${\bf(h1)}$-${\bf(h3)}$, then there exists a global-in-time classical solution $V(t,x,m)$, given by \eqref{Master_eq_sol}, of the master equation \eqref{Master_eq}, and it is differentiable in $t\in[0 ,T]$, $x\in\R^{d_x}$, $L$-differentiable in $m\in\mc{P}_2(\R^{d_x})$ with its corresponding derivatives $\p_t V(t,x,m)$, $\p_x V(t,x,m)$ and $\p_m V(t,x,m)(y)$ being continuous in their respective arguments. In addition, $\p_x V(t,x,m)$ is differentiable in $t\in[0,T]$, $x\in\R^{d_x}$ and $L$-differentiable in $m\in\mc{P}_2(\R^{d_x})$ with its corresponding derivatives being continuous in their respective arguments, and $\p_tV(t,x,m)$ and $\p_m V(t,x,m)(y)$ are continuously differentiable in $x\in\R^{d_x}$.
\end{theorem}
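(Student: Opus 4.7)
The plan is to define $V(t,x,m)$ directly by the representation formula \eqref{Master_eq_sol}, using the solution pair $(X^{t,m}_s(x), Z^{t,m}_s(x))$ of the FBODE system \eqref{FBODE} furnished by Theorem \ref{GlobalSol}, and then verify that this candidate is a classical solution of the master equation \eqref{Master_eq} with all the claimed regularity. Regularity of $V$ in $(x,m)$ will follow by the chain rule applied to \eqref{Master_eq_sol}: Theorem \ref{GlobalSol} provides jointly continuous $x$-derivatives and $L$-derivatives of $X^{t,m}_s$ and $Z^{t,m}_s$, Propositions \ref{A1}--\ref{A5} ensure the smoothness of $g$, $k$, and $\wh{\alpha}$, and the linear-growth bounds \eqref{Bp_xX_G}--\eqref{Bp_mZ_G} justify differentiation under the integral sign. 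Note that since the push-forward $X^{t,m}_s\ot m$ depends only on $(t,m,s)$ and not on the initial point $x$, the $x$-derivative of the law inside $g$ and $k$ simply vanishes.

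The central identity is $\p_x V(t,x,m) = Z^{t,m}_t(x)$, proved by an integration-by-parts argument. Setting $\Phi(s) := Z^{t,m}_s(x)^\top \p_x X^{t,m}_s(x)$ and differentiating in $s$ along \eqref{eq_8_6_new}, the first-order condition \eqref{first_order_condition} annihilates every contribution passing through $\p_x\alpha^{t,m}_s(x)$ and the transport pieces cancel, leaving
\begin{align*}
\frac{d}{ds}\Phi(s) = -\p_x g\big(X^{t,m}_s(x), X^{t,m}_s\ot m, \alpha^{t,m}_s(x)\big)\cdot \p_x X^{t,m}_s(x).
\end{align*}
Integrating over $[t,T]$ and using the terminal condition $Z^{t,m}_T(x) = \p_x k(X^{t,m}_T(x), X^{t,m}_T\ot m)$ reproduces exactly the expression obtained by differentiating \eqref{Master_eq_sol} in $x$, so $\p_x V(t,x,m) = Z^{t,m}_t(x) = \gamma(t,x,m)$. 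An entirely analogous computation based on \eqref{eq_8_5_new} and the first-order relation \eqref{eq_7_18_1} expresses $\p_m V(t,x,m)(y)$ through the $L$-derivatives of the forward-backward flow.

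For $\p_t V$ and the master equation itself, I would exploit the flow property \eqref{coro_6_4}: iterating \eqref{Master_eq_sol} with this property yields, for every $h \in (0, T-t]$,
\begin{align*}
V(t,x,m) = V\big(t+h, X^{t,m}_{t+h}(x), X^{t,m}_{t+h}\ot m\big) + \int_t^{t+h} g^{t,m}_s(x)\, ds.
\end{align*}
Differentiating this identity in $h$ at $h = 0^+$, using the regularity of $V$ in $(x,m)$ already obtained and the continuous $s$-differentiability of $X^{t,m}_s$ from Theorem \ref{GlobalSol}, gives
\begin{align*}
0 = \p_t V(t,x,m) + \p_x V(t,x,m)\cdot f^{t,m}_t(x) + \int_{\R^{d_x}}\p_m V(t,x,m)(\wt x)\cdot f^{t,m}_t(\wt x)\, dm(\wt x) + g^{t,m}_t(x),
\end{align*}
which, after substituting $\p_x V = Z^{t,m}_t(x)$ and unpacking $\alpha^{t,m}_t(x) = \wh{\alpha}(x, m, \p_x V(t,x,m))$, is precisely the master equation \eqref{Master_eq}; the terminal condition $V(T,x,m) = k(x,m)$ is read off from \eqref{Master_eq_sol}. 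This argument simultaneously produces $\p_t V$ as a continuous function of $(t,x,m)$.

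The additional regularity of $\p_x V = Z^{t,m}_t(x)$ in $(t,x,m)$ is a direct quotation of Theorem \ref{GlobalSol} specialised at $s=t$, while the continuous $x$-differentiability of $\p_t V$ and of $\p_m V$ follows algebraically from the master equation itself, since every other term is $x$-differentiable. The main obstacle I anticipate is the careful justification of the $h$-derivative in the flow identity: when passing the pairing $\p_m V(t+h,\cdot,\cdot)(\wt x)$ through $\frac{d}{dh}(X^{t,m}_{t+h}\ot m)$, one must verify that the $L$-derivative of $V$ at the perturbed measure interacts correctly with the push-forward along the forward flow, which requires jointly continuous derivatives and the integrability supplied by \eqref{Bp_mX_G}--\eqref{Bp_mZ_G}, together with Proposition \ref{Ldif} to convert between linear-functional and $L$-derivatives along the push-forward path.
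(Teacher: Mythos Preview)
Your approach is essentially the same as the paper's: define $V$ by \eqref{Master_eq_sol}, prove $\p_x V(t,x,m)=Z^{t,m}_t(x)$ via integration by parts along the forward--backward flow, and recover the master equation by differentiating the flow identity $V(s,X^{t,m}_s(x),X^{t,m}_s\ot m)=k(X^{t,m}_T,\cdot)+\int_s^T g$ in $s$ (the paper does exactly this in \eqref{eq_7_69_2} and sets $s=t$, which is equivalent to your $h\to 0^+$ argument). One small slip: your displayed formula $\tfrac{d}{ds}\Phi(s)=-\p_x g\cdot \p_x X^{t,m}_s$ is incomplete---the first-order condition converts $Z\cdot(\p_\alpha f)\,\p_x\alpha$ into $-\p_\alpha g\cdot \p_x\alpha$ rather than annihilating it, so the correct identity is $\tfrac{d}{ds}\Phi(s)=-\p_x g\cdot \p_x X^{t,m}_s-\p_\alpha g\cdot \p_x\alpha^{t,m}_s$, which is precisely the total $x$-derivative of $g^{t,m}_s(x)$ and therefore integrates to match $\p_x V$ as you claim.
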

\begin{proof}
Under Assumption ${\bf(a1)}$-${\bf(a3)}$ and Hypothesis ${\bf(h1)}$-${\bf(h3)}$, by Theorem \ref{GlobalSol}, we then have a global solution pair $\big(X^{t,m}_s(x),Z^{t,m}_s(x)\big)_{x\in\R^{d_x},s\in[t,T]}$ of the FBODE system \eqref{FBODE} for any $t\in[0 ,T]$ and $m\in\mc{P}_2(\R^{d_x})$. Define $V(t,x,m)$ by \eqref{Master_eq_sol}, then, the regularity of $V(t,x,m)$ directly follows from the regularity of $\big(X^{t,m}_s(x),Z^{t,m}_s(x)\big)$ and the coefficient functions $f$, $g$ and $k$, and, by the flow property \eqref{coro_6_4} of $X^{t,m}_s(x)$, we have 
\footnotesize\begin{align}
V(s,X^{t,m}_s(x),X^{t,m}_s\ot m)=k\big(X^{t,m}_T(x),X^{t,m}_T\ot m\big)+\int_s^T g\Big(X^{t,m}_\tau(x),X^{t,m}_\tau\ot m, \alpha\big(X^{t,m}_\tau(x),X^{t,m}_\tau\ot m,Z^{t,m}_\tau(x)\big)\Big)d\tau,
\end{align}\normalsize
which implies, by taking derivative with respect to $s$ on both sides, 
\footnotesize\begin{align}\no
&\p_s V(s,X^{t,m}_s(x),X^{t,m}_s\ot m)+\p_x V(s,X^{t,m}_s(x),X^{t,m}_s\ot m)\cdot f\Big(X^{t,m}_s(x),X^{t,m}_s\ot m,\alpha\big(X^{t,m}_s(x),X^{t,m}_s\ot m,Z^{t,m}_s(x)\big)\Big)\\\no
&+\int_{\R^{d_x}}\p_\mu V(s,X^{t,m}_s(x),X^{t,m}_s\ot m)\big(X^{t,m}_s(\wt{x})\big)\cdot f\Big(X^{t,m}_s(\wt{x}),X^{t,m}_s\ot m,\alpha\big(X^{t,m}_s(\wt{x}),X^{t,m}_s\ot m,Z^{t,m}_s(\wt{x})\big)\Big)dm(\wt{x})\\\label{eq_7_69_2}
=&\ -g\Big(X^{t,m}_s(x),X^{t,m}_s\ot m,\alpha\big(X^{t,m}_s(x),X^{t,m}_s\ot m,Z^{t,m}_s(x)\big)\Big).
\end{align}\normalsize
In addition, by taking the derivative of Equation \eqref{Master_eq_sol} with respect to $x$ and using the notations in Section \ref{sec:notations}, we have 
\footnotesize\begin{align}\nonumber
& \p_x V(t,x,m)=\p_x\bigg( k^{t,m}_T(x) +\int_t^T g^{t,m}_s(x) ds\bigg)\\\nonumber
=&\   {\color{blue}\big(\p_x k\big)^{t,m}_T(x)\cdot \p_x\big(X^{t,m}_T(x)\big)}\\\label{eq_53}
&+ \int_t^T \bigg(\p_x\big(\alpha^{t,m}_s(x)\big)\cdot \big(\p_\alpha g\big)^{t,m}_s(x){\color{red}-\bigg(\dfrac{d}{ds}Z^{t,m}_s(x)+\big(\p_x f\big)^{t,m}_s(x)\cdot Z^{t,m}_s(x)\bigg)\cdot \p_x\big(X^{t,m}_s(x)\big)}\bigg)ds,
\end{align}\normalsize
where we use the second last equation of \eqref{FBODE} to obtain the red terms of the last equality.
By the forward equation \eqref{FBODE} and a simple application of integration-by-parts, the following integral in \eqref{eq_53} can be rewritten as
\small\begin{align}\no
&\int_t^T  \dfrac{d}{ds}Z^{t,m}_s(x)\cdot \p_x\big(X^{t,m}_s(x)\big)  ds\\\no
=&\  Z^{t,m}_T(x)\cdot \p_x\big(X^{t,m}_T(x)\big) - Z^{t,m}_t(x)\cdot \p_x\big(X^{t,m}_t(x)\big)- \int_t^T Z^{t,m}_s(x)\cdot \dfrac{d}{ds}\p_x\big(X^{t,m}_s(x)\big)ds  \\\no
=&\  \big(\p_x k\big)^{t,m}_T(x) \cdot \p_x\big(X^{t,m}_T(x)\big) -  Z^{t,m}_t(x)- \int_t^T Z^{t,m}_s(x)\cdot \p_x\big(f^{t,m}_s(x)\big)ds  \\\no
=&\    \big(\p_x k\big)^{t,m}_T(x) \cdot \p_x\big(X^{t,m}_T(x)\big) - Z^{t,m}_t(x)  \\\label{eq_12_29}
&- \int_t^T Z^{t,m}_s(x)\cdot \bigg(\big(\p_x f\big)^{t,m}_s(x) \p_x\big(X^{t,m}_s(x)\big)+ \big(\p_\alpha f\big)^{t,m}_s(x)\p_x\big(\alpha^{t,m}_s(x)\big)\bigg)ds.
\end{align}\normalsize
Substituting \eqref{eq_12_29} into \eqref{eq_53} so as to cancel some similar terms, one has, by using the first-order condition \eqref{first_order_condition} in the last equality,
\begin{align}\label{eq_7_72_2}
 \p_xV(t,x,m)=&\ \int_t^T \p_x\big(\alpha^{t,m}_s(x)\big)\cdot \Big(\big(\p_\alpha g\big)^{t,m}_s(x)+\big(\p_\alpha f\big)^{t,m}_s(x)\cdot Z^{t,m}_s(x)\Big)ds+Z^{t,m}_t(x)=Z^{t,m}_t(x).
\end{align}
Then, the regularity of $\p_x V(t,x,m)$ directly follows from the regularity of $ Z^{t,m}_t(x)=\gamma(t,x,m)$.
Set $s=t$ in \eqref{eq_7_69_2} and then the resulting equation is exactly \eqref{Master_eq} since $X^{t,m}_t(x)=x$ and $X^{t,m}_t\ot m=m$. In addition, the differentiability of $\p_x V(t,x,m)=\gamma(t,x,m)$ follows from that of $\gamma$ in Theorem \ref{GlobalSol} immediately. 

Next, similar to the computations in deriving \eqref{eq_53}-\eqref{eq_7_72_2}, we compute $\p_t V$ as follows. Taking the derivative of the equation \eqref{Master_eq_sol} with respect to $t$ and using the notations in Section \ref{sec:notations}, we can obtain 
\small\begin{align}\nonumber
& \p_t V(t,x,m)=\p_t\bigg( k^{t,m}_T(x) +\int_t^T g^{t,m}_s(x) ds\bigg)\\\nonumber
=&\   {\color{blue}\int_{\R^{d_x}} \big(\p_\mu k\big)^{t,m}_T(x,\wt{x})\cdot \p_t\big(X^{t,m}_T(\wt{x})\big)dm(\wt{x}) +\big(\p_x k\big)^{t,m}_T(x)\cdot \p_t\big(X^{t,m}_T(x)\big)}- g^{t,m}_t(x) \\\nonumber
&+ \int_t^T \bigg(\int_{\R^{d_x}}\big(\p_\mu g\big)^{t,m}_s(x,\wt{x})\cdot \p_t\big(X^{t,m}_s(\wt{x})\big)dm(\wt{x})+\p_t\big(\alpha^{t,m}_s(x)\big)\cdot \big(\p_\alpha g\big)^{t,m}_s(x)  \\\label{eq_52}
&\ \ \ \ \ \ \ \ \ \ \ \ \ \ \ \ {\color{red}-\bigg(\dfrac{d}{ds}Z^{t,m}_s(x)+\big(\p_x f\big)^{t,m}_s(x)\cdot Z^{t,m}_s(x)\bigg)\cdot \p_t\big(X^{t,m}_s(x)\big)}\bigg)ds,
\end{align}\normalsize
where we use the second last equation of \eqref{FBODE} to obtain the red terms of the last equality. By the forward equation \eqref{FBODE} and a simple application of integration-by-parts, the following integral in \eqref{eq_52} can be rewritten as
\small\begin{align}\no
&\int_t^T  \dfrac{d}{ds}Z^{t,m}_s(x)\cdot \p_t\big(X^{t,m}_s(x)\big)  ds\\\no
=&\  {\color{blue}Z^{t,m}_T(x)\cdot \p_t\big(X^{t,m}_T(x)\big)} - Z^{t,m}_t(x)\cdot \p_t\big(X^{t,m}_s(x)\big)|_{s=t}- \int_t^T {\color{purple}Z^{t,m}_s(x)\cdot \dfrac{d}{ds}\p_t\big(X^{t,m}_s(x)\big)}ds  \\\no
=&\  {\color{blue} \big(\p_x k\big)^{t,m}_T(x) \cdot \p_t\big(X^{t,m}_T(x)\big)}+ Z^{t,m}_t(x)\cdot f^{t,m}_t(x)- \int_t^T {\color{purple}Z^{t,m}_s(x)\cdot \p_t\big(f^{t,m}_s(x)\big)}ds  \\\no
=&\  {\color{blue}  \big(\p_x k\big)^{t,m}_T(x) \cdot \p_t\big(X^{t,m}_T(x)\big)} + Z^{t,m}_t(x)\cdot f^{t,m}_t(x) \\\no
&- \int_t^T {\color{purple}Z^{t,m}_s(x)\cdot \bigg(\big(\p_x f\big)^{t,m}_s(x)\cdot\p_t\big(X^{t,m}_s(x)\big)+ \p_t\big(\alpha^{t,m}_s(x)\big)\cdot \big(\p_\alpha f\big)^{t,m}_s(x)}\\\label{eq_12_29_1}
&\ \ \ \ \ \ \ \ \ \ \ \ \ \ \ \ \ \ \ \ \ \ \ \ {\color{purple}+\displaystyle\int_{\R^{d_x}}\big(\p_\mu f\big)^{t,m}_s(x,\wt{x})\cdot\p_t\big(X^{t,m}_s(\wt{x})\big)dm(\wt{x})\bigg)}ds,
\end{align}\normalsize
and thus, by substituting \eqref{eq_12_29_1} into \eqref{eq_52} so as to cancel some similar terms, one has, by using the first-order condition \eqref{first_order_condition} in the last equality,
\small\begin{align}\no
 \p_tV(t,x,m)
=&\ \int_t^T \int_{\R^{d_x}}\Big(\big(\p_\mu g\big)^{t,m}_s(x,\wt{x})+\big(\p_\mu f\big)^{t,m}_s(x,\wt{x})\cdot Z^{t,m}_s(x)\Big)\cdot  \p_t\big(X^{t,m}_s(\wt{x})\big)dm(\wt{x}) ds \\
&+ \int_{\R^{d_x}} \big(\p_\mu k\big)^{t,m}_T(x,\wt{x})\cdot \p_t\big(X^{t,m}_T(\wt{x})\big)dm(\wt{x})- \bigg(Z^{t,m}_t(x)\cdot f^{t,m}_t(x)+g^{t,m}_t(x)\bigg),
\end{align}\normalsize

Finally, we compute $\p_m V$ as follows. By taking the derivative of Equation \eqref{Master_eq_sol} with respect to $m$ and using the notations in Section \ref{sec:notations}, we can obtain 
\small\begin{align}\nonumber
& \p_m V(t,x,m)(y)=\p_m\bigg( k^{t,m}_T(x) +\int_t^T g^{t,m}_s(x) ds\bigg)(y)\\\nonumber
=&\   {\color{blue}\int_{\R^{d_x}} \big(\p_\mu k\big)^{t,m}_T(x,\wt{x})\cdot \p_m\big(X^{t,m}_T(\wt{x})\big)(y)dm(\wt{x})+ \big(\p_\mu k\big)^{t,m}_T(x,y)\cdot \p_y\big(X^{t,m}_T(y)\big) +\big(\p_x k\big)^{t,m}_T(x)\cdot \p_m\big(X^{t,m}_T(x)\big)(y)}\\\nonumber
&+ \int_t^T \bigg(\int_{\R^{d_x}}\big(\p_\mu g\big)^{t,m}_s(x,\wt{x})\cdot \p_m\big(X^{t,m}_s(\wt{x})\big)(y)dm(\wt{x})+\big(\p_\mu g\big)^{t,m}_s(x,y)\cdot \p_y\big(X^{t,m}_s(y)\big) \\\label{eq_54}
&\ \ \ \ \ \ \ \ \ \ \ \ +\p_m\big(\alpha^{t,m}_s(x)\big)(y)\cdot \big(\p_\alpha g\big)^{t,m}_s(x) {\color{red}-\bigg(\dfrac{d}{ds}Z^{t,m}_s(x)+\big(\p_x f\big)^{t,m}_s(x)\cdot Z^{t,m}_s(x)\bigg)\cdot \p_m\big(X^{t,m}_s(x)\big)(y)}\bigg)ds,
\end{align}\normalsize
where we use the second last equation of \eqref{FBODE} to obtain the red terms of the last equality. By the forward equation \eqref{FBODE} and a simple application of integration-by-parts, the following integral in \eqref{eq_54} can be rewritten as 
\small\begin{align}\no
&\int_t^T  \dfrac{d}{ds}Z^{t,m}_s(x)\cdot \p_m\big(X^{t,m}_s(x)\big)(y)  ds\\\no
=&\  {\color{blue}Z^{t,m}_T(x)\cdot \p_m\big(X^{t,m}_T(x)\big)(y)} - Z^{t,m}_t(x)\cdot \p_m\big(X^{t,m}_t(x)\big)(y)- \int_t^T {\color{purple}Z^{t,m}_s(x)\cdot \dfrac{d}{ds}\p_m\big(X^{t,m}_s(x)\big)(y)}ds  \\\no
=&\  {\color{blue} \big(\p_x k\big)^{t,m}_T(x) \cdot \p_m\big(X^{t,m}_T(x)\big)(y)} - \int_t^T {\color{purple}Z^{t,m}_s(x)\cdot \p_m\big(f^{t,m}_s(x)\big)(y)}ds  \\\no
=&\  {\color{blue}  \big(\p_x k\big)^{t,m}_T(x) \cdot \p_m\big(X^{t,m}_T(x)\big)(y)} \\\no
&- \int_t^T {\color{purple}Z^{t,m}_s(x)\cdot \bigg(\big(\p_x f\big)^{t,m}_s(x)\cdot\p_m\big(X^{t,m}_s(x)\big)(y)+\displaystyle\int_{\R^{d_x}}\big(\p_\mu f\big)^{t,m}_s(x,\wt{x})\cdot\p_m\big(X^{t,m}_s(\wt{x})\big)(y)dm(\wt{x})}\\\label{eq_12_30}
&\ \ \ \ \ \ \ \ \ \ \ \ \ \ \ \ \ \ \ \ \ \ \ \ {\color{purple}+\big(\p_\mu f\big)^{t,m}_s(x,y)\cdot\p_y\big(X^{t,m}_s(y)\big)+ \p_m\big(\alpha^{t,m}_s(x)\big)(y)\cdot \big(\p_\alpha f\big)^{t,m}_s(x)\bigg)}ds,
\end{align}\normalsize
and thus, by substituting \eqref{eq_12_30} into \eqref{eq_54} so as to cancel some similar terms, one has, by using the first-order condition \eqref{first_order_condition} in the last equality,
\small\begin{align}\nonumber
&\p_m V(t,x,m)(y)\\\no
=&\int_{\R^{d_x}} \big(\p_\mu k\big)^{t,m}_T(x,\wt{x})\cdot \p_m\big(X^{t,m}_T(\wt{x})\big)(y)dm(\wt{x})+ \big(\p_\mu k\big)^{t,m}_T(x,y)\cdot \p_y\big(X^{t,m}_T(y)\big) \\\nonumber
&+ \int_t^T \bigg(\int_{\R^{d_x}}\Big(\big(\p_\mu f\big)^{t,m}_s(x,\wt{x})
\cdot Z^{t,m}_s(x)+\big(\p_\mu g\big)^{t,m}_s(x,\wt{x})\Big)\cdot \p_m\big(X^{t,m}_s(\wt{x})\big)(y)dm(\wt{x})\\\label{eq_54_1}
&\ \ \ \ \ \ \ \ \ \ \ \ +\bigg(\big(\p_\mu f\big)^{t,m}_s(x,y)\cdot Z^{t,m}_s(x)+\big(\p_\mu g\big)^{t,m}_s(x,y)\bigg)\cdot \p_y\big(X^{t,m}_s(y)\big)\bigg)ds.
\end{align}\normalsize
Therefore, the continuous differentiability of $\p_t V(t,x,m)$ and $\p_m V(t,x,m)(y)$ with respect to $x\in\R^{d_x}$ directly follows from the regularity of $Z^{t,m}_s(x)$ and the coefficient functions $f$, $g$ and $k$.
\end{proof}

\section{Local and Global Uniqueness}\label{sec:unique}

\begin{theorem}\label{Thm6_3} (Local uniqueness of the solution to the FBODE system \eqref{fbodesystem})
Assume that the drift function $f$ and the running cost $g$ satisfy Assumptions ${\bf(a1)}$ and ${\bf(a2)}$ respectively, the terminal function $p$ satisfies Assumption ${\bf(P)}$ with $L_p\leq \wb{L}_p:=\max\left\{\Lambda_k,L^*_0\right\}$ and the inequality relation \eqref{p_aa_f_new} among $f$, $g$ and $p$ is valid, then there exists a constant $\eps_3=\eps_3(\wb{L}_p; L_f,\Lambda_f,\wb{l}_f,L_g,\Lambda_g,\wb{l}_g,L_\alpha)>0$, such that, for any fixed $0\leq t\leq \wt{T}\leq T$ with $\wt{T}-t\leq \eps_3$ and $m\in \mathcal{P}_2(\R^{d_x})$, if there exist two solution pairs $\big(X^{t,m}_s(x),Z^{t,m}_s(x)\big)_{x\in\R^{d_x},s\in[t,\wt{T}]}$ and $\big(\wt{X}^{t,m}_s(x),\wt{Z}^{t,m}_s(x)\big)_{x\in\R^{d_x},s\in[t,\wt{T}]}$ of FBODE system \eqref{fbodesystem}, each of them is continuously differentiable in $s\in[t,\wt{T}]$, then they must be equal for all $s\in[t,\wt{T}]$ and $x\in\R^{d_x}$.
\end{theorem}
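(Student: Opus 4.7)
\textbf{Proof plan for Theorem \ref{Thm6_3}.} The plan is to derive coupled Gr\"onwall-type inequalities for the forward and backward differences in $L^{2,d_x}_m$-norm, and then to close them on a short time interval whose length depends only on the Lipschitz constants of $f$, $g$, $\alpha$ and on $\wb L_p$. A preliminary remark is that, for either solution pair, the optimal control $\alpha(\cdot,\cdot,\cdot)$ can appear in \eqref{fbodesystem} only if the trajectory stays inside the cone $c_{k_0}$ where Proposition \ref{A5} guarantees its unique solvability; thus both given pairs satisfy $\bigl(X^{t,m}_s(x),X^{t,m}_s\ot m,Z^{t,m}_s(x)\bigr)\in c_{k_0}$ and the Lipschitz/boundedness estimates of Propositions \ref{A1}, \ref{A2}, \ref{A5} apply uniformly.

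Set $\Delta X_s(x):=X^{t,m}_s(x)-\wt X^{t,m}_s(x)$, $\Delta Z_s(x):=Z^{t,m}_s(x)-\wt Z^{t,m}_s(x)$, and
\begin{equation*}
\phi(s):=\|\Delta X_s\|_{L^{2,d_x}_m}^{2},\qquad \psi(s):=\|\Delta Z_s\|_{L^{2,d_x}_m}^{2}.
\end{equation*}
Subtracting the two forward equations, taking the inner product with $\Delta X_s$ in $L^{2,d_x}_m$, and using the Lipschitz bounds for $f$ and $\alpha$ (Propositions \ref{A1} and \ref{A5}) together with $W_{1}(X^{t,m}_s\ot m,\wt X^{t,m}_s\ot m)\leq \|\Delta X_s\|_{L^{1,d_x}_m}\leq \sqrt{\phi(s)}$, I will obtain
\begin{equation*}
\tfrac{d}{ds}\phi(s)\leq C_{1}\bigl(\phi(s)+\psi(s)\bigr),\qquad \phi(t)=0.
\end{equation*}
A symmetric computation on the backward equation, using the Lipschitz bounds for $\p_x f$ and $\p_x g$ (Proposition \ref{A2}) on the cone, together with Assumption ${\bf(P)}$ at the terminal time, yields
\begin{equation*}
-\tfrac{d}{ds}\psi(s)\leq C_{2}\bigl(\phi(s)+\psi(s)\bigr),\qquad \psi(\wt T)\leq 2L_{p}^{2}\bigl(\phi(\wt T)+\|\Delta X_{\wt T}\|_{L^{1,d_x}_m}^{2}\bigr)\leq 4L_{p}^{2}\phi(\wt T),
\end{equation*}
where $C_{1},C_{2}$ depend only on $\wb L_p$, $L_f$, $\Lambda_f$, $\wb l_f$, $L_g$, $\Lambda_g$, $\wb l_g$, $L_\alpha$.

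Setting $M:=\sup_{s\in[t,\wt T]}\bigl(\phi(s)+\psi(s)\bigr)$, integration of the forward inequality gives $\phi(s)\leq C_{1}(\wt T-t)M$, while integration of the backward inequality together with the terminal estimate gives $\psi(s)\leq 4L_{p}^{2}\phi(\wt T)+C_{2}(\wt T-t)M\leq \bigl(4L_{p}^{2}C_{1}+C_{2}\bigr)(\wt T-t)M$. Summing these yields $M\leq \bigl(C_{1}+4L_{p}^{2}C_{1}+C_{2}\bigr)(\wt T-t)M$, so choosing $\eps_{3}>0$ so that $\bigl(C_{1}+4\wb L_{p}^{2}C_{1}+C_{2}\bigr)\eps_{3}<1$ forces $M=0$ whenever $\wt T-t\leq \eps_{3}$. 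Since $\eps_3$ depends only on the declared constants and $L_p\leq \wb L_p$, it has the required uniform form.

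From $M=0$ we get $X^{t,m}_s(x)=\wt X^{t,m}_s(x)$ and $Z^{t,m}_s(x)=\wt Z^{t,m}_s(x)$ for $m$-a.e.\ $x$, and in particular the common measure flow $\mu_s:=X^{t,m}_s\ot m=\wt X^{t,m}_s\ot m$ is unambiguously determined. For any fixed $x\in\R^{d_x}$ (including those outside $\mathrm{supp}(m)$), the FBODE \eqref{fbodesystem} then reduces to a finite-dimensional coupled system in $(X^{t,m}_\cdot(x),Z^{t,m}_\cdot(x))$ with the now-known measure flow $\mu_s$; a pointwise repetition of the same Gr\"onwall argument on $[t,\wt T]$, based on the very same Lipschitz constants, delivers pointwise uniqueness for every $x\in\R^{d_x}$. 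The only nontrivial point of the proof is the bookkeeping needed to bound the measure contribution $W_1(\mu_s,\wt\mu_s)$ by $\sqrt{\phi(s)}$ and to control $\|\Delta X_{\wt T}\|_{L^{1,d_x}_m}^{2}$ by $\phi(\wt T)$ via Jensen's inequality, both of which are standard and require no additional smallness on $L_p$.
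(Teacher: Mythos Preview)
The paper itself does not give a proof of this theorem; it simply remarks that the argument is analogous to (and simpler than) Theorem~9.1 of \cite{bensoussan2023theory} and omits all details. Your coupled-Gr\"onwall strategy in $L^{2,d_x}_m$ followed by a pointwise upgrade is the natural route and almost certainly matches what the referenced proof contains, so the overall plan is fine.

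There is, however, one genuine step you are glossing over in the backward estimate. Differencing the $Z$-equation produces the term
\[
\bigl(\partial_x f(X_s,X_s\ot m,\alpha_s)-\partial_x f(\wt X_s,\wt X_s\ot m,\wt\alpha_s)\bigr)\cdot\wt Z_s .
\]
Proposition~\ref{A2} says nothing about $\partial_x f$; it only bounds $\partial_x g$ and $\partial_\alpha g$. A naive global Lipschitz bound on $\partial_x f$ (with constant $\wb l_f$, since the second derivatives satisfy $\|\partial_\cdot\partial_x f\|\leq\wb l_f/(1+|x|+\|\mu\|_1)\leq\wb l_f$) leaves an extra factor $|\wt Z_s(x)|\leq\tfrac{k_0}{2}(1+|\wt X_s(x)|+\|\wt X_s\ot m\|_1)$ that is \emph{not} uniformly bounded in $x$, so the inequality $-\tfrac{d}{ds}\psi\leq C_2(\phi+\psi)$ cannot be closed in $L^{2,d_x}_m$ this way. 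The correct argument uses two ingredients together: (i) for $s-t$ small, both $X_s(x),\wt X_s(x)$ (and every point on the segment between them) remain within a fixed multiple of $1+|x|+\|m\|_1$, by the linear growth of $f$ and a preliminary Gr\"onwall; (ii) the decay in Assumption~{\bf(a1)}(iii) then yields
\[
|\partial_x f(X_s,\ldots)-\partial_x f(\wt X_s,\ldots)|\leq\frac{C\,\wb l_f}{1+|x|+\|m\|_1}\bigl(|\Delta X_s|+W_1+|\Delta\alpha_s|\bigr),
\]
and this decay exactly cancels the cone growth of $|\wt Z_s|$. Once this cancellation is inserted, your inequality $-\tfrac{d}{ds}\psi\leq C_2(\phi+\psi)$ holds with $C_2$ depending only on the declared constants, and the rest of your argument goes through unchanged. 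A related remark: the Lipschitz estimate \eqref{lipalpha_old} for $\alpha$ requires the entire segment between the two $(x,\mu,z)$ triples to lie in $c_{k_0}$; the same short-time comparability of $X_s,\wt X_s$ to $x$ guarantees this, and it is worth making explicit.
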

The proof of Theorem \ref{Thm6_3} is similar to, but simpler\footnote{The simplification is due to the absence of the terms $\p_\mu f(x,\mu,\alpha)$ and $\p_\mu g(x,\mu,\alpha)$ on the right-hand side of the FBODE system \eqref{fbodesystem}, in comparison with (7.1) of \cite{bensoussan2023theory}.} than, the argument leading to Theorem 9.1 of our previous paper \cite{bensoussan2023theory}. Therefore, we omit the details of the proof here.

\begin{theorem}\label{Thm6_4} (Global uniqueness of the solution to the FBODE system \eqref{FBODE})
Under Assumptions ${\bf(a1)}$-${\bf(a3)}$ and Hypotheses ${\bf(h1)}$-${\bf(h3)}$, for any fixed terminal time $T>0$ and initial distribution $m\in\mc{P}_2(\R^{d_x})$, if there exists a continuously differentiable, in $s\in[0,T]$, solution pair $\big(\wt{X}^{0,m}_s(x),\wt{Z}^{0,m}_s(x)\big)_{x\in\R^{d_x},s\in[0,T]}$ of FBODE system \eqref{FBODE} subject to the initial distribution $m$ at the initial time $t=0$, then it must be equal to the solution pair $\big(X^{0,m}_s(x),Z^{0,m}_s(x):=\gamma(s,X^{0,m}_s(x),X^{0,m}_s\ot m)\big)_{x\in\R^{d_x},s\in[0,T]}$, where the forward dynamics $X^{0,m}_s(x)$ was obtained by solving \eqref{xeq} over $[0,T]$ and the decoupling field $\gamma$ was constructed in Theorem \ref{GlobalSol}.
\end{theorem}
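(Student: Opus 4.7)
The plan is to mirror the backward pasting strategy used for global existence in Theorem \ref{GlobalSol}, replacing the local existence Theorem \ref{Thm6_1} with the local uniqueness Theorem \ref{Thm6_3}. First I would partition $[0,T]$ into consecutive sub-intervals $0=t_0<t_1<\cdots<t_{N+1}=T$ each of length at most $\eps_3/2$, where $\eps_3$ is the uniqueness lifespan provided by Theorem \ref{Thm6_3} with the choice $\wb{L}_p=\max\{\Lambda_k,L^*_0\}=L^*_0$ (the universal Lipschitz bound on the decoupling field $\gamma$ from Theorem \ref{GlobalSol}). The key intermediate target is the identity
\[
\wt{Z}^{0,m}_s(x)=\gamma\bigl(s,\wt{X}^{0,m}_s(x),\wt{X}^{0,m}_s\ot m\bigr),\qquad s\in[0,T],\ x\in\R^{d_x},
\]
which I would establish by backward induction on the sub-intervals. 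The base case at $s=T$ is automatic because both sides equal $\p_x k\bigl(\wt{X}^{0,m}_T(x),\wt{X}^{0,m}_T\ot m\bigr)$.

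For the inductive step on $[t_i,t_{i+1}]$, I would reparametrize the restriction $(\wt{X}^{0,m}_s,\wt{Z}^{0,m}_s)|_{[t_i,t_{i+1}]}$ via $y=\wt{X}^{0,m}_{t_i}(x)$ to obtain a pair $(\bar{X}_s(y),\bar{Z}_s(y))$ which solves \eqref{fbodesystem} on $[t_i,t_{i+1}]$ with identity initial condition $\bar{X}_{t_i}(y)=y$, initial distribution $\wt{m}_i:=\wt{X}^{0,m}_{t_i}\ot m$, and terminal function $p(x,\mu):=\gamma(t_{i+1},x,\mu)$ (using $\gamma(T,\cdot,\cdot)=\p_x k$ as the base convention). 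By the inductive hypothesis and the pushforward identity $\bar{X}_s\ot \wt{m}_i=\wt{X}^{0,m}_s\ot m$, the terminal $\bar{Z}_{t_{i+1}}(y)$ really does coincide with $p(\bar{X}_{t_{i+1}}(y),\bar{X}_{t_{i+1}}\ot\wt{m}_i)$. Independently, Theorem \ref{GlobalSol} furnishes the canonical sub-solution $(X^{t_i,\wt{m}_i}_s,Z^{t_i,\wt{m}_i}_s)$ generated by the global $\gamma$ on the same sub-interval, with the same identity initial condition, the same initial distribution $\wt{m}_i$, and the same terminal $p=\gamma(t_{i+1},\cdot,\cdot)$, whose Lipschitz constant is bounded by $L^*_0=\wb{L}_p$. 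Local uniqueness (Theorem \ref{Thm6_3}) then forces $(\bar{X},\bar{Z})=(X^{t_i,\wt{m}_i},Z^{t_i,\wt{m}_i})$ on $[t_i,t_{i+1}]$; pulling back through the reparametrization extends the decoupling identity to this interval, closing the induction.

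Once the decoupling identity is established on all of $[0,T]$, the forward equation for $\wt{X}^{0,m}$ reduces to a McKean--Vlasov ODE whose right-hand side $f(x,\mu,\alpha(x,\mu,\gamma(s,x,\mu)))$ is globally Lipschitz in $(x,\mu)$ thanks to \eqref{Lipf}, \eqref{lipalpha_old}, and the bound $\vertiii{\gamma}_{2,[0,T]}\leq L^*_0$; since $\wt{X}^{0,m}_0=X^{0,m}_0=\mathrm{id}$, standard uniqueness for such mean-field ODEs yields $\wt{X}^{0,m}=X^{0,m}$ on $[0,T]$, and the decoupling identity then forces $\wt{Z}^{0,m}=Z^{0,m}$.

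The main technical nuance will be justifying the reparametrization $y=\wt{X}^{0,m}_{t_i}(x)$, which requires that the flow map $x\mapsto\wt{X}^{0,m}_{t_i}(x)$ be a bi-Lipschitz homeomorphism on $\R^{d_x}$. The hypotheses only prescribe continuous differentiability of $(\wt{X},\wt{Z})$ in $s$, but the required Lipschitz regularity in $x$ is bootstrapped from the Lipschitz nature of $f$ in $x$, the Lipschitz estimate \eqref{lipalpha_old} for $\alpha$ on the cone $c_{k_0}$ (into which the flow of any solution is forced by \eqref{eq_7_44}), and Gronwall. An alternative, if one wishes to bypass invertibility entirely, is to carry out an energy estimate on $\int\delta X_s(x)\cdot\delta Z_s(x)\,dm(x)$ in the spirit of Step 3 of Theorem \ref{Crucial_Estimate}, exploiting the displacement monotonicity \eqref{positive_H_mu} together with \eqref{dispalcement_lambda} and \eqref{dispalcement_lambda_k}; then $\delta X_0\equiv 0$ together with the non-negativity of the boundary term at $T$ would close the argument by contradiction.
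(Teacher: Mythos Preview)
Your proposal follows essentially the same backward-induction strategy as the paper: partition $[0,T]$ by the local-uniqueness lifespan $\eps_3$, reparametrize on each sub-interval via $y=\wt{X}^{0,m}_{t_i}(x)$ with initial measure $\wt{m}_i=\wt{X}^{0,m}_{t_i}\ot m$ and terminal data $p=\gamma(t_{i+1},\cdot,\cdot)$, and invoke Theorem~\ref{Thm6_3} against the canonical sub-solution generated by the global decoupling field $\gamma$. Two minor differences: the paper dispatches the reparametrization issue not by establishing a bi-Lipschitz homeomorphism but by the simpler observation that the complement of the image $\wt{X}^{0,m}_{t_i}(\R^{d_x})$ has $\wt{m}_i$-measure zero, so those $y$'s may be ignored when solving \eqref{fbodesystem}; and the paper's proof terminates at the decoupling identity $\wt{Z}^{0,m}_s(x)=\gamma(s,\wt{X}^{0,m}_s(x),\wt{X}^{0,m}_s\ot m)$, leaving your final McKean--Vlasov uniqueness step implicit.

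One caution on your bootstrap remark: the cone confinement \eqref{eq_7_44} is a property of the \emph{constructed} solution via $\gamma$, not of an arbitrary solution pair $(\wt{X},\wt{Z})$; you cannot invoke it to place $(\wt{X}^{0,m}_s,\wt{X}^{0,m}_s\ot m,\wt{Z}^{0,m}_s)$ in $c_{k_0}$ a priori. What does hold is that for $(\wt{X},\wt{Z})$ to be a solution of \eqref{FBODE} at all, the optimal control $\alpha(\wt{X}_s,\wt{X}_s\ot m,\wt{Z}_s)$ must be well-defined along the trajectory, which already implicitly confines the solution to the region where Proposition~\ref{A5} applies.
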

\begin{proof}
Under Assumptions ${\bf(a1)}$-${\bf(a3)}$ and Hypotheses ${\bf(h1)}$-${\bf(h3)}$, according to Theorem \ref{GlobalSol}, we have a global decoupling field $\gamma(s,x,\mu)$ for the FBODE system \eqref{FBODE} on $[0,T]$, which satisfies the equation \eqref{gammaeq}. Let $\wb{L}_p:=\max\left\{\Lambda_k,L^*_0\right\}=L^*_0$ and $\eps_3=\eps_3(\wb{L}_p; L_f,\Lambda_f,\wb{l}_f,L_g,\Lambda_g,\wb{l}_g,L_\alpha)>0$ given in Theorem \ref{Thm6_3}. Partition the interval $[0 ,T]$ into $0  < t_1 <...<t_{N-1}:=T-\eps_3<t_N =:T$, where $t_{N-i}=T-i\cdot \eps_3$, $i=0,1,...,N-1$ and $N$ is the smallest integer such that $N\geq T/\eps_3$. Note that the interval $[0 ,t_1=T-(N-1)\eps_3]$ has a length not longer than those other intervals $[t_1,t_2]$,..., and $[t_{N-1},t_N]$, while every others has a common length of size $\eps_3$. \\

Step 1. On the interval $[t_{N-1},t_N=T]$, define $y:=\wt{X}^{0,m}_{t_{N-1}}(x)$, $\wt{m}:=\wt{X}^{0,m}_{t_{N-1}}\ot m$ and $p(x,\mu):=\p_x k(x,\mu)$, then $\big(X^{t_{N-1},\wt{m}}_{s}(y),Z^{t_{N-1},\wt{m}}_{s}(y)\big):=\big(\wt{X}^{0,m}_s(x),\wt{Z}^{0,m}_s(x)\big)$ \footnote{Since the initial measure at time $t_N$ is $\wt{m}=\wt{X}^{0,m}_{t_{N-1}}\ot m$ where $\wt{X}^{0,m}_{t_{N-1}}(\cdot)$ is a push-forward mapping, so the complement set $S^c$ of the set $S:=\{y:y=\wt{X}^{0,m}_{t_{N-1}}(x),x\in\R^{d_x}\}=\wt{X}^{0,m}_{t_{N-1}}(\R^{d_x})$ is $\wt{m}$-measure zero, that is $\wt{m}(S^c)\leq m(\emptyset)=0$, and thus we can ignore those elements $y\in S^c$ while solving the FBODE system \eqref{fbodesystem} over $[t_{N-1},T]$.} is a solution pair of FBODE system \eqref{fbodesystem} with the terminal data $Z^{t_{N-1},\wt{m}}_{T}(y)=p(X^{t_{N-1},\wt{m}}_{T}(y),X^{t_{N-1},\wt{m}}_{T}\ot \wt{m})$ and the initial data $X^{t_{N-1},\wt{m}}_{t_{N-1}}(y)=y$. Note that,  for any $0\leq s\leq \tau \leq T$, $\wh{x}\in\R^{d_x}$ and $\wh{m}\in\mc{P}_2(\R^{d_x})$, the decoupling field $\gamma$ solves \eqref{gammaeq} with \eqref{xeq}:\\
\small\begin{align}\no
\gamma (s,\wh{x},\wh{m})
= &\ p(\wh{X}^{s,\wh{m}}_T(\wh{x}),\wh{X}^{s,\wh{m}}_T\ot \wh{m})\\\no
&+ \int_s^T \Bigg(\p_x f(\wh{X}^{s,\wh{m}}_\tau(\wh{x}),\wh{X}^{s,\wh{m}}_\tau\ot \wh{m},\alpha(\wh{X}^{s,\wh{m}}_\tau(\wh{x}),\wh{X}^{s,\wh{m}}_\tau\ot \wh{m},\wh{Z}^{s,\wh{m}}_\tau(\wh{x})))\cdot \wh{Z}^{s,\wh{m}}_\tau(\wh{x})\\\no
&\ \ \ \ \ \ \ \ \ \ \ \ \ \ +\p_x g(\wh{X}^{s,\wh{m}}_\tau(\wh{x}),\wh{X}^{s,\wh{m}}_\tau\ot \wh{m},\alpha(\wh{X}^{s,\wh{m}}_\tau(\wh{x}),\wh{X}^{s,\wh{m}}_\tau\ot \wh{m},\wh{Z}^{s,\wh{m}}_\tau(\wh{x})))\Bigg)d\tau,
\end{align}\normalsize
where $\wh{Z}^{s,\wh{m}}_\tau(\wh{x}):=\gamma\big(\tau,\wh{X}^{s,\wh{m}}_\tau(\wh{x}),\wh{X}^{s,\wh{m}}_\tau\ot \wh{m}\big)$ and
\small\begin{align} \no    
\wh{X}^{s,\wh{m}}_\tau(\wh{x}) = \wh{x}+\int_s^\tau f\Big(\wh{X}^{s,\wh{m}}_{\wt{\tau}}(\wh{x}),\wh{X}^{s,\wh{m}}_{\wt{\tau}}\ot \wh{m},\alpha\big(\wh{X}^{s,\wh{m}}_{\wt{\tau}}(\wh{x}),\wh{X}^{s,\wh{m}}_{\wt{\tau}}\ot \wh{m},\gamma(\tau,\wh{X}^{s,\wh{m}}_{\wt{\tau}}(\wh{x}),\wh{X}^{s,\wh{m}}_{\wt{\tau}}\ot \wh{m})\big)\Big)d\wt{\tau}.
\end{align}\normalsize
Then, one can directly check that $\big(\wh{X}^{t_{N-1},\wt{m}}_s(y),\wh{Z}^{t_{N-1},\wt{m}}_s(y)\big)$ is also a solution pair of the FBODE system \eqref{fbodesystem} on the interval $[t_{N-1},t_N=T]$ with the same initial and terminal data as $\big(X^{t_{N-1},\wt{m}}_{s}(y),Z^{t_{N-1},\wt{m}}_{s}(y)\big)$ does. By Theorem \ref{Thm6_3}, $\big(\wh{X}^{t_{N-1},\wt{m}}_s(y),\wh{Z}^{t_{N-1},\wt{m}}_s(y)\big)=\big(X^{t_{N-1},\wt{m}}_{s}(y),Z^{t_{N-1},\wt{m}}_{s}(y)\big)=\big(\wt{X}^{0,m}_s(x),\wt{Z}^{0,m}_s(x)\big)$ and thus $\wt{Z}^{0,m}_s(x)=\gamma(s,\wt{X}^{0,m}_s(x),\wt{X}^{0,m}_s\ot m)$ on the interval $s\in[t_{N-1},t_N=T]$.

Step 2. On the interval $[t_{N-2},t_{N-1}]$, define $y:=\wt{X}^{0,m}_{t_{N-2}}(x)$, $\wt{m}:=\wt{X}^{0,m}_{t_{N-2}}\ot m$ and $p(x,\mu):=\gamma(t_{N-1},x,\mu)$, then $\big(X^{t_{N-2},\wt{m}}_{s}(y),Z^{t_{N-2},\wt{m}}_{s}(y)\big):=\big(\wt{X}^{0,m}_s(x),\wt{Z}^{0,m}_s(x)\big)$ is a solution pair of FBODE system \eqref{fbodesystem} with terminal data $Z^{t_{N-2},\wt{m}}_{t_{N-1}}(y)=p(X^{t_{N-2},\wt{m}}_{t_{N-1}}(y),X^{t_{N-2},\wt{m}}_{t_{N-1}}\ot \wt{m})$ and initial data $X^{t_{N-2},\wt{m}}_{t_{N-2}}(y)=y$. Same as Step 1, one can also directly check that $\big(\wh{X}^{t_{N-2},\wt{m}}_s(y),\wh{Z}^{t_{N-2},\wt{m}}_s(y)\big)$ is also a solution pair of FBODE system \eqref{fbodesystem} on the interval $[t_{N-2},t_{N-1}]$ with the same initial and terminal data as $\big(X^{t_{N-2},\wt{m}}_{s}(y),Z^{t_{N-2},\wt{m}}_{s}(y)\big)$ does. Since $\vertiii{\gamma }_{2,[0,T]}\leq L^*_0$, by Theorem \ref{Thm6_3} again, $\big(\wh{X}^{t_{N-2},\wt{m}}_s(y),\wh{Z}^{t_{N-2},\wt{m}}_s(y)\big)=\big(X^{t_{N-2},\wt{m}}_{s}(y),Z^{t_{N-2},\wt{m}}_{s}(y)\big)=\big(\wt{X}^{0,m}_s(x),\wt{Z}^{0,m}_s(x)\big)$, and thus $\wt{Z}^{0,m}_s(x)=\gamma(s,\wt{X}^{0,m}_s(x),\wt{X}^{0,m}_s\ot m)$ on the interval $s\in[t_{N-2},t_{N-1}]$. We can repeat the same procedure for all the rest of the other intervals $[t_{N-3},t_{N-2}]$,...,$[0 ,t_1]$ in a backward manner since Theorem \ref{GlobalSol} can ensure $\vertiii{\gamma }_{2,[0,T]}\leq L^*_0$ independent of the current subinterval. Therefore, we can obtain $\wt{Z}^{0,m}_s(x)=\gamma(s,\wt{X}^{0,m}_s(x),\wt{X}^{0,m}_s\ot m)$ on the whole interval $s\in[0,T]$.
\end{proof}

\begin{remark}
In the proof of the global existence of a continuously differentiable decoupling field $\gamma$ (refer to \eqref{displacement}), we relied on the monotonicity conditions \eqref{positive_k_mu} and \eqref{positive_H_mu}. Therefore, it is not unexpected that the global existence of such a decoupling field $\gamma$ could lead to the global uniqueness of the solution to the FBODE system \eqref{FBODE}.
\end{remark}

\begin{theorem}\label{Unique_B} (Global uniqueness of the solution to the master equation \eqref{Master_eq})
Under Assumptions ${\bf(a1)}$-${\bf(a3)}$ and Hypotheses ${\bf(h1)}$-${\bf(h3)}$, the master equation \eqref{Master_eq} has at most one solution $V(t,x,m)$ that has the regularity stated in Theorem \ref{thm_master}.
\end{theorem}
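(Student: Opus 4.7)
The plan is to reduce the uniqueness of $V$ to the global uniqueness of the FBODE solution established in Theorem~\ref{Thm6_4}, via the method of characteristics. Given any classical solution $V(t,x,m)$ of the master equation~\eqref{Master_eq} with the regularity stated in Theorem~\ref{thm_master}, I would fix an arbitrary $(t,m)\in[0,T]\times \mc{P}_2(\R^{d_x})$ and construct a candidate forward-backward pair as follows. First, let $X^{t,m}_s(x)$ solve the forward ODE
\begin{align*}
\tfrac{d}{ds}X^{t,m}_s(x)=f\Big(X^{t,m}_s(x),X^{t,m}_s\ot m,\wh{\alpha}\big(X^{t,m}_s(x),X^{t,m}_s\ot m,\p_x V(s,X^{t,m}_s(x),X^{t,m}_s\ot m)\big)\Big),
\end{align*}
with $X^{t,m}_t(x)=x$, which is well-posed because the regularity of $V$ makes $\p_x V$ jointly continuous in $(s,x,m)$ with Lipschitz-type control in $x$ and $m$, and because $\wh{\alpha}$ is well-defined on the relevant cone region by Proposition~\ref{A5}. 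Then set $Z^{t,m}_s(x):=\p_x V(s,X^{t,m}_s(x),X^{t,m}_s\ot m)$.

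The next step is to verify that $(X^{t,m}_s(x),Z^{t,m}_s(x))$ is a continuously differentiable (in $s$) solution of the FBODE system~\eqref{FBODE}. The forward equation is satisfied by construction, and the terminal condition $Z^{t,m}_T(x)=\p_x k(X^{t,m}_T(x),X^{t,m}_T\ot m)$ follows from $V(T,\cdot,\cdot)=k(\cdot,\cdot)$ by differentiation in $x$. For the backward equation, I would differentiate $Z^{t,m}_s(x)$ in $s$ via the chain rule in the Wasserstein sense, using the continuity and differentiability of $\p_x V$ in $(t,x,m)$ guaranteed by Theorem~\ref{thm_master}, and then substitute the identity \eqref{p_x_Master_eq} obtained from differentiating the master equation in $x$ together with the first-order condition \eqref{foc}. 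After cancellation, this yields precisely the backward ODE in \eqref{FBODE}. At this point, Theorem~\ref{Thm6_4} applies and forces $(X^{t,m}_s(x),Z^{t,m}_s(x))$ to coincide with the canonical pair built from the decoupling field $\gamma$ of Theorem~\ref{GlobalSol}.

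Finally, to recover $V$, I would compute $\tfrac{d}{ds} V(s,X^{t,m}_s(x),X^{t,m}_s\ot m)$ along the characteristic, using the joint differentiability of $V$, and invoke the master equation~\eqref{Master_eq} to reduce this derivative to $-g(X^{t,m}_s(x),X^{t,m}_s\ot m,\wh{\alpha}(\cdots))$. Integrating from $t$ to $T$ and applying the terminal condition yields
\begin{align*}
V(t,x,m)=k\big(X^{t,m}_T(x),X^{t,m}_T\ot m\big)+\int_t^T g\Big(X^{t,m}_s(x),X^{t,m}_s\ot m,\wh{\alpha}\big(X^{t,m}_s(x),X^{t,m}_s\ot m,Z^{t,m}_s(x)\big)\Big)ds,
\end{align*}
which is exactly the representation~\eqref{Master_eq_sol}. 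Since the right-hand side depends only on the now-unique FBODE pair, $V(t,x,m)$ is uniquely determined, proving the theorem.

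The main obstacle will be Step~2: rigorously justifying the differentiation of $Z^{t,m}_s(x)=\p_x V(s,X^{t,m}_s(x),X^{t,m}_s\ot m)$ in $s$ and the use of~\eqref{p_x_Master_eq} under the Wasserstein calculus, which requires continuous second-order cross derivatives of $V$ and a delicate application of the chain rule along measure-valued flows. A secondary, essentially technical, issue is ensuring the cone membership $(X^{t,m}_s(x),X^{t,m}_s\ot m,Z^{t,m}_s(x))\in c_{k_0}$ so that $\wh{\alpha}$ is defined and Theorem~\ref{Thm6_4} can be invoked; this should be enforced by requiring (as is implicit in Theorem~\ref{thm_master}) that $\p_x V$ inherits the $L^*_0$-growth bound of~\eqref{eq_7_44}, which one can deduce from the characteristic representation on any short sub-interval before pasting globally.
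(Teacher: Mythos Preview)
Your proposal is correct and follows essentially the same route as the paper: differentiate the master equation in $x$ to obtain \eqref{p_x_Master_eq}, run the characteristics to see that $(X^{t,m}_s,\p_x V(s,X^{t,m}_s,X^{t,m}_s\ot m))$ solves the FBODE \eqref{FBODE}, invoke Theorem~\ref{Thm6_4}, and then recover $V$ along the characteristic. The only cosmetic difference is that the paper phrases the argument by taking two solutions $V,\wb{V}$ and showing $\p_x V=\p_x\wb{V}$ via Theorem~\ref{Thm6_4} before integrating the master equation along the common characteristic, whereas you show directly that any solution must satisfy the representation \eqref{Master_eq_sol}; these are equivalent. Your flagged concern about cone membership is legitimate but is not addressed explicitly in the paper either---it is implicit in the requirement that $V$ be a classical solution of \eqref{Master_eq}, since $\wh{\alpha}(x,m,\p_x V)$ must make sense for the equation to hold.
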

\begin{proof}
Suppose that there exist two solutions $V(t,x,m)$ and $\wb{V}(t,x,m)$ to the master equation \eqref{Master_eq}, both of which have the regularity stated in Theorem \ref{thm_master}. Define $\gamma(t,x,m):=\p_x V(t,x,m)$, $\wb{\gamma}(t,x,m):=\p_x \wb{V}(t,x,m)$, and
\begin{align}\label{Unique_X}
\begin{cases}
X^{t,m}_s(x):= x+\int_t^s f\Big(X^{t,m}_\tau(x),X^{t,m}_\tau\ot m,\alpha\big(X^{t,m}_\tau(x),X^{t,m}_\tau\ot m,\gamma(\tau,X^{t,m}_\tau(x),X^{t,m}_\tau\ot m)\big)\Big)d\tau,\\
\wb{X}^{t,m}_s(x):= x+\int_t^s f\Big(\wb{X}^{t,m}_\tau(x),\wb{X}^{t,m}_\tau\ot m,\alpha\big(\wb{X}^{t,m}_\tau(x),\wb{X}^{t,m}_\tau\ot m,\wb{\gamma}(\tau,\wb{X}^{t,m}_\tau(x),\wb{X}^{t,m}_\tau\ot m)\big)\Big)d\tau,
\end{cases}
\end{align}
$Z^{t,m}_s(x):=\gamma(s,X^{t,m}_s(x),X^{t,m}_s\ot m)$ and $\wb{Z}^{t,m}_s(x):=\wb{\gamma}(s,\wb{X}^{t,m}_s(x),\wb{X}^{t,m}_s\ot m)$. Then, by taking the derivative $\p_x$ to the master equation \eqref{Master_eq} and using the first-order condition \eqref{first_order_condition} and $\gamma(t,x,m)=\p_x V(t,x,m)$, we have 
\footnotesize\begin{align}\no
&\p_t \gamma(t,x,m)+f(x,m,\alpha(x,m,\gamma(t,x,m)))\cdot \p_x\gamma(t,x,m)+\int_{\R^{d_x}}f(y,m,\alpha(y,m,\gamma(t,y,m)))\cdot \p_\mu\gamma(t,x,m)(y)dm(y)\\\no
=&\ -\p_x f(x,m,\alpha(x,m,\gamma(t,x,m)))\cdot \gamma(t,x,m)-\p_x g(x,m,\alpha(x,m,\gamma(t,x,m))),
\end{align}\normalsize
which implies that, by evaluating at $t=s$, $x=X^{t,m}_s(x)$ and $m=X^{t,m}_s\ot m$,
\footnotesize\begin{align}\no
&\p_s \gamma(s,X^{t,m}_s(x),X^{t,m}_s\ot m)+f(X^{t,m}_s(x),X^{t,m}_s\ot m,\alpha(X^{t,m}_s(x),X^{t,m}_s\ot m,\gamma(s,X^{t,m}_s(x),X^{t,m}_s\ot m)))\cdot \p_x\gamma(s,X^{t,m}_s(x),X^{t,m}_s\ot m)\\\no
&+\int_{\R^{d_x}}f(X^{t,m}_s(y),X^{t,m}_s\ot m,\alpha(X^{t,m}_s(y),X^{t,m}_s\ot m,\gamma(s,X^{t,m}_s(y),X^{t,m}_s\ot m)))\cdot \p_\mu\gamma(s,X^{t,m}_s(x),X^{t,m}_s\ot m)(X^{t,m}_s(y))dm(y)\\\no
=&\ -\p_x f(X^{t,m}_s(x),X^{t,m}_s\ot m,\alpha(X^{t,m}_s(x),X^{t,m}_s\ot m,\gamma(s,X^{t,m}_s(x),X^{t,m}_s\ot m)))\cdot \gamma(s,X^{t,m}_s(x),X^{t,m}_s\ot m)\\\label{eq_9_17_1}
&\ -\p_x g(X^{t,m}_s(x),X^{t,m}_s\ot m,\alpha(X^{t,m}_s(x),X^{t,m}_s\ot m,\gamma(s,X^{t,m}_s(x),X^{t,m}_s\ot m))).
\end{align}\normalsize 
Applying the characteristics method to the derived equation \eqref{eq_9_17_1}, we can observe that the left-hand side of \eqref{eq_9_17_1} is precisely equal to $\frac{d}{ds}Z^{t,m}_s(x)$. The corresponding characteristic equations satisfied by both $\Big(X^{t,m}_s(x),Z^{t,m}_s(x)\Big)$ and $\Big(\wb{X}^{t,m}_s(x),\wb{Z}^{t,m}_s(x)\Big)$ are exactly the same as the FBODE system \eqref{FBODE}. This implies that $\Big(X^{t,m}_s(x),Z^{t,m}_s(x)\Big)=\Big(\wb{X}^{t,m}_s(x),\wb{Z}^{t,m}_s(x)\Big)$ by Theorem \ref{Thm6_4}. Therefore, we obtain $\p_x V(t,x,m)=\gamma(t,x,m)=Z^{t,m}_t(x)=\wb{Z}^{t,m}_t(x)=\wb{\gamma}(t,x,m)=\p_x \wb{V}(t,x,m)$ for all $t\in[0,T]$, $x\in\R^{d_x}$ and $m\in\mc{P}_2(\R^{d_x})$. Thus, by \eqref{Unique_X}, $X^{t,m}_s(x)=\wb{X}^{t,m}_s(x)$. Next, using the master equation \eqref{Master_eq} again, we can conclude that $\p_s \big(V(s,X^{t,m}_s(x),X^{t,m}_s(x)\ot m)-\wb{V}(s,\wb{X}^{t,m}_s(x),\wb{X}^{t,m}_s\ot m)\big)=0$ for all $0\leq t\leq s\leq T$ and $V(T,X^{t,m}_T(x),X^{t,m}_T\ot m)-\wb{V}(T,\wb{X}^{t,m}_T(x),\wb{X}^{t,m}_T\ot m)=0$ since both initial data are the same. Therefore, we have $V(s,X^{t,m}_s(x),X^{t,m}_s(x)\ot m)=\wb{V}(s,\wb{X}^{t,m}_s(x),\wb{X}^{t,m}_s\ot m)$ for all $0\leq t\leq s\leq T$, and thus $V(t,x,m)=\wb{V}(t,x,m)$ for all $t$ by setting $s=t$.
\end{proof}

\section{Non-linear-quadratic Example}\label{sec:nonLQ}
We here provide a non-trivial non-linear-quadratic example in which the drift function $f$, the running cost function $g$ and the terminal cost function $k$ are defined as follows; for simplicity, we just consider the process $x_t$ living in $\R$. Let $\varepsilon_2\in(0,1/2]$, $\varepsilon_3\in(0,1/8]$, $\varepsilon_4\in(0,1)$ be fixed constants. Define, for any $x$, $\alpha\in\R$, $\mu\in\mathcal{P}_2(\R)$, 
\begin{align}\label{example_f}
f(x,\mu,\alpha):= &\ x+ \alpha +\int_{\R} yd\mu(y)+\varepsilon_1 x \exp\Big(-x^2-\alpha^2-\Big(\int_\R \phi(y)d\mu(y)\Big)^2\Big),\\\label{example_g}
g(x,\mu,\alpha):=&\ \frac{1}{2}\alpha^2+\frac{1}{2} x^2-\varepsilon_2x \int_{\R} yd\mu(y)+\varepsilon_3\alpha \int_{\R} y d\mu(y),\\\label{example_k}
k(x,\mu):=&\ \frac{1}{2}x^2-\varepsilon_4 x \int_{\R} yd\mu(y),
\end{align}
where the constant $\varepsilon_1\in(0,1)$ will be chosen sufficiently small below so that $(\bf{h2})$ will be satisfied, and 
\begin{align}\label{phi_1}
\phi(y):=\begin{cases}
|y|,\ \text{ for }|y|\geq 1;\\
-\frac{1}{8}y^4+\frac{3}{4}y^2+\frac{3}{8},\ \text{ for }|y|<1.
\end{cases}
\end{align}
\begin{remark}
The example of the drift function $f(x,\mu,\alpha)$ is clearly nonlinear and non-separable in all $x$, $\mu$ and $\alpha$. The running cost $g(x,\mu,\alpha)$ is non-separable in $\alpha$ and $\mu$ and it is not convex in $\mu$ but it is of quadratic-growth in $x$. The terminal cost $k(x,\mu)$ is not convex in $\mu$ yet it is of quadratic-growth in $x$. To the best of our knowledge, this example cannot be covered as a workable one in the theory proposed in the contemporary literature. 
\end{remark}
One can check that $\phi\in C^2(\R)$ satisfies, $\phi(y)\geq |y|$ for all $y\in\R$, 
\begin{align}
\phi'(y)=\begin{cases}
1,\ \text{ for }y\geq 1;\\
-1,\ \text{ for }y\leq 1;\\
-\frac{1}{2}y^3+\frac{3}{2}y,\ \text{ for }|y|<1,
\end{cases}\text{ and  }\ \  
\phi''(y)=\begin{cases}
0,\ \text{ for }|y|\geq 1,\\
-\frac{3}{2}y^2+\frac{3}{2},\ \text{ for }|y|<1.
\end{cases}
\end{align}
In addition, we can check that $\|\mu\|_1\leq \int_\R \phi(y)d\mu(y)$. 
It follows from routine calculations that
\small\begin{align*}
(i)\ &\p_x f(x,\mu,\alpha)=1+\varepsilon_1(1-2 x^2) \exp\Big(-x^2-\alpha^2-\Big(\int_\R \phi(y)d\mu(y)\Big)^2\Big);\\ 
&\p_\mu f(x,\mu,\alpha)(\wt{x})= 1-2\varepsilon_1 \phi'(\wt{x}) x \int_\R \phi(y)d\mu(y)  \exp\Big(-x^2-\alpha^2-\Big(\int_\R \phi(y)d\mu(y)\Big)^2\Big);\\ 
&\p_\alpha f(x,\mu,\alpha)= 1-2\varepsilon_1 \alpha x \exp\Big(-x^2-\alpha^2-\Big(\int_\R \phi(y)d\mu(y)\Big)^2\Big);\\
&\p_x\p_x f(x,\mu,\alpha)=-2x\Big(3-2x^2\Big)\varepsilon_1  \exp\Big(-x^2-\alpha^2-\Big(\int_\R \phi(y)d\mu(y)\Big)^2\Big);\\
&\p_x\p_\mu f(x,\mu,\alpha)(\wt{x})=-2\varepsilon_1 \phi'(\wt{x}) (1-2x^2) \int_\R \phi(y)d\mu(y)  \exp\Big(-x^2-\alpha^2-\Big(\int_\R \phi(y)d\mu(y)\Big)^2\Big)=\p_\mu \p_x f(x,\mu,\alpha)(\wt{x});\\ 
&\p_x\p_\alpha f(x,\mu,\alpha)=-2\varepsilon_1 \alpha (1-2x^2) \exp\Big(-x^2-\alpha^2-\Big(\int_\R \phi(y)d\mu(y)\Big)^2\Big)=\p_\alpha \p_x f(x,\mu,\alpha);\\
&\p_\alpha\p_\mu f(x,\mu,\alpha)(\wt{x})=4\varepsilon_1 \phi'(\wt{x}) x\alpha \int_\R \phi(y)d\mu(y)  \exp\Big(-x^2-\alpha^2-\Big(\int_\R \phi(y)d\mu(y)\Big)^2\Big)=\p_\mu\p_\alpha f(x,\mu,\alpha)(\wt{x});\\ 
&\p_\alpha\p_\alpha f(x,\mu,\alpha)=-2\varepsilon_1 (1-2\alpha^2) x \exp\Big(-x^2-\alpha^2-\Big(\int_\R \phi(y)d\mu(y)\Big)^2\Big).\\ 
(ii)\ &\p_x g(x,\mu,\alpha)=x-\varepsilon_2\int_{\R} yd\mu(y),\ \p_\alpha g(x,\mu,\alpha)=\alpha+\varepsilon_3\int_{\R} y d\mu(y);\\
&\p_\mu g(x,\mu,\alpha)(\wt{x})=-\varepsilon_2x+\varepsilon_3\alpha;\\
&\p_\alpha\p_\alpha g(x,\mu,\alpha)=\p_x\p_x g(x,\mu,\alpha)=1,\ \p_\alpha\p_\mu g(x,\mu,\alpha)(\wt{x})=\p_\mu\p_\alpha g(x,\mu,\alpha)(\wt{x})=\varepsilon_3;\\ 
&\p_x\p_\alpha g(x,\mu,\alpha)=\p_\alpha\p_x g(x,\mu,\alpha)=0,\ \p_{x}\p_\mu g(x,\mu,\alpha)(\wt{x})=\p_{\mu} \p_x g(x,\mu,\alpha)(\wt{x})=-\varepsilon_2. \\
(iii)\ &\p_x k(x,\mu)=x-\varepsilon_4 \int_{\R} y d\mu(y),\ \p_\mu k(x,\mu)(\wt{x})= -\varepsilon_4 x,\ \p_{xx} k(x,\mu)=1;\\
&\p_{x}\p_\mu k(x,\mu)(\wt{x})=\p_{\mu} \p_x k(x,\mu)(\wt{x})=-\varepsilon_4.
\end{align*}\normalsize
All of these functions are clearly jointly Lipschitz continuous in their corresponding arguments.  Note also that $0\leq x\exp\big(-x^2\big)\leq \frac{1}{\sqrt{2}}\exp\big(-\frac{1}{2}\big)\leq \frac{1}{2}$ and $0\leq x^2\exp\big(-x^2\big)\leq \exp\big(-1\big)\leq \frac{1}{2}$. We emphisize here that both $g(x,\mu,\alpha)$ and $k(x,\mu)$ do not satisfy Lasry-Lions monotonicity condition since $\p_{x}\p_\mu g(x,\mu,\alpha)(\wt{x})=-\varepsilon_2<0$ and $\p_x\p_\mu k(x,\mu)(\wt{x})=-\varepsilon_4<0$.

Now, we shall check that whether the $f$, $g$ and $k$ defined in \eqref{example_f}-\eqref{example_k} do satisfy Assumptions ${\bf(a1)}$-${\bf(a3)}$ and Hypotheses ${\bf(h1)}$-${\bf(h3)}$ in the following:\\
${\bf(a1)}$(i) Since $|\p_\alpha f(x,\mu,\alpha)|^2 \geq (1-\frac{1}{2}\varepsilon_1)^2$, then \eqref{positive_f} is valid with $\lambda_f:=0.99$ for any $0<\varepsilon_1\leq 0.01$.\\
(ii) Since $|\p_\alpha f(x,\mu,\alpha)| \leq 1+\frac{1}{2}\varepsilon_1=:\Lambda_1$, $|\p_\mu f(x,\mu,\alpha)(\wt{x})| \leq  1+\frac{1}{2}\varepsilon_1=:\Lambda_2$ and $|\p_x f(x,\mu,\alpha)| \leq 1+2\varepsilon_1=:\Lambda_3$, then \eqref{bdd_d1_f} is valid with $\Lambda_f:=1.02$ for any $0<\varepsilon_1\leq 0.01$.\\
(iii) Using the decay nature of $\exp\Big(-x^2-\alpha^2-\Big(\int_\R \phi(y)d\mu(y)\Big)^2\Big)$ and the fact that $\|\mu\|_1\leq \int_\R \phi(y)d\mu(y)$, one can show that there exists a generic positive constant $C_1$ such that \eqref{bdd_d2_f} is valid with $\wb{l}_f:=C_1\varepsilon_1$.\\
(iv) The Lipschitz continuity can be verified in the same manner as in ${\bf(a1)}$(iii).\\
${\bf(a2)}$(i) Since $\p_x\p_x g(x,\mu,\alpha)=1$, \eqref{positive_g_alpha} is valid with $\lambda_g:=1$.\\
(ii) First of all, since $\p_x\p_x g(x,\mu,\alpha)=1$, \eqref{positive_g_x} is valid with $\lambda_g=1$; since $\p_x\p_\mu g(x,\mu,\alpha)(\wt{x})=\p_\mu\p_x g(x,\mu,\alpha)(\wt{x})=-\varepsilon_2$, \eqref{positive_g_mu_1} is valid with $l_g:=\varepsilon_2<\lambda_g$ for any $\varepsilon_2<1$; in addition, $\int_{\R}g(x,\mu,\alpha)-g(x,\mu',\alpha)d(\mu-\mu')(x)=-\varepsilon_2\left(\int_{\R} yd(\mu-\mu')(y)\right)^2$, which implies \eqref{positive_g_mu}.\\
(iii) Using the explicit formulae for all second-order derivatives of $g$, one can check that \eqref{bdd_d2_g_1} is valid with $\Lambda_g:=1$ for any $\varepsilon_2\leq 1$ and \eqref{bdd_d2_g_2} is valid with $\wb{l}_g:=\varepsilon_3$.\\
${\bf(a3)}$ (i) Since $\p_x\p_x k(x,\mu)=1$, \eqref{positive_k} are valid with $\lambda_k:=1$.\\
(ii) Since $\p_x\p_\mu k(x,\mu)(\wt{x})=\p_\mu\p_x k(x,\mu)(\wt{x})=-\varepsilon_4$, \eqref{positive_k_mu_1} is valid with $l_k:=\varepsilon_4<\lambda_k=1$ for any $\varepsilon_4< 1$; in addition, $\int_{\R}k(x,\mu)-k(x,\mu')d(\mu-\mu')(x)=-\varepsilon_4 \left(\int_{\R} yd(\mu-\mu')(y)\right)^2$, which implies \eqref{positive_k_mu}.\\
(iii) Using the explicit formulae for all second-order derivatives of $k$, one can check that \eqref{bdd_d2_k_1} is valid with $\Lambda_k:=1$ for any $\varepsilon_4\leq 1$.\\
${\bf(h1)}$ Evaluating \eqref{example_f} and formulae for derivatives of $g$ and $k$ at $(x,\mu,\alpha,\wt{x})=(0,\delta_0,0,0)$ shows the fulfillment of ${\bf(h1)}$.\\
${\bf(h2)}$ It is worth noting that for the $\wb{l}_g$ and $\lambda_g$ chosen above, we have $\wb{l}_g=\varepsilon_3\leq \frac{1}{8}=\frac{1}{8}\lambda_g$. Furthermore, the constant $L^*_0$ defined in \eqref{L_star_0} 
depends only on $\lambda_f=0.99$, $\Lambda_f=1.02$, $\lambda_g=1$, $\Lambda_g=1$, $\lambda_k=1$ and $\Lambda_k=1$ but not on $\wb{l}_f$ or $\varepsilon_1$, so we can always choose an $\varepsilon_1$ small enough, such that $\wb{l}_f=C_1\varepsilon_1\leq \frac{1}{40 \max\{\Lambda_k,L^*_0\}}\lambda_g=\frac{1}{40 L^*_0}$. Therefore, ${\bf(h2)}$ is satisfied. \\
${\bf(h3)}$ By some simple computations, we can show that $\frac{25}{16}\Lambda_f^2<1.63<3.733(1-\varepsilon_2)<4(\lambda_g-l_g)\cdot\frac{\lambda_f^2}{\Lambda_g+\lambda_g/20}$ for any $\varepsilon_2\leq\frac{1}{2}<1-\frac{1.63}{3.733}$, and then the validity of ${\bf(h3)}$ can be inferred from Remark \ref{remark_h2}.

\section*{Acknowledgment}
Alain Bensoussan is supported by the National Science Foundation under grant NSF-DMS-2204795.
Tak Kwong Wong was partially supported by the HKU Seed Fund for Basic Research under the project code 201702159009, the Start-up Allowance for Croucher Award Recipients, and Hong Kong General Research Fund (GRF) grants with project numbers 17306420, 17302521, and 17315322.
Phillip Yam acknowledges the financial supports from HKSAR-GRF 14301321 with the project title ``General Theory for Infinite Dimensional Stochastic Control: Mean Field and Some Classical Problems'', HKSAR-GRF 14300123 with the project title ``Well-posedness of Some Poisson-driven Mean Field Learning Models and their Applications'', and NSF-DMS-2204795. He also thanks Columbia University for the kind invitation to be a visiting faculty member in the Department of Statistics during his sabbatical leave. Hongwei Yuan thanks the Department of Statistics of The Chinese University of Hong Kong and the Department of Mathematics of University of Macau for the financial support.

\bibliography{Bib}

\begin{thebibliography}{10}

\bibitem{A16}
S.~Ahuja.
\newblock Wellposedness of mean field games with common noise under a weak monotonicity condition.
\newblock {\em SIAM Journal on Control and Optimization}, 54(1):30--48, 2016.

\bibitem{AM23}
D.~Ambrose and A.~M{\'e}sz{\'a}ros.
\newblock {Well-posedness of mean field games master equations involving non-separable local Hamiltonians}.
\newblock {\em Transactions of the American Mathematical Society}, 376(04):2481--2523, 2023.

\bibitem{A18}
David~M Ambrose.
\newblock {Strong solutions for time-dependent mean field games with non-separable Hamiltonians}.
\newblock {\em Journal de Math{\'e}matiques Pures et Appliqu{\'e}es}, 113:141--154, 2018.

\bibitem{andersson2011maximum}
D.~Andersson and B.~Djehiche.
\newblock A maximum principle for {SDEs} of mean-field type.
\newblock {\em Applied Mathematics \& Optimization}, 63:341--356, 2011.

\bibitem{bensoussan2018estimation}
A.~Bensoussan.
\newblock {\em Estimation and control of dynamical systems}, volume~48.
\newblock Springer, 2018.

\bibitem{bensoussan2013mean}
A.~Bensoussan, J.~Frehse, and S.C.P. Yam.
\newblock {\em Mean Field Games and Mean Field Type Control Theory}, volume 101.
\newblock Springer, 2013.

\bibitem{bensoussan2017interpretation}
A.~Bensoussan, J.~Frehse, and S.C.P. Yam.
\newblock On the interpretation of the master equation.
\newblock {\em Stochastic Processes and their Applications}, 127(7):2093--2137, 2017.

\bibitem{bensoussan2020control}
A.~Bensoussan, P.J. Graber, and S.C.P. Yam.
\newblock Control on {Hilbert} spaces and application to mean field type control theory.
\newblock {\em arXiv preprint arXiv:2005.10770}, 2020.

\bibitem{bensoussan2015master}
A.~Bensoussan, J.Frehse, and S.C.P. Yam.
\newblock The master equation in mean field theory.
\newblock {\em Journal de Math{\'e}matiques Pures et Appliqu{\'e}es}, 103(6):1441--1474, 2015.

\bibitem{bensoussan2016linear}
A.~Bensoussan, K.C.J. Sung, S.C.P. Yam, and S.P. Yung.
\newblock Linear-quadratic mean field games.
\newblock {\em Journal of Optimization Theory and Applications}, 169(2):496--529, 2016.

\bibitem{homan2023game}
A.~Bensoussan, H.M. Tai, T.K. Wong, and S.C.P. Yam.
\newblock Mean field games, their {FBSDEs} and master equations.
\newblock {\em preprint}, 2023.

\bibitem{homan2023control}
A.~Bensoussan, H.M. Tai, and S.C.P. Yam.
\newblock Mean field type control problems, some {Hilbert}-space-valued {FBSDEs}, and related equations.
\newblock {\em arXiv preprint arXiv:2305.04019}, 2023.

\bibitem{bensoussan2023theory}
A.~Bensoussan, T.K. Wong, S.C.P. Yam, and H.~Yuan.
\newblock A theory of first order mean field type control problems and their equations.
\newblock {\em arXiv preprint arXiv:2305.11848}, 2023.

\bibitem{bensoussan2019control}
A.~Bensoussan and S.C.P. Yam.
\newblock Control problem on space of random variables and master equation.
\newblock {\em ESAIM: Control, Optimisation and Calculus of Variations}, 25:10, 2019.

\bibitem{bensoussan2015well}
A.~Bensoussan, S.C.P. Yam, and Z.~Zhang.
\newblock Well-posedness of mean-field type forward--backward stochastic differential equations.
\newblock {\em Stochastic Processes and their Applications}, 125(9):3327--3354, 2015.

\bibitem{B21}
C.~Bertucci.
\newblock Monotone solutions for mean field games master equations: continuous state space and common noise.
\newblock {\em arXiv preprint arXiv:2107.09531}, 2021.

\bibitem{buckdahn2011general}
R.~Buckdahn, B.~Djehiche, and J.~Li.
\newblock A general stochastic maximum principle for {SDEs} of mean-field type.
\newblock {\em Applied Mathematics \& Optimization}, 64(2):197--216, 2011.

\bibitem{buckdahn2009meanLim}
R.~Buckdahn, B.~Djehiche, J.~Li, and S.~Peng.
\newblock {Mean-field backward stochastic differential equations: A limit approach}.
\newblock {\em The Annals of Probability}, 37(4):1524 -- 1565, 2009.

\bibitem{buckdahn2017meanSDC}
R.~Buckdahn, J.~Li, and J.~Ma.
\newblock {A mean-field stochastic control problem with partial observations}.
\newblock {\em The Annals of Applied Probability}, 27(5):3201 -- 3245, 2017.

\bibitem{buckdahn2009mean}
R.~Buckdahn, J.~Li, and S.~Peng.
\newblock Mean-field backward stochastic differential equations and related partial differential equations.
\newblock {\em Stochastic processes and their Applications}, 119(10):3133--3154, 2009.

\bibitem{buckdahn2017mean}
R.~Buckdahn, J.~Li, S.~Peng, and C.~Rainer.
\newblock Mean-field stochastic differential equations and associated {PDEs}.
\newblock {\em The Annals of Probability}, 45(2):824--878, 2017.

\bibitem{Peter2015}
P.~Caines.
\newblock Mean field game control theory: Introduction and recent results.
\newblock {\em https://www.cityu.edu.hk/ma/news-events/mean-field-game-control-theory-introduction-and-recent-results}, 2015.

\bibitem{cardaliaguet2022splitting}
P.~Cardaliaguet, M.~Cirant, and A.~Porretta.
\newblock Splitting methods and short time existence for the master equations in mean field games.
\newblock {\em Journal of the European Mathematical Society}, 2022.

\bibitem{cardaliaguet2019master}
P.~Cardaliaguet, F.~Delarue, J.-M. Lasry, and P.-L. Lions.
\newblock {\em The Master Equation and the Convergence Problem in Mean Field Games:(AMS-201)}, volume 201.
\newblock Princeton University Press, 2019.

\bibitem{C13}
Pierre Cardaliaguet.
\newblock {Notes on mean-field games (from P.-L. Lions' lectures at Coll{\`e}ge de France), 2013}.

\bibitem{C15}
Pierre Cardaliaguet.
\newblock Weak solutions for first order mean field games with local coupling.
\newblock {\em Analysis and geometry in control theory and its applications}, pages 111--158, 2015.

\bibitem{CG15}
Pierre Cardaliaguet and P~Jameson Graber.
\newblock Mean field games systems of first order.
\newblock {\em ESAIM: Control, Optimisation and Calculus of Variations}, 21(3):690--722, 2015.

\bibitem{CS21}
Pierre Cardaliaguet and Panagiotis Souganidis.
\newblock Weak solutions of the master equation for mean field games with no idiosyncratic noise.
\newblock {\em arXiv preprint arXiv:2109.14911}, 2021.

\bibitem{carmona2013mean}
R.~Carmona and F.~Delarue.
\newblock Mean field forward-backward stochastic differential equations.
\newblock {\em Electronic Communications in Probability}, 18:1--15, 2013.

\bibitem{carmona2015forward}
R.~Carmona and F.~Delarue.
\newblock Forward--backward stochastic differential equations and controlled {McKean--Vlasov} dynamics.
\newblock {\em The Annals of Probability}, 43(5):2647--2700, 2015.

\bibitem{carmona2018probabilistic}
R.~Carmona and F.~Delarue.
\newblock {\em Probabilistic Theory of Mean Field Games with Applications I-II}.
\newblock Springer, 2018.

\bibitem{carmona2013control}
Ren{\'e} Carmona, Fran{\c{c}}ois Delarue, and Aim{\'e} Lachapelle.
\newblock {Control of McKean--Vlasov dynamics versus mean field games}.
\newblock {\em Mathematics and Financial Economics}, 7:131--166, 2013.

\bibitem{CDL16}
Ren{\'e} Carmona, Fran{\c{c}}ois Delarue, and Daniel Lacker.
\newblock Mean field games with common noise.
\newblock {\em The Annals of Probability}, 44(6):3740--3803, 2016.

\bibitem{CL15}
Ren{\'e} Carmona and Daniel Lacker.
\newblock A probabilistic weak formulation of mean field games and applications.
\newblock {\em The Annals of Applied Probability}, 25(3):1189--1231, 2015.

\bibitem{CD22weak}
Alekos Cecchin and Fran{\c{c}}ois Delarue.
\newblock Weak solutions to the master equation of potential mean field games.
\newblock {\em arXiv preprint arXiv:2204.04315}, 2022.

\bibitem{CPFP19}
Alekos Cecchin, Paolo~Dai Pra, Markus Fischer, and Guglielmo Pelino.
\newblock On the convergence problem in mean field games: a two state model without uniqueness.
\newblock {\em SIAM Journal on Control and Optimization}, 57(4):2443--2466, 2019.

\bibitem{chassagneux2022probabilistic}
J.-F. Chassagneux, D.~Crisan, and F.~Delarue.
\newblock A probabilistic approach to classical solutions of the master equation for large population equilibria.
\newblock {\em Memoirs of the American Mathematical Society}, 280, 2022.

\bibitem{mou2019wellposedness}
MOU Chenchen and Jianfeng Zhang.
\newblock Wellposedness of second order master equations for mean field games with nonsmooth data.
\newblock {\em Memoirs of the American Mathematical Society}, 2021.

\bibitem{cosso2023optimal}
A.~Cosso, F.~Gozzi, I.~Kharroubi, H.~Pham, and M.~Rosestolato.
\newblock Optimal control of path-dependent {McKean-Vlasov SDEs} in infinite-dimension.
\newblock {\em The Annals of Applied Probability}, 33(4):2863--2918, 2023.

\bibitem{cosso2019zero}
A.~Cosso and H.~Pham.
\newblock Zero-sum stochastic differential games of generalized {McKean--Vlasov} type.
\newblock {\em Journal de Math{\'e}matiques Pures et Appliqu{\'e}es}, 129:180--212, 2019.

\bibitem{cosso2021master}
Andrea Cosso, Fausto Gozzi, Idris Kharroubi, Huy{\^e}n Pham, and Mauro Rosestolato.
\newblock Master bellman equation in the wasserstein space: Uniqueness of viscosity solutions.
\newblock {\em Transactions of the American Mathematical Society}, 2023.

\bibitem{DT20}
Fran{\c{c}}ois Delarue and Rinel~Foguen Tchuendom.
\newblock Selection of equilibria in a linear quadratic mean-field game.
\newblock {\em Stochastic Processes and their Applications}, 130(2):1000--1040, 2020.

\bibitem{djete2022mckean}
Mao~Fabrice Djete, Dylan Possama{\"\i}, and Xiaolu Tan.
\newblock Mckean--vlasov optimal control: the dynamic programming principle.
\newblock {\em The Annals of Probability}, 50(2):791--833, 2022.

\bibitem{gangbo2020global}
W.~Gangbo and A.R. M{\'e}sz{\'a}ros.
\newblock Global well-posedness of master equations for deterministic displacement convex potential mean field games.
\newblock {\em Communications on Pure and Applied Mathematics}, 75(12):2685--2801, 2022.

\bibitem{gangbo2022mean}
W.~Gangbo, A.R. M{\'e}sz{\'a}ros, C.~Mou, and J.~Zhang.
\newblock Mean field games master equations with nonseparable {Hamiltonians} and displacement monotonicity.
\newblock {\em The Annals of Probability}, 50(6):2178--2217, 2022.

\bibitem{gangbo2015existence}
W.~Gangbo and A.~{\'S}wi{\k{e}}ch.
\newblock Existence of a solution to an equation arising from the theory of mean field games.
\newblock {\em Journal of Differential Equations}, 259(11):6573--6643, 2015.

\bibitem{gomes2016regularity}
D.A. Gomes, E.A. Pimentel, and V.~Voskanyan.
\newblock {\em Regularity theory for mean-field game systems}.
\newblock Springer, 2016.

\bibitem{G14}
P~Jameson Graber.
\newblock {Optimal control of first-order Hamilton--Jacobi equations with linearly bounded Hamiltonian}.
\newblock {\em Applied Mathematics \& Optimization}, 70(2):185--224, 2014.

\bibitem{GM22mono}
P~Jameson Graber and Alp{\'a}r~R M{\'e}sz{\'a}ros.
\newblock On monotonicity conditions for mean field games.
\newblock {\em Journal of Functional Analysis}, 285(9):110095, 2023.

\bibitem{huang2006large}
Minyi Huang, Roland~P Malham{\'e}, and Peter~E Caines.
\newblock Large population stochastic dynamic games: closed-loop {McKean-Vlasov} systems and the nash certainty equivalence principle.
\newblock 2006.

\bibitem{LL07}
Jean-Michel Lasry and Pierre-Louis Lions.
\newblock Mean field games.
\newblock {\em Japanese journal of mathematics}, 2(1):229--260, 2007.

\bibitem{lauriere2014dynamic}
M.~Lauri{\`e}re and O.~Pironneau.
\newblock Dynamic programming for mean-field type control.
\newblock {\em Comptes Rendus Mathematique}, 352(9):707--713, 2014.

\bibitem{li2012stochastic}
J.~Li.
\newblock Stochastic maximum principle in the mean-field controls.
\newblock {\em Automatica}, 48(2):366--373, 2012.

\bibitem{L14}
Pierre-Louis Lions.
\newblock {Cours au Coll\`ege de France}.

\bibitem{M20}
Sergio Mayorga.
\newblock Short time solution to the master equation of a first order mean field game.
\newblock {\em Journal of Differential Equations}, 268(10):6251--6318, 2020.

\bibitem{pardoux2005backward}
E.~Pardoux and S.~Peng.
\newblock Backward stochastic differential equations and quasilinear parabolic partial differential equations.
\newblock In {\em Stochastic Partial Differential Equations and Their Applications}, pages 200--217. Springer, 2005.

\bibitem{pham2016linear}
H.~Pham.
\newblock Linear quadratic optimal control of conditional {McKean-Vlasov} equation with random coefficients and applications.
\newblock {\em Probability, Uncertainty and Quantitative Risk}, 1:1--26, 2016.

\bibitem{PHW}
H.~Pham and X.~Wei.
\newblock Dynamic programming for optimal control of stochastic {McKean--Vlasov} dynamics.
\newblock {\em SIAM Journal on Control and Optimization}, 55(2):1069--1101, 2017.

\bibitem{pham2018bellman}
H.~Pham and X.~Wei.
\newblock Bellman equation and viscosity solutions for mean-field stochastic control problem.
\newblock {\em ESAIM: Control, Optimisation and Calculus of Variations}, 24(1):437--461, 2018.

\bibitem{P15}
Alessio Porretta.
\newblock Weak solutions to {F}okker--{P}lanck equations and mean field games.
\newblock {\em Archive for Rational Mechanics and Analysis}, 216(1):1--62, 2015.

\bibitem{yong2013linear}
Jiongmin Yong.
\newblock Linear-quadratic optimal control problems for mean-field stochastic differential equations.
\newblock {\em SIAM journal on Control and Optimization}, 51(4):2809--2838, 2013.

\end{thebibliography}
\bibliographystyle{plain}

\end{document}